\date{\today}
\newcommand{\bbA}{{\mathbb{A}}}
\newcommand{\bbD}{{\mathbb{D}}}
\newcommand{\bbR}{{\mathbb{R}}}
\newcommand{\bbZ}{{\mathbb{Z}}}
\newcommand{\bbC}{{\mathbb{C}}}
\DeclareMathAlphabet{\mathpzc}{OT1}{pzc}{m}{it}
\newcommand{\zm}{{\mathpzc{m}}}
\newcommand{\cA}{{\mathcal{A}}}
\newcommand{\cB}{{\mathcal{B}}}
\newcommand{\cE}{{\mathcal{E}}}
\newcommand{\cF}{{\mathcal{F}}}
\newcommand{\cG}{{\mathcal{G}}}
\newcommand{\cJ}{{\mathcal{J}}}
\newcommand{\cK}{{\mathcal{K}}}
\newcommand{\cO}{{\mathcal{O}}}
\newcommand{\cS}{{\mathcal{S}}}
\newcommand{\fA}{{\mathfrak{A}}}
\newcommand{\fK}{{\mathfrak{K}}}
\newcommand{\fa}{{\mathfrak{a}}}
\newcommand{\fb}{{\mathfrak{b}}}
\newcommand{\fe}{{\mathfrak{e}}}
\newcommand{\ff}{{\mathfrak{f}}}
\newcommand{\fj}{{\mathfrak{j}}}
\newcommand{\fv}{{\mathfrak{v}}}
\newcommand{\fw}{{\mathfrak{w}}}
\newcommand{\fz}{{\mathfrak{z}}}
\newcommand{\bTh}{{\mathbf{\Theta}}}
\newcommand{\bC}{{\mathbf{C}}}
\newcommand{\ba}{{\mathbf{a}}}
\newcommand{\bb}{{\mathbf{b}}}
\newcommand{\bc}{{\mathbf{c}}}
\newcommand{\bp}{{\mathbf{p}}}
\newcommand{\bo}{{\mathbf{o}}}
\newcommand{\e}{{\epsilon}}
\renewcommand{\k}{\varkappa}
\newcommand{\z}{\zeta}
\newcommand{\vp}{{\vec{p}}}
\newcommand{\vq}{{\vec{q}}}
\newcommand{\vbp}{{\vec{\mathbf{p}}}}
\newcommand{\oc}{\overset{\circ}}
\newcommand{\is}{\mathcal{IS}_E}
\renewcommand{\Im}{\text{\rm Im}\,}
\newcommand{\GMP}{\text{\rm GMP}}
\newcommand{\KS}{\text{\rm KS}}
\newcommand{\KSA}{\text{\rm KSA}}
\newcommand{\tr}{\text{\rm tr}\,}
\newcommand{\dist}{\text{\rm dist}}
\newcommand{\Res}{\text{\rm Res}\,}
\allowdisplaybreaks \numberwithin{equation}{section}
\newtheorem{theorem}{Theorem}[section]
\newtheorem{lemma}[theorem]{Lemma}
\newtheorem{proposition}[theorem]{Proposition}
\newtheorem{corollary}[theorem]{Corollary}
\theoremstyle{definition}
\newtheorem{definition}[theorem]{Definition}
\newtheorem{notation}[theorem]{Notation}
\newtheorem{remark}[theorem]{Remark}
\date{\today}
\title
{Killip-Simon problem and Jacobi flow on GMP matrices}
\author{P. Yuditskii\thanks{Supported by the Austrian Science Fund FWF, project no: P22025-N18.}}
\begin{document}

\maketitle
\begin{abstract}
One of the first and therefore most important theorems in perturbation theory claims that for an arbitrary self-adjoint operator $A$ there exists a perturbation $B$ of Hilbert-Schmidt class with arbitrary small operator norm, which destroys completely the absolutely continuous (a.c.) spectrum of the initial operator $A$ (von Neumann). However, if $A$ is the discrete free 1-D Schr\"odinger operator and $B$ is an arbitrary Jacobi matrix (of Hilbert-Schmidt class) the a.c. spectrum remains perfectly the same, that is, the interval $[-2,2]$. Moreover, Killip and Simon described explicitly  the spectral properties for such $A+B$. Jointly with Damanik they generalized this result to the case of perturbations of periodic Jacobi matrices in the non-degenerated case. Recall that the spectrum of a periodic Jacobi matrix is a system of intervals of a very specific nature. Christiansen, Simon and Zinchenko posed in a review dedicated to F. Gesztesy (2013)
 the following question: ``is there an extension of the Damanik-Killip-Simon theorem to the general finite system of intervals case?" In this paper we solve this problem completely. Our method deals with the Jacobi flow on GMP matrices. GMP matrices are probably  a new object in the spectral theory. They form a certain Generalization of matrices related to the strong Moment Problem, the latter ones are a very close relative of Jacobi and CMV matrices. The Jacobi flow on them is also a probably new member of the rich family of integrable systems. Finally, related to Jacobi matrices of Killip-Simon class, analytic vector bundles and their curvature play a certain role in our construction and, at least on the level of ideology, this role is quite essential.
\end{abstract}

\section{Introduction}

\subsection{Main result}

\noindent
\textit{(1) Von Neumann Theorem} \cite{vN} states that for an arbitrary self-adjoint operator $A$, having a nontrivial absolutely continuous (a.c.) component of the spectrum, there exists a self-adjont perturbation $\delta A$ of Hilbert-Schmidt class  such that $A+\delta A$ has a pure point spectrum. Moreover, $\delta A$ may have
 an arbitrary small operator norm.

Therefore, the following result is already quite non-trivial.

\noindent
\textit{(2) Deift-Killip Theorem} \cite{DK}. For a discrete one-dimensional Schr\"odinger operator with square summable potential, the absolutely continuous part of the spectrum is $[-2,2]$.

Thus, under special perturbations of Hilbert-Schmidt class (the square summable potential) the absolutely continuous spectrum of the free, discrete 1-D Schr\"odinger operator  is perfectly preserved. 
It is totally surprising that one can find a \textit{complete explicit characterization} of the spectral data if the perturbation is an arbitrary  Jacobi matrix of Hilbert-Schmidt class.

\noindent
\textit{(3) Killip-Simon Theorem} \cite{KS}. Let $d\sigma$ be a probability measure  on $\bbR$ with bounded but infinite support. As it is well known the orthonormal polynomials $P_n(x)$ with respect to this measure obey a three-term recurrence relation
\begin{equation*}\label{eq1}
xP_n(x)=a(n)P_{n-1}(x)+b(n)P_n(x)+a(n+1) P_{n+1}(x), \quad a(n)>0.
\end{equation*}
The following are equivalent:
\begin{itemize}
\item[(op)] $\sum_{n\ge 1}|a(n)-1|^2<\infty$ and $\sum_{n\ge 0}|b(n)|^2<\infty$.
\item[(sp)] The measure $d\sigma$ is supported on $[-2,2]\cup X$, and moreover
\begin{equation}\label{eq2}
\int_{-2}^2|\log\sigma'(x)|\sqrt{4-x^2}dx+\sum_{x_k\in X}\sqrt{x_k^2-4}^3<\infty.
\end{equation}
\end{itemize}
\begin{remark}
Of course the (op)-condition means that the Jacobi matrix
$$
J_+=\begin{bmatrix}
b(0)&a(1)& & \\
a(1)&b(1)& a(2)& \\
 &\ddots&\ddots&\ddots
\end{bmatrix}
$$
represents a Hilbert-Schmidt class perturbation of the matrix $\oc J_+$ with the constant coefficients 
$\oc a(n)=1$ and $\oc b(n)=0$. In this case we consider $J_+$ as an operator acting in the standard space of one-sided sequences $\l^2_+$. In its turn, the (sp)-condition means that the related \textit{spectral measure} $d\sigma$ has an absolutely continuous component supported on $[-2,2]$. Moreover, the spectral density $\sigma'(x)$ with respect to the Lebesgue measure satisfies an explicitly given integral condition, which in particular means that $\sigma'(x)\not=0$ a.e. on this interval. Besides that, the measure may have at most countably many mass points (the set $X$) outside of the given interval. Again, the corresponding set $X$ satisfies an explicitly given condition, which in particular means that the only possible accumulation points of this set are the endpoints $\pm 2$. Finally, note that there is no restriction on the \textit{singular component} of the measure $d\sigma$ on the interval $[-2,2]$.
\end{remark}

Later,  the authors jointly with David Damanik generalized their result on the case of perturbations of \textit{periodic}  Jacobi matrices. To state this theorem we need a couple of definitions.

We define a distance between two one-sided sequences $b=\{b(n)\}_{n\ge 0}$ and
$\tilde b=\{\tilde b(n)\}_{n\ge 0}$ from $\l_+^\infty$ by
\begin{equation}\label{eq3}
\dist^2(b,\tilde b)=\dist_{\eta}^2(b,\tilde b):=\sum_{n\ge 0}|b(n)-\tilde b(n)|^2\eta^{2n}, \quad \eta\in(0,1).
\end{equation}
The distance $\dist (J_+,\tilde J_+)$ between two Jacobi matrices is defined via the distances between the generating coefficient sequences. 

Let $J(E)$ be the isospectral set of periodic two-sided Jacobi matrices with a given spectral set $E\subset\bbR$.
 The distance between $J_+$ and $J(E)$ is defined in a standard way
\begin{equation*}\label{eq4}
\dist(J_+,J(E))=\inf\{\dist(J_+,\oc J_+):\ \oc J\in J(E)\},
\end{equation*}
where $\oc J_+$ is the restriction of a two-sided matrix $\oc J$ on the positive half-axis.

\noindent
\textit{(4) Damanik-Killip-Simon Theorem} (DKST) \cite{KSDp}. Assume that $J_+$ is a Jacobi matrix and let $d\sigma$ be the associated spectral measure.
The following are equivalent:
\begin{itemize}
\item[(opp)]  Let $S_+$ denote the shift operator in $\l^2_+$. Then
\begin{equation}\label{opp}
\sum_{n\ge 0}\dist^2((S_+^*)^n J_+S_+^n, J(E))<\infty.
\end{equation}
\item[(spp)] The measure $d\sigma$ is supported on $E\cup X$, and moreover
\begin{equation}\label{eq5}
\int_{E}|\log\sigma'(x)|\sqrt{\dist(x,\bbR\setminus E)}dx+\sum_{x_k\in X}\sqrt{\dist(x_k, E)}^3<\infty.
\end{equation}
\end{itemize}

\begin{remark}
Note that \eqref{opp}  means that the shifts of the given Jacobi matrix $J_+$ approach to the isospectral set $J(E)$, but possibly not to a specific element $\oc J$ of this set. In the same time  \eqref{eq5} looks as a  straightforward counterpart of  condition \eqref{eq2}.
\end{remark}

\begin{remark}
Let us point out that the spectral set of any periodic two-sided Jacobi matrix $\oc J$ is a system of intervals of a very special nature: the  system of intervals $E=[\bb_0,\ba_0]\setminus\cup_{j=1}^g(\ba_j,\bb_j)$ represents the spectrum of a periodic Jacobi matrix if and only if $E=T_m^{-1}([-2,2])$, where  $T_m(z)$ is a polynomial   with only real critical points, that is,
$
T_m'(c)=0 \ \text{for} \ c\in\bbR,
$
and its critical values $T_m(c)$ obey the conditions $|T_m(c)|\ge 2$. Actually, the Damanik-Killip-Simon Theorem was proved under an additional regularity condition $|T_m(c)|>2$ for all critical points $c$. In this case the degree $m=g+1$.
\end{remark}

The paper \cite{jreview} reviews recent progress in the understanding of the class of so-called  \textit{finite gap} Jacobi matrices and their perturbations. In the end of the article the authors posed the following question: ``Is there an extension of the Damanik-Killip-Simon theorem to the general finite system of intervals $E$ case?"  In the present paper \textit{we solve completely this problem}, see Theorem \ref{mainhy} below. Naturally, this question was posed as soon as  the original Killip-Simon theorem was published or even presented or proved. From this point of view \cite{jreview} is just an explicit recent reference. 

Finite gap Jacobi matrices were discovered in 
the context of approximation theory
\cite{AKH, Akh60}, \cite[Chapter X]{AKHef}. They became especially famous because of their relation with the theory of integrable systems, for  historical comments we would refer to \cite{MaT} with many references therein. But the true meaning of this class was significantly clarified recently by C. Remling: for a system of intervals $E$ the finite gap class $J(E)$ consists of all limit points of Jacobi matrices with an essential spectrum on $E$, having this $E$ as the support of their a.c. spectrum.

\noindent
\textit{(5) Remling Theorem} \cite{REMA11}. Let $E$ be a system of intervals.
Let $J_+$ be a Jacobi matrix with the generating coefficient sequences $\{a(n),b(n)\}$ such that its spectrum $\sigma(J_+)=E\cup X$, where $X$ is a set of points, which accumulate only to the endpoints of the intervals, and $\sigma'(x)\not=0$ for a.e. $x\in E$. If 
$$
\oc a(n)=\lim_{m_k\to +\infty} a(n+m_k),\quad \oc b(n)=\lim_{m_k\to +\infty} b(n+m_k),
$$
for all $n\in\bbZ$, then the corresponding two-sided Jacobi matrix $\oc J$ belongs to $J(E)$.

Note that the system of shifts $\{(S_+^*)^nJ_+S_+^n\}_{n\ge 0}$ forms a precompact set in the compact-open topology (generated by the distance \eqref{eq3}).

For $E=[\bb_0,\ba_0]\setminus\cup_{j=1}^g(\ba_j,\bb_j)$ the class $J(E)$ represents a $g$-dimensional torus, which can be parametrized explicitly.

\noindent
\textit{(6) Baker-Akhiezer parametrization for the class $J(E)$}, see e.g. \cite[Theorem 9.4]{GT}.
For $\alpha\in \bbR^g/\bbZ^g$ let 
\begin{equation}\label{aal}
\cA(\alpha)=\bar a^2\frac{\theta(\alpha+\mu+\bar\alpha)\theta(\alpha-\mu+\bar\alpha)}{\theta(\alpha+\bar\alpha)^2},
\ \ 
\cB(\alpha)=\bar b+\partial_\xi\ln\frac{\theta(\alpha-\mu+ \bar\alpha)}{\theta(\alpha+\bar\alpha)},
\end{equation}
 where 
 $$
 \theta(z)=\theta(z,\Omega)=\sum_{n\in \bbZ^g}e^{\pi i\langle \Omega n,n \rangle+2\pi i\langle z,n \rangle}, \quad z\in \bbC^g,
 $$
 with the following system of parameters depending on $E$: 
 \begin{itemize}
 \item $\Omega$ is a symmetric $g\times g$ matrix with a positive imaginary part, $\Im \Omega >0$; 
 \item $\bar\alpha\in\bbC^g$ is an appropriate shift; 
 \item $\mu\in\bbR^g/\bbZ^g$ and $\xi\in\bbR^g$ are certain fixed directions of discrete and continuous translations  on the torus $\bbR^g/\bbZ^g$, respectively; 
 \item $\bar a>0$ and $\bar b\in\bbR$ are normalization constants. 
 \end{itemize}
 Then $\oc J\in J(E)$ if and only if
 \begin{equation}\label{eqparam}
\oc a(n)^2=\cA(\alpha-\mu n), \quad
\oc b(n)=\cB(\alpha-\mu n),
\end{equation}
for some $\alpha\in \bbR^g/\bbZ^g$. In this case we write $\oc J=J(\alpha)$. Thus,
\begin{equation}\label{param1}
J(E)=\{J(\alpha):\ \alpha\in\bbR^g/\bbZ^g\}.
\end{equation}

\begin{definition}
For an arbitrary finite system of intervals $E$, we say that a Jacobi matrix $J_+$ belongs to the Killip-Simon class  $\KS(E)$ if  for some $X$ the corresponding spectral measure $d\sigma$ is supported on
$E\cup X$ and obeys \eqref{eq5}.
\end{definition}
\begin{theorem}\label{mainhy}
$J_+$ belongs to $\KS(E)$ if and only if there exist $\epsilon_\alpha(n)\in\l_+^2(\bbR^g)$ and $\epsilon_a(n)\in \l^2_+$,
$\epsilon_b(n)\in \l^2_+$ such that (cf. \eqref{eqparam})
\begin{equation}\label{132}
a(n)^2=\cA(\sum_{k=0}^n\epsilon_\alpha(k)-\mu n)+\epsilon_a(n),\ \ 
b(n)=\cB(\sum_{k=0}^n\epsilon_\alpha(k)-\mu n)+\epsilon_b(n),
\end{equation}
where $\cA(\alpha)$ and $\cB(\alpha)$ are defined in \eqref{aal}.
\end{theorem}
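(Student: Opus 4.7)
The plan is to transfer the problem from the Jacobi matrix $J_+$ to an auxiliary GMP matrix $\fA$, on which the natural shift becomes a ``Jacobi flow'' $\cS$ whose equilibria are precisely the finite-gap isospectral torus $J(E)$. The idea is to run this flow on the GMP matrix associated to $J_+$, read off at each step both a position $\alpha(n)\in\bbR^g/\bbZ^g$ on the torus and a residual ``defect'' measuring the distance from the Baker--Akhiezer values $\cA(\alpha(n))$, $\cB(\alpha(n))$, and then derive an $\ell^2$--isometric sum rule linking the cumulative defect to the Szeg\H{o}--type integral plus mass-point sum on the right-hand side of \eqref{eq5}.

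First I would set up the GMP framework: to each $J_+$ with essential spectrum in a neighborhood of $E\cup X$ one associates a GMP matrix $\fA$ of band structure dictated by the geometry of $E$, together with an explicit rational dictionary between the spectral measures of $J_+$ and of $\fA$. One then verifies that the finite-gap class \eqref{param1} corresponds, under this dictionary, to the orbit generated by a single step $\cS\colon\alpha\mapsto \alpha-\mu$ of the Jacobi flow, so that $J(E)$ is the flow-invariant submanifold and the Baker--Akhiezer functions \eqref{aal} are simply the ``coordinate functions'' on this orbit.

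Next I would establish a step-by-step sum rule. Performing one step of $\cS$ should produce an identity schematically of the form
$$
F(\fA)=F(\cS\fA)+\|(\epsilon_\alpha(n),\epsilon_a(n),\epsilon_b(n))\|^2,
$$
where $F$ is a nonnegative ``Killip--Simon'' entropy functional whose value on the spectral measure of a GMP matrix reproduces the left-hand side of \eqref{eq5}. Telescoping over $n\ge 0$ and exploiting upper semicontinuity plus positivity of $F$ yields, in one direction, that the total $\ell^2$ defect is finite whenever the spectral condition \eqref{eq5} holds, and in the other direction, that the structure \eqref{132} forces $F$ to be finite. Translating the GMP defects back through the dictionary produces the triple $(\epsilon_\alpha,\epsilon_a,\epsilon_b)$ in \eqref{132}.

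The principal obstacle is geometric: since $J(E)$ is a $g$-dimensional torus rather than a single periodic matrix, a naive definition of defect (e.g.\ $a(n)^2-\cA(-\mu n)$ and $b(n)-\cB(-\mu n)$) destroys positivity and breaks the sum rule. What is required is a decomposition of the one-step defect into a torus-translation piece $\epsilon_\alpha(n)$ and a truly residual piece $(\epsilon_a(n),\epsilon_b(n))$, orthogonal to the torus direction in the sense dictated by the curvature of an appropriate analytic line bundle over the uniformization of $\bbC\setminus E$. Arranging this splitting -- which is the role played by the curvature/vector-bundle machinery advertised in the abstract and made concrete through the algebra of GMP matrices -- is the step that will absorb most of the work; once it is in place, both implications follow from standard telescoping and semicontinuity arguments modeled on the scalar Killip--Simon proof.
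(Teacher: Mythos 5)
Your strategy points in the same general direction as the paper --- GMP matrices, the Jacobi flow, and a telescoped Killip--Simon functional are indeed the backbone of the argument --- but as written it has several genuine gaps. First, the association $J_+\mapsto \fA$ is not automatic: a one-sided Jacobi matrix with essential spectrum on $E$ may have spectrum at the poles $\bc_j$ of $\Delta$, in which case $(\bc_j-A)^{-1}$ is unbounded and no GMP matrix exists; worse, even if it exists initially, the shifted matrices $(S_+^*)^nJ_+S_+^n$ generically acquire eigenvalues dense in the gaps, so the flow cannot be iterated on one-sided matrices. The paper resolves this by passing to two-sided matrices: Lemma \ref{lemma51ini} glues on a suitable $J_-=P_-\oc JP_-$, $\oc J\in J(E)$, so that every $\bc_j$ lies in the resolvent set of the resulting $J$, and the flow is run on the associated two-sided GMP matrix. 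Your proposal never addresses this.

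Second, your schematic sum rule $F(\fA)=F(\cS\fA)+\|(\epsilon_\alpha(n),\epsilon_a(n),\epsilon_b(n))\|^2$ does not hold in that form, and the mechanism you propose for repairing it (an orthogonal splitting dictated by the curvature of a line bundle) is not what carries the argument. The actual one-step decrement $\delta_J H_+(A)$ of Lemma \ref{lemjder} is a sum of squares of entries of $\Delta(\cJ A)e_{-1}$, i.e.\ of quantities expressed through the coefficients of $\Delta(A)$, not of $A$ itself and certainly not of $J=\cF A$. Converting those squares into the coefficient conditions \eqref{m29}--\eqref{m31} is the content of Theorem \ref{th73} (Lemma \ref{lem52}, the residue identities, etc.), and converting \eqref{m29}--\eqref{m31} into the parametrization \eqref{132} uses the Moser--Uhlenbeck nondegeneracy \eqref{grad} of the first integrals $\Lambda_k$ on the isospectral torus together with a nearest-point projection \eqref{estmain1} --- a finite-dimensional Lagrange-multiplier argument, not a curvature computation. (The curvature/reproducing-kernel machinery of Lemma \ref{le75} enters only in the converse direction, to bound the GMP coefficient defects by $\dist_\eta(J_+,J(\alpha_n)_+)$.) Finally, telescoping plus positivity gives only one inequality between $H_+(A)$ and the telescoped sum; the reverse implication requires the $\liminf$ condition \eqref{8apr24} of Theorem \ref{thnoch}, obtained by a compactness argument identifying limit points of $\vbp_0(n_k)$ with elements of $A(E,\bC)$ via the magic formula. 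Each of these is a substantive missing step rather than a routine verification, so the proposal as it stands is an outline of the paper's program rather than a proof.
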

\begin{remark} In the one interval case the functions $\cA$ and $\cB$ are constants, e.g. if $E=[-2,2]$, then $\cA=1$ and $\cB=0$ and we obtain the original Killip-Simon Theorem.
\end{remark}

\begin{remark}\label{rem7}
It is easy to see that a Jacobi matrix of the form \eqref{132} satisfies \eqref{opp}, see Lemma \ref{lem72}. Moreover, from our explicit formulas one can give immediately a suitable approximant  for $(S_+^*)^nJ_+S^n_+$, this is
$J(\alpha_n)\in J(E)$, $\alpha_n=\sum_{k=0}^n\epsilon_\alpha(k)-\mu n$; or conclude that, if the series $\beta:=\sum_{k=0}^\infty\epsilon_\alpha(k)$ conditionally converges, then the coefficients  of $J_+$ approach, in fact, to the coefficients of the fixed element  $J(\beta)\in J(E)$,
$$
a^2(n)-\cA(\beta-\mu n)\to 0\ \text{and}\ b(n)-\cB(\beta-\mu n)\to 0,\ \text{where}\ n\to\infty.
$$
The representation \eqref{132} contains a certain ambiguity, for the reason see Remark \ref{rem71}.
\end{remark}

\subsection{Basic ideas of the method and the structure of the paper}

\noindent
\textit{The proof of DKST}   was based on two things:
\begin{itemize}
\item[(i)] Magic formula for periodic Jacobi matrices
\item[(ii)] Matrix version of the Killip-Simon theorem
\end{itemize}

The first one is the following identity. Let $S$ be the shift in the space of two sided sequences $\l^2$. If 
$E=[\bb_0,\ba_0]\setminus\cup_{j=1}^g(\ba_j,\bb_j)=T^{-1}_{g+1}([-2,2])$, then
\begin{equation*}\label{mf}
T_{g+1}(\oc J)=S^{g+1}+S^{-(g+1)}
\end{equation*}
for all $\oc J\in J(E)$. The last matrix can be understood as the $(g+1)\times(g+1)$-block Jacobi matrix with  constant block coefficients $\oc A(n)=I_{g+1}$ and $\oc B(n)=\mathbf{0}_{g+1}$.

Now, for $J_+$ the matrix $T_{g+1}(J_+)$ is a $(2g+3)$-diagonal matrix, or, also a one-sided $(g+1)\times(g+1)$ Jacobi block-matrix, see survey \cite{DPS},
$$
T_{g+1}(J_+)=\begin{bmatrix}
B(0)&A(1)& & \\
A(1)&B(1)& A(2)& \\
 &\ddots&\ddots&\ddots
\end{bmatrix}.
$$
Such matrix has a spectral $(g+1)\times(g+1)$ matrix-measure, say $d\Xi$. According to \cite{KSDp} the matrix analog of \eqref{eq2} is of the form
\begin{equation}\label{eq12}
\int_{-2}^2|\log\det\Xi'(y)|\sqrt{4-y^2}dy+\sum_{y_k\in Y}\sqrt{y_k^2-4}^3<\infty,
\end{equation}
as before  $[-2,2]\cup Y$ is the support of $d\Xi$. On the one hand this condition can be rewritten by means of the spectral measure $d\sigma$ of the initial Jacobi matrix $J_+$ into the form \eqref{eq5}, $y=T_{g+1}(x)$. On the other hand, due to the matrix version of the Killip-Simon theorem, \eqref{eq12} is equivalent to $T_{g+1}(J_+)-(S_+^{g+1}+(S_+^*)^{g+1})$ belongs to the Hilbert-Schmidt class. This is a certain bunch of conditions on the coefficients of $J_+$, but we should recognize that extracting from this simple-looking condition the final one \eqref{opp}, is a very non-trivial task.

\smallskip
Our \textit{first basic observation} is the following.

\begin{lemma}
For a system of intervals $E$  there exists a unique rational function $\Delta(z)$, $\Delta(\infty)=\infty$, such that
\begin{equation*}\label{eq13}
E=[\bb_0,\ba_0]\setminus\bigcup_{j=1}^g(\ba_j,\bb_j)=\Delta^{-1}([-2,2]),
\end{equation*}
and $\Im \Delta(z)>0$ for $\Im z>0$.
\end{lemma}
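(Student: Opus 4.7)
My plan is to write down $\Delta$ by an explicit formula forced by its zero–pole data, and then verify the properties; uniqueness will then be automatic because the derivation of the formula is rigid. Suppose such a $\Delta$ exists. Since every point of $(-2,2)$ must lift to exactly one point in each of the $g+1$ bands of $E$, we have $\deg\Delta=g+1$, so $\Delta-2$ and $\Delta+2$ each have exactly $g+1$ simple zeros. Openness of the branched cover $\Delta:\bar\bbC\setminus E\to\bar\bbC\setminus[-2,2]$ forces these $2g+2$ zeros to lie at the $2g+2$ band endpoints, with $\Delta-2$ vanishing at the $\ba_j$'s and $\Delta+2$ at the $\bb_j$'s. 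Since $\Delta\pm2$ share the same poles and principal parts, subtracting yields the candidate
\[
\Delta(z)=2+\frac{\prod_{j=0}^g(z-\ba_j)}{Q(z)}=-2+\frac{\prod_{j=0}^g(z-\bb_j)}{Q(z)},\quad Q(z):=\tfrac14\!\left[\prod_{j=0}^g(z-\bb_j)-\prod_{j=0}^g(z-\ba_j)\right].
\]

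With this formula fixed, there are three nontrivial checks. First, $\deg Q=g$: the coefficient of $z^g$ in $Q$ equals $\tfrac14(\sum\ba_j-\sum\bb_j)>0$, the positivity coming from the interlacing $\bb_0<\ba_1<\bb_1<\cdots<\bb_g<\ba_0$, which lets one pair each $\bb_j$ with a strictly larger $\ba_k$. Second, the main technical step: $Q$ has exactly one simple root $\lambda_j$ in each gap $(\ba_j,\bb_j)$. One computes $Q(\ba_j)=\tfrac14\prod_k(\ba_j-\bb_k)$ and $Q(\bb_j)=-\tfrac14\prod_k(\bb_j-\ba_k)$, tracks the signs of the products using how $\ba_j-\bb_k$ and $\bb_j-\ba_k$ flip sign with $k$, and finds opposite signs at the two endpoints; the IVT plus the degree count delivers exactly one simple root per gap. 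Third, the Herglotz property: the leading behavior at infinity is $\beta z$ with $\beta=4/(\sum\ba_j-\sum\bb_j)>0$, while the residue of $\Delta$ at $\lambda_j$ equals $\prod_k(\lambda_j-\ba_k)/Q'(\lambda_j)$, which is negative by the same sign-tracking. Hence the partial-fraction expansion $\Delta(z)=\beta z+\alpha-\sum_{j=1}^g\gamma_j/(z-\lambda_j)$ with $\beta,\gamma_j>0$ gives $\Im\Delta(z)>0$ for $\Im z>0$.

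Finally I would verify $\Delta^{-1}([-2,2])=E$. On each band $\Delta$ takes the correct boundary values $\pm 2$ at the endpoints and, if it ever left $[-2,2]$ or revisited $\pm 2$ in the interior, it would produce an extra preimage of $\pm 2$ and violate $\deg\Delta=g+1$; so it maps each band bijectively onto $[-2,2]$. On each side of the pole $\lambda_j$ inside $(\ba_j,\bb_j)$, $\Delta$ runs from $\pm 2$ to $\pm\infty$ (with signs fixed by the negative residue) and, by the same no-extra-preimage argument, cannot dip back into $(-2,2)$. Uniqueness is built into the derivation of the formula. I expect the principal obstacle to be the second step above, the sign bookkeeping for $Q$ at the gap endpoints: it is combinatorial rather than deep, but one must be systematic about the case split on $k$ relative to $j$.
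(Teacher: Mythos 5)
Your proof is correct, but it takes a genuinely different route from the paper's. The paper constructs $\Delta$ as the superposition of the Ahlfors function $\Psi$ of $\bar\bbC\setminus E$ with the Zhukovskii map, $\Delta=\Psi+1/\Psi$, quoting the classical formula $\frac{1-\Psi}{1+\Psi}=\bigl(\prod_j\frac{z-\ba_j}{z-\bb_j}\bigr)^{1/2}$; the mapping properties of $\Psi$ ($|\Psi|<1$ inside, $|\Psi|=1$ on $E$, zeros at $\infty$ and at one point $\bc_j$ per gap) then yield $\Delta^{-1}([-2,2])=E$ and the partial-fraction form \eqref{eq15} with $\lambda_j>0$. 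You instead pin $\Delta$ down purely algebraically from its zero--pole data and check the Herglotz property by sign bookkeeping; your candidate in fact coincides with the paper's, since with $R^2=\prod(z-\ba_j)/\prod(z-\bb_j)$ one has $\Psi+1/\Psi=2(1+R^2)/(1-R^2)=(P_a+P_b)/(2Q)=2+P_a/Q$ in your notation. Your elementary argument buys self-containedness and an explicit uniqueness argument, which the paper leaves essentially implicit; one small suggestion is that in the step ``every point of $(-2,2)$ lifts to exactly one point per band'' the cleanest justification is not openness but strict monotonicity, $\Delta'>0$ on $\bbR$ away from the poles, which follows from the Herglotz hypothesis once you know $\Delta$ is real-rational. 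What your route loses is the structural content the rest of the paper depends on: the factorization of $\Delta$ through the inner function $\Psi$ is exactly the ``generalized magic formula,'' because multiplication by $\Psi$ acts as the shift $S^{g+1}$ in the GMP basis and the zeros $\bc_j$ of $\Psi$ are the data defining $\GMP(\bC)$. So your proof establishes the lemma, while the paper's proof is simultaneously setting up the machinery used later.
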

\begin{proof}
Let $\Psi(z)$ be the Ahlfors function in the domain $\bar\bbC\setminus E$. Among all analytic functions in this domain, which vanish at infinity and are bounded by one in absolute value, this function has the biggest possible value Cap$_a(E)=|z\Psi(z)|_{z=\infty}$ (the so-called analytic capacity) \cite{ALF}. As it is well known \cite{Pom0}
\begin{equation}\label{eq14}
\frac{1-\Psi(z)}{1+\Psi(z)}=\sqrt{\prod_{j=0}^g\frac{z-\ba_j}{z-\bb_j}}.
\end{equation}
Then
\begin{equation}\label{eq15}
\Delta(z)=\frac{1}{\Psi(z)}+\Psi(z)=\lambda_0z+\bc_0+\sum_{j=1}^g\frac{\lambda_j}{\bc_j-z},
\end{equation}
where $\lambda_j>0$, $j\ge 0$, and $\Psi(\bc_j)=0$, $\bc_j\in(\ba_j,\bb_j)$, $j\ge 1$.
\end{proof}

Note that in this proof we represented $\Delta(z)$ as a superposition of a function $\Psi:\bar\bbC\setminus E\to \bbD$ with the Zhukovskii map. Essentially, \eqref{eq15} is our \textit{generalized magic formula}, though it holds of course not for Jacobi matrices.

\smallskip
\noindent
\textit{Jacobi, CMV and SMP matrices}. Jacobi matrices are probably the oldest object in the spectral theory of self-adjoint operators. They are generated by the moment problem \cite{AKHmp}
\begin{equation}\label{eq16}
s_k=\int x^k d\sigma.
\end{equation}
In this problem we are looking for a measure $d\sigma$ supported on the real axis, which provides the representation 
\eqref{eq16} for the given moments $\{s_k\}_{k\ge 0}$. In this sense CMV matrices are related to the \textit{trigonometric moment problem}, which corresponds to the same question with respect to a measure supported on the unit circle. Note that this problem is also classical \cite{AKHmp}, but corresponding CMV matrices are a comparably fresh object in the spectral theory \cite{2005v1, 2005v2}. The \textit{strong moment problem} corresponds to measures on the real axis in the case that the moments are given for \textit{all integers} $k$.  An extensive bibliography of works on the strong moment problem can be found in the survey  \cite{JN}, concerning its matrix generalization see \cite{Sim1,Sim2}. 

As usual the solution of the problem deals with the orthogonalization of the generating system of functions, that is, the system
$$
1, \frac {-1} x, x, \frac {(-1)^2}{x^2}, x^2,\dots
$$
in the given case. The multiplication operator by the independent variable in $L^2_{d\sigma}$ with respect to the related \textit{orthonormal basis we call SMP matrix} (this is exactly the way of the appearance of Jacobi and CMV matrices in connection with the power and trigonometric moment problem, respectively). In another terminology they are called Laurent-Jacobi matrices \cite{BD2,DUD,HN}. Very similar to the CMV-case, this is a five-diagonal matrix of a special structure, say $A_+=A_+(d\sigma)$. We assume that the measure is compactly supported and the origin does not belong to the support of this measure. In this case our $A_+$ is bounded, moreover $A_+^{-1}$ is also a bounded operator of a similar five-diagonal structure (just shifted by one element!)

Note that, by a linear change of variable, we can always normalize an arbitrary \textit{two intervals} system to the form
$\bc_1=0$, see \eqref{eq15}, that is,
\begin{equation}\label{eq17}
E=[\bb_0,\ba_0]\setminus(\ba_1,\bb_1)=\Delta^{-1}([-2,2]),\quad \Delta(z)=\lambda_0+\bc_0-\frac{\lambda_1}{z}.
\end{equation}
Without going into detail, dealing with the structure of SMP matrices, we can formulate our \textit{second basic observation}. 
\begin{proposition}\cite{EPY}
Let $A(E)$ be the set of all two sided SMP matrices of period two with their spectrum on $E$ \eqref{eq17}.
Then $\oc A\in A(E)$ if and only if 
\begin{equation}\label{eq18}
\Delta(\oc A)=\lambda_0\oc A+\bc_0-\lambda_1 (\oc A)^{-1}=S^2+S^{-2}.
\end{equation}
\end{proposition}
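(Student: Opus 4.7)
\medskip
\noindent\textbf{Proof plan.} This statement is the SMP counterpart of the periodic-Jacobi magic formula $T_{g+1}(\oc J)=S^{g+1}+S^{-(g+1)}$ recalled above, with the rational function $\Delta$ in the role of the Hill discriminant. My plan is to prove it by Floquet--Bloch analysis, exploiting that both $\oc A$ and $\oc A^{-1}$ are five-diagonal (the latter only shifted by one index), so that both translate harmoniously through the period-two quasimomentum decomposition.

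For the forward direction I would fix $\oc A\in A(E)$ and look for Bloch solutions of $\oc A\psi=\lambda\psi$ satisfying $\psi_{n+2}=\zeta\psi_n$. Because $\oc A$ commutes with $S^2$, this ansatz reduces the eigenvalue problem to a finite linear system at a fundamental cell whose compatibility condition is a dispersion relation $F(\lambda,\zeta)=0$. Reading off the band edges of $E$ together with the pole at $\lambda=0$ contributed by the $\oc A^{-1}$ piece would identify this dispersion relation with $\Delta(\lambda)=\zeta+\zeta^{-1}$. For any Bloch wave one then has
\[
(S^2+S^{-2})\psi_n=\psi_{n+2}+\psi_{n-2}=(\zeta+\zeta^{-1})\psi_n=\Delta(\lambda)\psi_n=\Delta(\oc A)\psi_n,
\]
and the direct integral completeness of Bloch waves over $\lambda\in E$ promotes this pointwise identity to the operator identity $\Delta(\oc A)=S^2+S^{-2}$.

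For the converse, assume $\Delta(\oc A)=S^2+S^{-2}$ for a two-sided SMP matrix $\oc A$. Since $0$ is separated from $\sigma(\oc A)$ (as $\oc A$ is boundedly invertible), the rational spectral mapping theorem gives $\Delta(\sigma(\oc A))=\sigma(S^2+S^{-2})=[-2,2]$, so $\sigma(\oc A)\subset\Delta^{-1}([-2,2])=E$. For the period-two property I would perform a direct entry-by-entry comparison in $\lambda_0\oc A+\bc_0 I-\lambda_1\oc A^{-1}=S^2+S^{-2}$: using the five-diagonal form of $\oc A$ and the shifted five-diagonal form of $\oc A^{-1}$, the requirements that the right-hand side be constant equal to $1$ on the $\pm 2$ diagonals and vanish elsewhere translate into a finite system of local relations on the generating coefficients, and these readily force them to be two-periodic.

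The hard part will be the identification, in the forward direction, of the Floquet discriminant of a generic period-two SMP matrix with the specific rational function $\Delta(\lambda)=\lambda_0\lambda+\bc_0-\lambda_1/\lambda$ determined by $E$ through \eqref{eq15}--\eqref{eq17}. In the polynomial (Jacobi) case this identification is automatic from the degree count, but here the $\oc A^{-1}$ term contributes the simple pole at $\lambda=0$, so I must verify that the leading coefficient and the residue produced by the period-two monodromy come out exactly as $\lambda_0$ and $-\lambda_1$, not merely proportional to them. Once this normalization is pinned down, Floquet--Bloch completeness and spectral mapping close both directions cleanly.
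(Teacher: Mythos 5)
Your Floquet--Bloch scheme is a genuinely different route from the paper's. The paper does not prove this proposition itself (it is cited to \cite{EPY}), and for its GMP generalization, Proposition \ref{prop18}, it argues through the functional model of Section \ref{Section2}: there $\oc A$ is multiplication by $\fz$ in a basis in which multiplication by the Ahlfors function $\Psi$ is \emph{literally} the shift $S^{g+1}$, so $\Delta(\oc A)=\Psi(\oc A)+\Psi(\oc A)^{-1}=S^{g+1}+S^{-(g+1)}$ is read off at once, while the converse uses Na\u{\i}man's lemma and the transfer-matrix identification of Theorem \ref{inth7}. Your reduction of the operator identity to the fiberwise identity $\Delta(\oc A(\zeta))=(\zeta+\zeta^{-1})I$ over the quasimomentum circle is sound in principle, but the step you yourself flag is a genuine gap, not a normalization check, and as planned it cannot be closed.

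Concretely: for a two-interval $E$ there is a whole one-parameter family of degree-two real rational functions of the form $u+1/u$, with $u$ a degree-two proper holomorphic map of $\bar\bbC\setminus E$ onto $\bbD$, all of which have $(u+1/u)^{-1}([-2,2])=E$; the function $\Delta=\Psi+1/\Psi$ of \eqref{eq15} is singled out among them only by the location of its poles (equivalently, of the zeros of $u$) at $\infty$ and at $0$. The Floquet discriminant of a generic period-two five-diagonal matrix with rank-one corner blocks is such a degree-two rational function whose \emph{finite pole sits at a coefficient-dependent point}, not automatically at $0$; so ``reading off the band edges'' cannot identify it with $\Delta$, and you cannot compute ``the leading coefficient and the residue'' because the coefficients of $\oc A$ are not given --- only $\sigma(\oc A)=E$ is. What pins the pole to $0$ is precisely the defining SMP property, which you invoke only in the converse: since $\oc A^{-1}$ is again a (shifted) five-diagonal period-two matrix, its discriminant $D(1/\mu)$ must have its pole at $\mu=\infty$, forcing the finite pole of $D$ to lie at $0$; and the normalization $\bc_1=0$ in \eqref{eq17} says exactly that $0$ is the interior zero of the Ahlfors function, so the inner function with divisor $\{0,\infty\}$ is $\pm\Psi$ and hence $D=\pm\Delta$, the sign being fixed by $p_g>0$ (equivalently $D\to+\infty$ along $\bbR$). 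Without this argument the forward direction does not close. Separately, in the converse the claim that the entrywise relations ``readily force'' two-periodicity is optimistic, since the entries of $\oc A^{-1}$ are complicated rational functions of those of $\oc A$; the clean route, and the one the paper uses for Proposition \ref{prop18}, is to observe that \eqref{eq18} makes $\oc A$ commute with $S^{2}+S^{-2}$ and to apply Na\u{\i}man's lemma \cite[Lemma 3.4]{KSDp} for five-diagonal matrices to conclude $[\oc A,S^{2}]=0$.
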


\begin{remark} 
It is highly important in \eqref{eq18} to be hold  that both $\oc A$ and $(\oc A)^{-1}$ are five-diagonal matrices.
\end{remark}

Naturally, \eqref{eq17}-\eqref{eq18} have to be generalized to the multi-interval case. This leads to the concept of GMP
matrices (G for generalized), see the next subsection. However, even after such a generalization the result on spectral properties of (``some") GMP matrices of Killip-Simon class would be interesting probably only
to a small circle of specialists, working with the strong moment problem. The point is that GMP matrices are used here as a certain \textit{intermediate} (but very important) object. In a sense, this is the best possible choice of a \textit{system of coordinates}. We can try to clarify the last sentence. The standard point of view on $J(E)$ is to associate it with the hyperelliptic Riemann surface $\mathfrak{R}_E=\{(z,w):\ w^2=\prod_{j=0}^g(z-\ba_j)(z-\bb_j)\}$. Then $J(E)$ corresponds to the ``real part" of the Jacobian variety Jac$(\mathfrak{R}_E)$ of this surface see e.g. \cite{MaT, MUM}.  Periodic GMP matrices, satisfying 
\begin{equation}\label{magic}
\Delta(\oc A)=S^{g+1}+S^{-(g+1)}
\end{equation}
for $\Delta(z)$ given in \eqref{eq15}, are most likely the best possible choice for a coordinate system on the affine part of Jac$(\mathfrak{R}_E)$, at least in the application to spectral theory.

Thus, the point is to \textit{go back to Jacobi matrices}. Let $d\sigma$ be compactly supported and $0$ does not belong to its support. We can define the map 
$$
\cF_+: \text{SMP}\to \text{Jacobi}
$$ just setting $J_+(\sigma)$ in correspondence with the given $A_+(\sigma)$.
If so, we can define (in a naive way) a discrete dynamical system (\textit{Jacobi flow on SMP matrices}) by the map $\cJ_+$, which corresponds to the following commutative diagram:
\begin{equation}\label{defjfg+}
\begin{array}{ccc}
 \text{SMP} & \xrightarrow{\mathcal J_+}  & \text{SMP}  \\
   &   &    \\
_{\mathcal F_+}  \big\downarrow &   &  _{\mathcal F_+} \big\downarrow  \\
   &   &    \\
\text{Jacobi}  &  \xrightarrow{\mathcal S_+}  &  \text{Jacobi} 
\end{array}
\end{equation}
where $\cS_+ J_+=S^*_+ J_+ S_+$.

The  \textit{third basic observation} deals with the idea of getting properties of the class $\KS(E)$ from the corresponding properties of the class of SMP (or, generally, GMP) matrices using the above introduced dynamical system
$
A_+(n)=\cJ_+^{\circ n}(A_+).
$

In the next subsection we give  formal definitions for GMP matrices and the Jacobi flow on them,  but probably we can already outline the structure of the current paper:

Section 2. Probably this is a bit unusual, but we start with \textit{inverse spectral theory} for periodic GMP matrices. We recall the functional model for finite gap Jacobi matrices. In this model each operator is marked by a Hardy
space $H^2(\alpha)$ of character-automorphic functions  in the domain $\bar\bbC\setminus E$, where $\alpha$ is a character of the fundamental group of this domain \eqref{aleqal}, so, as before, $\alpha\in\bbR^g/\bbZ^g$ cf. \eqref{param1}. Here $J(\alpha)$ is the multiplication operator by the independent variable with respect to the basis $\{\fe^\alpha_n\}_{n\in\bbZ}$
\eqref{defeal}, and $\{\fe^\alpha_n\}_{n\ge 0}$ is an intrinsic basis in $H^2(\alpha)$.  The point is that  in this domain the \textit{inner} function $\Psi(z)$ and the fixed ordering $\bC=\{\bc_1,...,\bc_g\}$ of its zeros generate another natural basis $\{\ff^\alpha_n\}_{n\ge 0}$ in $H^2(\alpha)$ \eqref{gsmpbase}. Thus, we obtain a new family of operators
(the same multiplication operator in the new bases)
$$
A(E,\bC)=\{A(\alpha,\bC):\ \alpha\in\bbR^g/\bbZ^g\}.
$$
This is the collection of all periodic GMP matrices associated with the given spectral set $E$ and a fixed ordering $\bC$ of zeros of the Ahlfors function $\Psi(z)$. The fact that $\Psi(z)$ is \textit{single valued} (the character corresponding to this function is trivial) is responsible for the periodicity of an arbitrary $A(\alpha,\bC)$. 

Another characteristic feature of $\Psi(z)$ is its certain conformal invariance. Indeed, if $w=w_j=\frac{1}{c_j-z}$, then $\Psi_j(w):=\Psi(z)$ is the Ahlfors function in the $w$-plane. The given ordering $\bC$ generates the specific ordering
$$
\bC_j=\left\{\frac 1{\bc_{j+1}-\bc_j},\dots,\frac 1{\bc_{g}-\bc_j},0,\frac 1{\bc_{1}-\bc_j},\dots,\frac 1{\bc_{j-1}-\bc_j}\right\}
$$
and the multiplication by $w$  is again a \textit{periodic GMP matrix} (up to an appropriate shift). That is,
\begin{equation}\label{aut1}
S^{-j}(\bc_j-A(\alpha,\bC))^{-1}S^j\in A(E_j,\bC_j),
\end{equation}
where  $E_j=\{y=\frac 1{c_j-x}:\ x\in E\}$. Note that $0=w(\infty)$.
Let us point out that the spectral condition \eqref{eq5} possesses the same conformal invariance property.
Thus, passing from the $\fe$-basis to the $\ff$-basis in $H^2(\alpha)$, we payed a certain prize: $J(\alpha)$ is three diagonal and  $A(\alpha,\bC)$ is
a $(2g+3)$-diagonal matrix. 
 In the same time we essentially win, since $(\bc_j-J(\alpha))^{-1}$ has infinitely many non-trivial diagonals, but due to \eqref{aut1} all matrices
$(\bc_j-A(\alpha,\bC))^{-1}$ are still $(2g+3)$ diagonal.
For them \eqref{eq15} (in the chosen basis) is nothing but the magic formula \eqref{magic}.

The Jacobi flow on $A(E,\bC)$ can be defined in a very natural way. Since
$S^{-1}J(\alpha) S=J(\alpha-\mu)$ is a shift by a fixed character $\mu$, we set 
$
\cJ A(\alpha,\bC)=A(\alpha-\mu,\bC).
$
As we see, this is just one, probably new, object in the family of integrable systems.

As a result, thanks to this section we are well prepared to understand and describe the structure of GMP matrices, $A\in \GMP(\bC)$, and the Jacobi flow on them, $A(n)=\cJ^{\circ n}A$, in the general case. This  is done in the Sections 3 and 4, respectively.

 In Section 5 we work with the Killip-Simon spectral condition for two-sided Jacobi and GMP matrices.
Let us explain this passage to two-sided matrices. Our definition \eqref{defjfg+} is naive for the following reason. In the transformation $J_+(n)=\cS_+^{\circ n} J_+$ the eigenvalues in the gaps start to move. E.g., in a generic case for an initial $\oc J_+$, which corresponds to one of our fundamental  operators $\oc J\in J(E)$, the eigenvalues will cover densely  the spectral gaps $(\ba_j,\bb_j)$. Thus, corresponding to such measures $A_+(n)$ just can not be properly defined. The easiest way to explain that nevertheless our program is doable is the following:  use  two-sided Jacobi matrices and enjoy unitarity of the shift $S$ in $\l^2$!\footnote{One can  actually work with one-sided matrices but still use methods related to two dimensional cyclic subspaces.} We show that an arbitrary one-sided Jacobi matrix $J_+$, with its essential spectrum on $E$, can be extended by a Jacobi matrix $J_-=P_-\oc JP_-$, $\oc J\in J(E)$, such that for the resulting two-sided matrix $J$
\begin{equation}\label{ins17}
(\bc_j-J)^{-1}\  \text{exists for all $\bc_j$},
\end{equation}
see Lemma \ref{lemma51ini}.
We can improve the diagram \eqref{defjfg+}, see \ref{defjfg}, using the map $J=\cF A$ on two-sided matrices, see Definition \ref{def115}.  In Proposition \ref{prop73} we describe its image, which consists of Jacobi matrices with the property \eqref{ins17}.

Using the block-matrix version of the Killip-Simon theorem, it is a fairly simple task to write the necessary and sufficient condition for $A\in\GMP(\bC)$ with the spectral data \eqref{eq5}  in the form 
\begin{equation}\label{inhs}
\Delta(A)-(S^{-(g+1)}+S^{g+1})\  \text{is in the Hilbert-Schmidt class}.
\end{equation}
Note that the  relation between corresponding spectral densities of $V(A)$ and $A$ has a quite elegant form \eqref{densks}.

Section 6.
Condition  \eqref{inhs} is equivalent to
\begin{equation}\label{inhs2}
H_\pm(A)<\infty
\end{equation}
for the \textit{Killip-Simon functional of the problem}, which is basically the $\l^2_{\pm}$-part of the trace of $(\Delta(A)-(S^{-(g+1)}+S^{g+1}))^2$, for the precise expression see \eqref{eq10}. In the spirit of our third basic observation, we compute the ``derivative" of this functional in the direction of the Jacobi flow, that is, the value
$$
\delta_{\cJ}H_+(A):=H_+(A)-H_+(\cJ A),
$$ 
see Lemma \ref{lemjder}. \textit{This derivative represents a finite sum of squares!} Now, we can rewrite \eqref{inhs2} as the ``integral" $\sum_{n\ge 0}\delta_{\cJ}H_+(\cJ^{\circ n}A)<\infty$ and, thus,  get  certain $\l^2$-properties. Note that they are already more related to the Jacobi matrix $J=\cF A$,  than to the given GMP matrix $A$ itself.
Nevertheless, all these conditions were given by means of the coefficients of $\Delta(A)$, not by the ones of $A$ (or the system of iterates $A(n)$, to be more precise). This is probably the hardest technical part of the work. To indicate the difficulty, we would 
like to mention the following. In \cite{NPVY} we found higher-order generalizations of Killip-Simon sum rules (relations between coefficients of $J_+$ and the spectral measure $d\sigma$), for a \textit{single interval spectrum}. But only for a very special family (related to Chebyshev polynomials of an arbitrary degree $n$), which was initially found in \cite{LNS}, we were able to convert the result of the form \eqref{inhs} to explicit relations on the coefficients of the given $J_+$. Otherwise, each particular case becomes a reason for an interesting research, see e.g. \cite{K2004, GZ,SZ}. Moreover, a nice looking general conjecture was recently disproved by M. Lukic \cite{LU}. By the  way, for a highly interesting new development in this area see \cite{GNR}. 
So, in  this section we prove Theorem \ref{th73}; practically, this is already a parametric representation for coefficients of Jacobi matrices of $\KS(E)$. 

Section 7. In this section we finalize the parametric representation for Killip-Simon Jacobi matrices associated to an arbitrary system of intervals $E$, that is, we prove the main Theorem \ref{mainhy}. In the end of this section we demonstrate implicitly our \textit{last basic for this paper observation} that the spectral theory in the spirit of \cite{CD} could be more powerful than the  classical orthogonal polynomials approach \cite{AKHmp, BS}, see Subsection \ref{subs72} and especially Remark \ref{inirem74}. Explicitly this was demonstrated in \cite{PY, VY, PVY}, as well as in Section 2 of the current paper.
At the moment we are not able to present a theory of spaces of vector bundles, which corresponds as  model spaces to Jacobi matrices of Killip-Simon class (in full generality) even in a finite gap case. 

Basic facts with respect to one sided GMP matrices are given in the appendix.

\subsection{GMP matrices and Jacobi flow on them in solving the Killip-Simon problem}

In this subsection we give formal definitions for the named objects so that in the end of it we are able to state Theorem \ref{th73}. This is the main ingredient in our proof of Theorem \ref{mainhy}.

\begin{itemize}
\item
Let $\{e_n\}$ be the standard basis in $\l^2$. Depending on the context, $\l^2_+$ is the  set of square-summable one-sided sequences or the subspace of $\l^2$ spanned by $\{e_n\}_{n\ge 0}$. In the last case $\l^2_-:=\l^2\ominus \l^2_+$ and 
$P_\pm:\l^2\to \l^2_\pm$ are the orthogonal projectors. 

\item
Let $\{\delta_k\}_{k=0}^g$ denote the standard basis in the Euclidian space $\bbC^{g+1}$.

\item
By $T^*$ we denote the conjugated operator to an operator  $T$, or the conjugated matrix  if $T$ is a matrix.
In particular, for a vector-column $\vp\in \bbC^{g+1}$, $(\vp)^*$ is a $(g+1)$-dimensional vector-row. Consequently, the scalar product in $\bbC^{g+1}$ can be given in the following form
$
\langle \vp,\vq \rangle=(\vq)^*\vp.
$

\item
 The notation $T^-$ denotes the upper triangular part of a matrix $T$ (\textit{excluding the main diagonal}), respectively 
$T^+:=T-T^-$ is its lower triangular part (\textit{including the main diagonal}).
\end{itemize}
GMP matrices form a certain special subclass of real symmetric $(2g+3)$-diagonal matrices, $g\ge 1$.
First of all, the class depends on an ordered collection of distinct points $\bC=\{\bc_1,\dots,\bc_g\}$. That is, if needed we will specify the notation $\GMP(\bC)$. We will define two-sided GMP matrices, but their restrictions on the positive half-axis will be highly important.

\begin{definition}
We say that $A$ is of the class $\bbA$ if it is a $(g+1)$-block Jacobi matrix
\begin{equation}\label{n1}
A=\begin{bmatrix}
\ddots&\ddots&\ddots&& &\\
&A^*(\vp_{-1})&B(\vbp_{-1})&A(\vp_0)& & \\
& &A^*(\vp_{0})&B(\vbp_{0})&A(\vp_1)& \\
& & &\ddots&\ddots&\ddots
\end{bmatrix}
\end{equation}
such that
\begin{equation}\label{n2}
\vbp=(\vp,\vq)\in\bbR^{2g+2},\quad A(\vp)=\delta_g \vp\,^*,
\quad
B(\vbp)
=(\vq \vp\,^*)^-+(\vp\vq\,^*)^++\tilde\bC,
\end{equation}
and
\begin{equation}\label{n3}
\tilde \bC=\begin{bmatrix}
\bc_1& & & \\
& \ddots& & \\
& & \bc_g & \\
& & &0
\end{bmatrix},\ 
\vp_j=
\begin{bmatrix}
p^{(j)}_0\\
\vdots\\
p^{(j)}_g
\end{bmatrix}, \ 
\vq_j=
\begin{bmatrix}
q^{(j)}_0\\
\vdots\\
q^{(j)}_g
\end{bmatrix}, \quad p^{(j)}_g>0.
\end{equation}
 We call $\{\vbp_j\}_{j\in\bbZ}$ the generating coefficient sequences (for the given $A$).
\end{definition}

\begin{definition} Let $S$ be the shift operator $Se_n=e_{n+1}$.
A  matrix $A\in\bbA$ belongs to the GMP class if the matrices $\{\bc_k-A\}_{k=1}^g$ are invertible, and moreover
$S^{-k}(\bc_k-A)^{-1}S^k$ are also of the class $\bbA$, see \eqref{n1}-\eqref{n3}. To abbreviate we write $A\in \GMP(\bC)$.
\end{definition} 

\begin{remark}
As it follows from the definition  $\|S^{-k}(\bc_k-A)^{-1}S^k\|<\infty$. These  conditions can be written explicitly as a certain set of conditions on  the coefficients $\{\vbp_j\}_{j\in\bbZ}$  of the initial matrix $A\in\bbA$, see \eqref{altdef}. That is, in fact, $A\in\GMP(\bC)$ if and only if it is of the class $\bbA$ for a certain ordered collection $\{\bc_1,\dots,\bc_g\}$ and \eqref{altdef} holds for the generating sequences.
 This can be regarded as a \textit{constructive definition}  of GMP matrices, see 
Theorem \ref{defaltdef}. 
\end{remark}

Let $J$ be a Jacobi matrix with coefficients
$\{a(n),b(n)\}$:
\begin{equation}\label{ijf1}
Je_n=a(n) e_{n-1}+b(n) e_n +a(n+1) e_{n+1}, \quad a(n)>0, \ n\in\bbZ.
\end{equation}
The two-dimensional space spanned by $e_{-1}$ and $e_0$ forms a cyclic subspace for $J$. Also, $J$ can be represented as a two-dimensional perturbation of the orthogonal sum with respect to the decomposition $\l^2=\l^2_-\oplus \l^2_+$
\begin{equation}\label{ijf2}
J=\begin{bmatrix}
J_-& 0\\
0& J_+
\end{bmatrix}+a(0)(e_0\langle \cdot, e_{-1} \rangle+e_{-1}\langle \cdot, e_{0} \rangle ).
\end{equation}
We have a similar decomposition for $A\in\GMP(\bC)$
\begin{equation}\label{ijf3}
A=\begin{bmatrix}
A_-& 0\\
0& A_+
\end{bmatrix}+\|\vp_0\|(\tilde e_0\langle \cdot, \tilde e_{-1} \rangle+\tilde e_{-1}\langle \cdot, \tilde e_{0} \rangle ),
\ \tilde e_{-1}=e_{-1}, \ \tilde e_0:=\frac{1}{\|\vp_0\|}P_+ A e_{-1}. 
\end{equation}

\begin{definition}\label{def115}
For $A\in \GMP(\bC)$ the Jacobi matrix $J=\cF A$ is uniquely defined by the conditions
\begin{equation}\label{ijf4}
r_{\pm}(z):=\langle (J_{\pm}-z)^{-1} e_{\frac{-1\pm 1}{2}}, e_{\frac{-1\pm 1}{2}}\rangle=
\langle (A_{\pm}-z)^{-1}\tilde e_{\frac{-1\pm 1}{2}}, \tilde e_{\frac{-1\pm 1}{2}}\rangle, \quad
a(0)=\|\vp_0\|.
\end{equation}
\end{definition}

\begin{definition} Let
$\cS J:= S^{-1}J S$. 
The Jacobi flow on GMP matrices is generated by the transformation $\cJ$, which makes the following diagram commutative
\begin{equation}\label{defjfg}
\begin{array}{ccc}
 \text{GMP} & \xrightarrow{\mathcal J}  & \text{GMP}  \\
   &   &    \\
_\mathcal F  \big\downarrow &   &  _\mathcal F \big\downarrow  \\
   &   &    \\
\text{Jacobi}  &  \xrightarrow{\mathcal S}  &  \text{Jacobi} 
\end{array}
\end{equation}
The corresponding discrete dynamical system (Jacobi flow) is of the form 
$$
A(n+1)=\cJ A(n), \quad A(0)=A.
$$
\end{definition}

Essentially, it can be reduced to an open (input-output) dynamical system \eqref{ods14}. The coefficients of the Jacobi matrix $J=\cF A$ are easily represented by means of the Jacobi flow acting on the initial $A$. Namely,

\begin{corollary}\label{cor113}
Let $J=\cF A$ and $A(n)=\cJ^{\circ n} A$. In the above notations \eqref{ijf1}
\begin{equation}\label{coefflow}
a(n)=\|\vp_0(n)\|, \quad b(n-1)=q_g^{(-1)}(n)p_g^{(-1)}(n).
\end{equation}
\end{corollary}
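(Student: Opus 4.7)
The plan is to use the commutativity of diagram \eqref{defjfg}, which by $n$-fold iteration gives
$$
\cF A(n)=\cS^{\circ n}(\cF A)=S^{-n}JS^n.
$$
The right-hand side is again a Jacobi matrix, and a direct computation shows that its generating coefficients are the $n$-shifts of those of $J$: the $a$-coefficient at position $0$ equals $a(n)$, and the diagonal entry at position $-1$ equals $b(n-1)$. Thus it suffices to prove, for an arbitrary $A\in\GMP(\bC)$, the two base cases
$$
a(0)=\|\vp_0\|,\qquad b(-1)=q_g^{(-1)}p_g^{(-1)};
$$
the first is just the normalization built into Definition \ref{def115}, so the real content lies in the formula for $b(-1)$.

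For the $b(-1)$-identity I would argue in two steps. First, since $r_{-}^J(z)=r_{-}^A(z)$ are the same Borel transform of a spectral measure, they generate the same moment sequence; matching the first moment and recalling $\tilde e_{-1}=e_{-1}$ yields
$$
b(-1)=\langle J_-e_{-1},e_{-1}\rangle=\langle A_-e_{-1},e_{-1}\rangle=A_{-1,-1}.
$$
Second, I read off $A_{-1,-1}$ directly from the block description \eqref{n1}--\eqref{n3}: the entry sits in the $(g,g)$-corner of the diagonal block $B(\vbp_{-1})$. Using $B(\vbp)=(\vq\vp\,^*)^-+(\vp\vq\,^*)^++\tilde\bC$, the strictly upper triangular summand contributes $0$ on the diagonal, $\tilde\bC$ has a $0$ in its last diagonal entry by \eqref{n3}, and the lower-triangular-including-diagonal summand contributes $p_gq_g$; hence $B(\vbp)_{g,g}=p_gq_g$, and specializing to $\vbp_{-1}$ gives $A_{-1,-1}=p_g^{(-1)}q_g^{(-1)}$.

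Combining these two ingredients with the shift observation completes the proof. Applied to $A(n)$ in place of $A$, Definition \ref{def115} yields $a(0)$ of $\cF A(n)$ equals $\|\vp_0(n)\|$, while step one and two applied to $A(n)$ give that the $(-1,-1)$ entry of $A(n)$, which equals $q_g^{(-1)}(n)p_g^{(-1)}(n)$, coincides with $b(-1)$ of $\cF A(n)=S^{-n}JS^n$; translating back to $J$ yields \eqref{coefflow}.

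The main obstacle is really only bookkeeping --- correctly identifying where in the two-sided block matrix the entry $A_{-1,-1}$ sits, and verifying that the conventions $\tilde e_{-1}=e_{-1}$ and $A_-=P_-AP_-$ let one pass from the scalar resolvent identity to the concrete diagonal entry. No deeper analysis is needed since the functorial property $\cF\circ\cJ=\cS\circ\cF$ has already been built into the definition of the Jacobi flow.
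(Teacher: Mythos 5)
Your proposal is correct, and the base-case computation is sound: the first-moment matching forced by $r_-^J=r_-^A$ together with $\tilde e_{-1}=e_{-1}$ does give $b(-1)=\langle A_-e_{-1},e_{-1}\rangle$, and since index $-1$ sits in the $(g,g)$-corner of the block $B(\vbp_{-1})$, the decomposition $B(\vbp)=(\vq\vp^*)^-+(\vp\vq^*)^++\tilde\bC$ (with the last diagonal entry of $\tilde\bC$ equal to $0$ and $(\vq\vp^*)^-$ having empty diagonal) yields $p_g^{(-1)}q_g^{(-1)}$ exactly as you say. However, your route differs from the paper's. The paper does not prove the corollary by reducing to $n=0$ via the diagram; it proves it \emph{simultaneously} with the theorem at the end of Section 4, which explicitly constructs the orthonormal system $\tilde e_m=U_{A(0)}S\,U_{A(1)}S\cdots U_{A(m)}S\,e_{-1}$, verifies the three-term recurrence \eqref{nis50} for $A$ in that basis, and reads off \eqref{zhz} --- this single computation both identifies the coefficients and establishes that the concretely defined flow $\cJ A=S^{-(g+1)}\cO^{\circ g}AS^{g+1}$ actually makes the diagram \eqref{defjfg} commute. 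Your argument instead treats the commutativity $\cF A(n)=S^{-n}JS^n$ as free (definitional), which is legitimate if $\cJ$ is taken to be the abstract diagram-defined map (its existence and uniqueness rest on Proposition \ref{prop73} and on $\sigma(S^{-1}JS)=\sigma(J)$), but then says nothing about the coefficients produced by the explicit formulas \eqref{jfex}--\eqref{jfex1}, which is the form in which the corollary is actually used later. So your proof buys brevity and isolates the genuinely elementary content (the $n=0$ case), while the paper's buys the identification of the abstract and concrete flows; to make your version self-contained one would still have to cite or reprove that identification.
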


Now we can define the Killip-Simon class of GMP matrices. Let $E$ be a system of $g+1$ disjoint intervals, $E=[\bb_0,\ba_0]\setminus \cup_{j=1}^g(\ba_j,\bb_j)$.
Let $\Delta(z)=\Delta_E(z)$ be the unique function, which was given in \eqref{eq15}.

\begin{proposition}\label{prop18}
 $\oc A\in\GMP(\bC)$, generated by coefficients $\vbp=(\vp,\vq)$,  belongs to the isospectral set  of periodic matrices $A(E,\bC)$ if and only if 
it obeys the magic formula \eqref{magic}. Moreover, the 
isospectral surface $\is$ is given by, see \eqref{explpqg}, \eqref{explrhk} and \eqref{defla},
\begin{equation}\label{iso101}
p_g=\frac 1{\lambda_0}, \quad q_g=-\bc_0-\lambda_0\sum_{j=1}^{g-1} p_jq_j, \quad \Lambda_k(\vbp)=\lambda_k, \ k=1,...,g.
\end{equation}
\end{proposition}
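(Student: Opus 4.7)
The plan is to prove the two claims of Proposition~\ref{prop18} in turn: first the magic-formula characterization of membership in $A(E,\bC)$, then the explicit parametric description of the isospectral surface.

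For the magic-formula equivalence, the ``only if'' direction follows from the functional-model picture built in Section~2. An element $\oc A = A(\alpha,\bC) \in A(E,\bC)$ is the operator of multiplication by the independent variable on the character-automorphic Hardy space $H^2(\alpha)$, written in the basis $\{\ff_n^\alpha\}_{n\in\bbZ}$ generated from the Ahlfors inner function $\Psi(z)$ and the fixed ordering $\bC$ of its zeros. In this basis multiplication by $\Psi$ is represented by the shift $S^{g+1}$, because $\Psi$ is single valued in $\bar\bbC\setminus E$ (its character is trivial) and the $\ff$-basis is constructed in blocks of length $g+1$ precisely so that applying $\Psi$ shifts it by one block. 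Applying the functional $\Delta(z)=\Psi(z)^{-1}+\Psi(z)$ to $\oc A$ therefore gives $S^{-(g+1)}+S^{g+1}$, which is~\eqref{magic}. For the ``if'' direction, assume $\oc A\in\GMP(\bC)$ satisfies~\eqref{magic}. Spectral mapping gives $\sigma(\oc A)\subseteq\Delta^{-1}([-2,2])=E$, and the fact that the $(g+1)$-super- and subdiagonals of $\Delta(\oc A)$ are constantly $1$ (and all other diagonals vanish) forces the block sequence $\{\vbp_j\}_{j\in\bbZ}$ to be constant in $j$, hence periodic with the correct spectrum and the correct ordering, placing $\oc A$ in $A(E,\bC)$.

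For the explicit form~\eqref{iso101} of the isospectral surface, I would match both sides of
$$\lambda_0\oc A + \bc_0 I + \sum_{k=1}^g \lambda_k(\bc_k I-\oc A)^{-1}=S^{g+1}+S^{-(g+1)}$$
entry by entry on a single block column, using the block-periodicity just established. By~\eqref{n2} only the corner entries $p_g^{(j)}$ of $A(\vp_j)=\delta_g\vp_j^{*}$ contribute to the extreme $(g+1)$-superdiagonal position coming from $\oc A$, while the inverses $(\bc_k-\oc A)^{-1}$, which by the GMP hypothesis are themselves of class $\bbA$ up to the shift $S^k$ and hence $(2g+3)$-diagonal, contribute to the other positions on this same diagonal. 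Matching the corner entry to $1$ forces $\lambda_0 p_g=1$. Matching the main diagonal (which is $0$ on the right, and on the left involves $\bc_0$, the diagonals of $B(\vbp)$ via $(\vq\vp^{*})^-+(\vp\vq^{*})^+ + \tilde\bC$, and the diagonals of the inverses) yields the stated expression for $q_g$. Finally, requiring the vanishing of each of the intermediate super- and subdiagonals of $\Delta(\oc A)$ produces $g$ rational identities in $\vbp$ which, after simplification, are exactly $\Lambda_k(\vbp)=\lambda_k$ in the notation of~\eqref{explrhk}.

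The main obstacle is obtaining compact expressions for the matrix entries of the inverses $(\bc_k-\oc A)^{-1}$ in terms of the original coefficients $\vbp$. The GMP constructive description~\eqref{altdef}, which says that each $S^{-k}(\bc_k-\oc A)^{-1}S^k$ is again in $\bbA$ with its own $(\vp^{(k)},\vq^{(k)})$ explicitly computable from $(\vp,\vq)$, is the essential bookkeeping tool here: without it, the magic formula is an abstract operator identity; with it, it becomes a finite polynomial system in the entries of $(\vp_0,\vq_0)$. Once that translation is in hand, the reduction of this system to the three clauses of~\eqref{iso101} is a direct linear-algebra calculation that groups contributions diagonal by diagonal.
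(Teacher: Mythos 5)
Your ``only if'' direction is exactly the paper's argument: in the basis \eqref{gsmpbase} multiplication by $\Psi$ is $S^{g+1}$, so $\Delta(\oc A)=\Psi^{-1}+\Psi$ becomes \eqref{magic}. The problem is in your ``if'' direction. You assert that the constancy of the $(g+1)$-st super- and subdiagonals of $\Delta(\oc A)$ (and the vanishing of the others) ``forces the block sequence $\{\vbp_j\}_{j\in\bbZ}$ to be constant in $j$.'' That is precisely the nontrivial step, and reading off diagonals does not deliver it: the identity $\Delta(\oc A)=S^{g+1}+S^{-(g+1)}$ is a system of polynomial equations coupling the generating sequences of $\oc A$ and of the shifted inverses $S^{-k}(\bc_k-\oc A)^{-1}S^{k}$ at each block index, and nothing in that system visibly excludes $j$-dependent solutions. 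The paper closes this gap with Na\u{\i}man's Lemma: $\oc A$ commutes with $\Delta(\oc A)=S^{g+1}+S^{-(g+1)}$, and a bounded finitely banded matrix (bandwidth at most $g+1$) commuting with $S^{g+1}+S^{-(g+1)}$ must commute with $S^{g+1}$ itself, i.e.\ be periodic (alternatively, see the commutant argument in the proof of Theorem \ref{th8.4}). A second, smaller gap in the same direction: spectral mapping only gives $\sigma(\oc A)\subseteq E$, whereas membership in $A(E,\bC)$ requires $\sigma(\oc A)=E$; once periodicity is in hand this follows from the transfer-matrix description of the spectrum, Theorem \ref{inth7} and \eqref{spectrum}, which you should invoke explicitly rather than folding into ``with the correct spectrum.''

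For the description \eqref{iso101} of the isospectral surface your route differs from the paper's. You propose to match the operator identity entry by entry, using the explicit form of the inverses $(\bc_k-\oc A)^{-1}$ as bookkeeping. The paper instead reads the relations off the scalar identity $\Delta(z)=\tr\fA(z)$ for the transfer matrix: expanding the product of Blaschke--Potapov factors \eqref{facto3} at $z=\infty$ gives \eqref{explpqg}, and taking residues at the poles $\bc_k$ gives \eqref{explrhk}, i.e.\ $\Lambda_k(\vbp)=\lambda_k$. Your entry-matching would, after assembling the entries of the inverses from Lemma \ref{lem:gsmpEntries}, reproduce the same quantities, but it is substantially heavier and you have not carried out the computation; the trace identity is the mechanism that makes the three clauses of \eqref{iso101} ``immediate,'' and your sketch stops short of it.
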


\begin{definition}\label{defini1} Let $A\in\GMP(\bC)$. Let $\sigma_\pm$ be the related spectral measures, that is,
\begin{equation*}\label{specmes}
r_{\pm}(z)=\int \frac{d\sigma_{\pm}(x)}{x-z},
\end{equation*}
where $r_{\pm}(z)$ are given in \eqref{ijf4}.
We say that $A$ belongs to the Killip-Simon class
$\KSA(E, \bC)$ if
the measures $\sigma_\pm$ are supported on $E\cup X_{\pm}$, and both satisfy \eqref{eq5}.
\end{definition}

The following theorem is essentially a consequence of the matrix version of the Killip-Simon theorem.

\begin{theorem}
$A\in\GMP(\bC)$ belongs to the Killip-Simon class $\KSA(E,\bC)$ if and only if
the difference $\Delta_E(A)-(S^{-(g+1)}+S^{g+1})$ belongs to the Hilbert-Schmidt class.
\end{theorem}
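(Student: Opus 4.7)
The plan is to reduce to the matrix (block) Killip--Simon theorem applied to $\Delta_E(A)$, using the generalized magic formula \eqref{eq15} as the bridge. The argument has three essentially independent ingredients: a structural statement saying $\Delta_E(A)$ is a $(g+1)$-block Jacobi matrix, a change-of-variable computation relating the $(g+1)\times(g+1)$ spectral matrix-measure of $\Delta_E(A_\pm)$ to the scalar measures $\sigma_\pm$, and an assembly of the two half-line statements into the two-sided one.

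First I would prove the structural step. By definition of $\GMP(\bC)$, each matrix $S^{-k}(\bc_k-A)^{-1}S^k$ lies in class $\bbA$, i.e., is $(g+1)$-block tridiagonal. Conjugating back by $S^k$ keeps the band structure within $(2g+3)$ diagonals, and in the block decomposition into $(g+1)\times(g+1)$ blocks each summand $(\bc_k-A)^{-1}$ remains block tridiagonal (the shift $S^k$ only reshuffles diagonals inside adjacent blocks). Combined with $\lambda_0 A+\bc_0 I$, which is $(2g+3)$-diagonal and hence block tridiagonal, this shows $\Delta_E(A)$ is a block Jacobi matrix of block size $g+1$. The same holds for the half-line restrictions $\Delta_E(A_\pm)$ after verifying that the block corresponding to the "gluing" region $\{e_{-g},\dots,e_{g}\}$ is perturbed only by a finite-rank operator when passing between two-sided and one-sided pictures.

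Next, the analytic heart of the proof: the change of variables $y=\Delta_E(x)$. On $E$ the map $\Delta_E$ is a real $(g+1)$-to-one branched covering of $[-2,2]$, with $|\Delta_E'(x)|\,\sqrt{\dist(x,\bbR\setminus E)}\asymp \sqrt{4-y^2}$ (up to bounded positive factors, via \eqref{eq14}--\eqref{eq15} and the standard Akhiezer-type asymptotics near the branch points $\ba_j,\bb_j$). The first-block basis $\{e_0,\ldots,e_g\}$ (respectively the analogous block on the minus side) is cyclic for $\Delta_E(A_\pm)$, and computing the associated matrix-valued Weyl function by summing scalar resolvents of $A_\pm$ over the $g+1$ preimages yields a matrix spectral measure $d\Xi_\pm$ whose density satisfies $\det\Xi_\pm'(y)\asymp\prod_{x\in\Delta_E^{-1}(y)}\sigma_\pm'(x)\,|\Delta_E'(x)|^{-1}$. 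Inserting this and the local asymptotics into \eqref{eq12} for $\Xi_\pm$ converts it, term by term, into condition \eqref{eq5} for $\sigma_\pm$ (the bound-state sum transforms via $\sqrt{y_k^2-4}^3\asymp\sqrt{\dist(x_k,E)}^3$).

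Finally I would invoke the Damanik--Killip--Simon matrix version of the Killip--Simon theorem (the block-Jacobi analogue used in \cite{KSDp}): for the one-sided block Jacobi matrix $\Delta_E(A_\pm)$, condition \eqref{eq12} on $\Xi_\pm$ is equivalent to $\Delta_E(A_\pm)-(S_\pm^{(g+1)}+(S_\pm^*)^{(g+1)})\in\cS_2$. Combined with the previous step, this shows $A\in\KSA(E,\bC)$ iff both half-line Hilbert--Schmidt conditions hold. The two-sided statement then follows because the decomposition \eqref{ijf3} of $A$ is a rank-two perturbation of $A_-\oplus A_+$, so $\Delta_E(A)-\Delta_E(A_-\oplus A_+)$ has finite rank, and the same is true for the free piece $S^{g+1}+S^{-(g+1)}$ versus its direct sum of one-sided counterparts.

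The main obstacle will be the change-of-variable step: identifying the matrix measure $d\Xi_\pm$ of $\Delta_E(A_\pm)$ explicitly in terms of $\sigma_\pm$ and carrying out the precise asymptotic matching of the weights $\sqrt{4-y^2}$ versus $\sqrt{\dist(x,\bbR\setminus E)}$ and of $\sqrt{y_k^2-4}^3$ versus $\sqrt{\dist(x_k,E)}^3$ uniformly near every branch point of $\Delta_E$. Once that bookkeeping is handled, the two-sided-to-one-sided reduction is standard finite-rank manipulation, and the matrix Killip--Simon theorem is applied as a black box.
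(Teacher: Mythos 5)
Your overall strategy is the one the paper uses: the generalized magic formula turns $\Delta_E(A)$ into a $(g+1)\times(g+1)$ block Jacobi matrix (automatic, since a $(2g+3)$-diagonal matrix is block tridiagonal for blocks of size $g+1$), the block Killip--Simon theorem of \cite{KSDp} is invoked as a black box, and the bridge between \eqref{eq12} and \eqref{eq5} is a change of variables $y=\Delta_E(x)$ for the matrix spectral density. For that last step the paper does not settle for an asymptotic $\det\Xi'(y)\asymp\prod\sigma'(x)|\Delta'(x)|^{-1}$: Lemma \ref{insl54} produces the exact Cauchy--Vandermonde identity \eqref{densks}, $\det\Xi'(y)=\prod_{\Delta(x)=y}\sigma'(x)/\prod_k\lambda_k$, after which only the elementary weight comparison $\sqrt{4-y^2}\,dy\asymp\sqrt{\dist(x,\bbR\setminus E)}\,dx$ near band edges remains. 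Your asymptotic version would also suffice for the $\log$-integral, so this is a stylistic rather than substantive difference.

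There is, however, a genuine gap in your half-line splitting. You apply the one-sided matrix Killip--Simon theorem to $\Delta_E(A_\pm)$ and glue by finite rank, but $\Delta_E(A_\pm)$ need not be a bounded operator: membership in $\GMP(\bC)$ guarantees $\bc_j\notin\sigma(A)$ for the \emph{two-sided} $A$, while the decoupled restrictions $A_\pm=P_\pm AP_\pm$ may well have $\bc_j$ as an eigenvalue (equivalently $\sigma_\pm(\{\bc_j\})>0$), and Definition \ref{defini1} together with \eqref{eq5} does not exclude this. In that case $(\bc_j-A_\pm)^{-1}$ does not exist, your resolvent-identity argument that $\Delta_E(A)-\Delta_E(A_-\oplus A_+)$ has finite rank has no meaning, and the one-sided Hilbert--Schmidt condition you want to quote is vacuous even though the two-sided statement of the theorem is perfectly sensible. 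This is exactly the issue the paper engineers around by staying two-sided throughout: it uses the two-dimensional cyclic subspace $\{e_{-1},\tilde e_0\}$, replaces the pair $\sigma_\pm$ by the $2\times2$ matrix measure $d\Sigma$ of \eqref{sh1} (Lemma 5.2 shows the two Killip--Simon conditions \eqref{mmc} and \eqref{eq5} for $\sigma_\pm$ are equivalent, using the Herglotz relations \eqref{inizp}), notes that $\Sigma(\{\bc_j\})=0$ automatically for $A\in\GMP(\bC)$ (cf.\ Lemma \ref{lemma51ini}), and then runs the $(2g+2)$-dimensional cyclic-subspace computation of Theorem \ref{thdensks}. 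To repair your argument you would either need to adopt this two-sided matrix-measure formulation, or first dispose of the exceptional case $\bc_j\in\sigma(A_\pm)$ by a separate rank-one modification; as written, the ``only if'' direction fails for such $A$.
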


However, the next statement is already highly non-trivial. Practically, it gives a parametrization of the coefficients of \textit{Jacobi matrices} of  Killip-Simon class with its essential spectrum on $E$.
\begin{theorem}\label{th73} 
For $A\in \GMP(\bC)$, let $A(n+1)=\cJ A(n)$, $A(0)=A$. Let $\{\vbp_j(n)\}_{j\in \bbZ}$ be the forming $A(n)$ coefficient sequences. $A$ belongs to $\KSA(E,\bC)$ if and only if 
\begin{align}
\{p^{(\pm 1)}_j(n)-p^{(0)}_j(n)\}_{n\ge 0}\in \l^2_+,\quad
 &\{q^{(\pm 1)}_j(n)-q^{(0)}_j(n)\}_{n\ge 0}\in
  \l^2_+,
\label{m29}\\
\{\lambda_0p_g^{(0)}(n)-1\}_{n\ge0}\in\l_+^2, \quad
&\{\lambda_0\langle \vp_0(n),\vq_0(n) \rangle+\bc_0\}_{n\ge 0}\in \l_+^2,
\label{m30}\\
&\{\Lambda_k (\vbp_0(n))-\lambda_k\}_{n\ge 0}\in\l^2_+
\label{m31} 
\end{align}
hold for all $j=0,\dots, g-1$ and all $k=1,\dots, g$ (cf. \eqref{m30}-\eqref{m31} and \eqref{iso101}).
\end{theorem}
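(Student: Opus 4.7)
The plan relies on the equivalence, provided by the preceding theorem, between $A\in \KSA(E,\bC)$ and the statement that $\Delta_E(A)-(S^{-(g+1)}+S^{g+1})$ lies in the Hilbert--Schmidt class. Decomposing this HS norm by the orthogonal projection onto $\ell^2_\pm$ gives, up to a finite-rank boundary correction, the Killip--Simon functional $H_+(A)+H_-(A)$ of \eqref{eq10}, so the task reduces to characterising $H_+(A)<\infty$ (and, symmetrically, $H_-(A)<\infty$) in terms of the block coefficients $\vbp_j(n)$ of the iterates $A(n)=\cJ^{\circ n}A$.

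The central computation is the one-step increment
\[
\delta_{\cJ}H_+(A):=H_+(A)-H_+(\cJ A).
\]
Since the diagram \eqref{defjfg} gives $\cF\cJ A=S^{-1}(\cF A)S$, applying $\cJ$ is conjugate to a unit Jacobi shift, so the HS norm of $\Delta_E(A)-(S^{-(g+1)}+S^{g+1})$ differs from that of $\Delta_E(\cJ A)-(S^{-(g+1)}+S^{g+1})$ only by the contribution of a single block of matrix entries near index $0$. Using the explicit block-Jacobi form \eqref{n1}--\eqref{n3} together with the magic formula $\Delta(\oc A)=S^{-(g+1)}+S^{g+1}$ satisfied on the isospectral torus $A(E,\bC)$, I would show that $\delta_{\cJ}H_+(A)$ is a finite positive sum of squares whose summands are precisely the expressions whose $\ell^2_+$-finiteness is asserted in \eqref{m29}--\eqref{m31}, evaluated on the blocks $\vbp_{-1}$, $\vbp_{0}$, $\vbp_{1}$ of the current matrix $A$. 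Conceptually, the terms in \eqref{m30}--\eqref{m31} measure the departure of $\vbp_0$ from the isospectral locus \eqref{iso101} and hence from annihilating the main block-diagonal of $\Delta(A)-S^{\pm(g+1)}$, while the differences $p^{(\pm 1)}_j-p^{(0)}_j$ and $q^{(\pm 1)}_j-q^{(0)}_j$ in \eqref{m29} capture the non-periodicity between neighbouring blocks responsible for the off-diagonal entries of $\Delta(A)$.

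Once this identification is in hand, the telescoping
\[
H_+(A)=\sum_{n\ge 0}\delta_{\cJ}H_+\bigl(\cJ^{\circ n}A\bigr)+\lim_{n\to\infty}H_+\bigl(\cJ^{\circ n}A\bigr)
\]
yields the desired equivalence: $H_+(A)<\infty$ if and only if each square-sum is finite, i.e.\ if and only if \eqref{m29}--\eqref{m31} hold. Finiteness of the tail limit follows from the fact that the sum-of-squares control pushes $A(n)$ asymptotically into the isospectral torus, where $H_+$ is bounded. A symmetric computation with $H_-$ along a backward flow covers the ``$-1$'' instances of \eqref{m29}, exploiting the identification of block coefficients at index $-1$ of $A(n)$ with those at index $0$ of the preceding iterate, as already used in \eqref{coefflow}.

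The main obstacle is the middle paragraph: carrying out $\delta_{\cJ}H_+(A)$ explicitly and factoring it as the claimed sum of squares. One has to open the $(g+1)$-banded form of $\Delta_E(A)-S^{\pm(g+1)}$, track cancellations coming from the fact that each $(\bc_k-A)^{-1}$ is itself $(2g+3)$-diagonal after conjugation by $S^k$, and in particular recognise the invariants $\Lambda_k$ of \eqref{iso101} as the natural squared quantities into which the $\lambda_k/(\bc_k-z)$ part of $\Delta$ reorganises. The author warns that this is the hardest technical portion of Section~6, and the real work is in ensuring that the resulting quadratic form is a true sum of squares with strictly positive weights and that no cross-terms survive to spoil the equivalence with \eqref{m29}--\eqref{m31}.
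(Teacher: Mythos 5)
Your top-level architecture --- reduce to the Hilbert--Schmidt condition on $\Delta_E(A)-(S^{-(g+1)}+S^{g+1})$, introduce the functional $H_+$, compute the one-step increment $\delta_{\cJ}H_+(A)=H_+(A)-H_+(\cJ A)$, and telescope --- is exactly the paper's ``third basic observation'' and matches Lemma \ref{lemjder}. But the load-bearing claim in your middle paragraph is not correct: $\delta_{\cJ}H_+(A)$ is \emph{not} a sum of squares of the expressions appearing in \eqref{m29}--\eqref{m31}. It is the entropy-type quantity $\tfrac12\langle\Delta(\cJ A)e_{-1},\Delta(\cJ A)e_{-1}\rangle-1-\log (\cJ v)^{(-1)}_{g,g}(\cJ v)^{(0)}_{g,g}$, whose arguments are entries of the block decomposition \eqref{eq11} of $\Delta(A(n))$ --- i.e.\ resolvent entries of $(\bc_k-A)^{-1}$, which by Lemma \ref{lem:gsmpEntries} are ratios of Blaschke--Potapov products with denominators $\Lambda^\#_{j,k}$ --- and not the coefficient differences $p^{(\pm1)}_j-p^{(0)}_j$, $q^{(\pm1)}_j-q^{(0)}_j$ themselves. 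The translation between $\ell^2$ control of the $\Delta$-entries and the conditions \eqref{m29}--\eqref{m31} is the actual content of Section 6, and it is not a matter of checking that ``no cross-terms survive'': the paper needs the auxiliary reordering operation $\cO$ and its commutation with $\cJ$ (Corollary \ref{corshift}), the rotation sublemma (Lemma \ref{lem52}) with its a priori lower bounds on $\cos\psi_n$ and $\tau_n$ applied to the identity \eqref{eqar}, the multiplicative recursion \eqref{lambdal22} for $\Lambda^\#_{-1,g}$ to upgrade \eqref{lambdal2} to \eqref{m31}, and the residue identities \eqref{11m7} and \eqref{alternativqg} to cancel the leading parts of $w^{(-1)}_{l,g}(n)$ in the converse direction. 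None of these steps is anticipated by your sum-of-squares factorization, which simply does not exist in the form you describe.

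A second gap concerns your telescoping identity and the tail term $\lim_{n}H_+(\cJ^{\circ n}A)$. In the direction \eqref{m29}--\eqref{m31} $\Rightarrow$ $A\in\KSA(E,\bC)$ you cannot write $H_+(A)$ as a convergent telescope, because you do not yet know any $H_+(\cJ^{\circ n}A)$ is finite; your justification that the flow ``pushes $A(n)$ into the isospectral torus, where $H_+$ is bounded'' is circular. The paper resolves this with a second, independent telescoping in the GMP (block-shift) direction: the partial-sum identity \eqref{7sem2} for $\tilde H_{+,n}$, combined with Theorem \ref{thnoch}, shows $H_+(A)=\tilde H_+(A)$ provided $\liminf_n h(\fv_0(n),\fw_0(n),\fv_1(n))=0$, and that liminf condition is verified by a compactness argument (convergent subsequences $\vbp_0(n_k)\to\oc\vbp$ landing on the isospectral surface \eqref{iso101} by passing to the limit in \eqref{m30}--\eqref{m31}). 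Finally, your suggestion of a ``symmetric computation with $H_-$ along a backward flow'' for the $j=-1$ instances of \eqref{m29} is not how the paper proceeds and is not needed: the blocks $\vbp_{-1},\vbp_0,\vbp_1$ all enter because each block of $\Delta(A)$ is built from two consecutive blocks of $A$, so the forward flow together with the $\cO$-rotations already reaches them.
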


 To summarize, in this paper solving the Killip-Simon problem
 \begin{itemize}
 \item we introduce GMP matrices as  possibly the best coordinate system on the Jacobians of hyperelliptic Riemann surfaces associated to finite band operators;
 \item we introduce and study the Jacobi flow on GMP matrices as  one more important object in a rich family of integrable systems;
 \item our study is based essentially on the Damanik-Killip-Simon theorem on Hilbert-Schmidt perturbations of Jacobi block-matrices with constant coefficients;
 \item we follow  the ideology of application of analytic vector bundles in spectral theory, explicitly in Section \ref{Section2} and implicitly in Section \ref{Section7}.
 \end{itemize}

\section{Functional models for $J(E)$ and $A(E,\bC)$.  Jacobi flow on periodic GMP matrices}\label{Section2}

\subsection{ Hardy spaces and the class $J(E)$}

In what follows, we will use functional models for the class of reflectionless matrices $J(E)$ in the form as considered in \cite{SY}. To this end, we need
to recall certain special functions related to function theory in
 the common resolvent domain $\Omega=\overline{\mathbb C}\setminus E$ for $J\in J(E)$. Note that in this case, $E$ can be a set of an essentially more complicated structure \cite{Has, Pom, WID}, than
 a system of intervals.  

Let $\mathbb D/\Gamma\simeq\overline{\mathbb C}\setminus E$ be a uniformization of the domain $\Omega$. It means that there exists a Fuchsian
group $\Gamma$ and a meromorphic function $\fz:\mathbb D\rightarrow\overline{\mathbb C}\setminus E$, $\fz\circ\gamma=\fz$ for all
$\gamma\in\Gamma$, such that
\begin{equation*}
 \forall z\in\overline{\mathbb C}\setminus E~\exists\zeta\in\mathbb D\!:~\fz(\zeta)=z \text{ and } \fz(\zeta_1)=\fz(\zeta_2)\Rightarrow
\zeta_1=\gamma(\zeta_2).
\end{equation*}
We assume that $\fz$ meets the normalization $\fz(0)=\infty$, $(\zeta \fz)(0)>0$.

Let $\Gamma^{*}$ be the group of characters of the discrete group $\Gamma$,
$$
\Gamma^*=\{\alpha|\ \alpha:\Gamma\to \bbR/\bbZ\ \text{such that}\ \alpha(\gamma_1\gamma_2)=\alpha(\gamma_1)+\alpha(\gamma_2)\}
$$
Since $\Gamma$ is formed by $g$ independent generators, say $\{\oc \gamma_j\}_{j=1}^g$, the group $\Gamma^*$ is equivalent to $\bbR^g/\bbZ^g$,
\begin{equation}\label{aleqal}
\alpha\simeq\{\alpha(\oc \gamma_1),\dots, \alpha(\oc\gamma_g)\}\in\bbR^g/\bbZ^g.
\end{equation}

\begin{definition}
 \label{def:htwo}
For $\alpha\in\Gamma^*$
we define the Hardy  space of character automorphic functions as
\begin{equation*}
H^2(\alpha) = H^2_{\Omega}(\alpha) = \{ f \in H^2\!:~ f \circ \gamma = e^{2\pi i\alpha(\gamma)} f,~\gamma\in\Gamma \},
\end{equation*}
where $H^2$ denotes the standard Hardy class in $\mathbb D$.
\end{definition}

Fix $z_0\in\Omega$ and let $\text{\rm orb}(\zeta_0)=\fz^{-1}(z_0)=\{\gamma(\zeta_0)\}_{\gamma\in\Gamma}$. The Blaschke product $\fb_{z_0}$ with zeros at
$\fz^{-1}(z_0)$ is called the Green function of the group $\Gamma$ (cf.~\cite{SY}). It is related to the standard Green
function $G(z,z_0)$ in the domain $\Omega$ by
$$
\log \frac{1}{|\fb_{z_0}(\zeta)|} = G\left(\fz(\zeta),z_0\right).
$$
The function $\fb_{z_0}$ is character automorphic, that is, $\fb_{z_0}\circ\gamma=e^{2\pi i\mu_{z_0}}\fb_{z_0}$, where $\mu_{z_0}\in\Gamma^{*}$. For $\fb_{z_0}$ we fix the normalization $\fb_{z_0}(0)>0$ if $z_0\not=\infty$ and $(\fz \fb)(0)>0$
for the Blaschke product $\fb$ related to infinity.

We define $k_{\zeta_0}^{\alpha}(\zeta)=k^{\alpha}(\zeta,\zeta_0)$ as the reproducing kernel of the space $H^2(\alpha)$, that is,
\begin{equation*}
 \left\langle f, k_{\zeta_0}^{\alpha}\right\rangle = f(\zeta_0)\quad \forall f\in H^2(\alpha).
\end{equation*}
\begin{remark}\label{rema22}
Let us point out that in our case this reproducing kernels possess a representation by means of $\theta$ functions associated with the given Riemann surface \cite{Fay}. As already mentioned, $k^\alpha$ has sense in a much more general situation, say, domains of Widom type. Although, generally speaking,  they can not be represented via $\theta$ functions, they still play a role of special functions in the related problems.
\end{remark}

Let $k^{\alpha}(\zeta)=k_{0}^{\alpha}(\zeta)$, $\fb(\zeta)=\fb_{\fz(0)}(z)$, and  $\mu=\mu_{\fz(0)}$. We have an evident decomposition
\begin{equation}\label{ort1}
 H^{2}(\alpha)=\{\fe^{\alpha}\}\oplus \fb H^2(\alpha-\mu), \quad \fe^{\alpha}=\frac{k^{\alpha}(\zeta)}{\sqrt{k^{\alpha}(0)}}.
\end{equation}
This decomposition plays an essential role in the proof of the following theorem.

\begin{theorem}
 \label{thm:onb}
The system of functions 
\begin{equation}\label{defeal}
 \fe_{n}^{\alpha}(\zeta)=\fb^{n}(\zeta)\frac{k^{\alpha-n\mu}(\zeta)}{\sqrt{k^{\alpha -n\mu}(0)}}
\end{equation}

\begin{itemize}
 \item[(i)] forms an orthonormal basis in $H^2(\alpha)$ for $n\in \mathbb N$ and
 \item[(ii)] forms an orthonormal basis in $L^2(\alpha)$ for $n\in\mathbb Z$,
\end{itemize}
where 
\begin{equation*}
L^2(\alpha) =  \{ f \in L^2\!:~ f \circ \gamma = e^{2\pi i\alpha(\gamma)} f,~\gamma\in\Gamma \}.
\end{equation*}
\end{theorem}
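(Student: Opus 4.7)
The plan is to establish part (i) by iterating the one-step decomposition \eqref{ort1}, and then bootstrap to part (ii) via the fact that multiplication by $\fb$ is a unitary between $L^2$-spaces of different characters.

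For part (i), I apply \eqref{ort1} successively, first to $H^2(\alpha)$, then to the $H^2(\alpha-\mu)$ appearing on its right-hand side, and so on. After $N$ iterations this yields
\begin{equation*}
H^2(\alpha) = \bigoplus_{n=0}^{N-1}\fb^n \{\fe^{\alpha-n\mu}\} \oplus \fb^N H^2(\alpha-N\mu),
\end{equation*}
the summands being mutually orthogonal since $|\fb|=1$ on $\partial\bbD$. Identifying $\fb^n \fe^{\alpha-n\mu}=\fe_n^\alpha$ from \eqref{defeal}, I obtain orthonormality of $\{\fe_n^\alpha\}_{n=0}^{N-1}$ for every $N$. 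Completeness reduces to the statement $\bigcap_{N\ge 0} \fb^N H^2(\alpha-N\mu)=\{0\}$, which holds because $\fb$ has a simple zero at $\zeta=0$, forcing any $f$ in the intersection to have an infinite-order zero at $0$ and therefore $f\equiv 0$.

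For part (ii), $|\fb|=1$ on $\partial\bbD$ makes multiplication by $\fb$ a unitary from $L^2(\alpha)$ to $L^2(\alpha+\mu)$, sending $\fe_n^\alpha$ to $\fe_{n+1}^{\alpha+\mu}$. Given $n,m\in\bbZ$, shifting both indices by $k=-\min(n,m)\ge 0$ brings the inner product into the regime of non-negative indices, where orthonormality follows from part (i). For completeness, set $\cM_\alpha:=\overline{\mathrm{span}}\{\fe_n^\alpha:n\in\bbZ\}\subset L^2(\alpha)$; since $\cM_\alpha\supset H^2(\alpha)$ by part (i), it suffices to show $\cM_\alpha\supset L^2(\alpha)\ominus H^2(\alpha)$. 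Combining the standard disk decomposition $L^2(\partial\bbD)=H^2\oplus\overline{\zeta H^2}$ with the $\Gamma$-invariance of $H^2$, and using that a function $\bar G$ with $G\in H^2(-\alpha)$ is orthogonal to $H^2(\alpha)$ if and only if $G(0)=0$ (equivalently $G=\fb\tilde G$ with $\tilde G\in H^2(-\alpha-\mu)$), one obtains $L^2(\alpha)\ominus H^2(\alpha)=\bar\fb\,\overline{H^2(-\alpha-\mu)}$ as boundary function spaces. Applying part (i) to $H^2(-\alpha-\mu)$ produces an orthonormal basis $\{\fe_k^{-\alpha-\mu}\}_{k\ge 0}$; conjugating and multiplying by $\bar\fb$ gives an orthonormal basis of $L^2(\alpha)\ominus H^2(\alpha)$.

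The main obstacle is the final identification: verifying that on $\partial\bbD$
\begin{equation*}
\bar\fb(\zeta)\,\overline{\fe_k^{-\alpha-\mu}(\zeta)} = \fe_{-k-1}^\alpha(\zeta)
\end{equation*}
for each $k\ge 0$, which is equivalent to the boundary symmetry $\overline{\fe^{-\gamma}(\zeta)}=\fe^\gamma(\zeta)$ (with $\gamma=\alpha+(k+1)\mu$). In the finite-gap case this can be derived from the explicit theta-function representations of reproducing kernels alluded to in Remark \ref{rema22}, or, more invariantly, from the natural duality between $H^2(\gamma)$ and $\overline{H^2(-\gamma)}$ that underlies the identification of $L^2(\alpha)\ominus H^2(\alpha)$. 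Once this is confirmed, the closed span of $\{\bar\fb\,\overline{\fe_k^{-\alpha-\mu}}\}_{k\ge 0}$ coincides with $\overline{\mathrm{span}}\{\fe_n^\alpha\}_{n\le -1}$, completing the proof of (ii).
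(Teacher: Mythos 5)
Your part (i) is exactly the paper's argument (iterate \eqref{ort1}), and your completeness step via $\bigcap_{N}\fb^{N}H^2(\alpha-N\mu)=\{0\}$ is a correct way of making ``iterating'' precise. The orthonormality half of (ii), using unitarity of multiplication by $\fb$ to shift any pair of indices into the range covered by (i), is also fine.

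The completeness half of (ii), however, fails. The identity you flag as the ``main obstacle,'' $\bar\fb\,\overline{\fe_k^{-\alpha-\mu}}=\fe_{-k-1}^{\alpha}$ on $\partial\bbD$, is not merely unverified --- it is false for $g\ge 1$. Unwinding \eqref{defeal}, it amounts to $\overline{k^{-\gamma}(\zeta)}/\sqrt{k^{-\gamma}(0)}=k^{\gamma}(\zeta)/\sqrt{k^{\gamma}(0)}$ on $\partial\bbD$ with $\gamma=\alpha+(k+1)\mu$; the left-hand side is the boundary value of a function in $\overline{H^2}$ and the right-hand side of a function in $H^2$, so equality would force $k^{\gamma}$ into $H^2\cap\overline{H^2}=\bbC$, i.e.\ $k^{\gamma}$ constant, which happens only for the trivial character. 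For the same reason your space-level identification $L^2(\alpha)\ominus H^2(\alpha)=\bar\fb\,\overline{H^2(-\alpha-\mu)}$ is wrong: the actual negative-index basis vectors are $\fe_{-m}^{\alpha}=\bar\fb^{\,m}k^{\alpha+m\mu}/\sqrt{k^{\alpha+m\mu}(0)}$, i.e.\ $\bar\fb^{\,m}$ times \emph{analytic} reproducing kernels, and already $\fe_{-1}^{\alpha}$ does not lie in $\bar\fb\,\overline{H^2(-\alpha-\mu)}$ (that would again place the nonconstant $k^{\alpha+\mu}$ in $H^2\cap\overline{H^2}$). The subspace $\bar\fb\,\overline{H^2(-\alpha-\mu)}$ is indeed orthogonal to $H^2(\alpha)$ (since $(\fb gh)(0)=0$), but it is a \emph{proper} subspace of the complement; your derivation of equality from ``the standard disk decomposition plus $\Gamma$-invariance'' breaks down because $L^2=H^2\oplus\overline{\zeta H^2}$ is not $\Gamma$-equivariant --- composing with $\gamma\in\Gamma$ moves the zero of $\zeta$ to $\gamma^{-1}(0)$, so the Riesz projection does not preserve character-automorphy of the two summands. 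What completeness actually requires is the density of $\bigcup_{N\ge 0}\fb^{-N}H^2(\alpha+N\mu)$ in $L^2(\alpha)$, and this is precisely the nontrivial content of the result the paper invokes for (ii) (Theorem E of \cite{SY}, proved there in the general Widom/DCT setting, which the finite-gap case satisfies automatically); no conjugation duality of the kind you propose enters.
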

\begin{proof}
 Item (i) we obtain by iterating   \eqref{ort1}. A proof for (ii) in a much more general case can be found in \cite[Theorem E]{SY}.
\end{proof}

The following theorem describes all elements of $J(E)$ for a given finite-gap set $E$.

\begin{theorem}
 \label{thm:multbyz}
The multiplication operator by $\fz$ in $L^2(\alpha)$ with respect to the basis $\{\fe_n^{\alpha}\}$ from Theorem~\ref{thm:onb}
is the following Jacobi matrix $J=J(\alpha)$:
\begin{equation*}
 \fz \fe_{n}^{\alpha}=a(n;\alpha)\fe_{n-1}^{\alpha} + b(n;\alpha)\fe_{n}^{\alpha}+a(n+1;\alpha)\fe^{\alpha}_{n+1},
\end{equation*}
where
\begin{equation*}
 a(n;\alpha)=\mathcal A(\alpha-n\mu), \quad\mathcal A(\alpha)=(\fz \fb)(0)\sqrt{\frac{k^{\alpha}(0)}{k^{\alpha+\mu}(0)}}
\end{equation*}
and 
\begin{equation*}
 b(n;\alpha)=\mathcal B(\alpha-n \mu),~~ \mathcal B(\alpha)=\frac{\fz \fb(0)}{\fb^{\prime}(0)}+
\left\{  \frac{\left(k^{\alpha}\right)^{\prime}(0)}{k^{\alpha}(0)}- \frac{\left(k^{\alpha+\mu}\right)^{\prime}(0)}{k^{\alpha+\mu}(0)}\right\}
+\frac{\left(\fz \fb\right)^{\prime}(0)}{\fb^{\prime}(0)}.
\end{equation*}
This Jacobi matrix $J(\alpha)$ belongs to $J(E)$. Thus, we have a map from $\Gamma^{*}$ to $J(E)$. Moreover, this map is one-to-one.
\end{theorem}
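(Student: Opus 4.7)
The plan is to split the argument into four pieces: a covariance under the character shift $\alpha\mapsto\alpha-\mu$, three-diagonality, explicit evaluation of the two nontrivial diagonals, and finally the membership $J(\alpha)\in J(E)$ together with injectivity of the parametrization. The structural backbone is the identity $\fb\cdot\fe_n^{\alpha-\mu}=\fe_{n+1}^\alpha$, immediate from \eqref{defeal}, combined with the fact that since $\fb$ is inner ($|\fb|=1$ a.e.\ on $\bbT$) multiplication by $\fb$ is a unitary map $L^2(\alpha-\mu)\to L^2(\alpha)$ intertwining multiplication by $\fz$. This at once produces the stationarity $a(n;\alpha)=a(0;\alpha-n\mu)$ and $b(n;\alpha)=b(0;\alpha-n\mu)$, reducing the whole computation to the case $n=0$ in a shifted model.

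For three-diagonality, I would write $\fz\fe_n^\alpha=(\fz\fb)\,\fe_{n-1}^{\alpha-\mu}$. Because $\fz\fb\in H^\infty$ has character $\mu$, for $n\geq 1$ the product $\fz\fe_n^\alpha$ lies in $H^2(\alpha)$ and has a zero at $\zeta=0$ of order at least $n-1$. Iterating the orthogonal decomposition \eqref{ort1} then forces $\fz\fe_n^\alpha\in\overline{\mathrm{span}}\{\fe_k^\alpha:k\geq n-1\}$, killing every entry $\langle\fz\fe_n^\alpha,\fe_m^\alpha\rangle$ with $m\leq n-2$. Because $\fz$ is real on $\bbT$, multiplication by $\fz$ is self-adjoint, which symmetrizes the band and handles $m\geq n+2$; the cases $n\leq 0$ are absorbed by the $\fb$-covariance above.

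To evaluate $a(0;\alpha)$ I would argue by pole cancellation: $\fz\fe_0^\alpha-a(0;\alpha)\fe_{-1}^\alpha$ must belong to $H^2(\alpha)$. Inserting the leading Laurent terms $\fz(\zeta)\sim (\fz\fb)(0)/(\fb'(0)\zeta)$, $\fe_0^\alpha(0)=\sqrt{k^\alpha(0)}$, and $\fe_{-1}^\alpha(\zeta)\sim\sqrt{k^{\alpha+\mu}(0)}/(\fb'(0)\zeta)$ and matching singular parts yields $a(0;\alpha)=\cA(\alpha)$ as stated. The diagonal $b(0;\alpha)$ is then the projection of the $H^2(\alpha)$-function $\fz\fe_0^\alpha-\cA(\alpha)\fe_{-1}^\alpha$ onto the one-dimensional summand $\{\fe_0^\alpha\}$ in \eqref{ort1}, which amounts to evaluating at $\zeta=0$ and dividing by $\sqrt{k^\alpha(0)}$. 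Reading off the $\zeta^0$ coefficient of the two Laurent expansions and simplifying using $(\zeta\fz)(0)=(\fz\fb)(0)/\fb'(0)$ delivers precisely the three summands displayed in $\cB(\alpha)$. This Taylor-coefficient bookkeeping is routine but is, I expect, the main technical nuisance.

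Finally, the spectrum of multiplication by $\fz$ on $L^2(\alpha)$ is the essential range of $\fz|_{\bbT}$, which is exactly $E$. Reflectionlessness of $J(\alpha)$ follows from the standard fact (cf.\ \cite{SY}, Theorem~E) that in the $H^2(\alpha)$ functional model the two half-line Weyl functions are built symmetrically from the reproducing kernels and the splitting of $L^2(\alpha)$ into $H^2(\alpha)$ and its orthogonal complement, which forces the diagonal Green's function to be purely imaginary on $E$; hence $J(\alpha)\in J(E)$. Injectivity of $\alpha\mapsto J(\alpha)$ then reduces to the statement that the spectral measure of multiplication by $\fz$ at the cyclic vector $\fe_0^\alpha$ recovers $k^\alpha$, and therefore $\alpha\in\Gamma^*$, through the Abel map on the Jacobian. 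This last ingredient -- reflectionlessness plus injectivity -- is the deepest piece of the proof and will be quoted from \cite{SY} rather than re-derived.
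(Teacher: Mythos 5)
Your proof is correct in method, but note that the paper itself offers no proof of Theorem~\ref{thm:multbyz}: it is imported as known background from \cite{SY} (cf.\ also \cite[Theorem 9.4]{GT}), so there is nothing in the text to compare line by line. Your derivation is the natural one and matches the style the paper uses for the analogous GMP computation in Theorem~\ref{thm:multbyzsmp}: the covariance $\fb\,\fe_n^{\alpha-\mu}=\fe_{n+1}^{\alpha}$ gives stationarity, $\fz\fe_n^{\alpha}=\fb^{n-1}\bigl[(\fz\fb)\,k^{\alpha-n\mu}/\sqrt{k^{\alpha-n\mu}(0)}\bigr]\in\fb^{n-1}H^2(\alpha-(n-1)\mu)$ together with self-adjointness of multiplication by $\fz$ gives three-diagonality, and residue/constant-term matching at $\zeta=0$ (using Theorem~\ref{thm:onb}(ii) to identify $L^2(\alpha)\ominus H^2(\alpha)$ with the span of $\{\fe_m^{\alpha}\}_{m<0}$) gives the coefficients. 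Deferring completeness of the basis, reflectionlessness, and injectivity to \cite{SY} is exactly what the paper does implicitly.

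One caveat on the bookkeeping you declared routine: carrying out the $\zeta^0$-matching actually yields
\begin{equation*}
b(0;\alpha)=\frac{(\fz\fb)'(0)}{\fb'(0)}+\frac{(\fz\fb)(0)}{\fb'(0)}\left\{\frac{(k^{\alpha})'(0)}{k^{\alpha}(0)}-\frac{(k^{\alpha+\mu})'(0)}{k^{\alpha+\mu}(0)}\right\},
\end{equation*}
i.e.\ the braced difference of logarithmic derivatives is \emph{multiplied} by $(\zeta\fz)(0)=(\fz\fb)(0)/\fb'(0)$, not added to it as in the paper's display (which appears to contain a typo; the structure $\bar b+\partial_\xi\ln(\theta/\theta)$ in \eqref{aal} is consistent with the multiplicative version, the constant being absorbed into the direction $\xi$). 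So your claim that the computation ``delivers precisely the three summands displayed'' should be amended, but this does not affect the validity of your argument.
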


\begin{remark}
Using the representation of the reproducing kernels via $\theta$ functions, see Remark \ref{rema22},
 one gets $\cA(\alpha)$ and $\cB(\alpha)$ in  the form
\eqref{aal}.
\end{remark}

\begin{remark}
The following important relation is
an immediate  consequence of the above functional model 
\begin{equation}\label{sjm}
S^{-1}J(\alpha) S=J(\alpha-\mu), \quad S e_n:= e_{n+1}.
\end{equation}
In particular, $J(\alpha)$ is periodic if and only if $N\mu=\mathbf{0}_{\Gamma^*}$ for a certain  integer $N$.
\end{remark}

\subsection{Class $A(E,\bC)$ and Jacobi flow}

Now we turn to the functional model for $A(E,\bC)$.
The rational function $\Delta(z)$ and the  single valued function $\Psi(z)$,  $z\in \bar\bbC\setminus E$, were defined in
\eqref{eq14}-\eqref{eq15}.
Let us list characteristic properties of $\Psi(z)$:
\begin{itemize}
 \item[(i)] $|\Psi|<1$ in $\Omega$ and $|\Psi|=1$ on $E$,
 \item[(ii)] $\Psi(\infty)=\Psi(\bc_j)=0$, $1\le j\le g$, otherwise $\Psi(z)\not=0$.
\end{itemize}
All this implies that 
$$
\log\frac 1{|\Psi(z)|}=G(z)+\sum_{j=1}^g G(z,\bc_j).
$$ 
Therefore $\Psi(\fz(\zeta))=\fb(\zeta)\prod_{j=1}^g \fb_{\bc_j}(\zeta)$. In particular, $\mu+\sum^g_{j=1}\mu_{\bc_j}=\mathbf{0}_{\Gamma^*}$.

Let us fix $\zeta_{j}\in\bbD$ such that $\fz(\zeta_{j})=\bc_j$ and $\oc\gamma_j(\zeta_{j})=\bar \zeta_{j}$ for the generator $\oc\gamma_j$ of the group $\Gamma$. In order to construct a functional model for operators from $A(E,\bC)$, we start with the following counterpart of the orthogonal decomposition \eqref{ort1}:
\begin{equation}\label{smpbase0}
H^2(\alpha)=\{k^{\alpha}_{\zeta_{1}},\dots, k^{\alpha}_{\zeta_{g}}, k^{\alpha}\}\oplus\Psi H^2(\alpha)
=\{\ff^{\alpha}_0\}\oplus\dots\oplus\{\ff^{\alpha}_g\}\oplus\Psi H^2(\alpha),
\end{equation}
where
\begin{equation}\label{smpbase}
\ff_0^{\alpha}=\frac{e^{-\pi i\alpha(\oc\gamma_1)}k^\alpha_{\zeta_{1}}}{\sqrt{k^\alpha_{\zeta_{1}}(\zeta_{1})}}, \
\ff_1^\alpha=\frac{e^{-\pi i(\alpha-\mu_{\bc_1})(\oc\gamma_2)}\fb_{\bc_1} k_{\z_{2}}^{\alpha-\mu_{\bc_1}}}{\sqrt{k_{\z_{2}}^{\alpha-\mu_{\bc_1}}(\z_{2})}},...,
\ \ff_g^\alpha=\frac{\prod_{j=1}^g \fb_{\bc_j} k^{\alpha+\mu}}{\sqrt{k^{\alpha+\mu}(0)}}.
\end{equation}

\begin{theorem}
 \label{thm:onbsmp}
The system of functions 
\begin{equation}\label{gsmpbase}
\ff_{n}^{\alpha}=\ff_{n}^{\alpha}(\z;\bc_1,\dots,\bc_g)=\Psi^m \ff_j^\alpha,\quad 
 n=(g+1)m+j, \ j\in[0,\dots,g]
\end{equation}
\begin{itemize}
 \item[(i)] forms an orthonormal basis in $H^2(\alpha)$ for $n\in \mathbb N$ and
 \item[(ii)] forms an orthonormal basis in $L^2(\alpha)$ for $n\in\mathbb Z$.
\end{itemize}
\end{theorem}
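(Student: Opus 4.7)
The plan is to mimic the proof of Theorem~\ref{thm:onb}, replacing the single inner function $\fb$ with the inner function $\Psi$, and using the refined orthogonal decomposition \eqref{smpbase0} in place of \eqref{ort1}. The starting point is the observation that, since $\mu+\sum_{j=1}^g\mu_{\bc_j}=\mathbf{0}_{\Gamma^*}$, the function $\Psi=\fb\prod_{j=1}^g\fb_{\bc_j}$ is a character-automorphic inner function with \emph{trivial} character. Consequently, multiplication by $\Psi$ is an isometry from $H^2(\alpha)$ into itself and from $L^2(\alpha)$ onto itself.

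First I would justify the decomposition \eqref{smpbase0}. The subspace $\Psi H^2(\alpha)$ has codimension $g+1$ in $H^2(\alpha)$, its orthogonal complement being the finite-dimensional space spanned by reproducing kernels at the zeros of $\Psi$ in $\bbD$, namely $\{k^\alpha_{\zeta_1},\dots,k^\alpha_{\zeta_g},k^\alpha\}$. I would then show that the vectors $\ff_0^\alpha,\dots,\ff_g^\alpha$ defined in \eqref{smpbase} form an orthonormal basis of this complement. The key structural point is the ordering: $\ff_j^\alpha$ is the normalization of $\prod_{i=1}^{j}\fb_{\bc_i}$ times a reproducing kernel evaluated at $\zeta_{j+1}$ (with $\zeta_{g+1}=0$), and since $\prod_{i=1}^j\fb_{\bc_i}$ vanishes at $\zeta_1,\dots,\zeta_j$, one has $\ff_j^\alpha\perp\{k^\alpha_{\zeta_1},\dots,k^\alpha_{\zeta_j}\}$. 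The Gram–Schmidt-like step $\ff_j^\alpha\perp\ff_{j<k}^\alpha$ then follows, and the normalization by the square root of the reproducing-kernel value at $\zeta_{j+1}$ yields unit norm; here one uses the isometric property of multiplication by inner Blaschke products together with the transformation rule of the character, $k^\alpha\mapsto k^{\alpha-\mu_{\bc_1}}\mapsto\dots\mapsto k^{\alpha+\mu}$, which is exactly what the definitions of $\ff_j^\alpha$ encode.

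For (i), I iterate \eqref{smpbase0}: since $\Psi$ has trivial character, $\Psi H^2(\alpha)$ is itself of the form $\Psi\cdot H^2(\alpha)$ with the \emph{same} $\alpha$, so applying the decomposition again yields
\begin{equation*}
H^2(\alpha)=\bigoplus_{j=0}^{g}\{\ff_j^\alpha\}\oplus\Psi\Bigl(\bigoplus_{j=0}^{g}\{\ff_j^\alpha\}\Bigr)\oplus\Psi^2 H^2(\alpha),
\end{equation*}
and so on. Since $\Psi$ is a Blaschke product (inner), $\bigcap_{m\ge 0}\Psi^m H^2(\alpha)=\{0\}$ by the standard Beurling-type argument, and the system $\{\Psi^m\ff_j^\alpha\}_{m\ge 0,\,0\le j\le g}$ is complete in $H^2(\alpha)$. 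This gives the identification $\ff_n^\alpha=\Psi^m\ff_j^\alpha$ with $n=(g+1)m+j\ge 0$.

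For (ii), I would argue in parallel with the proof of Theorem~\ref{thm:onb}(ii) cited from \cite{SY}. Extending to negative $m$ uses that $|\Psi|=1$ on $E$, so multiplication by $\Psi^{-1}$ is an isometry on $L^2(\alpha)$ (even though $\Psi^{-1}\notin H^\infty$). One checks that $L^2(\alpha)=\overline{H^2(\alpha)+\Psi^{-1}H^2(\alpha)+\Psi^{-2}H^2(\alpha)+\cdots}$, which in the Widom-domain setting reduces to the direct integral/trigonometric-type completeness statement on $\partial\bbD$. Combined with (i) this identifies the two-sided system as an orthonormal basis of $L^2(\alpha)$. The main technical obstacle is precisely this step: the density of $\bigcup_{m\le 0}\Psi^m H^2(\alpha)$ in $L^2(\alpha)$, and checking that the particular ordered Gram–Schmidt vectors $\ff_j^\alpha$ in \eqref{smpbase} really have unit norm and pairwise orthogonality (which requires careful bookkeeping of the Blaschke products $\fb_{\bc_j}$, their characters $\mu_{\bc_j}$, and the explicit phases $e^{-\pi i(\cdot)(\oc\gamma_\bullet)}$). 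Once these verifications are completed, the theorem follows directly from iterating \eqref{smpbase0}.
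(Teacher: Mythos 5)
Your proposal is correct and follows essentially the same route as the paper: item (i) by iterating the orthogonal decomposition \eqref{smpbase0} (using that $\Psi$ has trivial character and is inner, so $\bigcap_m\Psi^mH^2(\alpha)=\{0\}$), and item (ii) by invoking the description of $L^2(\alpha)\ominus H^2(\alpha)$ from \cite{SY}, exactly as in the proof of Theorem~\ref{thm:onb}. The paper's own proof is a two-line reference to these same two ingredients; your write-up merely supplies the bookkeeping it leaves implicit.
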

\begin{proof}
 Item (i) follows from \eqref{smpbase0} and for (ii) we have to use the description of the orthogonal complement 
 $L^2(\alpha)\ominus H^2(\alpha)$, see \cite{SY}.
\end{proof}

Similarly as we had before, this allows us to parametrize all elements of $A(E,\bC)$ for a given  $E$ by the 
characters of $\Gamma^*$.

\begin{theorem}
 \label{thm:multbyzsmp}
In the above notations
the multiplication operator by $\fz$ with respect to the basis $\{\ff_n^{\alpha}\}$ is a GMP matrix $A(\alpha;\bC)\in A(E,\bC)$.
Moreover, this map $\Gamma^*\to A(E;\bC)$ is one-to-one up to the identification 
$
(p_j,q_j)\mapsto (-p_j,-q_j) \ \text{in} \ A(E;\bC)$, $ 0\le j\le g-1.$
\end{theorem}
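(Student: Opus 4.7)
The plan is to verify three things in order: that multiplication by $\fz$ in the basis $\{\ff_n^\alpha\}_{n\in\bbZ}$ has the class-$\bbA$ block-Jacobi structure of \eqref{n1}-\eqref{n3}, that this matrix is a GMP matrix satisfying the magic formula \eqref{magic} and hence belongs to $A(E,\bC)$, and that the resulting map $\Gamma^*\to A(E,\bC)$ is a bijection modulo the sign identification. The organizing observation is that the inner function $\Psi$ acts as the shift $S^{g+1}$ in the basis: by \eqref{gsmpbase} directly, $\Psi\ff_n^\alpha=\ff_{n+g+1}^\alpha$. Since $|\Psi|=1$ on $E$, the conjugate identity with $\bar\Psi=1/\Psi$ gives that $1/\Psi$ acts as $S^{-(g+1)}$. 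Combined with $\Delta=\Psi+1/\Psi$ on $E$ from \eqref{eq15}, this immediately yields the magic formula $\Delta(A(\alpha;\bC))=S^{g+1}+S^{-(g+1)}$; periodicity with period $g+1$ then follows because $\fz$ commutes with $\Psi$.

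For the class-$\bbA$ structure, I would compute $\langle\fz\ff_m^\alpha,\ff_n^\alpha\rangle$ with $m=(g+1)s+j$, $n=(g+1)t+k$ and $0\le j,k\le g$. Cancelling $\Psi^s\overline{\Psi^t}=\Psi^{s-t}$ on the boundary reduces this to an integral of $\fz\Psi^{s-t}\ff_j^\alpha\overline{\ff_k^\alpha}$; for $|s-t|\ge 2$ the integrand, properly interpreted as a character-automorphic $H^1$-object, has enough vanishing at $\fz^{-1}(\infty)=\{0\}$ (the only pole of $\fz$) to force the integral to vanish, giving the required $(g+1)$-block bandwidth. The shape $A(\vp)=\delta_g\vp^*$ of the off-diagonal block reflects that only $\ff_g^\alpha$ couples to the next block: among the basis of $H^2(\alpha)\ominus\Psi H^2(\alpha)$, it is the unique element whose $\fz$-multiple still carries the pole at $\zeta=0$ needed to match the extra $\Psi$ factor. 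The diagonal block $B(\vbp)$ is then obtained from residue computations at the reproducing points $\zeta_1,\dots,\zeta_g$: the piece $\tilde\bC$ with $\bc_k$ on position $(k,k)$ emerges from $(\bc_k-\fz)$ vanishing at $\zeta_k$, while the triangular split $(\vq\vp^*)^-+(\vp\vq^*)^+$ encodes the asymmetric, ordered choice of $\bC$ in \eqref{smpbase} and the non-orthonormality of the underlying reproducing-kernel basis. The remaining GMP condition, that $S^{-k}(\bc_k-A)^{-1}S^k$ also lies in $\bbA$, follows from the conformal invariance \eqref{aut1}: the M\"obius map $w_j=1/(\bc_j-z)$ sends $(E,\bC)$ to $(E_j,\bC_j)$ and fixes the Ahlfors function, so the $\ff$-basis built in the $w$-plane realizes multiplication by $(\bc_j-\fz)^{-1}$ as a class-$\bbA$ matrix.

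For bijectivity, surjectivity onto $A(E,\bC)$ invokes Proposition \ref{prop18}: any $\oc A\in A(E,\bC)$ satisfies the magic formula, so via the spectral theorem it is unitarily equivalent to multiplication by $\fz$ on some $L^2(\alpha)$, with $\alpha\in\Gamma^*$ uniquely determined by the associated ``Dirichlet'' spectral data in the gaps. Injectivity up to $(p_j,q_j)\mapsto(-p_j,-q_j)$ follows from observing that each reproducing kernel $k^\alpha_{\zeta_{j+1}}$ entering \eqref{smpbase} depends on the choice of one of the two preimages of $\bc_{j+1}$ in a fundamental domain, namely $\zeta_{j+1}$ versus $\oc\gamma_{j+1}(\zeta_{j+1})$; the phase factor $e^{-\pi i\alpha(\oc\gamma_{j+1})}$ is arranged so that swapping this choice multiplies $\ff_j^\alpha$ by $-1$, which in turn flips the signs of $p_j$ and $q_j$ while preserving the underlying operator.

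The main obstacle is the detailed residue and Gram-matrix computation identifying $B(\vbp)$ with the precise form $(\vq\vp^*)^-+(\vp\vq^*)^++\tilde\bC$: this is a structural factorization, not merely a bandwidth estimate, and expresses $\fz$-multiplication on the quotient $H^2(\alpha)/\Psi H^2(\alpha)$ in a basis whose ordering is forced by $\bC$. Matching the triangular parts against the self-adjointness of $\fz$ is exactly what pins down the pair $(\vp,\vq)$, and dually it is what produces the sign ambiguity described above.
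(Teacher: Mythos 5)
Your overall architecture is reasonable and several of your observations are correct and do appear in the paper (that $\Psi$ acts as $S^{g+1}$ in the basis \eqref{gsmpbase}, hence the magic formula; the bandwidth count; the conformal-invariance mechanism \eqref{aut1} behind the resolvent condition). But there are two genuine gaps.

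First, the heart of the theorem is exactly the step you defer as ``the main obstacle'': showing that the diagonal block has the precise form $B(\vbp)=(\vq\,\vp^*)^-+(\vp\,\vq^*)^++\tilde\bC$ with \emph{real} $p_j,q_j$. A bandwidth argument cannot produce this rank-one-plus-triangular factorization, and ``residue computations at the reproducing points'' is a description of what must be done, not a proof. The paper carries this out explicitly: for $0\le m<n\le g$ it writes $\langle \fz\ff_n^\alpha,\ff_m^\alpha\rangle$ as an inner product against a normalized reproducing kernel at $\zeta_{m+1}$, subtracts the term $(\fb\fz)(0)\tilde\ff(0)\,k^{\beta_m+\mu}/(\fb\, k^{\beta_m+\mu}(0))$ to compensate the pole of $\fz$ at $0$, and then the reproducing property evaluates everything at $\zeta_{m+1}$, yielding the exact product $p_n(\alpha)q_m(\alpha)$ with $p_n(\alpha)$ as in \eqref{29apr1}. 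Reality of $p_j,q_j$ is then not automatic but follows from the symmetry $\overline{k^\beta(\bar\zeta)}=k^\beta(\zeta)$ combined with $\oc\gamma_j(\zeta_j)=\bar\zeta_j$, which forces $e^{-\pi i\beta_j(\oc\gamma_j)}k^{\beta_j}(0,\zeta_j)$ to be real; and the $\pm$ ambiguity in the square root of $e^{-2\pi i\beta_j(\oc\gamma_j)}$ is precisely the source of the sign identification $(p_j,q_j)\mapsto(-p_j,-q_j)$ in the statement. Your attribution of the sign ambiguity to the choice of preimage of $\bc_{j+1}$ is at best a reformulation of this and is not established.

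Second, your surjectivity argument is circular: Proposition \ref{prop18} is proved in the paper \emph{using} the parametrization of $A(E,\bC)$ by $\Gamma^*$ established in this very theorem (its proof opens with ``First of all, we have a parametrization of $A(E,\bC)$ by the characters $\Gamma^*$''), so you cannot invoke it here. The paper instead takes a periodic $\oc A\in A(E,\bC)$, forms its resolvent function $r_+(z)$ via the transfer matrix of Theorem \ref{th213}, and recovers $\alpha$ from the divisor data exactly as in the Jacobi case in \cite{SY}. Your parenthetical remark about ``Dirichlet spectral data in the gaps'' points in the right direction, but as written the argument rests on a statement whose proof depends on the theorem being proved.
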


\begin{proof}
We claim that the structure of the matrix is fixed by the choice of the orthonormal basis. In particular we need to check that, under the normalization \eqref{smpbase}, $p_j(\alpha)$
 and $q_j(\alpha)$ are real. 
 
 For $\beta_j=\alpha-\sum_{k=1}^{j}\mu_{c_k}$, we have
 \begin{equation}\label{29apr1}
 p_j(\alpha)=\langle \fz \ff_j^\alpha, \ff_{-1}^\alpha \rangle=(\fb\fz)(0)\prod_{k=1}^{j-1}\fb_{\bc_k}(0)
 e^{-\pi i\beta_j(\oc\gamma_j)}\frac{k^{\beta_j}(0,\zeta_{j})}{\sqrt{k^{\beta_j}_{\zeta_{j}}(\zeta_{j})k^{\alpha+\mu}(0)}}.
\end{equation}
 Since $\overline{k^\beta(\bar \zeta)}=k^\beta(\zeta)$  for all $\beta\in\Gamma^*$, we get
 $$
k^\beta(\zeta_{j})= \overline{k^\beta(\bar\z_{j})}= \overline{k^\beta(\oc\gamma_j(\zeta_{j}))}= 
 e^{-2\pi i\beta(\oc\gamma_j)}\overline{k^\beta(\zeta_{j})}.
 $$
 Therefore, $e^{-\pi i\beta(\oc\gamma_j)}\overline{k^\beta(\zeta_{j})}=
 e^{-\pi i\beta(\oc\gamma_j)}k^\beta(0,\z_{j})$ is real. Note that the square root of $e^{-2\pi i\beta(\oc\gamma_j)}$ is defined up to the multiplier $\pm 1$.
 
 To compute the entries of the matrix $B(\vbp)$, and actually to show its specific structure,  we use a standard trick related to reproducing kernels. Let $0\le m <n\le g$. Then
 $$
 \langle \fz \ff^\alpha_n, \ff^\alpha_m \rangle=
 \langle \fz \fb_{\bc_{m}}\dots \fb_{\bc_{n-1}}
 \frac{e^{-i\pi(\beta_n(\oc\gamma_n)-\beta_m(\oc\gamma_m))}k_{\z_{n}}^{\beta_n}}{\sqrt{k^{\beta_{n}}_{\zeta_{n}}(\z_{n})}}, 
  \frac{k_{\z_{m}}^{\beta_m}}{\sqrt{k^{\beta_{m}}_{\zeta_{m}}(\z_{m})}} \rangle.
 $$
 Denoting for a moment the first function in the scalar product by $\tilde \ff$, since $\tilde \ff(\z_{m})=0$, we can continue with
 $$
 = \langle \fz \tilde \ff -(\fb\fz)(0)\tilde \ff(0)\frac{k^{\beta_m+\mu}}{\fb k^{\beta_m+\mu}(0)}
 , 
 \frac {k_{\z_{m}}^{\beta_m}}{\sqrt{k^{\beta_{m}}_{\zeta_{m}}(\z_{m})}}\rangle
  =-\frac{(\fb\fz)(0)\tilde \ff(0)}{  \sqrt{k^{\beta_{m}}_{\zeta_{m}}(\z_{m})}}\frac{k^{\beta_m+\mu}(\z_{m})}{\fb(\z_{m}) k^{\beta_m+\mu}(0)
}
 $$
 $$
 =- \frac{(\fb\fz)(0)
 (\fb_{\bc_{m}}\dots \fb_{\bc_{n-1}})(0)
e^{-i\pi\beta_n(\oc\gamma_n)}k_{\z_{n}}^{\beta_n}(0)}{\sqrt{k^{\beta_{n}}_{\zeta_{n}}(\z_{n})}}
 \frac{e^{i\pi\beta_m(\oc\gamma_m)}k^{\beta_m+\mu}(\z_{m})}{\fb(\z_{m}) k^{\beta_m+\mu}(0)\sqrt{k^{\beta_{m}}_{\zeta_{m}}(\z_{m})}}=p_n(\alpha)q_m(\alpha)
 $$
 where, as before,
 $p_n(\alpha)$ is of the form \eqref{29apr1} and
 $$
 q_m(\alpha):=-\frac{e^{i\pi\beta_m(\oc\gamma_m)}k^{\beta_m+\mu}(\z_{m})\sqrt{k^{\alpha+\mu}(0)}}
 {(\fb_{\bc_{1}}\dots \fb_{\bc_{m-1}})(0)
 \fb(\z_{m}) k^{\beta_m+\mu}(0)\sqrt{k^{\beta_{m}}_{\zeta_{m}}(\z_{m})}}.
 $$
 Similarly, we get the representation for the diagonal terms.
  
 If a periodic $\oc A\in A(E,\bC)$ is given, we introduce its resolvent function $r_+(z)$. As usual it can be expressed by means of the \textit{transfer matrix}, see Theorem \ref{th213} below. After that we define $\alpha$ exactly as in the Jacobi matrix case, see e.g. \cite[(2.3.2)-(2.3.3) and Theorem A]{SY}.
\end{proof}

\begin{definition}\label{def:312}
We define the Jacobi flow  on $A(E;\bC)$ as the dynamical system generated by the following map (see \eqref{sjm}, \eqref{defjfg}): 
$$
\mathcal J A(\alpha)=A(\alpha-\mu), \quad \alpha\in\Gamma^*.
$$
\end{definition}

We can describe this operation in a very explicit form.
\begin{lemma}
 \label{thm:jacobiflowper}
 Let
 \begin{equation}\label{oofphi}
\bo(\phi)=\begin{bmatrix}
\sin\phi&
\cos\phi\\
\cos\phi&-\sin\phi
\end{bmatrix}.
\end{equation}
Let $\cO(\alpha)$ be the unitary, periodic $(g+1)\times (g+1)$-block diagonal  matrix given by
\begin{equation}
\label{eqn:ufunctional}
 \cO(\alpha)\begin{bmatrix}e_{(g+1)m}&\dots& e_{(g+1)m+g}
 \end{bmatrix}=\begin{bmatrix}e_{(g+1)m}&\dots& e_{(g+1)m+g}
 \end{bmatrix} O(\alpha), 
 \end{equation}
 where
  \begin{equation}\label{jfp0}
O(\alpha)=
 \begin{bmatrix}
I_{g-2}&0\\
0&\bo(\phi(\alpha))
\end{bmatrix},\ 
\begin{bmatrix}
\sin (\phi(\alpha))& \cos(\phi(\alpha))
\end{bmatrix}=
\frac{\begin{bmatrix} 
p_{g-1}(\alpha)& p_g(\alpha)
\end{bmatrix}}
{\sqrt{p_{g-1}^2(\alpha)+p_g^2(\alpha)}}.
 \end{equation}
 Then
\begin{equation}\label{jfp}
 \mathcal O A(\alpha;\bC):=S^{-1}\mathcal O(\alpha)^*A(\alpha;\bC)\mathcal O(\alpha)S=
 A(\alpha+\mu_{\bc_g};\bc_g,\bc_1,\dots,\bc_{g-1}).
\end{equation}
\end{lemma}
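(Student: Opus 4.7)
My plan is to prove the identity by showing that $\tilde A := S^{-1}\cO(\alpha)^*A(\alpha;\bC)\cO(\alpha)S$ lies in $A(E,\bC')$ with $\bC'=(\bc_g,\bc_1,\dots,\bc_{g-1})$ and has character $\alpha'=\alpha+\mu_{\bc_g}$. By Proposition~\ref{prop18} the first assertion reduces to (i) $\tilde A$ has the GMP structure for the ordering $\bC'$, and (ii) $\tilde A$ satisfies the magic formula $\Delta(\tilde A)=S^{g+1}+S^{-(g+1)}$. Item~(ii) is essentially free: $\cO(\alpha)$ is $(g+1)$-block diagonal with the \emph{same} rotation $O(\alpha)$ in every block, so it commutes with $S^{\pm(g+1)}$; the shift $S$ trivially commutes with $S^{\pm(g+1)}$; and $\Delta=\Delta_E$ depends only on $E$, not on the ordering of the $\bc_j$'s. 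Hence conjugation of the magic formula of $A(\alpha;\bC)$ by $\cO(\alpha)S$ returns it unchanged.

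For (i), I will work in the functional model of Theorem~\ref{thm:onbsmp}. Write $\ff^\alpha_n$ for the ONB of $L^2(\alpha)$ adapted to $\bC$ and $\tilde\ff_n$ for the analogous ONB of $L^2(\alpha')$ adapted to $\bC'$. The explicit formulas~\eqref{smpbase} give, for $j=1,\dots,g-1$,
\[
\tilde\ff_j=\fb_{\bc_g}\,\ff^\alpha_{j-1},
\]
because the character shift $\alpha'-\mu_{\bc_g}=\alpha$ absorbs the leading Blaschke factor, the kernels $k_{\zeta_k}^{\cdot}$ line up after relabelling $c_k'=\bc_{k-1}$, and the phase factors collapse. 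Since multiplication by $\fb_{\bc_g}$ is a unitary $L^2(\alpha)\to L^2(\alpha')$ intertwining $M_\fz$, these identities account for the $I_{g-1}$ part of $O(\alpha)$ composed with the one-step shift $S$. The remaining vectors $\tilde\ff_0,\tilde\ff_g$ span a $2$-dimensional subspace whose pullback to $L^2(\alpha)$ leaks out of the $0$-th block: $\fb_{\bc_g}^{-1}\tilde\ff_0$ acquires a simple pole at $\zeta_g$ and so reaches the component $\ff^\alpha_{-1}=\Psi^{-1}\ff^\alpha_g$ in block $-1$. After applying the shift $S$, the resulting $2$-dim plane becomes $\mathrm{span}\{\ff^\alpha_{g-1},\ff^\alpha_g\}$, and $\bo(\phi)$ realises the change of ONB inside that plane. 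The angle $\phi$ in~\eqref{jfp0} is forced by~\eqref{n2}--\eqref{n3}: the column of $A(\alpha;\bC)$ coupling block $-1$ to block $0$ is $\vp_0$ by \eqref{29apr1}, and $\bo(\phi)$ with $(\sin\phi,\cos\phi)\propto(p_{g-1},p_g)$ is precisely what rotates its last two entries $(p_{g-1},p_g)$ to the single-entry form $(\sqrt{p_{g-1}^2+p_g^2},0)$ required by $A(\tilde\vp_0)=\delta_g\tilde\vp_0^{\,*}$. Periodicity of $\cO(\alpha)$ is then automatic from periodicity of $A(\alpha;\bC)$ with period $g+1$.

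For the character identification, since the intertwining unitary between $L^2(\alpha)$ and $L^2(\alpha')$ is multiplication by $\fb_{\bc_g}$, character-automorphic of character $\mu_{\bc_g}$, the transformed matrix naturally lives in the $\alpha+\mu_{\bc_g}$-representation; alternatively, once~(i) puts $\tilde A$ into $A(E,\bC')$, the bijection $\Gamma^*\to A(E,\bC')$ of Theorem~\ref{thm:multbyzsmp} determines the character from the resolvent function $r_+$ at a single point, yielding $\alpha+\mu_{\bc_g}$. As a consistency check, iterating the lemma $g$ times cycles $\bC$ back to itself and shifts the character by $\sum_{j=1}^g\mu_{\bc_j}=-\mu$, recovering $\cJ A(\alpha)=A(\alpha-\mu)$ from Definition~\ref{def:312}. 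The main technical obstacle is the bookkeeping in step~(i), especially verifying that the new off-diagonal block retains the form $\delta_g\tilde\vp_0^{\,*}$ (and not something more general) after the rotation, and that the new diagonal block matches $\tilde\bC'=\mathrm{diag}(\bc_g,\bc_1,\dots,\bc_{g-1},0)$ up to the prescribed $(\vp\vq^*)^{\pm}$ contributions; this is carried out via the reproducing-kernel identities underlying~\eqref{29apr1}.
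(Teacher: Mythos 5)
Your argument is essentially the paper's own: the proof in the text also works in the functional model, observing that reordering the reproducing kernels $\{k^\alpha_{\zeta_1},\dots,k^\alpha_{\zeta_g},k^\alpha\}$ to put $k^\alpha_{\zeta_g}$ last changes only the final two orthonormal vectors of each block by a rotation $\bo(\phi)$, that multiplication by $\fb_{\bc_g}$ then intertwines the resulting basis with the shifted GMP basis for $(\alpha+\mu_{\bc_g};\bc_g,\bc_1,\dots,\bc_{g-1})$, and that the angle is fixed by the requirement $p_{g-1}(\alpha)\cos\phi-p_g(\alpha)\sin\phi=0$ forced by the $\delta_g\vp^*$ form of the off-diagonal block. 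Your additional detour through the magic formula and Proposition~\ref{prop18} is harmless but redundant, since once the basis identification is complete the matrix is $A(\alpha+\mu_{\bc_g};\bc_g,\bc_1,\dots,\bc_{g-1})$ by definition of the functional model (and, as a minor point, the pullback $\fb_{\bc_g}^{-1}\tilde\ff_0$ lands in $\Psi^{-1}\mathrm{span}\{\ff^\alpha_{g-1},\ff^\alpha_g\}=\mathrm{span}\{\ff^\alpha_{-2},\ff^\alpha_{-1}\}$ rather than on $\ff^\alpha_{-1}$ alone, which does not affect your conclusion).
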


\begin{proof}
In fact, we consider here the following elementary operation on the set of periodic GMP matrices: we switch the oder in the orthogonalization procedure of the family of reproducing kernels from $\{k^{\alpha}_{\zeta_{1}},\dots, k^{\alpha}_{\zeta_{g}}, k^{\alpha}\}$ to $\{k^{\alpha}_{\zeta_{1}},\dots, k^{\alpha}, k^{\alpha}_{\zeta_{g}}\}$. Let $\{\tilde \ff_n^\alpha\}$ be the new orthonormal system
in $L^2(\alpha)$.
It is easy to observe that
$$
\fb_{\bc_g}(\z)\tilde\ff^\alpha_n(\z)=\ff^{\alpha+\mu_{\bc_g}}_{n-1}(\z;\bc_g,\bc_1,\dots,\bc_{g-1}).
$$
 That is, up to the shift, we derived a GMP basis of the form \eqref{smpbase}, but with the new ordering $(\bc_g,\bc_1,\dots,\bc_{g-1})$ and the new character $\alpha+\mu_{c_g}$. Note that passing from one to another basis in a two dimensional space is  a rotation 
 $\bo(\phi)$. Thus the matrices of multiplication by $\fz$ with respect to 
 $\{\ff^{\alpha}_{n}(\z;\bc_1,\dots,\bc_{g})\}$ and $\{\ff^{\alpha+\mu_{\bc_g}}_{n}(\z;\bc_g,\dots,\bc_{g-1})\}$ are related 
  by \eqref{jfp}. It remains to compute the angle $\phi$ by means of $A(\alpha;\bC)$. Since 
  $A(\alpha+\mu_{\bc_g};\bc_g,\dots,\bc_{g-1})\in\bbA$, we have
  $$
  \bo(\phi)^*\begin{bmatrix}
0&0\\
p_{g-1}(\alpha)&p_g(\alpha)
\end{bmatrix}\bo(\phi)
=
\begin{bmatrix}
*&0\\
*&0
\end{bmatrix}.
  $$
  That is, $p_{g-1}(\alpha)\cos\phi-p_g(\alpha)\sin\phi=0$. Since $p_g(\alpha)>0$ we obtain \eqref{jfp0} for $\bo(\phi)$ of the form 
  \eqref{oofphi}.
\end{proof}
\begin{theorem}\label{th211}
In the above notations
\begin{equation}\label{perjac}
\cJ A(\alpha;\bC)=\cO^{\circ g} A(\alpha;\bC)
\end{equation}
\end{theorem}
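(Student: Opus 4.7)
The plan is to iterate Lemma \ref{thm:jacobiflowper} and use the fact that the character of the inner function $\Psi$ is trivial.

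First I would track what happens to the ordering and the character under repeated applications of $\cO$. By Lemma \ref{thm:jacobiflowper}, one application sends
$$A(\alpha; \bc_1,\dots,\bc_g) \;\longmapsto\; A(\alpha + \mu_{\bc_g}; \bc_g, \bc_1,\dots,\bc_{g-1}).$$
To iterate I just need to re-apply the same recipe to the matrix on the right-hand side, with the new ordering $(\bc_g, \bc_1, \dots, \bc_{g-1})$ in place of $\bC$; the ``last'' point is now $\bc_{g-1}$, so the second application gives
$$A(\alpha + \mu_{\bc_g} + \mu_{\bc_{g-1}}; \bc_{g-1}, \bc_g, \bc_1,\dots,\bc_{g-2}).$$
An easy induction shows that after $k$ applications the ordering is the cyclic shift $(\bc_{g-k+1},\dots,\bc_g,\bc_1,\dots,\bc_{g-k})$ and the character has been shifted by $\mu_{\bc_g}+\mu_{\bc_{g-1}}+\dots+\mu_{\bc_{g-k+1}}$.

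Taking $k=g$, the cyclic permutation returns to the original ordering $\bC=(\bc_1,\dots,\bc_g)$, and the total character shift is $\sum_{j=1}^{g}\mu_{\bc_j}$. Hence
$$\cO^{\circ g} A(\alpha;\bC) \;=\; A\Bigl(\alpha + \sum_{j=1}^{g}\mu_{\bc_j};\;\bC\Bigr).$$

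Next I would invoke the factorization
$\Psi(\fz(\zeta)) = \fb(\zeta)\prod_{j=1}^{g}\fb_{\bc_j}(\zeta),$
recorded just before \eqref{smpbase0}. Since $\Psi$ is a single-valued function on $\Omega$, it is $\Gamma$-automorphic with trivial character, so comparing characters on both sides gives $\mu+\sum_{j=1}^{g}\mu_{\bc_j}=\mathbf{0}_{\Gamma^*}$, i.e.\ $\sum_{j=1}^g\mu_{\bc_j}=-\mu$. Plugging this into the previous formula yields
$$\cO^{\circ g}A(\alpha;\bC)\;=\;A(\alpha-\mu;\bC)\;=\;\cJ A(\alpha;\bC),$$
where the last equality is Definition \ref{def:312}. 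This proves \eqref{perjac}.

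The only potential subtlety is bookkeeping the cumulative effect of the cyclic shifts: at each step the angle $\phi$ in \eqref{jfp0} is read off from the \emph{current} matrix (not the original $A(\alpha;\bC)$), and the rotation $\cO(\alpha)$ in \eqref{eqn:ufunctional} together with the unilateral shift $S$ has to be tracked carefully so that composing $g$ such conjugations yields a well-defined unitary (this is what \eqref{perjac} implicitly asserts). Once one trusts Lemma \ref{thm:jacobiflowper} as a black box and simply iterates it symbolically on the character-ordering pair $(\alpha;\bC)$, the remaining work is the one-line character identity $\sum\mu_{\bc_j}=-\mu$ extracted from the uniformization of $\Psi$.
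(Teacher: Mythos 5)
Your proposal is correct and follows essentially the same route as the paper: iterate Lemma \ref{thm:jacobiflowper}, observe that after $g$ cyclic permutations the ordering returns to $\bC$, and use the character identity $\sum_{j=1}^g\mu_{\bc_j}=-\mu$ coming from the single-valuedness of $\Psi$ to conclude $\cO^{\circ g}A(\alpha;\bC)=A(\alpha-\mu;\bC)=\cJ A(\alpha;\bC)$. The paper's proof is a one-line version of exactly this argument.
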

\begin{proof}
We use \eqref{jfp}, having in mind that $\sum_{j=1}^g\mu_{\bc_j}=-\mu$  and that after all permutations we obtain the original ordering $\bC$.
\end{proof}

The  next lemma allows us to estimate components of the vector $\ff^{\alpha}_j$, $j=0,..,g,$ in its decomposition with respect to the  basis $\{\fe^\alpha_n\}_{n\ge 0}$.

\begin{lemma}
Let
$\ff_j^\alpha=\sum_{k=0}^\infty F^j_k(\alpha) \fe_k^\alpha$.
Then
\begin{equation}\label{estF}
|F^j_k(\alpha)|\le C(E) \eta^k, \quad j=0,\dots,g,
\end{equation}
where $1>\eta>\max\{|\fb(\z_1)|,\dots,|\fb(\z_g)|\}$.
\end{lemma}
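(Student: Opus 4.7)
The plan is to use the \emph{model space} $M(\alpha) := H^2(\alpha) \ominus \Psi H^2(\alpha)$. By the decomposition \eqref{smpbase0}, the functions $\{\ff^\alpha_j\}_{j=0}^g$ form an orthonormal basis of $M(\alpha)$, so
$$\sum_{j=0}^g |F^j_k(\alpha)|^2 \;=\; \|P_{M(\alpha)}\fe^\alpha_k\|^2,$$
and it is enough to control the projection norm on the right by $C(E)^2\eta^{2k}$. The inner function $\Psi$ vanishes in $\Omega$ exactly at $\infty, \bc_1, \ldots, \bc_g$; its zeros in $\bbD$ are therefore the $\Gamma$-orbits of $\zeta_0 := 0, \zeta_1, \ldots, \zeta_g$. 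Two consequences follow. First, since $f - P_{M(\alpha)} f \in \Psi H^2(\alpha)$ vanishes at every $\zeta_l$, one has the interpolation identity $(P_{M(\alpha)} f)(\zeta_l) = f(\zeta_l)$ for $l = 0, \ldots, g$. Second, each reproducing kernel $k^\alpha_{\zeta_l}$ lies in $M(\alpha)$ because $\langle k^\alpha_{\zeta_l}, \Psi h\rangle = \overline{\Psi(\zeta_l)}\,h(\zeta_l) = 0$ for all $h \in H^2(\alpha)$; generically these $g+1$ kernels span the $(g+1)$-dimensional space $M(\alpha)$.

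The second ingredient is a pointwise bound on $\fe^\alpha_k$ at the nodes $\zeta_l$. At $\zeta_0 = 0$ the factor $\fb^k$ in \eqref{defeal} forces $\fe^\alpha_k(0) = 0$ for every $k \ge 1$. For $l \ge 1$, Cauchy--Schwarz applied to the reproducing kernel $k^{\alpha-k\mu}$ gives
$$|\fe^\alpha_k(\zeta_l)| \;=\; |\fb(\zeta_l)|^k \cdot \frac{|k^{\alpha-k\mu}(\zeta_l, 0)|}{\sqrt{k^{\alpha-k\mu}(0)}} \;\le\; |\fb(\zeta_l)|^k \sqrt{k^{\alpha-k\mu}_{\zeta_l}(\zeta_l)}.$$
The factor $\sqrt{k^\beta_{\zeta_l}(\zeta_l)}$ is continuous in $\beta$ on the compact torus $\Gamma^*$, hence bounded by a constant depending only on $E$. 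Thus $|\fe^\alpha_k(\zeta_l)| \le C(E)|\fb(\zeta_l)|^k$ uniformly in $\alpha$.

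The last step is a sampling estimate: since $P_{M(\alpha)}\fe^\alpha_k \in M(\alpha)$ is determined by its values $\fe^\alpha_k(\zeta_l)$, $l = 0,\dots,g$, one expects an inequality of the form $\|h\|^2 \le C(E)\sum_{l=0}^g |h(\zeta_l)|^2$ for $h \in M(\alpha)$, uniform in $\alpha\in\Gamma^*$. Combining this with the pointwise bound yields
$$\sum_{j=0}^g |F^j_k(\alpha)|^2 \;\le\; C(E)\sum_{l=0}^g |\fe^\alpha_k(\zeta_l)|^2 \;\le\; C'(E)\,\eta^{2k}$$
for any $\eta > \max_{l\ge 1}|\fb(\zeta_l)|$, which implies $|F^j_k(\alpha)| \le C(E)\eta^k$ as claimed. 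The main obstacle is the \emph{uniformity} of the sampling inequality: the Gram matrix $[k^\alpha(\zeta_l,\zeta_m)]_{l,m}$ can degenerate on the $\theta$-divisor in $\Gamma^*$, so handling these divisor points requires either the explicit $\theta$-function representation of the reproducing kernels (Remark \ref{rema22}) or a limiting/perturbation argument in which coalescing reproducing kernels are replaced by appropriate derivative kernels.
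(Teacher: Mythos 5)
Your reduction to bounding $\|P_{M(\alpha)}\fe^\alpha_k\|$ is sound and genuinely different from the paper's argument. The paper computes $F^j_k(\alpha)=\langle \fe^\alpha_k,\ff^\alpha_j\rangle$ head-on: it peels the poles of $\fe^\alpha_k/\prod_{n<j}\fb_{\bc_n}$ off one at a time by subtracting multiples of reproducing kernels of the \emph{character-shifted} spaces, $k^{\beta_j+\mu_{\bc_n}}_{\zeta_n}$, and then applies the reproducing property of $k^{\beta_j}_{\zeta_j}$; the only uniform inputs are diagonal quantities, namely $\underline C(E)\le k^{\beta}_{\zeta_n}(\zeta_n)\le\overline C(E)$ and $|\fb_{\bc_l}(\zeta_n)|\ge\underline c(E)$, together with the same pointwise bound $|\fe^\alpha_k(\zeta_n)|\le\overline c(E)\eta^k$ that you use. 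Your route trades that explicit computation for a sampling inequality on the $(g+1)$-dimensional model space $M(\alpha)$, which is conceptually cleaner but concentrates all the difficulty in the uniform invertibility of the Gram matrix $G(\alpha)=[k^\alpha(\zeta_l,\zeta_m)]_{l,m=0}^{g}$.

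That is the one genuine gap: you only say you \emph{expect} the uniform sampling inequality. It does hold, and the worry about degeneration on the theta divisor is unfounded for real characters. Writing $h=\sum_l c_l k^\alpha_{\zeta_l}\in M(\alpha)$ one gets $\sum_m|h(\zeta_m)|^2=c^*G(\alpha)^2c\ge\lambda_{\min}(G(\alpha))\,c^*G(\alpha)c=\lambda_{\min}(G(\alpha))\|h\|^2$, so the sampling constant is $\lambda_{\min}(G(\alpha))^{-1}$. Now $G(\alpha)$ is nonsingular for \emph{every} $\alpha\in\Gamma^*$: a nontrivial relation $\sum_l c_l k^\alpha_{\zeta_l}=0$ would mean the evaluation map $M(\alpha)\to\bbC^{g+1}$, $h\mapsto(h(\zeta_l))_{l=0}^g$, fails to be surjective, hence fails to be injective; but any $h\in H^2(\alpha)$ vanishing at $\zeta_0,\dots,\zeta_g$ vanishes on their full $\Gamma$-orbits by character-automorphy, and since the lift $\fb\prod_{j}\fb_{\bc_j}$ of $\Psi$ has simple zeros exactly there, $h/\Psi\in H^2(\alpha)$, i.e.\ $h\in\Psi H^2(\alpha)$, so $h=0$ in $M(\alpha)$. (This is precisely the content of the decomposition \eqref{smpbase0}, without which the basis $\{\ff^\alpha_j\}$ would not exist in the first place.) Since $\alpha\mapsto k^\alpha(\zeta_l,\zeta_m)$ is continuous on the compact torus $\Gamma^*$ for a finite-gap $E$, it follows that $\inf_{\alpha}\lambda_{\min}(G(\alpha))>0$, and your estimate closes with any $\eta>\max_{l\ge1}|\fb(\zeta_l)|$. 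Note that this continuity-plus-compactness step is of exactly the same nature as the bounds $\underline C(E)\le\|k^\alpha_{\zeta_n}\|\le\overline C(E)$ that the paper itself asserts without proof; once you include it, your argument is a legitimate and arguably more transparent alternative, and no theta-function machinery or coalescence/perturbation analysis is needed.
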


\begin{proof}
First of all, we note that
$
\underline C(E) \le \|k^\alpha_{\zeta_n}\|\le \overline C(E)
$
uniformly on $\alpha\in\Gamma^*$.
Also $|\fb_{\bc_n}(\z_j)|\ge \underline c(E)$ and
by definition \eqref{defeal}, $|\fe^\alpha_k(\z_n)|\le \overline c(E)\eta^k$.
Since
\begin{eqnarray*}
\langle \fe^\alpha_k,
 \prod_{n=1}^{j-1}\fb_{\bc_n}
{k_{\zeta_j}^{\beta_j}}\rangle
&=&\langle 
 \frac{\fe^\alpha_k}{ \prod_{n=1}^{j-1}\fb_{\bc_n}}-\sum_{n=1}^{j-1}
\frac{k^{\beta_j+\mu_{\bc_n}}_{\zeta_n}\fe^\alpha_k(\z_n)}{\fb_{\bc_n}k^{\beta_j+\mu_{\bc_n}}_{\zeta_n}(\z_n)
 \prod_{l=1,l\not=n}^{j-1}\fb_{\bc_l}(\z_n)}
,{k_{\zeta_j}^{\beta_j}}\rangle
\\
&=&\frac{\fe^\alpha_k(\z_j)}{ \prod_{n=1}^{j-1}\fb_{\bc_n}(\z_j)}-\sum_{n=1}^{j-1}
\frac{k^{\beta_j+\mu_{\bc_n}}_{\zeta_n}(\z_j)\fe^\alpha_k(\z_n)}{\fb_{\bc_n}(\z_j)k^{\beta_j+\mu_{\bc_n}}_{\zeta_n}(\z_n)
 \prod_{l=1,l\not=n}^{j-1}\fb_{\bc_l}(\z_n)}
\end{eqnarray*}
and this is $ e^{-\pi i\beta_j(\oc\gamma_j)}{\|k_{\zeta_j}^{\beta_j}\|}\overline{F_k^j(\alpha)}$,  we get \eqref{estF}.
\end{proof}

\subsection{Transfer matrix}

In this subsection we discuss briefly the direct spectral problem of the class $A(E,\bC)$. For a vector $\vec{v}=\vp,\vq$, we  use the following notations
\begin{equation}\label{updown}
(u_k\vec{v})^*=\begin{bmatrix} v_0&\hdots & v_{g-k} \end{bmatrix},\quad 
(d_k\vec{v})^*=\begin{bmatrix} v_k& \hdots & v_{g} \end{bmatrix}.
\end{equation}

 Recall, $\{\delta_j\}_{j=0}^g$ is the standard basis in $\bbC^{g+1}$. We define inductively    upper triangular matrices $M_j$'s of dimension $(g+1-j)\times(g+1-j)$ such that
\begin{equation}\label{mdef0}
B(\vbp)-\vp(\vq)^*=M(\vbp):=M_0=\begin{bmatrix}M_1& 0\\
0 & 0\end{bmatrix}+(-\vp\, q_g+\vq\, p_g)\delta_g^*
\end{equation}
and
\begin{equation}\label{mdefj}
M_j=\begin{bmatrix}M_{j+1}& 0\\
0 & \bc_{g+1-j}\end{bmatrix}+(-u_j\vp\, q_{g-j}+{u_j\vq}\, p_{g-j})\delta_{g-j}^*, \ j\ge 1.
\end{equation}

 \begin{theorem}\label{th213}
 Let
 \begin{equation}\label{forfa1}
\begin{bmatrix}
 R_{00}(z)& R_{0g}(z)\\
  R_{g0}(z)& R_{gg}(z)
\end{bmatrix}=
\begin{bmatrix}
\langle(B_0-z)^{-1}\vp,\vp\rangle& \langle(B_0-z)^{-1}\delta_g,\vp\rangle \\
 \langle(B_0-z)^{-1}\vp, \delta_g\rangle& \langle(B_0-z)^{-1}\delta_g,\delta_g\rangle
\end{bmatrix}
\end{equation}
and
$
r_+(z)=\|\vp\|^2\langle(A_+-z)^{-1}\tilde e_0, \tilde e_0\rangle.
$
Then the shift by one block for  a one-sided GMP matrix $A_+\mapsto A_+^{(1)}$, see \eqref{gsmpshift}, by means of the spectral function has the following form
\begin{equation*}\label{forfa3}
r_+(z)=\frac{\fA_{11}(z) r^{(1)}_+(z)+\fA_{12}(z)}{\fA_{21}(z)r^{(1)}_+(z)+\fA_{22}(z)},
\end{equation*}
where
\begin{equation}\label{forfa2}
\fA(z):=\begin{bmatrix}
\fA_{11}&\fA_{12}\\
\fA_{21}&\fA_{22}
\end{bmatrix}(z)=
\frac{1}{R_{0g}(z)}\begin{bmatrix}
R_{00}R_{gg}-R_{0g}^2& -R_{00}\\
R_{gg}&-1
 \end{bmatrix}(z).
\end{equation}
 \end{theorem}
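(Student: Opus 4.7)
\medskip

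\noindent\textbf{Proof plan.} The plan is to split $A_+$ as a rank-one coupling between its top-left $(g{+}1)\times(g{+}1)$ block $B_0=B(\vbp_0)$ and the one-block shift $A_+^{(1)}$, and to reduce the claim to a Schur complement combined with one Sherman--Morrison rank-one update. With respect to the orthogonal decomposition $\l^2_+=\bbC^{g+1}\oplus\mathrm{span}\{e_j\}_{j\ge g+1}$, and using that $A(\vp_1)=\delta_g\vp_1^*$ has rank one by \eqref{n2}, we have
\begin{equation*}
A_+-z=\begin{bmatrix} B_0-z & \delta_g\vp_1^*\\ \vp_1\delta_g^* & A_+^{(1)}-z\end{bmatrix}.
\end{equation*}

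\noindent The Schur complement for the $(1,1)$-block of the inverse gives
\begin{equation*}
P_0(A_+-z)^{-1}P_0 = \bigl(B_0-z-\delta_g\delta_g^*\,\cdot\,\vp_1^*(A_+^{(1)}-z)^{-1}\vp_1\bigr)^{-1},
\end{equation*}
where $P_0$ is the projection onto the first block. The crucial identification is $\vp_1^*(A_+^{(1)}-z)^{-1}\vp_1 = r_+^{(1)}(z)$: this follows from the definition \eqref{ijf4} applied to the shifted GMP matrix $A_+^{(1)}$, whose distinguished cyclic vector is $\tilde e_0^{(1)}=\vp_1/\|\vp_1\|$ in view of \eqref{ijf3}, so that $r_+^{(1)}(z)=\|\vp_1\|^2\langle(A_+^{(1)}-z)^{-1}\tilde e_0^{(1)},\tilde e_0^{(1)}\rangle=\vp_1^*(A_+^{(1)}-z)^{-1}\vp_1$.

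\medskip

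\noindent Next, setting $M=B_0-z$ and $\rho=r_+^{(1)}(z)$, the Sherman--Morrison rank-one update yields
\begin{equation*}
(M-\rho\,\delta_g\delta_g^*)^{-1}=M^{-1}+\frac{\rho\,M^{-1}\delta_g\delta_g^*M^{-1}}{1-\rho\,\delta_g^*M^{-1}\delta_g}.
\end{equation*}
Taking the quadratic form on $\vp=\vp_0$ (so that $r_+(z)=\|\vp\|^2\langle(A_+-z)^{-1}\tilde e_0,\tilde e_0\rangle=\vp^*(A_+-z)^{-1}\vp$), and using real-symmetry of $B_0$ so that $\vp^*M^{-1}\delta_g=R_{0g}(z)$ and $\delta_g^*M^{-1}\vp=R_{g0}(z)=R_{0g}(z)$, I obtain
\begin{equation*}
r_+(z)=R_{00}(z)+\frac{R_{0g}(z)^2\,r_+^{(1)}(z)}{1-R_{gg}(z)\,r_+^{(1)}(z)}
=\frac{R_{00}(z)-\bigl(R_{00}R_{gg}-R_{0g}^2\bigr)(z)\,r_+^{(1)}(z)}{1-R_{gg}(z)\,r_+^{(1)}(z)}.
\end{equation*}

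\medskip

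\noindent Dividing numerator and denominator by $-R_{0g}(z)$ and matching entries against \eqref{forfa2} identifies this with the Möbius transform
\begin{equation*}
r_+(z)=\frac{\fA_{11}(z)\,r_+^{(1)}(z)+\fA_{12}(z)}{\fA_{21}(z)\,r_+^{(1)}(z)+\fA_{22}(z)},
\end{equation*}
which is the claim. There is no serious technical obstacle; the calculation is essentially one application of Sherman--Morrison after the correct block splitting. The only point demanding genuine care is the self-consistent identification in the Schur complement step, i.e., that the scalar $\vp_1^*(A_+^{(1)}-z)^{-1}\vp_1$ coming from the Schur reduction is exactly the resolvent function $r_+^{(1)}(z)$ of the shifted GMP matrix; this is where the GMP structure (namely that $A_+^{(1)}$ is itself in $\GMP(\bC)$ with generating sequences $\{\vbp_j\}_{j\ge 1}$ and cyclic vector proportional to $\vp_1$) enters, ensuring that the recursion closes onto itself.
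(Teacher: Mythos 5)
Your proof is correct and follows essentially the same route as the paper: the paper splits $A_+$ into the block-diagonal part $\mathrm{diag}(B(\vbp_0),A_+^{(1)})$ plus the rank-two coupling through $e_g$ and $\tilde e_0^{(1)}=\vp_1/\|\vp_1\|$ and invokes the resolvent perturbation formula, which is exactly your Schur-complement-plus-Sherman--Morrison computation written out in detail. The key identifications you flag ($\tilde e_0=\vp_0/\|\vp_0\|$, $r_+^{(1)}(z)=\vp_1^*(A_+^{(1)}-z)^{-1}\vp_1$, and $R_{0g}=R_{g0}$ by real symmetry of $B_0$) are all consistent with the paper's conventions, and the final algebra matches \eqref{forfa2}.
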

\begin{proof}
We represent $A_+$ as a two dimensional perturbation of the block diagonal matrix
\begin{equation}\label{gsmpshift}
A_+=\begin{bmatrix}
B(\vbp)&0\\ 0 &A_+^{(1)}
\end{bmatrix}+\|\vp\,^{(1)}\|(e_g\langle \cdot, \tilde e^{(1)}_0 \rangle+\tilde e^{(1)}_0\langle \cdot, e_g \rangle)
\end{equation}
and apply the resolvent perturbation formula.
\end{proof}
Note that in the definition \eqref{forfa2} we use the normalization $\det\fA(z)=1$.
\begin{definition}
Let $\bp^*=\begin{bmatrix}p&q\end{bmatrix}\in \bbR^2$. The matrix function
\begin{equation}\label{facto2}
\fa(z,\bc;\bp)=I-\frac{1}{\bc-z}\bp \bp^*\fj=e^{-\frac{1}{\bc-z}\bp\bp^*\fj},\quad
\fj=
\begin{bmatrix}
0&-1\\
1&0
\end{bmatrix}
\end{equation}
represents the so-called Blaschke-Potapov factor of the third kind with a real pole $\bc$ \cite{POT}. 
Note that $(\bp\bp^*\fj)^2=0$.
A specific factor related to infinity we introduce in the form
\begin{equation}\label{thematrix}
\fa(z;\bp)=\fa(z,\infty;\bp)=
\begin{bmatrix}
0&-{p}\\
\frac 1{p}&\frac{z-pq}{p}
\end{bmatrix}.
\end{equation}
\end{definition}

\begin{theorem}
Let $\bp_j^*=\begin{bmatrix}p_j&q_j\end{bmatrix}$. The matrix function $\fA(z)$, given in \eqref{forfa2}, possesses the following multiplicative representation
\begin{equation}\label{facto3}
\fA(z)=\fa(z,\bc_1;\bp_0)\fa(z,\bc_2;\bp_1)\dots\fa(z,\bc_g;\bp_{g-1})\fa(z;\bp_g).
\end{equation}
\end{theorem}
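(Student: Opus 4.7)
The proposed factorization is a Potapov-type multiplicative decomposition of the $\fj$-inner rational matrix function $\fA(z)$, and I would prove it by induction on $g$, peeling off one Blaschke--Potapov factor at a time from the right. The recursion is guided by the decomposition \eqref{mdef0}--\eqref{mdefj} of $M_0 = B(\vbp) - \vp\,\vq^{*}$: at step $j=0$ one removes the last row/column of $B_0$ (whose diagonal entry is $0$), while at step $j\geq 1$ one removes the coordinate whose diagonal entry is $\bc_{g-j+1}$. This peeling order exactly matches reading \eqref{facto3} from right to left, since the rightmost factor $\fa(z;\bp_g)$ is the one with pole at $\infty$ (corresponding to $\tilde\bC_{gg}=0$) and each subsequent factor $\fa(z,\bc_{g-j+1};\bp_{g-j})$ corresponds to a finite pole.

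The base case ($g=1$) is a direct computation: the $2\times 2$ matrix $B_0$ has an explicit resolvent, and inserting the three entries $R_{00},R_{01},R_{11}$ into \eqref{forfa2} reproduces the product $\fa(z,\bc_1;\bp_0)\,\fa(z;\bp_1)$ after simplification. For the inductive step, write
\[
B_0 = \begin{bmatrix} \tilde B & 0 \\ 0 & 0 \end{bmatrix} + (-\vp q_g + \vq p_g)\delta_g^{*} + \vp\,\vq^{*},
\]
where the upper--left $g\times g$ block has the structure of a GMP block of one lower dimension (with generating data $u_1\vp,u_1\vq$ and points $\bc_1,\dots,\bc_{g-1}$, plus the ``new'' last diagonal entry $\bc_g$ coming from \eqref{mdefj} with $j=1$). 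Using the standard rank-one / rank-two resolvent perturbation formulas (the same that were already used in the proof of Theorem~\ref{th213}), I would express the resolvent entries $R_{00}(z),R_{0g}(z),R_{gg}(z)$ of $B_0$ in terms of the analogous entries $R^{(1)}_{00}(z),R^{(1)}_{0,g-1}(z),R^{(1)}_{g-1,g-1}(z)$ of the reduced block $B_0^{(1)}$.

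Combining these resolvent identities into the definition \eqref{forfa2} of $\fA(z)$ should produce, after algebraic reorganization, the identity
\[
\fA(z) = \fA^{(1)}(z)\,\fa(z;\bp_g),
\]
where $\fA^{(1)}(z)$ is the transfer matrix associated with the reduced block. The reason the rightmost factor comes out in the specific form \eqref{thematrix} is that the last diagonal entry is $0$ and the coupling to the peeled row/column is governed by $\bp_g = (p_g,q_g)^{*}$; the polynomial growth at $\infty$ in \eqref{thematrix} precisely encodes the absence of a finite pole for this step. Iterating the same peeling on $\fA^{(1)}(z)$, where now the peeled row has diagonal entry $\bc_g$ and coupling vector $\bp_{g-1}$, gives the third-kind Blaschke--Potapov factor $\fa(z,\bc_g;\bp_{g-1})$ via the formula \eqref{facto2} (the nilpotency $(\bp\bp^{*}\fj)^{2}=0$ matches the rank-one nature of each peeling), and induction finishes the proof.

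The main technical obstacle is the bookkeeping in the inductive step: one must verify that the ``reduced'' vectors $u_j\vp, u_j\vq$ and the replaced diagonal entry match exactly with the data $\bp_{g-j}$ and pole $\bc_{g-j+1}$ appearing in the next Blaschke--Potapov factor, and that the $2\times 2$ block of the resolvent of $B_0^{(j)}$ formed from its first and last basis vectors transforms under the Möbius action of $\fa(z,\bc_{g-j+1};\bp_{g-j})^{-1}$ into the corresponding block for $B_0^{(j+1)}$. Unpacking \eqref{mdefj} to identify these vectors, and then matching coefficients of the Möbius transformation coming from the rank-one perturbation formula with the entries of \eqref{facto2}, is where essentially all of the calculation sits; once carried out, the factorization \eqref{facto3} follows by a clean induction on $g$.
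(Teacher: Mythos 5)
Your proposal follows essentially the same route as the paper: the paper's proof also peels off Blaschke--Potapov factors from the right, starting with $\fa(z;\bp_g)$ and using the recursive decomposition \eqref{mdef0}--\eqref{mdefj} of $M_0=B(\vbp)-\vp\vq^*$ together with rank-one resolvent perturbation identities, obtaining $\fA_{j-1}(z)=\fA_j(z)\fa(z,\bc_{g+1-j};\bp_{g-j})$ with $\fA_j$ built from $(M_j-z)^{-1}$ and the truncated vectors $u_j\vp,u_j\vq$. Your framing as an induction on $g$ versus the paper's iteration over $j$ for fixed $g$ is only a cosmetic difference.
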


\begin{proof} We use the representation \eqref{mdef0} and definitions \eqref{forfa1}, \eqref{thematrix} to get
$
\fA(z)=\fA_0(z)\fa(z;\bp_g),
$
where
\begin{equation}\label{facto1}
\fA_0(z)=I-\begin{bmatrix} \langle (M_1-z)^{-1} {u_1\vp},{u_1\vp}\rangle&
\langle (M_1-z)^{-1}{u_1\vq},{u_1\vp}\rangle\\
\langle (M_1-z)^{-1}{u_1\vp},{u_1\vq}\rangle & \langle (M_1-z)^{-1}{u_1\vq},{u_1\vq}\rangle
\end{bmatrix}\fj.
\end{equation}
Recall, that $u_j\vp, u_j\vq$ were defined in \eqref{updown}.
Then, we use one after another \eqref{mdefj} and definitions \eqref{facto2} to get 
$$
\fA_{j-1}(z)=\fA_{j}(z)\fa(z,\bc_{g+1-j};\bp_{g-j}),
$$
where
$$
\fA_{j-1}(z)=I-\begin{bmatrix} \langle (M_j-z)^{-1}{u_j\vp},{u_j\vq}\rangle&
\langle (M_j-z)^{-1}{u_j\vq},{u_j\vp}\rangle\\
\langle (M_j-z)^{-1}{u_j\vp},{u_j\vq}\rangle & \langle (M_j-z)^{-1}{u_j\vq},{u_j\vq}\rangle
\end{bmatrix}\fj.
$$
That is, we obtain \eqref{facto3}. 
\end{proof}

\begin{definition} Let 
$\oc A\in A(E,\bC)$.
Then the product \eqref{facto3}
is called the \textit{transfer matrix} associated  with the given $\oc A$.
\end{definition}

The role of the transfer matrix is described in the following theorem.

\begin{theorem}\label{inth7}
Let $\oc A\in A(E,\bC)$  with the transfer matrix $\fA(z)$, given in \eqref{facto3}, and
let $\Delta(z):=\tr\, \fA(z)$. Then the spectrum $E$ of $\oc A$ is given by 
\begin{equation}\label{spectrum}
E=\Delta^{-1}([-2,2])=\{x:\, \Delta(x)\in[-2,2]\}.
\end{equation}
Moreover, $\Delta(z)$ is of the form \eqref{eq15},
where
\begin{equation}\label{explpqg}
\lambda_0 p_g=1, \quad {\lambda_0}\sum_{j=0}^{g}p_j q_j+{\bc_0}=0,
\end{equation}
and $\lambda_k=\Lambda_k(\vbp)
:=-\Res_{\bc_k}\tr \fA(z)$, i.e.,
\begin{equation}\label{explrhk}
\lambda_k=
-\tr\{\prod_{j=0}^{k-2}\fa(\bc_k,\bc_{j+1};\bp_j)\bp_{k-1}
\bp^*_{k-1}\fj
\prod_{j=k}^{g-1}\fa(\bc_k,\bc_{j+1};\bp_j)\fa(\bc_k;\bp_g)\}.
\end{equation}
\end{theorem}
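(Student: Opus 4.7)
The plan is to analyze $\Delta(z) := \tr \fA(z)$ directly from the factorization \eqref{facto3} and identify the spectrum of $\oc A$ via a Floquet-type argument. First I would check that each factor of $\fA(z)$ is unimodular: $(\bp\bp^*\fj)^2 = 0$ and $\tr(\bp\bp^*\fj) = 0$ imply $\det \fa(z,\bc;\bp) = 1$ via the exponential form in \eqref{facto2}, while $\det \fa(z;\bp_g) = 1$ follows directly from \eqref{thematrix}. Hence $\det \fA(z) \equiv 1$. The factorization \eqref{facto3} then shows that $\fA(z)$ is rational with at most simple poles at $\bc_1,\dots,\bc_g$ and at infinity, so necessarily
\[
\tr \fA(z) = \lambda_0 z + \bc_0 + \sum_{k=1}^g \frac{\lambda_k}{\bc_k - z}
\]
for some real constants $\lambda_0,\bc_0,\lambda_k$.

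\textbf{Step 2: Explicit formulas for the coefficients.} To identify $\lambda_k$, I would take the residue at $\bc_k$: only the factor $\fa(z,\bc_k;\bp_{k-1})$ is singular there, with $\Res_{z=\bc_k}\fa(z,\bc_k;\bp_{k-1}) = \bp_{k-1}\bp_{k-1}^*\fj$. Evaluating the remaining factors at $z=\bc_k$ and invoking cyclicity of trace yields exactly \eqref{explrhk}. For the behavior at infinity, write $\fa(z;\bp_g) = (z/p_g)E_{22} + C$ where $E_{22}$ is the matrix with $1$ in position $(2,2)$ and $C$ is constant, together with $\prod_{j=1}^g \fa(z,\bc_j;\bp_{j-1}) = I + z^{-1}\sum_{j=1}^g \bp_{j-1}\bp_{j-1}^*\fj + O(z^{-2})$. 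Matching the $z$-coefficient gives $\lambda_0 = 1/p_g$, and matching the constant term, using the direct identity $\tr(E_{22}\bp\bp^*\fj) = -pq$, produces the second relation of \eqref{explpqg}.

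\textbf{Step 3: Identifying the spectrum.} Since $\oc A$ is periodic with period one block, $A_+$ and its one-block shift $A_+^{(1)}$ have the same spectral function, so $r_+^{(1)}(z) = r_+(z)$. Substituting this self-consistency into Theorem \ref{th213} reduces the fractional-linear relation to the Riccati equation
\[
\fA_{21}(z)\, r_+^2 + (\fA_{22}(z)-\fA_{11}(z))\, r_+ - \fA_{12}(z) = 0,
\]
whose discriminant equals $(\fA_{11}+\fA_{22})^2 - 4\det\fA = \Delta(z)^2 - 4$. Choosing the branch that makes $r_+$ Herglotz produces an analytic extension to $\bbC \setminus \Delta^{-1}([-2,2])$ with a genuine cut exactly on $\Delta^{-1}([-2,2])$; this cut equals $\sigma(\oc A)$, which is $E$ by hypothesis. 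Combined with the uniqueness from the introductory lemma of a rational function of the form \eqref{eq15} whose preimage of $[-2,2]$ is $E$, this forces $\tr \fA$ to coincide with the $\Delta$ of \eqref{eq15}.

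\textbf{Main obstacle.} Steps 1 and 2 are routine residue and asymptotic computations. The delicacy lies in Step 3: one must carefully justify the choice of the square-root branch so that $r_+$ is genuinely Herglotz, and rule out that zeros of $\fA_{21}$ distort the cut locus. This is a variant of the classical Floquet discriminant analysis, here transferred to the $(2g+3)$-diagonal GMP setting via the factorization \eqref{facto3}.
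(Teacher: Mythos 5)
Your proposal is correct and follows essentially the same route as the paper, whose proof consists of exactly the two remarks you expand: \eqref{spectrum} is obtained "as in the case of periodic Jacobi matrices" (your Floquet/Riccati argument via Theorem \ref{th213} with $r_+^{(1)}=r_+$ and discriminant $\Delta^2-4\det\fA=\Delta^2-4$), and \eqref{explpqg}--\eqref{explrhk} "follow immediately from \eqref{facto3}" (your residue computation at $\bc_k$ and the $z\to\infty$ asymptotics, both of which check out, including $\bc_0=-\lambda_0\sum_{j=0}^g p_jq_j$). The extra care you flag about the branch choice and zeros of $\fA_{21}$ is exactly what the paper delegates to the standard periodic Jacobi theory.
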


\begin{proof}
A proof of \eqref{spectrum} is the same as  in the case of periodic Jacobi matrices. The relations
\eqref{explpqg} and \eqref{explrhk} follow immediately from \eqref{facto3}.
\end{proof}

\begin{proof}[Proof of Proposition \ref{prop18}]
First of all, we have a parametrization of $A(E,\bC)$ by the characters $\Gamma^*$.
It is evident that, in the basis \eqref{gsmpbase}, multiplication by $\Psi$  is the shift $S^{g+1}$, $\Psi \ff_n^{\alpha}=\ff^\alpha_{n+(g+1)}$. Thus, the magic formula for GMP matrices corresponds to the definition \eqref{eq15}. The relations
\eqref{explpqg}, \eqref{explrhk} imply the form of the isospectral surface, that is, \eqref{iso101}.
Conversely, if $\Delta(\oc A)=S^{-(g+1)}+S^{g+1}$, then $\oc A$ is periodic by
Na\u{\i}man's Lemma \cite[Lemma 3.4]{KSDp}  (alternatively, see proof of Theorem \ref{th8.4}). Therefore we can apply Theorem \ref{inth7}.
\end{proof}

Later, in Section 6, we will use another representation for $q_g$.

\begin{lemma}\label{lem:c0formula}
$q_g$ allows the following alternative representation 
\begin{equation}
q_g+\bc_0
=\sum_{k=1}^{g}\tr \{\prod_{j=0}^{k-2}\fa(\bc_k,\bc_{j+1};\bp_j)\bp_{k-1}
\bp_{k-1}^*\fj
\prod_{j=k}^{g-1}\fa(\bc_k,\bc_{j+1};\bp_j)\begin{bmatrix}
0& 0\\ 0&\frac 1 {p_{g}}
\end{bmatrix}\}.\label{alternativqg}
\end{equation}
\end{lemma}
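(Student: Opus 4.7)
The plan is to recognize the claimed sum as the partial-fraction expansion of the $(2,2)$-entry of the ``finite'' part of the transfer matrix. Write $\fA(z)=F(z)\fa(z;\bp_g)$ with
$$
F(z):=\fa(z,\bc_1;\bp_0)\fa(z,\bc_2;\bp_1)\cdots\fa(z,\bc_g;\bp_{g-1}).
$$
Since $\fa(z,\bc_j;\bp_{j-1})=I+\frac{1}{z-\bc_j}\bp_{j-1}\bp_{j-1}^*\fj$ tends to $I$ at infinity and has a simple pole at $\bc_j$ with residue $\bp_{j-1}\bp_{j-1}^*\fj$, the matrix $F(z)$ is rational with $F(\infty)=I$, simple poles at $\bc_1,\dots,\bc_g$, and residue at $\bc_k$ equal precisely to
$$
T_k:=\prod_{j=0}^{k-2}\fa(\bc_k,\bc_{j+1};\bp_j)\,\bp_{k-1}\bp_{k-1}^*\fj\prod_{j=k}^{g-1}\fa(\bc_k,\bc_{j+1};\bp_j),
$$
which is exactly the matrix appearing inside the trace in \eqref{alternativqg}.

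Taking the $(2,2)$-entry produces a scalar rational function with the partial-fraction expansion
$$
[F(z)]_{22}=1+\sum_{k=1}^{g}\frac{[T_k]_{22}}{z-\bc_k}.
$$
Reading off the coefficient of $1/z$ in the Laurent series at infinity yields $\sum_{k=1}^g[T_k]_{22}$. On the other hand, expanding the product directly,
$$
F(z)=I+\frac{1}{z}\sum_{j=0}^{g-1}\bp_j\bp_j^*\fj+O(1/z^2),
$$
and the elementary computation $[\bp_j\bp_j^*\fj]_{22}=-p_jq_j$ shows the same coefficient equals $-\sum_{j=0}^{g-1}p_jq_j$. Comparing gives
$$
\sum_{k=1}^{g}[T_k]_{22}=-\sum_{j=0}^{g-1}p_jq_j.
$$

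To conclude, observe that $\tr\!\left(T_k\begin{bmatrix}0&0\\0&1/p_g\end{bmatrix}\right)=[T_k]_{22}/p_g$, so the right-hand side of \eqref{alternativqg} is $\frac{1}{p_g}\sum_{k=1}^g[T_k]_{22}=-\frac{1}{p_g}\sum_{j=0}^{g-1}p_jq_j$. The relation \eqref{explpqg}, together with $\lambda_0 p_g=1$, rearranges into $q_g+\bc_0=-\sum_{j=0}^{g-1}p_jq_j/p_g$, matching the right-hand side and proving \eqref{alternativqg}. The only conceptual step is spotting that $[F(z)]_{22}$ is the right generating object; once this is seen the proof is a two-line comparison of Laurent coefficients at infinity, and no new identity beyond those already recorded in Theorem \ref{inth7} is needed.
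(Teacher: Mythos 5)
Your proof is correct and is essentially the paper's own argument: the paper also identifies the right-hand side as $\sum_k\Res_{\bc_k}\tr\,\fA_0(z)\bigl[\begin{smallmatrix}0&0\\0&1/p_g\end{smallmatrix}\bigr]$ with $\fA_0(z)=F(z)$, and evaluates it via $\frac{1}{2\pi i}\oint_{|z|=R}$, i.e., by comparing with the $1/z$-coefficient at infinity, together with the second relation in \eqref{explpqg}. The only cosmetic difference is that the paper reads off that coefficient from the resolvent form \eqref{facto1} rather than from the expanded product of Blaschke--Potapov factors.
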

\begin{proof}
From the second relation in \eqref{explpqg} and \eqref{facto1} one has
$$
q_g+\bc_0=\frac 1{2\pi i}\oint_{|z|=R}\tr\, \fA_0(z)\begin{bmatrix}
0& 0\\ 0&\frac 1{p_g}
\end{bmatrix}dz=\sum_{k=1}^g\Res_{\bc_k}
\tr\, \fA_0(z)\begin{bmatrix}
0& 0\\ 0&\frac 1{p_g}
\end{bmatrix},
$$
which is \eqref{alternativqg}.
\end{proof}

\section{GMP matrices, general case.}\label{Section3}
 
 We hope after Theorem \ref{inth7}, and especially \eqref{explrhk}, it would be easy to perceive the following notations. 
\begin{notation}\label{notla} For $k=1,\dots, g$ the following functions (polynomials) are given by 
\begin{align}\label{deflad}
\Lambda^\#_{j,k}=&\Lambda^\#_k(\bp^{(j+1)}_0,\dots,
\bp^{(j+1)}_{k-1};\bp^{(j)}_{k-1},\dots, \bp^{(j)}_g) \\
=&-\tr\{
\prod_{m=0}^{k-2}\fa(\bc_k,\bc_{m+1};\bp^{(j+1)}_m)
\bp^{(j+1)}_{k-1}
(\bp^{(j)}_{k-1})^*\fj
\prod_{m=k}^{g-1}\fa(\bc_k,\bc_{m+1};\bp^{(j)}_m)
\fa(\bc_k;\bp^{(j)}_g)\}\nonumber
\end{align}
If, as before, $\bp^{(j)}_m=\bp^{(j+1)}_m=\bp_m$  for all $m\in [0,g]$  this notation is simplified to
\begin{equation}\label{defla}
\Lambda_k(\vbp)=\Lambda^\#_k(\bp_0,\dots,
\bp_{k-1};\bp_{k-1},\dots, \bp_g)=-\Res_{\bc_k}\tr \fA(z).
\end{equation}
\end{notation}
\begin{lemma}\label{lem:gsmpEntries}
Let $A\in \bbA$. 
Then the formal inverse to $\bc_k-A$ is well defined as soon as 
$\Lambda^\#_{j,k}\not=0$. Moreover, each element of the inverse matrix is a rational function of the coefficients $\vbp_j,\vbp_{j+1}$ of the two consecutive blocks of $A$ with the denominator $\Lambda^\#_{j,k}$.
In particular, the  vector $f(\bc_k)$ such that $(\bc_k-A)f(\bc_k)=e_{k-1}$, $k=1,..,g$, with the vector-components
\begin{equation*}\label{egsmp21}
f_j=f_j(\bc_k)=\{(f^{(j)}(\bc_k))_n\}_{n=0}^g\in\bbC^{g+1},
\end{equation*}
 obey $f_j(\bc_k)=0$ for $j\not\in\{-1,0,1\},$  
 $$
 (f^{(-1)})_0=...=(f^{(-1)})_{k-2}=0,\quad (f^{(1)})_k=...=(f^{(1)})_{g}=0,
 $$ 
 and
\begin{align}
{\Lambda^\#_{-1,k}}(f^{(-1)})_{k-1}&=1,\quad {\Lambda^\#_{0,k}}(f^{(1)})_{k-1}=1, \label{eqsmp21a}\\
{\Lambda^\#_{-1,k}}(f^{(-1)})_l&=\frac{(\bp^{(-1)}_{k-1})^*\fj
\prod_{j=k}^{l-1}\fa(\bc_k,\bc_{j+1};\bp_j^{(-1)})
{\bp_{l}^{(-1)}}}{\bc_k-\bc_{l+1}},\ l=k,\dots,g-1, \label{eqsmp21b}\\
{\Lambda^\#_{0,k}}(f^{(1)})_{m}&=\frac{(\bp^{(1)}_{m})^*\fj
\prod_{j=m+1}^{k-2}\fa(\bc_k,\bc_{j+1};\bp_j^{(1)})
{\bp_{k-1}^{(1)}}}{\bc_k-\bc_{m+1}},\ m=0,\dots,k-2.\label{eqsmp21d}
\end{align} 
\end{lemma}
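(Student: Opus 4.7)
Using $A(\vp_{j+1})=\delta_g \vp_{j+1}^{\,*}$ and $A^{*}(\vp_j)=\vp_j\delta_g^{\,*}$, the equation $(\bc_k-A)f(\bc_k)=e_{k-1}$ decouples block by block as
\begin{equation*}
-\vp_j (f_{j-1})_g + (\bc_k-B(\vbp_j))f_j - \delta_g(\vp_{j+1}^{\,*}f_{j+1})=\delta_{j,0}\,\delta_{k-1}.
\end{equation*}
The plan is to seek $f$ with $f_j=0$ for $|j|\ge 2$. The blocks $|j|\ge 3$ are then automatic, and the blocks $j=\pm 2$ force the boundary conditions $(f_1)_g=0$ and $\vp_{-1}^{\,*}f_{-1}=0$ needed to close the ansatz.

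The key tool is the splitting $B(\vbp)=M(\vbp)+\vp\vq^{\,*}$, where $M(\vbp)$ is upper triangular with diagonal $(\bc_1,\ldots,\bc_g,0)$ and above-diagonal entries $M_{ij}=\bp_i^{\,*}\fj\bp_j$; hence $\bc_k-M(\vbp)$ is singular only at diagonal position $k-1$. For block $1$, rewrite the equation as $(\bc_k-M(\vbp_1))f_1=\gamma_1\vp_1$ with $\gamma_1:=(f_0)_g+\vq_1^{\,*}f_1$; the last row gives $\bc_k(f_1)_g=\gamma_1 p_g^{(1)}$, forcing $\gamma_1=0$. The resulting homogeneous upper-triangular system together with $(f_1)_g=0$ then forces $(f_1)_i=0$ for $i\ge k$, makes row $k-1$ trivially consistent, and leaves $(f_1)_{k-1}$ as the only free parameter. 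Introducing the $2$-vector $\mathbf{h}_m:=\sum_{j\ge m}\bp_j^{(1)}(f_1)_j$, the row $m\le k-2$ reads $(\bc_k-\bc_{m+1})(f_1)_m=(\bp_m^{(1)})^{\,*}\fj\,\mathbf{h}_{m+1}$; using the nilpotency $(\bp\bp^{\,*}\fj)^2=0$ one checks that this is equivalent to the clean recursion $\mathbf{h}_m=\fa(\bc_k,\bc_{m+1};\bp_m^{(1)})\,\mathbf{h}_{m+1}$. Iterating from $\mathbf{h}_{k-1}=(f_1)_{k-1}\bp_{k-1}^{(1)}$ and reading off $(f_1)_m$ yields exactly the Blaschke--Potapov product appearing in \eqref{eqsmp21d}.

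A parallel argument handles the block-$(-1)$ equation $(\bc_k-B(\vbp_{-1}))f_{-1}=\delta_g\beta$ with $\beta:=\vp_0^{\,*}f_0$: the constraint $\vp_{-1}^{\,*}f_{-1}=0$ combined with the rows $i\le k-2$ forces $(f_{-1})_i=0$ for those indices, row $k-1$ becomes automatically consistent, and back-substitution through rows $g,g-1,\ldots,k$ expresses each $(f_{-1})_l$ as an affine combination of $\beta$ and $\delta:=\vq_{-1}^{\,*}f_{-1}$ involving the factors $\fa(\bc_k,\bc_{j+1};\bp_j^{(-1)})$ acting on $\bp_g^{(-1)}$ (with the factor at infinity \eqref{thematrix} covering the $l=g$ case). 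Substituting both $f_{\pm 1}$ into the block-$0$ equation and imposing the compatibility conditions $(f_0)_g$ (already determined by block $1$) and $\vp_0^{\,*}f_0=\beta$ (already determined by block $-1$) reduces everything to a $2\times 2$ linear system in the free parameters $(f_{\pm 1})_{k-1}$. By the cyclic property of the trace, the diagonal coefficients of this system are precisely $\Lambda^\#_{0,k}$ and $\Lambda^\#_{-1,k}$ from \eqref{deflad}, identified via the multiplicative transfer-matrix formula \eqref{facto3} and the residue computation \eqref{explrhk}. Cramer's rule then delivers the normalizations \eqref{eqsmp21a}; the explicit formulas \eqref{eqsmp21b} and \eqref{eqsmp21d} emerge after eliminating $\beta$ and $\delta$; and $\Lambda^\#_{j,k}\ne 0$ is the exact solvability criterion.

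The main obstacle is this last identification step: the recursions produce Blaschke--Potapov products of the form $(\bp_{k-1}^{(\pm 1)})^{\,*}\fj\prod\fa(\cdot)\,\bp_{k-1}^{(\mp 1)}$ that must be matched with the residue $\lambda_k=-\Res_{\bc_k}\tr\fA(z)$ of Theorem~\ref{inth7}. This requires careful bookkeeping of the asymmetric roles of block $0$ and blocks $\pm 1$ in the definition of $\Lambda^\#_{j,k}$ and of the special factor \eqref{thematrix} at infinity arising in $(f_{-1})_g$. Once this is done, the rationality claim (``denominator $\Lambda^\#_{j,k}$'') follows directly from Cramer's rule, and the entries of $(\bc_k-A)^{-1}$ with right-hand sides other than $e_{k-1}$ are obtained by the evident translation of the argument.
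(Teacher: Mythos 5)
Your proposal is correct and takes essentially the same route as the paper, whose own proof merely writes down the five block equations under the ansatz $f_j=0$ for $|j|\ge 2$ and remarks that, since $B(\vbp)$ is an upper-triangular matrix with diagonal $\tilde\bC$ plus a rank-one term, the block resolvents are given by products of Blaschke--Potapov factors; your recursions for $\mathbf{h}_m$ and their mirror image for the block $-1$ are the fleshed-out version of exactly that remark. One detail to make explicit in a write-up: the final ``$2\times 2$ system'' in $(f_{\pm 1})_{k-1}$ in fact decouples (the two quantities in \eqref{eqsmp21a} are built from different pairs of consecutive blocks, $(\vbp_{-1},\vbp_0)$ versus $(\vbp_0,\vbp_1)$), so Cramer's rule reduces to two independent scalar equations --- with genuinely nonzero off-diagonal entries the clean denominators $\Lambda^\#_{-1,k}$ and $\Lambda^\#_{0,k}$ would not survive.
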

\begin{proof}
We have  a purely linear algebra problem. 
To find $f(\bc_k)$ we solve the  system
\begin{eqnarray}
-A(\vp_{-1})f_{-1}&=&0, \nonumber \label{egsmp22}\\
(\bc_k-B(\vbp_{-1}))f_{-1}-A_0f_0&=&0, \nonumber\label{egsmp23}\\
-A^*(\vp_0)f_{-1}+(\bc_k-B(\vbp_{0}))f_{0}-A(\vp_1)f_1&=&\delta_{k-1},\nonumber\label{egsmp24}\\
-A^*(\vp_1)f_{0}+(\bc_k-B(\vbp_{1}))f_{1}&=&0,\nonumber\label{egsmp25}\\
-A^*(\vp_2)f_{1}&=&0.\label{egsmp26}
\end{eqnarray}
It is worth to recall that $B(\vbp)$ is an upper triangular matrix $M(\vbp)$ up to a one-dimensional perturbation, see \eqref{mdef0}, and its main diagonal in this case is $\tilde \bC$, see definition \eqref{n2}. For this reason all inverse matrices to $(\bc_k-B(\vbp))$ can be found exactly like in the previous section in terms of products of Blaschke-Potapov factors $\fa(z,\bc;\bp)$.
\end{proof}

\begin{theorem}\label{defaltdef}
Let $A\in\bbA$. $A$ belongs to the GMP class if and only if the forming sequences $\{\vp_j,\vq_j\}$ satisfy the following conditions
\begin{equation}\label{altdef}
\inf_{j\in\bbZ}
%\Lambda^\#_{j,k}=
\Lambda^\#_k(\bp^{(j+1)}_0,\dots,
\bp^{(j+1)}_{k-1};\bp^{(j)}_{k-1},\dots, \bp^{(j)}_g)
>0, \ \text{for all}\ k=1,\dots,g.
\end{equation}
\end{theorem}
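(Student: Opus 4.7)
The plan is to use Lemma \ref{lem:gsmpEntries} as the central computational tool. Under block translation, that lemma provides, for each $j\in\bbZ$ and each $k\in\{1,\ldots,g\}$, an explicit description of the column $(\bc_k-A)^{-1}e_{(g+1)j+(k-1)}$: it is well-defined precisely when $\Lambda^\#_{j,k}\ne 0$, is supported in the three consecutive blocks $j-1,j,j+1$, and each of its nonzero entries is a rational function whose denominator is $\Lambda^\#_{j,k}$ and whose numerator is a product of Blaschke--Potapov factors in the uniformly bounded coefficients $\vbp_j,\vbp_{j\pm 1}$.

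For ($\Leftarrow$), assume $\inf_j\Lambda^\#_{j,k}>0$ for each $k$. The uniform lower bound combined with uniform boundedness of the numerators yields a uniform upper bound on every entry of the formal inverse $T:=(\bc_k-A)^{-1}$; combined with the three-block bandwidth this gives $T$ as a bounded operator. To verify that $T':=S^{-k}TS^k$ lies in $\bbA$, I first note that the shift $S^{-k}$ sends $e_{(g+1)j+(k-1)}$ to the last vector of the $j$-th block in the $T'$ indexing. The zero patterns $(f^{(j-1)})_l=0$ for $l\le k-2$ and $(f^{(j+1)})_l=0$ for $l\ge k$ then translate exactly into block-tridiagonality of $T'$, and the rank-one product structure \eqref{eqsmp21b}--\eqref{eqsmp21d} gives the required off-diagonal form $A(\vp)=\delta_g\vp\,^*$. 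The diagonal block assembles from the $f^{(j)}$-components together with the triangular structure \eqref{mdef0}--\eqref{mdefj}, and the positivity $p_g^{(j)}>0$ for $T'$ follows from the sign of $1/\Lambda^\#_{j,k}$ in \eqref{eqsmp21a}.

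For ($\Rightarrow$), assume $A\in\GMP(\bC)$, so that $\bc_k-A$ is boundedly invertible. If $\Lambda^\#_{j,k}=0$ for some $j$, the linear subsystem determining the column $(\bc_k-A)^{-1}e_{(g+1)j+(k-1)}$ is singular, so no entry-wise inverse exists and bounded invertibility fails. If instead $\Lambda^\#_{j_n,k}\to 0$ along some sequence, then \eqref{eqsmp21a} gives $\|(\bc_k-A)^{-1}e_{(g+1)j_n+(k-1)}\|\ge 1/|\Lambda^\#_{j_n,k}|\to\infty$, again contradicting boundedness.

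The hard part will be the structural verification in ($\Leftarrow$): translating the Blaschke--Potapov product formulas \eqref{eqsmp21b}--\eqref{eqsmp21d}, through the $S^{-k}$ relabeling of block coordinates, into the specific rank-one form $\delta_g\vp\,^*$ required for membership in $\bbA$. This demands careful bookkeeping of which entries of the inverse become super-diagonal coupling vectors, which become sub-diagonal ones, and which contribute to the diagonal $B$-block of $T'$, plus confirmation of the sign condition $p_g^{(j)}(T')>0$. Once this is settled for the selected columns of $T$ described by the lemma, the remaining columns of $T'$ are forced by block-tridiagonality together with the identity $(\bc_k-A)T=I$.
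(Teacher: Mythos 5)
Your proposal follows essentially the same route as the paper's proof: both directions rest on Lemma \ref{lem:gsmpEntries}, with the forward implication reading $1/\Lambda^\#_{j,k}$ off as a matrix entry of $(\bc_k-A)^{-1}$ bounded by the operator norm, and the converse combining the uniform lower bound on $\Lambda^\#_{j,k}$ with the exponential bound on the Blaschke--Potapov numerators and the finite $(2g+3)$-diagonal bandwidth to conclude boundedness of the formal inverse. The only real difference is that you explicitly flag the structural verification that $S^{-k}(\bc_k-A)^{-1}S^k\in\bbA$ (and the positivity $\Lambda^\#_{j,k}>0$, as opposed to merely $|\Lambda^\#_{j,k}|$ bounded away from zero, which needs the sign condition $p_g>0$ for the shifted inverse), a point the paper leaves implicit in Lemma \ref{lem:gsmpEntries}.
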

\begin{proof}\label{thstr} 
The formal inverse to $\bc_k-A$ can be found explicitly, see the previous lemma. Moreover, 
solvability of the system \eqref{egsmp26} is equivalent to \eqref{eqsmp21a}. 
Thus, from one side, if $\bc_k -A$ is invertible  we have 
$$
\frac 1{\Lambda_{j,k}^\#}=|\langle (\bc_k-A)^{-1}e_{(j+1)(g+1)+k-1}, e_{j(g+1)+k-1} \rangle|\le \|(\bc_k-A)^{-1}\|.
$$
That is, \eqref{altdef} holds.

In the opposite direction,
the formal inverse operators exists since $\Lambda_{j,k}^\#\not=0$ for all $k$ and $j$.
The following estimation is very useful
\begin{equation}\label{oppalt1}
\|\fa(\bc_k,\bc_{j};\bp)\|\le e^{\frac{\|\bp\|^2}{\inf_{k\not=j}|\bc_k-\bc_j|}},
\
\text{
due to}\ 
\fa(\bc_k,\bc_{j};\bp)=e^{-\frac{1}{\bc_k-\bc_j}\bp\bp^*\fj}.
\end{equation}
It shows that every non-trivial entry  $\langle (\bc_k-A)^{-1}e_j,e_n \rangle$  is bounded  by
$
C_1e^{\frac{C_2(g)\|A\|^2}{\inf_{k\not=j}|c_k-c_j|}}
$, see \eqref{eqsmp21a}-\eqref{eqsmp21d}. In this estimation  $C_1>0$ depends only on the infimum \eqref{altdef}. 
Since the formal inverse has only $(2g+3)$ non-trivial diagonals, we proved that  all $\bc_k-A$ are indeed invertible, i.e., $\|(\bc_k-A)^{-1}\|<\infty$.
\end{proof}

\section{Jacobi flow, general case}
Let us mention once again that Theorem \ref{th211} gives already a certain hint for a constructive definition of the Jacobi flow. It will be  defined via the unitary transformation, which after $g$ rotations and one shift, maps GMP$(\bc_1,\dots,\bc_g)$ 
into itself.
The first rotation creates the matrix $\tilde A$, which belongs (up to a suitable shift) to GMP$(\bc_g,\bc_1,...,\bc_{g-1})$ class. Then we create a matrix of the class GMP$(\bc_{g-1},\bc_g,\bc_1,...,\bc_{g-2})$, and so on... On the last step (making the shift) we get the required  Jacobi flow transform, see \eqref{jflow}.
Having in  mind \eqref{eqn:ufunctional} and \eqref{jfp0}, we give the following definition.

\begin{definition}\label{defo}
We  define the  map 
$$
\cO:\text{GMP}(\bc_1,\bc_2,...,\bc_{g})\to\text{GMP}(\bc_g,\bc_1,...,\bc_{g-1})
$$ in the following way. Let 
$O=O_A$ be the block-diagonal matrix
$$
O=\begin{bmatrix} \ddots& & & \\ & O_{-1}& &\\
& & O_{0} & \\
& & &\ddots
\end{bmatrix}
$$
where $O_k$ are the $(g+1)\times(g+1)$ orthogonal matrices, see \eqref{oofphi},
$$
O_k=\begin{bmatrix}I_{g-2}& 0\\
0&\bo(\phi_k)
\end{bmatrix}, \ 
\begin{bmatrix}
\sin\phi_k&
\cos\phi_k
\end{bmatrix}
=\frac{
\begin{bmatrix}
p^{(k)}_{g-1}&
p^{(k)}_{g}
\end{bmatrix}}{\sqrt{(p^{(k)}_{g-1})^2+(p^{(k)}_{g})^2}}.
$$
Then
\begin{equation}\label{varpidef}
\cO A:=S O_A^* AO_A S^{-1}.
\end{equation}
\end{definition}
It is required, but easy to check  the correctness of this definition. Note that
 for $p$-entries of $\tilde A=\cO A$ we get
\begin{equation}\label{def3}
\tilde p^{(0)}_j=p^{(0)}_{j-1}\cos\phi_{-1}, \quad 1\le j\le g-1;
\end{equation}
\begin{equation}\label{def2}
\tilde p^{(0)}_g=\sqrt{(p^{(0)}_{g-1})^2+(p^{(0)}_{g})^2}\cos\phi_{-1}=
\sqrt{\frac{(p^{(0)}_{g-1})^2+(p^{(0)}_{g})^2}{(p^{(-1)}_{g-1})^2+(p^{(-1)}_{g})^2}}p^{(-1)}_g.
\end{equation}
Also,
\begin{equation*}\label{def0}
\begin{bmatrix}\tilde q_g^{(-1)}\tilde p_g^{(-1)}& \tilde p^{(0)}_0\\
\tilde p_0^{(0)}& \tilde q_0^{(0)}\tilde p_0^{(0)}+\bc_g\end{bmatrix}
=
\bo(\phi_{-1})^*
\begin{bmatrix}q^{(-1)}_{g-1}p^{(-1)}_{g-1}+\bc_g&q^{(-1)}_{g-1}p^{(-1)}_{g}\\
q^{(-1)}_{g-1}p^{(-1)}_{g}&q^{(-1)}_{g} p^{(-1)}_{g}
\end{bmatrix}
\bo(\phi_{-1}).
\end{equation*}
Thus, the $q$-entries have the form
\begin{equation}\label{def1}
\tilde q^{(0)}_0\tilde p^{(0)}_g=-\sin\phi_{-1}\sqrt{(p^{(0)}_{g-1})^2+(p^{(0)}_{g})^2},\quad 
\tilde q_{j}^{(0)}\tilde p^0_g=q_{j-1}^{(0)} \sqrt{(p^{(0)}_{g-1})^2+(p_g^{(0)})^2}.
\end{equation}

Our next definition is a counterpart of \eqref{perjac}.

\begin{definition}\label{defjf}
We  define the  Jacobi flow transform 
$$
\cJ:\text{GMP}(\bc_1,\bc_2,...,\bc_{g})\to\text{GMP}(\bc_1,\bc_2,...,\bc_{g})
$$ by 
\begin{equation}\label{jflow}
\cJ A=S^{-(g+1)}\cO^{\circ g} AS^{g+1}=\cO^{\circ g}(S^{-(g+1)} AS^{g+1}).
\end{equation}
\end{definition}

Let us  note that 
\begin{equation}\label{comvarpi}
S^{-(g+1)}\cO(A) S^{g+1}=\cO(S^{-(g+1)}A S^{g+1}).
\end{equation}
This has an important consequence.

\begin{corollary}\label{corshift}
\begin{equation}\label{shiftjflow}
\cO(\cJ^{\circ n} A)=\cJ^{\circ n}(\cO A).
\end{equation}
\end{corollary}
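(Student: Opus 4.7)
The plan is to use the preceding identity \eqref{comvarpi} to show that $\cO$ commutes with $\cJ$, and then iterate. The whole content is that $\cO$ interacts nicely with the block-shift conjugation $A\mapsto S^{-(g+1)}AS^{g+1}$, an observation already carried out in \eqref{comvarpi}: this holds because the block-diagonal rotation $O_A$ is built from $\vp_k^{(k)}$ of $A$ block by block, so shifting $A$ by a full block merely relabels the blocks of $O_A$ in the same way, and hence $O_{S^{-(g+1)}AS^{g+1}}=S^{-(g+1)}O_AS^{g+1}$.

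First I would verify the case $n=1$. By the second form of the definition of $\cJ$ in \eqref{jflow},
\begin{equation*}
\cJ(\cO A)=\cO^{\circ g}\bigl(S^{-(g+1)}\cO A\, S^{g+1}\bigr)
=\cO^{\circ g}\bigl(\cO(S^{-(g+1)}A\, S^{g+1})\bigr)
=\cO^{\circ(g+1)}(S^{-(g+1)}A\, S^{g+1}),
\end{equation*}
where in the middle equality I invoked \eqref{comvarpi}. On the other hand, using the first form of \eqref{jflow} and then \eqref{comvarpi} once more,
\begin{equation*}
\cO(\cJ A)=\cO\bigl(S^{-(g+1)}\cO^{\circ g}A\, S^{g+1}\bigr)
=S^{-(g+1)}\cO(\cO^{\circ g}A)\, S^{g+1}
=S^{-(g+1)}\cO^{\circ(g+1)}A\, S^{g+1}.
\end{equation*}
But these two right-hand sides coincide: moving $S^{\pm(g+1)}$ past $\cO^{\circ(g+1)}$ via $(g+1)$-fold application of \eqref{comvarpi} identifies
$\cO^{\circ(g+1)}(S^{-(g+1)}A S^{g+1})=S^{-(g+1)}\cO^{\circ(g+1)}(A)S^{g+1}$. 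Thus $\cO\circ\cJ=\cJ\circ\cO$.

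Finally the general case follows by a one-line induction: assuming $\cO\circ\cJ^{\circ(n-1)}=\cJ^{\circ(n-1)}\circ\cO$, we get
\begin{equation*}
\cO(\cJ^{\circ n}A)=\cO(\cJ(\cJ^{\circ(n-1)}A))=\cJ(\cO(\cJ^{\circ(n-1)}A))=\cJ(\cJ^{\circ(n-1)}(\cO A))=\cJ^{\circ n}(\cO A).
\end{equation*}
There is no real obstacle here; the only point that needs care is that the rotation blocks forming $O_A$ depend only on the local data $\vp_k^{(k)}$, which is precisely what makes \eqref{comvarpi} hold and thereby makes $\cO$ equivariant under the block shift used in the definition of $\cJ$.
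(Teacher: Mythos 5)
Your proposal is correct and follows essentially the same route as the paper: both reduce the claim to the single commutation identity \eqref{comvarpi} combined with the definition \eqref{jflow} to get $\cO\circ\cJ=\cJ\circ\cO$, and then iterate. The paper's version is slightly shorter — it only uses the second form of \eqref{jflow}, reading $\cO^{\circ(g+1)}(S^{-(g+1)}AS^{g+1})$ directly as $\cO(\cO^{\circ g}(S^{-(g+1)}AS^{g+1}))=\cO(\cJ A)$ — whereas you take a small detour through the first form of \eqref{jflow} and $(g+1)$ extra applications of \eqref{comvarpi}, but the content is identical.
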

\begin{proof} Due to \eqref{jflow} and \eqref{comvarpi} we get
$$
\cJ (\cO A)=\cO^{\circ g}(S^{-(g+1)} \cO AS^{g+1})=\cO^{\circ (g+1)}(S^{-(g+1)} AS^{g+1})=\cO(\cJ A).
$$
\end{proof}

Let us turn to explicit formulas for the given transform. First of all, we note that
\begin{equation}\label{jfex0}
\cJ A=S^{-1}U^*_A A U_A S,
\end{equation}
where  $U_A$ is a $(g+1)\times (g+1)$-block diagonal matrix
$$
U_A=\begin{bmatrix} \ddots& & & \\ & U(\vp_{-1})& &\\
& & U(\vp_0) & \\
& & &\ddots
\end{bmatrix}.
$$
\begin{lemma}\label{lem:formulaU}
In the notation above, see also \eqref{updown}, we have 
\begin{align}\label{eq:formulaU}
U(\vp)\delta_0=\frac{1}{\|\vp\|}\vp,\quad
U(\vp)\delta_k=\frac{1}{\|d_{k-1}\vp\|\|d_{k}\vp\|}
\begin{bmatrix}
0\\\|d_{k}\vp\|^2\\-p_{k-1}d_k\vp
\end{bmatrix}, 1\leq k\leq g.
\end{align}
\end{lemma}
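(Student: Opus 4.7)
The plan is to verify \eqref{eq:formulaU} by directly composing the $g$ reflections that arise from the iterations of $\cO$ in the definition \eqref{jflow} of $\cJ$.

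First, I would unwind $\cJ A = \cO^{\circ g}(S^{-(g+1)}AS^{g+1})$ via \eqref{varpidef}. Setting $B_0 = S^{-(g+1)}AS^{g+1}$ and $B_i = \cO B_{i-1}$, the iteration $\cO B = SO_B^*BO_BS^{-1}$ yields $\cJ A = S^{-1}U_A^*AU_AS$ with
\begin{equation*}
U_A = S^{g+1}\tilde O^{(1)}\cdots\tilde O^{(g)}S^{-(g+1)},\qquad \tilde O^{(j)} = S^{-(j-1)}O_{B_{j-1}}S^{j-1}.
\end{equation*}
Each $\tilde O^{(j)}$ is non-trivial only on pairs of indices of the form $(n(g+1)+g-j,\,n(g+1)+g-j+1)$ for $n\in\bbZ$. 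Within one $(g+1)$-period these form a non-overlapping chain $(g-1,g),(g-2,g-1),\dots,(0,1)$, so $U_A$ is block-diagonal in the $(g+1)$-partition; its $n$-th block is $U(\vp_n) = V_1 V_2\cdots V_g$, where $V_j = \bo(\phi^{(j)})$ acts on positions $(g-j,g-j+1)$.

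Next, using the transformation laws \eqref{def3} and \eqref{def2} I would show by induction on $j$ that, in terms of the original $\vp=\vp_n$,
\begin{equation*}
\sin\phi^{(j)} = \frac{p_{g-j}}{\|d_{g-j}\vp\|}, \qquad \cos\phi^{(j)} = \frac{\|d_{g-j+1}\vp\|}{\|d_{g-j}\vp\|}.
\end{equation*}
The base case $j=1$ is \eqref{jfp0} directly. For the inductive step, the key observation is that the passage $B_{j-1}\mapsto B_j$ rescales the relevant $p$-entries of $B_{j-1}$ by the common factor $\cos\phi^{(j)}$ (cf.\ \eqref{def2}, \eqref{def3}), which cancels in the ratios that define $\phi^{(j+1)}$.

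Finally, I would compute $V_1\cdots V_g\delta_k$ by applying the reflections right-to-left. For $k=0$, successive applications give $(p_0,\|d_1\vp\|,0,\dots)/\|\vp\|$, then $(p_0,p_1,\|d_2\vp\|,0,\dots)/\|\vp\|$, and so on, telescoping via $\|d_{j-1}\vp\|\sin\phi^{(g-j+1)} = p_{j-1}$ to produce $\vp/\|\vp\|$. For $k\ge 1$ the reflections $V_g,\dots,V_{g-k+2}$ fix $\delta_k$; then $V_{g-k+1}$ sends $\delta_k\mapsto\cos\phi^{(g-k+1)}\delta_{k-1}-\sin\phi^{(g-k+1)}\delta_k$, and the subsequent reflections $V_{g-k},\dots,V_1$ propagate the coefficient $-\sin\phi^{(g-k+1)}$ along positions $k,k+1,\dots,g$, multiplying by appropriate $\sin\phi$ and $\cos\phi$ factors that telescope to the claimed entries $-p_{k-1}p_j/(\|d_{k-1}\vp\|\|d_k\vp\|)$. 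The main obstacle will be the bookkeeping of the first step: verifying that the interleaved shifts align so that the product $\tilde O^{(1)}\cdots\tilde O^{(g)}$ is genuinely block-diagonal in the $(g+1)$-partition after conjugation by $S^{\pm(g+1)}$, with no spill across block boundaries. The subsequent computations, though index-heavy, are routine telescopings.
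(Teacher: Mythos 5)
Your proposal is correct and follows essentially the same route as the paper: the paper's (very terse) proof consists precisely of the factorization $U(\vp)=\begin{bmatrix}I_{g-2}&0\\0&\bo(\phi_g)\end{bmatrix}\cdots\begin{bmatrix}\bo(\phi_1)&0\\0&I_{g-2}\end{bmatrix}$ with $\begin{bmatrix}\sin\phi_k&\cos\phi_k\end{bmatrix}=\begin{bmatrix}p_{k-1}&\|d_k\vp\|\end{bmatrix}/\|d_{k-1}\vp\|$, read off step by step from \eqref{jflow}, which is exactly your product $V_1\cdots V_g$ after the reindexing $\phi^{(j)}=\phi_{g-j+1}$. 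Your derivation of the angles via the $\cos\phi$-cancellation in \eqref{def2}--\eqref{def3} and the subsequent telescoping evaluation of $V_1\cdots V_g\,\delta_k$ are both sound and simply make explicit what the paper leaves to the reader.
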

\begin{proof}
It follows from a step by step representation of the block $U(\vp)$ as   the product of orthogonal matrices,
see \eqref{jflow},
\begin{equation*}%\label{eq412}
U(\vp)=\begin{bmatrix}I_{g-2}& 0\\
0& \bo(\phi_g)
\end{bmatrix}
\dots
\begin{bmatrix}
\bo(\phi_1)& 0\\
0& I_{g-2}
\end{bmatrix},\ \begin{bmatrix}
\sin\phi_k&\cos\phi_k
\end{bmatrix}=\frac{\begin{bmatrix}p_{k-1}&\|d_k\vp\|
\end{bmatrix}}{\|d_{k-1}\vp\|}.
\end{equation*}
\end{proof}

\begin{theorem}\label{th54}
Let $A(1)=\cJ A$ and let $\{p^{(j)}_k(1),  q_k^{(j)}(1)\}$ be generating coefficient sequences of $A(1)$. Then
\begin{align}\label{jfex}
\begin{bmatrix}
 q_0^{(j)}(1)\\
\vdots\\
q_{g-1}^{(j)}(1)
\end{bmatrix}
= 
\|\vp_{j}\|
\begin{bmatrix}
\vdots\\
-\frac{p_k^{(j)}}{\|d_k\vp_j\|\|d_{k+1}\vp_j\|}\\
\vdots
\end{bmatrix},\ 
\begin{bmatrix}
*\\  p_0^{(j)}(1)\\ \vdots \\ p_{g-1}^{(j)}(1)
\end{bmatrix}
=
U^*(\vp_j)B(\vbp_{j})\frac{\vp_j}{\|\vp_j\|},\\ \label{jfex1}
p_g^{(j)}(1)
=
\frac{\|\vp_{j+1}\|}{\|\vp_{j}\|}p_g^{(j)}, \ 
q_g^{(j)}(1)
=
\frac {\|\vp_{j}\|}{p_g^{(j)}\|\vp_{j+1}\|}
\frac{\langle B(\vbp_{j+1})\,\vp_{j+1},  \vp_{j+1}\rangle}{\|\vp_{j+1}\|^2 }.
\end{align}
\end{theorem}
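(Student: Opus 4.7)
The plan is to combine the representation $\cJ A = S^{-1} U_A^* A U_A S$ from \eqref{jfex0} with the explicit formulas for the columns of $U(\vp)$ provided by Lemma \ref{lem:formulaU}. Write $\hat A := U_A^* A U_A$. The preliminary observation is that, indexing $e_n = e_{j,k}$ with $n = j(g+1)+k$, a single shift $S$ acts on the $(g+1)$-block structure by $(j,k)\mapsto(j,k+1)$ for $k<g$ and $(j,g)\mapsto(j+1,0)$. Consequently, the new $j$-th diagonal block of $A(1)$ is the $(g+1)\times(g+1)$ window of $\hat A$ whose rows and columns are indexed by $\{(j,1),\ldots,(j,g),(j+1,0)\}$, and the super-diagonal block $A(\vp_j(1))$ of $A(1)$ is the analogous window relating row-block $j-1$ to column-block $j$; each block of $A(1)$ is thus assembled from at most four adjacent blocks of $\hat A$.

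With these windows identified, I would first recover $\vp_j(1)$ from the super-diagonal block $A(\vp_j(1))=\delta_g\vp_j(1)^*$ of $A(1)$. The crucial ingredient is $U(\vp)\delta_0=\vp/\|\vp\|$. The coefficient $p_g^{(j)}(1)$ is the $(g,g)$ coordinate of that block, which after the index shift equals $(\hat A^{(j,j+1)})_{0,0}=\langle \delta_g,U(\vp_j)\delta_0\rangle\,\langle U(\vp_{j+1})\delta_0,\vp_{j+1}\rangle$, and this immediately gives $p_g^{(j)}(1)=\|\vp_{j+1}\|p_g^{(j)}/\|\vp_j\|$. The remaining components $p_k^{(j)}(1)$, $k<g$, come from entries of the diagonal block $\hat A^{(j,j)}=U(\vp_j)^*B(\vbp_j)U(\vp_j)$ whose column index is shifted; using once more $U(\vp_j)\delta_0=\vp_j/\|\vp_j\|$ together with the self-adjointness of $B(\vbp_j)$ yields the matrix-vector identity stated in \eqref{jfex}.

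Next I would extract the $q$-entries from the diagonal block of $A(1)$. For $k<g$, the symmetric triangular structure of $B$ in \eqref{n2} identifies the $(k,g)$ entry of $B(\vbp_j(1))$ with $q_k^{(j)}(1)\,p_g^{(j)}(1)$; after the shift this entry equals $(\hat A^{(j,j+1)})_{k+1,0}$, which by the explicit formula for $U(\vp_j)\delta_{k+1}$ in Lemma \ref{lem:formulaU} reduces to $\langle \delta_g,U(\vp_j)\delta_{k+1}\rangle\,\|\vp_{j+1}\|=-p_k^{(j)}p_g^{(j)}\|\vp_{j+1}\|/(\|d_k\vp_j\|\,\|d_{k+1}\vp_j\|)$. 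Dividing by the already computed $p_g^{(j)}(1)$ gives the first formula in \eqref{jfex}. Finally, $q_g^{(j)}(1)$ is determined by the $(g,g)$ entry of $B(\vbp_j(1))$, which under the shift equals $(\hat A^{(j+1,j+1)})_{0,0}=\|\vp_{j+1}\|^{-2}\langle B(\vbp_{j+1})\vp_{j+1},\vp_{j+1}\rangle$; again dividing by $p_g^{(j)}(1)$ yields the last formula in \eqref{jfex1}.

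The main technical obstacle is the careful bookkeeping of block indices under the partial shift: a single $S$ does not respect the $(g+1)$-block decomposition, so each new block of $A(1)$ is glued together from pieces of several adjacent old blocks of $\hat A$. The rank-one structure of $A(\vp_j)=\delta_g\vp_j^*$ and the closed-form expressions for the columns $U(\vp_j)\delta_k$ supplied by Lemma \ref{lem:formulaU} reduce every resulting matrix entry to a one-line evaluation, so once the windows are correctly identified, the identities in \eqref{jfex}--\eqref{jfex1} follow at once.
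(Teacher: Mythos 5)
Your proposal is correct and follows exactly the route of the paper, whose proof of Theorem \ref{th54} is the one-line remark that \eqref{jfex} and \eqref{jfex1} follow from \eqref{jfex0} together with Lemma \ref{lem:formulaU}; you have simply carried out the index bookkeeping for $S^{-1}U_A^*AU_AS$ that the paper leaves implicit, and your identification of the shifted windows and the resulting entry evaluations all check out.
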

\begin{proof}
We get \eqref{jfex} and \eqref{jfex1} from \eqref{jfex0} by Lemma \ref{lem:formulaU}.
\end{proof}

\begin{remark}
In view of Theorem \ref{th54} the Jacobi flow on GMP matrices can be related to an open (input-output) dynamical system, see \eqref{diag:jacobiflow}. Let us fix a block-position $j=0$, but vary $n$ in $A(n+1)=\cJ A(n)$. Then the coefficients related to the next block $j=1$ are involved only in \eqref{jfex1} and in a very specific way. If we define the \textit{two dimensional input} by
$$
a^{\text{in}}(n)=\|\vp_1(n)\|,\quad b^{\text{in}}(n)=\frac{\langle B(\vbp_{1}(n))\,\vp_{1}(n),  \vp_{1}(n)\rangle}{\|\vp_{1}(n)\|^2 }
$$
and consider the parameters $\{\vp_{0}(n),\vq_{0}(n)\}$ as the \textit{internal state} of the system, then the open dynamical system is defined by
\begin{equation}\label{ods14}
\begin{bmatrix}
b^{\text{out}}(n)&\vp_{0}(n+1)^*\\
\vp_{0}(n+1)&B(\vbp_{0}(n+1))
\end{bmatrix}=
\begin{bmatrix}
U^*(\vp_{0}(n))&0\\
0&1
\end{bmatrix}
\begin{bmatrix}
B(\vbp_{0}(n))&\delta_g a^{\text{in}}(n)\\
a^{\text{in}}(n)\delta_g^*&b^{\text{in}}(n)
\end{bmatrix}
\begin{bmatrix}
U(\vp_{0}(n))&0\\
0&1
\end{bmatrix},
\end{equation}
 and $a^{\text{out}}(n+1)=\|\vp_{0}(n+1)\|$. 
 Note also that the \textit{output} $\{a^{\text{out}}(n+1),b^{\text{out}}(n)\}$ are the Jacobi parameters of 
 $J=\cF A$ (cf. \eqref{coefflow}, \eqref{nis50}) and the input is related to $\cF S^{-(g+1)}AS^{g+1}$. That is, this system represents the \textit{GMP transform on Jacobi matrices}.
\end{remark}

\begin{equation}\label{diag:jacobiflow}
\begin{aligned}
	{\footnotesize
	\xymatrix@C=0.5em{ 
		A(0)&*+[F]{\boxed{\vp_{-1}(0),\vq_{-1}(0)}}\ar[dd]  & & *+[F]{\boxed{\vp_{0}(0),\vq_{0}(0)}}\ar[dl]\ar[dd] 
		&& *+[F]{\boxed{\vp_{1}(0),\vq_{1}(0)}}\ar[dl]\ar[dd]\ar@{.}[rr]&&~ \\
		&& *+[F]{a_0(0),b_{0}(0)}\ar[dl]&&*+[F]{a_1(0),b_{1}(0)}\ar[dl]&\\
		A(1)&*+[F]{\boxed{\vp_{-1}(1),\vq_{-1}(1)}}\ar[dd]  & & *+[F]{\boxed{\vp_{0}(1),\vq_{0}(1)}}\ar[dl]\ar[dd] 
		&& *+[F]{\boxed{\vp_{1}(1),\vq_{1}(1)}}\ar[dl]\ar[dd]\ar@{.}[rr]&&~ \\
		&&
		*+[F]{a_0(1),b_{0}(1)}\ar[dl]&&*+[F]{a_1(1),b_{1}(1)}\ar[dl]&\\
		A(2)&*+[F]{\boxed{\vp_{-1}(2),\vq_{-1}(2)}} \ar@{.}[d] & & *+[F]{\boxed{\vp_{0}(2),\vq_{0}(2)}}\ar@{.}[d]
		&& *+[F]{\boxed{\vp_{1}(2),\vq_{1}(2)}} \ar@{.}[rr]\ar@{.}[d]&&~\\
		&&&&&&&
	}
}
\end{aligned}
\end{equation}

\begin{theorem}
Let $A(0):=A\in \GMP(\bC)$, $A(n+1)=\cJ A(n)$, $n\in\bbZ$, and $\tilde e_{-1}:=e_{-1}$. Define
\begin{eqnarray}
\tilde e_m&=&U_{A(0)}S U_{A(1)}S\cdots U_{A(m)} S\tilde e_{-1}, \quad m\ge 0,\label{nis51}\\
\tilde e_{m-1}&=&S^{-1}U_{A(-1)}^{-1}\cdots S^{-1} U_{A(m)}^{-1}\tilde e_{-1}, \quad m<0.\label{nis52}
\end{eqnarray}
These system of vectors form an orthonormal system in $\l^2$, with respect to which the following three-term recurrence relation holds
\begin{equation}\label{nis50}
A\tilde e_{m-1}=a(m-1)\tilde e_{m-2}+b(m-1)\tilde e_{m-1}+a(m)\tilde e_{m},
\end{equation}
where $a(m)$ and $b(m-1)$ 
are given  by
\begin{equation}\label{zhz}
a(m)=\| \vp_0(m)\|=\sqrt{p^{(0)}_0(m)^2+
\dots+p^{(0)}_g(m)^2
}, \quad b(m-1)=p_g^{(-1)}(m)q_g^{(-1)}(m).
\end{equation}
That is, $A$ with respect to $\{\tilde e_m\}$ is a Jacobi matrix $J$, moreover the transformation \eqref{jfex0} corresponds to its shift
$S^{-1}JS$.
\end{theorem}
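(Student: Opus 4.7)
The plan is to proceed in three stages: orthonormality, a conjugation reducing the action of $A$ to its flow iterates, and the computation of the three non-vanishing matrix entries of $A$ in the basis $\{\tilde e_m\}$.

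\textit{Stage 1 (orthonormality).} Each $V_n := U_{A(n)} S$ is unitary, so every $\tilde e_m$ is a unit vector. Define $W_m := V_0 V_1 \cdots V_m$ for $m \geq 0$, $W_{-1}:=I$, and $W_m := V_{-1}^{-1} \cdots V_{m+1}^{-1}$ for $m \leq -2$, so that $W_m = W_{m-1} V_m$ for all $m$ and $\tilde e_m = W_m e_{-1}$. For $m < n$, telescoping gives $W_m^{-1} W_n = V_{m+1} V_{m+2} \cdots V_n$, hence $\langle \tilde e_m, \tilde e_n\rangle = \langle e_{-1}, V_{m+1} \cdots V_n e_{-1}\rangle$. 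The key point is that each $V_j$ maps $\l^2_+$ into itself: $S$ raises indices by one, and $U_{A(j)}$ is block-diagonal with blocks entirely contained in either $\l^2_+$ or $\l^2_-$ (no block straddles index $0$). Since $V_n e_{-1} = U_{A(n)} e_0$ lies in block $0 \subset \l^2_+$, induction gives $V_{m+1} \cdots V_n e_{-1} \in \l^2_+$, orthogonal to $e_{-1} \in \l^2_-$.

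\textit{Stage 2 (conjugation).} Relation \eqref{jfex0} is equivalent to $V_n^{-1} A(n) V_n = A(n+1)$; iterating forward and backward yields $W_m^{-1} A W_m = A(m+1)$ for every $m \in \bbZ$, hence
\[
A \tilde e_{m-1} = W_{m-1} A(m) e_{-1}.
\]
By the block form \eqref{n1}--\eqref{n3}, the $(-1)$-column of $A(m)$ is supported in indices $\{-g-2, \ldots, g\}$: its block-$0$ part equals $\vp_0(m) = (p_0^{(0)}(m), \ldots, p_g^{(0)}(m))^T$ (coming from $A^*(\vp_0(m)) = \vp_0(m) \delta_g^*$), its entry at index $-1$ equals $[B(\vbp_{-1}(m))]_{g,g} = p_g^{(-1)}(m) q_g^{(-1)}(m)$, and its remaining entries lie in blocks $-2$ and $-1$ at indices $\{-g-2, \ldots, -2\}$.

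\textit{Stage 3 (three-term recurrence and coefficients).} For $k \geq m$, the same telescoping gives $\langle A \tilde e_{m-1}, \tilde e_k\rangle = \langle A(m) e_{-1}, V_m V_{m+1} \cdots V_k e_{-1}\rangle$. The right-hand vector lies in $\l^2_+$, so only the block-$0$ contribution $\vp_0(m)$ of $A(m) e_{-1}$ can interact with it. The block-$0$ component of $V_m V_{m+1} \cdots V_k e_{-1}$ equals $U(\vp_0(m))$ applied to the block-$0$ entries of $S(V_{m+1} \cdots V_k e_{-1})$. For $k > m$, since $V_{m+1} \cdots V_k e_{-1} \in \l^2_+$ vanishes at index $-1$, its $S$-shift has vanishing zeroth coordinate; and since $U(\vp_0(m))\delta_0 = \vp_0(m)/\|\vp_0(m)\|$ by Lemma \ref{lem:formulaU}, any input with zero zeroth coordinate is mapped into $\vp_0(m)^\perp$. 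Thus $\langle A \tilde e_{m-1}, \tilde e_k\rangle = 0$ for $k > m$, and self-adjointness of $A$ handles $k < m-2$. For $k = m$, $V_m e_{-1} = \vp_0(m)/\|\vp_0(m)\|$ sits in block $0$, so the inner product is $\|\vp_0(m)\| = a(m)$. For $k = m-1$, $\langle A \tilde e_{m-1}, \tilde e_{m-1}\rangle = [A(m)]_{-1,-1} = p_g^{(-1)}(m) q_g^{(-1)}(m) = b(m-1)$ by Corollary \ref{cor113}, and self-adjointness applied with $m$ replaced by $m-1$ gives $a(m-1)$ as the coefficient of $\tilde e_{m-2}$. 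This yields \eqref{nis50} with \eqref{zhz}. The final assertion that \eqref{jfex0} corresponds to $J \mapsto S^{-1} J S$ is immediate from the commutative diagram $\cF \cJ = \cS \cF$: the Jacobi matrix associated to $\cJ A$ is $\cF(\cJ A) = S^{-1}(\cF A) S = S^{-1} J S$.

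The technical crux is the vanishing $\langle A \tilde e_{m-1}, \tilde e_k\rangle = 0$ for $k > m$, which relies on the specific structural property of $U(\vp)$ from Lemma \ref{lem:formulaU}---that any vector with zero zeroth coordinate is sent into $\vp^\perp$---to cancel what would otherwise be a non-trivial block-$0$ overlap.
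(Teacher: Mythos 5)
Your overall strategy is the same as the paper's (conjugation of $A$ by the unitaries $W_{m-1}=U_{A(0)}S\cdots U_{A(m-1)}S$, reducing everything to the $(-1)$-column of $A(m)$), and your Stage 1 and the identification of the coefficients $a(m)=\|\vp_0(m)\|$ and $b(m-1)=p_g^{(-1)}(m)q_g^{(-1)}(m)$ are correct and in fact more systematically written than the paper's own sketch. But there is one genuine gap, and it sits exactly at the point the paper calls ``the most nontrivial'': the term $a(m-1)\tilde e_{m-2}$. Your Stage 3 establishes only the \emph{inner products} $\langle A\tilde e_{m-1},\tilde e_k\rangle$ ($=0$ for $k<m-2$, $=a(m-1)$ for $k=m-2$, via self-adjointness). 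Since $\{\tilde e_k\}$ is at this stage only an orthonormal \emph{system} — completeness is deferred to Proposition \ref{prop73} and is explicitly not available here — knowing all inner products with the $\tilde e_k$ determines only the projection of $A\tilde e_{m-1}$ onto $\overline{\operatorname{span}}\{\tilde e_k\}$. It does not rule out a component of $W_{m-1}P_{\le-2}A(m)e_{-1}$ orthogonal to the whole system, so \eqref{nis50} as a vector identity in $\l^2$ does not follow. Concretely: you show $\langle v, V_{m-1}^{-1}e_{-1}\rangle=a(m-1)$ for $v:=P_{\le-2}A(m)e_{-1}$, but you never show $\|v\|=a(m-1)$, which is what forces $v=a(m-1)V_{m-1}^{-1}e_{-1}$.

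The missing step is a short direct computation (the paper's \eqref{zhy}). Write $A(m)=V_{m-1}^{-1}A(m-1)V_{m-1}$ and note $V_{m-1}e_{-1}=U_{A(m-1)}e_0=\vp_0(m-1)/\|\vp_0(m-1)\|$, a vector supported in block $0$. Since $U_{A(m-1)}$ is block-diagonal it commutes with $P_-$, so
$$
P_{\le-2}A(m)e_{-1}=S^{-1}U_{A(m-1)}^{-1}\,P_-A(m-1)\frac{\vp_0(m-1)}{\|\vp_0(m-1)\|},
$$
and the only block of $A(m-1)$ carrying block-column $0$ into $\l^2_-$ is $A(\vp_0(m-1))=\delta_g\vp_0(m-1)^*$, giving $P_-A(m-1)\vp_0(m-1)/\|\vp_0(m-1)\|=\|\vp_0(m-1)\|\,e_{-1}$. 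Hence $P_{\le-2}A(m)e_{-1}=\|\vp_0(m-1)\|\,V_{m-1}^{-1}e_{-1}=a(m-1)\,V_{m-1}^{-1}e_{-1}$ \emph{exactly}, and applying $W_{m-1}$ yields the term $a(m-1)\tilde e_{m-2}$ and with it the identity \eqref{nis50}. (A secondary, minor point: your justification of the last assertion via the diagram \eqref{defjfg} is circular — that diagram is what the concrete formula \eqref{jfex0} is being shown to realize; the correct argument is that \eqref{zhz} applied to $A(1)$ in place of $A(0)$ gives precisely the shifted coefficient sequences.)
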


\begin{proof}
This relation for $m=0$ follows basically from the definition. We consider the term related to $\tilde e_{-2}$, which is the most nontrivial in this case. We have
$$
A=S^{-1}U_{A(-1)}^{-1}A(-1)U_{A(-1)}S.
$$
Therefore
$
SA e_{-1}=U_{A(-1)}^{-1}A(-1)U_{A(-1)}e_0.
$
Using the block structure of $U_A$ we obtain
\begin{equation}\label{zhy}
P_-S Ae_{-1}=U_{A(-1)}^{-1}P_-A(-1)U_{A(-1)}e_0=U_{A(-1)}^{-1}e_{-1}\|\vp_0(-1)\|.
\end{equation}
Having in mind \eqref{nis50} and $\tilde e_{-1}=e_{-1}$, we formally define 
$$
a(-1) \tilde e_{-2}:=S^{-1}P_-S Ae_{-1},\quad a(0)\tilde e_0:=P_+ A e_{-1}.
$$ Then, due to \eqref{zhy},
$$
a(-1) \tilde e_{-2}=\|\vp_0(-1)\| S^{-1}U_{A(-1)}^{-1}e_{-1},
$$
which proves both \eqref{nis52} and the first relation in \eqref{zhz} for $m=-1$.

We can write a similar relation for $A(1)$. Using
definition \eqref{jfex0}, we rewrite such a relation by means of the original $A$. As the result, we obtain  \eqref{nis50} for $m=1$, and so on. 
 Simultaneously, we proved Corollary \ref{cor113}.
\end{proof}

\begin{remark} In fact $\{\tilde e_n\}_{n=-\infty}^\infty$ is a basis in $\l^2$, see Proposition \ref{prop73}. That is,
$$
J=\cF A=F^* A F, \quad Fe_n=\tilde e_n,
$$
and $F$ is unitary. 
\end{remark}

\section{Spectral conditions}

\subsection{Killip-Simon spectral conditions for one- and two-sided Jacobi matrices}
First of all we mention the following 
\begin{lemma}\label{lemma51ini}
Assume that $J_+$ is a one-sided Jacobi matrix  with essential spectrum on $E$. Then it can be extended by a matrix $ J_-=P_-\oc J P_-$,
$\oc J\in J(E)$, such that each $\bc_j$ belongs to the resolvent set (domain) of the resulting matrix \eqref{ijf2}.
\end{lemma}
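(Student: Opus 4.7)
The plan is a flexibility argument exploiting the $g$-dimensional structure of the isospectral torus $J(E)$: since each $\bc_j$ lies in the open gap $(\ba_j,\bb_j)$, it is outside the essential spectrum $E$ of both $J_+$ and every $\oc J\in J(E)$, and so the only way $\bc_j$ can fail to belong to the resolvent set of the glued two-sided operator $J$ is as an isolated $\l^2(\bbZ)$-eigenvalue. Thus it suffices to choose $\oc J\in J(E)$ so that $\bc_j$ is not an eigenvalue of $J$ for any $j=1,\dots,g$.

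First I would invoke standard half-line Weyl theory in the gap: because $\bc_j\notin E$, there is a unique (up to scalar) $\l^2_+$-solution $\psi_+^{(j)}$ of $(J_+-\bc_j)\psi=0$ and, for $\oc J\in J(E)$ with $J_-=P_-\oc J P_-$, a unique $\l^2_-$-solution $\psi_-^{\oc J,(j)}$ of $(J_--\bc_j)\psi=0$. The two-sided matrix $J$ in \eqref{ijf2} has $\bc_j$ as an eigenvalue exactly when these two decaying solutions, after rescaling, can be joined through the coupling $a(0)(e_0\langle\cdot,e_{-1}\rangle+e_{-1}\langle\cdot,e_0\rangle)$ into a global $\l^2(\bbZ)$-eigenvector. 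This matching amounts to a single real equation $F_j(\oc J)=0$ (a vanishing Wronskian), expressible in terms of the Weyl $m$-functions as an algebraic relation between $m_+(\bc_j)$, fixed by $J_+$, and $m_-^{\oc J}(\bc_j)$.

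Second, I would use the Baker--Akhiezer parametrization \eqref{aal}--\eqref{param1} to regard $\oc J\in J(E)$ as indexed by $\alpha\in\bbR^g/\bbZ^g$, equivalently by the Dirichlet divisor $(\mu_1,\dots,\mu_g)$ with $\mu_k\in[\ba_k,\bb_k]$. In the $\mu_j$-coordinate the map $\mu_j\mapsto m_-^{\oc J}(\bc_j)$ is a genuinely non-constant meromorphic function sweeping all of $\bar\bbR$ as $\mu_j$ traverses $[\ba_j,\bb_j]$ (passing through $\infty$ precisely at $\mu_j=\bc_j$). Hence each $\{F_j=0\}$ is a proper real-analytic subset of $J(E)$ of codimension at least one, the finite union $\bigcup_{j=1}^g\{F_j=0\}$ has empty interior, and any $\oc J$ in the complement of this union gives the required extension.

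The main obstacle is the borderline case in which $\bc_j$ is already an eigenvalue of the given $J_+$, so $m_+$ has a pole at $\bc_j$: the matching equation then formally forces $m_-^{\oc J}(\bc_j)=\infty$ with a specific residue, which corresponds to placing $\mu_j$ exactly at $\bc_j$ with prescribed Dirichlet data -- still a codimension-one condition on the torus and therefore avoidable. The other technical point is to justify the non-degenerate $\mu_j$-dependence of $m_-^{\oc J}(\bc_j)$; this follows from the explicit $\theta$-function / Baker--Akhiezer formulas recalled in Section~\ref{Section2}, using the fact that $\bc_j$ is not a branch point of the hyperelliptic surface $\mathfrak{R}_E$.
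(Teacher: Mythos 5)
Your argument is correct in substance, but it packages the key step differently from the paper. The paper works directly with the $2\times 2$ resolvent $R(z)$ of \eqref{sh1} and the identities \eqref{inizp}: $\bc_j-J$ fails to be invertible only when the diagonal entries $R_{-1,-1}$, $R_{0,0}$ blow up at $\bc_j$, which happens precisely when $a(0)^2\,r_+(\bc_j)\,r_-(\bc_j)=1$, and it then prescribes $r_-(\bc_j)$ by a two-case rule (if $r_+(\bc_j)\in\{0,\infty\}$, take $r_-(\bc_j)$ regular; otherwise set $r_-(\bc_j)=0$). Your Wronskian condition $F_j(\oc J)=0$ is exactly this one relation between $m_+(\bc_j)$ and $m_-^{\oc J}(\bc_j)$, so both proofs identify the same forbidden set and both ultimately rest on the freedom of the Dirichlet divisor of $\oc J\in J(E)$. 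What your version adds is an explicit reason why a single $\oc J$ can be chosen to satisfy all $g$ constraints simultaneously: each $\{F_j=0\}$ is a proper closed real-analytic subset of the torus, so the finite union has empty interior. The paper leaves this simultaneity point implicit, and your genericity step is a clean way to supply it; the price is that you must (and do) justify the non-constancy of $\alpha\mapsto m_-^{\oc J}(\bc_j)$, which is immediate from the divisor parametrization since this function takes the value $\infty$ at some points of the torus and finite values at others.

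One detail is backwards. In the borderline case where $\bc_j$ is an eigenvalue of $J_+$, i.e. $r_+(\bc_j)=\infty$, the second identity in \eqref{inizp} gives $-1/R_{0,0}(\bc_j)=a(0)^2 r_-(\bc_j)$, so the forbidden condition is $r_-(\bc_j)=0$ (equivalently, the left Weyl solution vanishing at $n=-1$), not $m_-^{\oc J}(\bc_j)=\infty$ with a prescribed residue. This does not damage the argument — it is still a single real-analytic condition on $J(E)$, hence avoidable — but the identification of the bad set in that case should be corrected.
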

\begin{proof}
 Let
 \begin{equation}\label{sh1}
R(z):=\cE^*(J-z)^{-1}\cE=\int\frac{d\Sigma}{x-z}=\begin{bmatrix}
r_{-}(z)^{-1}& a(0)\\
a(0)& r_+(z)^{-1}
\end{bmatrix}^{-1},
\end{equation}
where $\cE:\bbC^2\to\l^2$ such that $\cE\begin{bmatrix}c_-\\ c_+\end{bmatrix}=c_-e_{-1}+c_+ e_0$. In particular, for the diagonal entries of $R(z)$ we have
\begin{equation}\label{inizp}
-\frac 1{R_{-1,-1}(z)}=-\frac 1{r_-(z)}+a(0)^2r_+(z),\
-\frac 1{R_{0,0}(z)}=-\frac 1{r_+(z)}+a(0)^2r_-(z).
\end{equation}
If $r_+(\bc_j)$ is zero or infinity, we choose $\oc J$ such that $r_{-}(\bc_j)$ is regular, that is, $r_{-}(\bc_j)\not=0$, $r_{-}(\bc_j)\not=\infty$. And vice versa, if 
$r_+(\bc_j)$ is regular, we set $r_-(\bc_j)=0$. In both cases $R_{-1,-1}(\bc_j)\not=\infty$ and $R_{0,0}(\bc_j)\not=\infty$. Therefore the whole matrix $(\bc_j-J)$ is invertible.
\end{proof}
The spectral Killip-Simon condition can be formulated either in terms of measures $\sigma_\pm$, see Definition \ref{defini1}, or by means of the matrix measure $d\Sigma$ from \eqref{sh1}.
\begin{lemma}
The measures $\sigma_\pm$ both satisfy the Killip-Simon condition if and only if the matrix measure $d\Sigma$ is supported on $E\cup Y$ and obeys
\begin{equation}\label{mmc}
\int_E|\log \det\Sigma'(x)|\sqrt{\dist(x,\bbR\setminus E)}dx+\sum_{y_k\in Y}\sqrt{\dist(y_k, E)}^3<\infty.
\end{equation}

\end{lemma}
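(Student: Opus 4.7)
The plan is to reduce the equivalence to an explicit density identity on $E$ plus a finite-rank perturbation argument for the point masses. First I would derive, from \eqref{sh1}, the identity
$$
\det\Sigma'(x)=\frac{\sigma'_-(x)\,\sigma'_+(x)}{|f(x+i0)|^2},\qquad f(z):=1-a(0)^2 r_-(z)r_+(z),\quad x\in E.
$$
Since $R^{-1}=\begin{bmatrix}r_-^{-1}&a(0)\\ a(0)&r_+^{-1}\end{bmatrix}$ has a real off-diagonal, $\Im R^{-1}(x+i0)$ is the diagonal matrix with entries $-\pi\sigma'_\pm(x)/|r_\pm(x+i0)|^2$, so $\det(\Im R^{-1})=\pi^2\sigma'_-\sigma'_+/(|r_-|^2|r_+|^2)$. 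Combined with the general identity $\det(\Im R)=|\det R|^2\det(\Im R^{-1})$ (immediate from $\Im R^{-1}=-R^{-1}(\Im R)\overline{R^{-1}}$ for complex-symmetric matrix Herglotz functions) and $\det R=r_-r_+/f$, this yields the claim.

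Second, I would establish a priori that $\int_E|\log|f(x+i0)||\sqrt{\dist(x,\bbR\setminus E)}\,dx<\infty$. Since $f$ is meromorphic on $\bar\bbC\setminus E$ with $f(\infty)=1$, one has $f(x+i0)\neq 0$ a.e.\ on $E$; standard edge estimates on the Borel transforms $r_\pm$ near the endpoints of $E$ yield $|\log|f||$ at worst logarithmic in $\dist(x,\bbR\setminus E)$, which the vanishing $\sqrt{\dist}$-weight tames. Taking logarithms in the density identity gives $\log\det\Sigma'=\log\sigma'_-+\log\sigma'_+-2\log|f|$. The forward direction (both $\sigma_\pm$ satisfy KS $\Rightarrow$ $d\Sigma$ satisfies \eqref{mmc}) follows from the triangle inequality. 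For the reverse direction, write $-\log\sigma'_-=\log\sigma'_+-\log\det\Sigma'+2\log|f|$, take positive parts, and use the trivial bound $\int_E(\log\sigma'_\pm)_+\sqrt{\dist}\,dx<\infty$ (from $\sigma_\pm$ being finite measures) to conclude $\int_E|\log\sigma'_\pm|\sqrt{\dist}\,dx<\infty$.

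Third, for the point masses, $Y$ consists of eigenvalues of the full two-sided $J$ outside $E$, while $X_\pm$ are eigenvalues of $J_\pm$. Since $J=J_-\oplus J_+$ plus the rank-two perturbation $a(0)(e_0\langle\cdot,e_{-1}\rangle+e_{-1}\langle\cdot,e_0\rangle)$ from \eqref{ijf2}, min-max shows that the eigenvalue counting functions of $J$ and $J_-\oplus J_+$ in each of the finitely many gaps of $E$ differ by at most two. Hence $Y$ and $X_-\cup X_+$ can be paired up to a bounded total discrepancy, paired eigenvalues have comparable $\dist(\cdot,E)$, and $\sum_{y_k\in Y}\sqrt{\dist(y_k,E)}^3$ is comparable to $\sum_{x\in X_-\cup X_+}\sqrt{\dist(x,E)}^3$, closing the equivalence.

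The hard part will be the a priori edge estimate on $|\log|f(x+i0)||$ against the vanishing weight: it rests on the delicate boundary behavior of the Herglotz transforms $r_\pm$ near the endpoints of $E$, exploiting that these arise as Borel transforms of spectral measures of Jacobi matrices with essential spectrum $E$. The remaining steps are routine bookkeeping.
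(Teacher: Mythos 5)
Your determinant identity $\det\Sigma'=\sigma_-'\sigma_+'/|f|^2$ with $f=1-a(0)^2r_-r_+$ is correct and is, up to notation, the same identity the paper derives (the paper's displayed prefactor $|r_-/(-r_+^{-1}+a(0)^2r_-)|^2=|r_-r_+/f|^2$ comes from its dropping the $|r_\pm|^{-2}$ factors in the formula for $\Sigma'$; the argument is unaffected because it controls $\log|r_\pm|$ anyway). Your treatment of the point masses by min--max for the rank-two perturbation \eqref{ijf2} is a legitimate alternative to the paper's interlacing argument for Herglotz functions, and once one compares the counting functions $N(t)=\#\{x:\ \dist(x,E)>t\}$ rather than the raw eigenvalue lists, it closes cleanly.

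The gap is in your second step. The claim that ``standard edge estimates on $r_\pm$ yield $|\log|f(x+i0)||$ at worst logarithmic in $\dist(x,\bbR\setminus E)$'' is not justified and is false as a pointwise statement: the danger is not at the band edges but in the interior of $E$. Here $r_+$ is the resolvent function of an essentially arbitrary $J_+$ with essential spectrum $E$ (only $r_-$ is under control, coming from $J(E)$), so $r_+(x+i0)$ can blow up, and $f$ can have boundary zeros, on sets accumulating everywhere inside $E$; no pointwise bound in terms of $\dist(x,\bbR\setminus E)$ holds, and likewise ``$f$ meromorphic off $E$ with $f(\infty)=1$'' does not by itself give $f(x+i0)\neq0$ a.e.\ on $E$. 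What saves the step --- and is exactly the device the paper uses via \eqref{inioch} --- is that $f=(-r_+)\cdot\bigl(-r_+^{-1}+a(0)^2r_-\bigr)$ is a product of two (anti-)Herglotz functions, hence of bounded characteristic in the upper half-plane, so $\int_{\bbR}|\log|f(x+i0)||\,(1+x^2)^{-1}dx<\infty$. Since $E$ is compact and $\sqrt{\dist(x,\bbR\setminus E)}$ is merely bounded there (its vanishing plays no role), this already gives $\int_E|\log|f||\sqrt{\dist(x,\bbR\setminus E)}\,dx<\infty$, after which your forward and backward bookkeeping with $(\log\sigma_\pm')_+$ goes through.
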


\begin{proof}
We note two properties of an arbitrary function $F(z)$, which is analytic in the upper half-plan and has positive imaginary part. 
If such a function has a meromorphic extension in an interval $(\ba_j,\bb_j)\subset\bbR$ then its zeros and poles interlay. Secondary, $F(z)$ is of bounded characteristic in the upper half-plane, and therefore 
\begin{equation}\label{inioch}
\int_{\bbR}\frac{|\log F(x)|}{1+x^2}dx<\infty.
\end{equation}
From the first property we get that all poles of the first and second functions in \eqref{inizp} satisfy the Killip-Simon condition in $\bbR\setminus E$. Applying this fact once again we obtain that poles of $R_{-1,-1}$ and $R_{0,0}$, that is the set $Y$, satisfy this condition. Similar observations show the opposite directions.

With respect to the a.c. part of the measure we have
$$
\Sigma'(x)=\begin{bmatrix}r_-(x)^{-1}&a(0)\\
a(0)&r_+(x)^{-1}\end{bmatrix}^{-1}
\begin{bmatrix} {\sigma'}_-(x)&0\\
0&{\sigma'}_+(x)\end{bmatrix}
\begin{bmatrix}r_-(x)^{-1}&a(0)\\
a(0)&r_+(x)^{-1}\end{bmatrix}^{-1}.
$$
Therefore
$$
\det\Sigma'(x)=\left|\frac{r_-(x)}{-r_{+}^{-1}(x)+a(0)^2 r_-(x)}\right|^2\sigma'_-(x)\sigma'_+(x).
$$
Applying \eqref{inioch} to $r_-(z)$ and $-r_{+}^{-1}(z)+a(0)^2 r_-(z)$, we obtain an equivalence of the conditions for $\det\Sigma'(x)$ and $\sigma'_{\pm}(x)$.
\end{proof}

\subsection{Scalar and block-matrix spectral Killip-Simon conditions}

\begin{theorem}\label{thdensks} Let $A\in\GMP(\bC)$.
Its spectral measure satisfies \eqref{mmc} if and only if the block Jacobi matrix $\Delta(A)$ belongs to the Killip-Simon class.
\end{theorem}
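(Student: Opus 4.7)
The strategy is to combine the block matrix version of the Killip-Simon theorem (applied to the $(g+1)$-block Jacobi matrix $\Delta(A)$) with the change of variables $y=\Delta(x)$, and to keep track of the passage from the $2\times 2$ matrix spectral measure $d\Sigma$ of $A$ to the $(g+1)\times (g+1)$ matrix spectral measure $d\Xi$ of $\Delta(A)$. By the magic formula together with the block structure of $A$, the operator $\Delta(A)$ is a two-sided $(g+1)$-block Jacobi matrix which differs from $S^{-(g+1)}+S^{g+1}$ by a bounded perturbation. Applying the matrix Killip-Simon theorem of \cite{KSDp} to the one-sided compression, the claim $\Delta(A)\in\KS$ is equivalent to $d\Xi$ being supported on $[-2,2]\cup Y$ together with the matrix Szeg\H{o}--Blaschke condition \eqref{eq12} for $d\Xi$. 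Thus the problem reduces to showing that condition \eqref{eq12} for $d\Xi$ is equivalent to condition \eqref{mmc} for $d\Sigma$.

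The first step is the geometric comparison of the two weights. Since $E=\Delta^{-1}([-2,2])$ and the critical values of $\Delta=\Psi+1/\Psi$ are realized at the points $\bc_j$, which lie strictly outside $[-2,2]$, the map $\Delta\colon E\to[-2,2]$ is a $(g+1)$-fold covering, smooth and of nonvanishing derivative at the endpoints $\ba_j,\bb_j$. A direct computation using $|\Psi|=1$ on $E$ gives
\begin{equation*}
4-\Delta(x)^2=-(\Psi-1/\Psi)^2=4(\Im\Psi(x))^2\quad\text{on } E,
\end{equation*}
so $\sqrt{4-\Delta(x)^2}$ vanishes only at the endpoints of $E$, with square-root order, while $|\Delta'(x)|$ is bounded above and away from zero on $E$. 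Consequently
\begin{equation*}
\sqrt{4-\Delta(x)^2}\,|\Delta'(x)|\,dx\asymp\sqrt{\dist(x,\bbR\setminus E)}\,dx.
\end{equation*}
Applied to the point-mass part: for any $y_k\in Y$ near $\pm 2$ one has $y_k=\Delta(x_k)$ with $x_k$ near an endpoint of $E$, and the non-degeneracy of $\Delta$ at the endpoints gives $\dist(y_k,[-2,2])\asymp\dist(x_k,E)$, whence $\sum\sqrt{y_k^2-4}^{\,3}\asymp\sum\sqrt{\dist(x_k,E)}^{\,3}$, with $Y=\Delta(X)$ in bijection with $X$.

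It remains to compare the absolutely continuous parts, and this is the main technical step. The natural cyclic subspace for $\Delta(A)$ contains the two-dimensional subspace spanned by $\tilde e_{-1},\tilde e_0$ generating $d\Sigma$; the additional vectors $A\tilde e_{-1},\dots,A^{g-1}\tilde e_0$ needed to span the $(g+1)$-dimensional cyclic subspace can be obtained by a Gram-Schmidt step over $\bbR[x]$ and are polynomials in $A$ acting on the original two vectors. Under the $(g+1)$-to-$1$ pushforward $y=\Delta(x)$, the matrix measure $d\Sigma$ transforms into a sum of contributions, one from each preimage $x\in\Delta^{-1}(y)\cap E$, each of them of the form $\bT(x)\Sigma'(x)\bT(x)^*/|\Delta'(x)|$ for an explicit $(g+1)\times 2$ block $\bT(x)$ depending polynomially on $x$. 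Taking determinants, one finds after a cofactor computation that
\begin{equation*}
\log\det\Xi'(\Delta(x))=\sum_{\tilde x\in\Delta^{-1}(\Delta(x))}\log\det\Sigma'(\tilde x)+\Phi(x),
\end{equation*}
where $\Phi(x)$ collects logarithms of $|\Delta'|$ and of a Vandermonde-type determinant built out of the preimages; crucially $\Phi$ is bounded on $E$, so its contribution is integrable against either weight. Combining this identity with the weight comparison and the change of variables yields the equivalence of \eqref{mmc} and \eqref{eq12}, establishing both directions of the theorem.

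The hardest part is precisely the determinantal density identity: extracting the correct Jacobian and Vandermonde factors so that the $\log$-singularities of $\det\Xi'(y)$ at the Lebesgue-generic points of $[-2,2]$ and those of $\det\Sigma'(x)$ at the generic points of $E$ match up after integration against their respective weights. Once this identity is in place, the remaining analysis is routine integrability of bounded error terms on a bounded set.
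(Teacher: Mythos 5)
Your plan follows essentially the same route as the paper: reduce via the block-matrix Killip--Simon theorem to comparing the spectral conditions for $d\Xi$ and $d\Sigma$ under the change of variable $y=\Delta(x)$, and then establish a determinantal identity relating $\det\Xi'(y)$ to $\prod_{\Delta(x)=y}\det\Sigma'(x)$. The one step you leave as a described-but-unexecuted computation is exactly the paper's Lemma \ref{insl54}, where the Vandermonde and Jacobian factors are shown to cancel completely so that your error term $\Phi$ is in fact the constant $-\log\prod_{k}\lambda_k$ (and in the two-sided, $2\times2$-block setting one uses $W_2=W\otimes I_2$ with $\det W_2=(\det W)^2$); your weaker claim that $\Phi$ is merely bounded on $E$ is also true, since the preimages of a given $y$ lie in distinct bands and $\Delta'>0$ on $E$, and it suffices for the equivalence.
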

 
Essentially, it follows from the  Lemma \ref{insl54} given below. We prove the corresponding lemma for a scalar measure $\sigma$, assuming that $\sigma(\bc_j)=0$. Note, even if we start with an initial one-sided matrix $J_+$ such that $\sigma_+(\bc_j)>0$ for some $j$, due to the Lemma \ref{lemma51ini}, we always can get a two-sided $J$ such that $\Sigma(c_j)=0$. 
  Also, it is more uniform to set
$
\Delta(z)=\sum_{j=1}^g\frac{\lambda_j}{\bc_j-z}
$. To pass to our case, where $\Delta(z)$ is of the form \eqref{eq15}, it is enough to send one of this $\bc_j$ to infinity by a suitable linear fractional transform.

So, let $d\sigma$ be a scalar measure with an essential support on $E=\Delta^{-1}([-2,2])$ such that $\sigma(\bc_j)=0$. We define the matrix measure $d\Xi$ by
\begin{equation}\label{nis3}
\int \frac{d\Xi(y)}{y-z}:=\int\frac 1 {\Delta(x)-z}W^*(x)d\sigma(x)W(x),
\end{equation}
where
\begin{equation}\label{nis4}
 W(x)=
\begin{bmatrix}
\frac{1}{\bc_1-x} &\hdots&\frac{1}{\bc_g-x}
\end{bmatrix}.
\end{equation}

In other words, $d\Xi$ is the matrix measure of the multiplication by $\Delta(x)$ in $L^2_{d\sigma}$
with respect to a suitable cyclic subspace.
 Note that one can normalize this measure by  a triangular (constant) matrix $L$ such that
$
L^*\int d\Xi(y) L=I,
$
that is, to choose an appropriate orthonormal basis in the fixed cyclic subspace.
\begin{lemma}\label{insl54}
Let $\Xi'(y)$ be the density of the a.c. part of the measure $d\Xi$ on $[-2,2]$ and
$\sigma'(x)$ be the density of $d\sigma$, respectively. Then
\begin{equation}\label{densks}
 \det\Xi'(y)=\frac{\prod_{\Delta(x)=y}\sigma'(x)}{\prod_{k=1}^g\lambda_k}.
\end{equation}
\end{lemma}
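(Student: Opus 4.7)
The plan is to compute $\Xi'(y)$ as a push-forward and reduce the determinant to a Cauchy-type identity. Since $\Delta:E\to[-2,2]$ is a proper $g$-to-$1$ map with $\Delta'\ne 0$ on the interior of $E$, for $y\in(-2,2)$ the preimage $\Delta^{-1}(y)=\{x_1,\dots,x_g\}\subset E$ consists of $g$ simple points. Changing variables $y=\Delta(x)$ in \eqref{nis3} and restricting to absolutely continuous parts, I get
\begin{equation*}
\Xi'(y)=\sum_{k=1}^g\frac{\sigma'(x_k)}{|\Delta'(x_k)|}\,W^*(x_k)W(x_k)=V\,D\,V^*,
\end{equation*}
where $V$ is the $g\times g$ matrix with entries $V_{ij}=1/(\bc_i-x_j)$ (i.e.\ the columns are $W^*(x_j)$) and $D=\Diag\!\bigl(\sigma'(x_k)/|\Delta'(x_k)|\bigr)$. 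Hence $\det\Xi'(y)=(\det V)^2\prod_k \sigma'(x_k)/\prod_k|\Delta'(x_k)|$.

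Next I would evaluate $(\det V)^2$ via the Cauchy determinant formula
\begin{equation*}
\det V=\frac{\prod_{i<j}(\bc_j-\bc_i)(x_j-x_i)}{\prod_{i,j}(\bc_i-x_j)},
\end{equation*}
and compute $\Delta'(x_m)$ from the factored form. Writing $P(z)=\prod_k(\bc_k-z)$ and observing that $\Delta(z)-y$ is rational with poles at the $\bc_k$'s and zeros at the $x_k$'s, an elementary comparison of leading coefficients gives
\begin{equation*}
\Delta(z)-y=\frac{-y\prod_{k}(x_k-z)}{\prod_{k}(\bc_k-z)},
\end{equation*}
so differentiating and evaluating at $z=x_m$ produces
\begin{equation*}
\Delta'(x_m)=\frac{y\prod_{k\ne m}(x_k-x_m)}{\prod_{k}(\bc_k-x_m)}, \qquad \prod_m|\Delta'(x_m)|=\frac{|y|^g\prod_{i<j}(x_i-x_j)^2}{\prod_{m,k}|\bc_k-x_m|}.
\end{equation*}
Substituting everything and cancelling the Vandermonde factors in $x_i-x_j$ leads to
\begin{equation*}
\det\Xi'(y)=\frac{\prod_{i<j}(\bc_i-\bc_j)^2\,\prod_k\sigma'(x_k)}{|y|^g\prod_{i,j}|\bc_i-x_j|}.
\end{equation*}

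The final step is to identify the $y$-dependent factor as $1/\prod_k\lambda_k$. For this I compare residues of $\Delta(z)-y$ at each pole $\bc_j$: from the partial fraction expansion the residue equals $-\lambda_j$, while from the factored form it equals $y\prod_k(x_k-\bc_j)/\prod_{k\ne j}(c_k-\bc_j)\cdot(-1)$. Taking absolute values yields
\begin{equation*}
\lambda_j\prod_{k\ne j}|\bc_j-\bc_k|=|y|\prod_{k}|\bc_j-x_k|,
\end{equation*}
and multiplying over $j=1,\dots,g$ (noting $\prod_j\prod_{k\ne j}|\bc_j-\bc_k|=\prod_{i<j}(\bc_i-\bc_j)^2$) gives exactly the identity needed to collapse the $y$-factor. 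The formula \eqref{densks} then follows.

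The routine but delicate part is bookkeeping of signs and absolute values, together with verifying that $\Xi'$ really admits the push-forward description above despite $\Delta$ not being monotone on $E$; this is the main technical obstacle, but it is handled by treating $\Delta|_E$ on each arc between critical points separately — the local diffeomorphism property on the interior of $E$ (which follows from the Ahlfors construction in \eqref{eq14}, where all critical points of $\Delta$ lie in the gaps) guarantees the Jacobian formula and positivity of each summand. The Cauchy-determinant/residue identity at the end is then purely algebraic.
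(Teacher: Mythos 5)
Your proposal is correct and follows essentially the same route as the paper: the push-forward formula $\Xi'(y)=W^*\Diag(\sigma'(x_k)/\Delta'(x_k))W$, the Cauchy determinant for $\det W$, and the factorization $y-\Delta(x)=y\prod(x-x_j)/\prod(x-\bc_j)$ used to express both $\Delta'(x_k)$ and $\lambda_k$, so that the Vandermonde factors cancel. The only cosmetic difference is that the paper combines $\prod_k\Delta'(x_k)=(\det W)^2\prod_k\lambda_k$ in one identity rather than treating $\prod_k|\Delta'(x_k)|$ and $\prod_k\lambda_k$ separately; note also that for the Herglotz function $\Delta$ one has $\Delta'(x)=\sum_j\lambda_j/(\bc_j-x)^2>0$ on all of $\bbR\setminus\{\bc_j\}$, so the absolute values and the worry about non-monotonicity are unnecessary.
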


\begin{proof}
Let $\{x_1,\dots,x_g\}=\Delta^{-1}(y)$, $y\in[-2,2]$. Then
\begin{eqnarray*}
\Xi'(y)&=&\sum_{\Delta(x)=y}W^*(x)\frac{\sigma'(x)}{\Delta'(x)}W(x)\nonumber\\
&=&
W^*\begin{bmatrix}
\frac{\sigma'(x_1)}{\Delta'(x_1)}& & \\
&\ddots& \\
& & \frac{\sigma'(x_g)}{\Delta'(x_g)}
\end{bmatrix} W,\ W:=
\begin{bmatrix}
\frac{1}{\bc_1-x_1}&\dots&\frac 1{\bc_g-x_1}\\
\vdots&\dots&\vdots\\
\frac{1}{\bc_1-x_g}&\dots&\frac 1{\bc_g-x_g}
\end{bmatrix}.\label{nis54}
\end{eqnarray*}
As it is well known
\begin{equation}\label{densks1}
\det W
=(-1)^{\frac{g(g-1)}{2}}\frac{\prod_{k<j}(x_k-x_j)\prod_{k<j}(\bc_k-\bc_j)}{\prod_{j,k}(\bc_j-x_k)}.
\end{equation}
On the other hand,
$$
y-\Delta(x)=y\frac{\prod(x-x_j)}{\prod(x-\bc_j)}.
$$
Therefore,
$$
-\Delta'(x_k)=y\frac{\prod_{k\not=j}(x_k-x_j)}{\prod_j(x_k-\bc_j)} \quad\text{and}\quad
-\lambda_k=y\frac{\prod_j(c_k-x_j)}{\prod_{k\not=j}(\bc_k-\bc_j)}.
$$
That is,
$$
\Delta'(x_k)=\lambda_k\frac{\prod_{k\not=j}(x_k-x_j)}{\prod_j(x_k-\bc_j)}\frac{\prod_{k\not=j}(\bc_k-\bc_j)}{\prod_j(\bc_k-x_j)}.
$$
Thus,
\begin{equation}\label{densks2}
\prod \Delta'(x_k)=\frac{\prod_{k<j}(x_k-x_j)^2(\bc_k-\bc_j)^2}{\prod_{k,j}(\bc_k-x_j)^2}\prod_k\lambda_k.
\end{equation}
Combining \eqref{densks1} and \eqref{densks2}, we obtain \eqref{densks}.
\end{proof}

Next, we prove the following general statement.

\begin{proposition}\label{prop73}
Let a two-sided Jacobi matrix $J$ be such that $\bc_j\not\in\sigma(J)$. Then, up to the identification
$(p^{(j)}_m,q^{(j)}_m)\simeq(-p^{(j)}_m,-q^{(j)}_m)$, there exists a unique GMP matrix $A$  related to the fixed ordering $\bC$ such that $J=\cF A=F^*AF$. In particular $F:\l^2\to \l^2$ is unitary.
\end{proposition}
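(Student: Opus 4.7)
The plan is to construct $A$ (together with the unitary $F$ sending $e_n \mapsto \tilde e_n$) by a block-wise Gram--Schmidt process that mirrors the functional model in Section \ref{Section2}. The hypothesis $\bc_j\notin\sigma(J)$ is crucial: it guarantees that each resolvent $(\bc_k-J)^{-1}$ is a bounded operator on $\l^2$, so the ``kernel-type'' vectors $(\bc_k-J)^{-1}v$ (which play the role of $k^\alpha_{\zeta_k}$ in the abstract setting) are well defined.

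\textbf{Step 1 (construction of the basis).} Start from $\tilde e_{-1}:=e_{-1}$ and proceed blockwise. For block $j\ge 0$, consider the $g+1$ candidate vectors
\begin{equation*}
(\bc_1-J)^{-1}\tilde e_{j(g+1)-1},\ \ldots,\ (\bc_g-J)^{-1}\tilde e_{j(g+1)-1},\ J\tilde e_{j(g+1)-1},
\end{equation*}
project them onto the orthogonal complement of all previously constructed $\tilde e_m$'s, and apply Gram--Schmidt in \emph{this ordering} to obtain $\tilde e_{j(g+1)},\dots,\tilde e_{j(g+1)+g}$, fixing signs so that the diagonal-normalization condition $p^{(j)}_g>0$ from \eqref{n3} holds. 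Blocks with $j<0$ are built symmetrically starting from $\tilde e_0:=\|P_+ J e_{-1}\|^{-1}P_+Je_{-1}$ and running in the opposite direction (compare \eqref{nis51}--\eqref{nis52}). Let $F:\l^2\to\l^2$ be defined by $Fe_n:=\tilde e_n$ and set $A:=F^*JF$.

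\textbf{Step 2 (GMP structure of $A$).} This is the main obstacle. One has to show two things: first, that $A\in\bbA$, i.e.\ each block $B(\vbp_j)$ has the rank-one-plus-diagonal form $(\vq\vp^*)^-+(\vp\vq^*)^++\tilde\bC$ prescribed in \eqref{n2}; and second, that each $S^{-k}(\bc_k-A)^{-1}S^k$ is again of class $\bbA$. The band-width statement $(2g+3)$-diagonal comes from the fact that $J\tilde e_{j(g+1)-1}$ was, by construction, orthogonalized against blocks $j-1$ and $j$ only, hence produces nonzero overlaps with at most three consecutive blocks. The rank-one structure inside each block is exactly the abstract version of the reproducing-kernel computation in the proof of Theorem \ref{thm:multbyzsmp}: the vanishing of $\tilde f(\zeta_m)$ when $\tilde f$ involves $\fb_{\bc_m}$, used there to peel off a single one-dimensional term, is replaced here by the identity $(\bc_k-J)(\bc_k-J)^{-1}\tilde e_{j(g+1)-1}=\tilde e_{j(g+1)-1}$, which forces the expansion coefficients of $J\tilde e_n$ outside a rank-one pattern to collapse. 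The second claim, boundedness and band-structure of $(\bc_k-A)^{-1}$ (equivalently \eqref{altdef}), follows from the first because the explicit formulas \eqref{eqsmp21a}--\eqref{eqsmp21d} of Lemma \ref{lem:gsmpEntries} exhibit $(\bc_k-A)^{-1}e_{j(g+1)+k-1}$ as a vector supported in three blocks, and its norm is controlled by $\|(\bc_k-J)^{-1}\|<\infty$, which is finite by hypothesis.

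\textbf{Step 3 (unitarity of $F$).} Orthonormality of $\{\tilde e_n\}_{n\in\bbZ}$ is built into Gram--Schmidt. Completeness uses that $\{e_{-1},e_0\}$ is cyclic for $J$ in $\l^2$: the linear span of $\{\tilde e_n\}$ contains all vectors obtainable by repeated application of $J$ and $(\bc_k-J)^{-1}$ to $\tilde e_{-1}=e_{-1}$, and by the Stone--Weierstrass argument applied to the spectral representation of $J$ (polynomials in $x$ and in $(\bc_k-x)^{-1}$ are uniformly dense in $C(\sigma(J))$) this span is the whole $\l^2$.

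\textbf{Step 4 (uniqueness).} If $J=F^*AF=\tilde F^*\tilde A\tilde F$ with two GMP matrices, both bases $\{\tilde e_n\}, \{\tilde{\tilde e}_n\}$ must arise from the same iterative procedure after fixing $\tilde e_{-1}=e_{-1}$, since inside each block the ordered family of kernel-type vectors is determined by $(J,\bC)$ alone. The sign convention $p^{(j)}_g>0$ then pins the basis down up to the simultaneous flip $(p^{(j)}_m,q^{(j)}_m)\mapsto(-p^{(j)}_m,-q^{(j)}_m)$ for $0\le m\le g-1$, which leaves $B(\vbp_j)$ invariant and is precisely the stated identification.
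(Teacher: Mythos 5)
Your overall strategy (build the adapted orthonormal basis $\{\tilde e_n\}$ directly in $\l^2$ and set $A=F^*JF$) is close in spirit to the paper's, but the two steps that carry the real content are missing or wrong. First, your choice of block generators is not the right one, and you never verify that they survive Gram--Schmidt. The correct generators are the one-dimensional spaces $\fK_{\bc_k}=\l^2_+\ominus\l^2_{+,\bc_k}$ of Lemma \ref{vbr}, spanned by $\k_{\bc_k}=(J-\bc_k)^{-1}\bigl(e_{-1}a(0)\sin\varphi(\bc_k)+e_0\cos\varphi(\bc_k)\bigr)$; they are built from the \emph{two-dimensional} cyclic subspace $\{e_{-1},e_0\}$, with the angle $\varphi(\bc_k)$ tuned so that the vector lies in $\l^2_+$. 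Your candidate $(\bc_k-J)^{-1}\tilde e_{-1}$, projected onto $(\l^2_-)^{\perp}$, is proportional to $\k_{\bc_k}$ \emph{when it is nonzero}, but it vanishes exactly when $R_{-1,-1}(\bc_k)=0$, e.g.\ when $r_-(\bc_k)=0$ --- a configuration that Lemma \ref{lemma51ini} explicitly arranges. Moreover, even with the correct generators one must prove that $\k_{\bc_1},\dots,\k_{\bc_g},e_0$ are linearly independent; the paper does this by showing that a linear relation would force $r_+(z)$ to be rational, contradicting $a(n)>0$ for all $n$. Your proposal silently assumes Gram--Schmidt never degenerates.

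Second, your Step 2 --- which you correctly identify as the main obstacle --- is argued only by analogy. The assertion that $(\bc_k-J)(\bc_k-J)^{-1}\tilde e=\tilde e$ ``forces the expansion coefficients to collapse'' to the rank-one-plus-triangular pattern of \eqref{n2} is not a proof, and the appeal to Lemma \ref{lem:gsmpEntries} for the band structure of $(\bc_k-A)^{-1}$ is circular: that lemma \emph{assumes} $A\in\bbA$. The mechanism the paper actually uses is different: one passes to the spectral representation $L^2_{d\Sigma}$, observes that the orthogonalized system spans a $(2g+2)$-dimensional cyclic subspace for multiplication by $\Delta(x)$, so that $\Delta$ becomes a block Jacobi matrix $\cG$ with triangular off-diagonal blocks, and then the commutation relation $\cG A=A\cG$ propagates the structure $A_k=\delta_0(\vp_k)^*$, $B_k=\vq_k(\vp_k)^*+M_k$ from the first block (computed by hand as in Lemma \ref{lemapp1}) to all blocks; this is carried out in Theorem \ref{th8.4}. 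Without this commutator argument, or an equivalent explicit computation, the claim $A\in\GMP(\bC)$ is unsubstantiated. Your completeness and uniqueness steps are fine in outline, but they ride on the two points above.
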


We need to define a counterpart of a cyclic subspace \eqref{smpbase0} in the general case.
Assume that $J=\cF A$, see \eqref{ijf2}-\eqref{ijf3}. Recall  $F:\l^2\to\l^2$ is the isometry 
$$
F e_m=\tilde e_m,
$$
where $\tilde e_m$ were defined in \eqref{nis51}-\eqref{nis52}.
In particular,
$Fe_{-1}=e_{-1}$ and  $FP_+=P_+ F$,  $Fe_0=\tilde e_0=\frac{1}{a(0)}P_+Ae_{-1}$.
We note that 
\begin{equation}\label{aort}
\{h=(A-\bc_1) f:\ f\in \l^2_+, \ \langle f,\tilde e_0 \rangle=0\}=\{h\in\l^2_+: \langle h, e_0 \rangle=0\}.
\end{equation}
Thus, $F^* e_0$ can be described by means of an orthogonal complement in the following construction.

Let $\bc\not\in \sigma(J)$ and, actually, it is not necessary, but let $\bc$ be real. 
Having in mind the previous paragraph,
we define
\begin{equation}\label{jort}
\l^2_{+,\bc}:=\{h=(J-\bc) f:\ f\in \l^2_+, \ \langle f,e_0 \rangle=0\}.
\end{equation}
Recall that
$r_+(z)=\langle (J_+-z)^{-1}e_0,e_0 \rangle$. 

\begin{lemma}\label{vbr}
Let $\fK_\bc=\l^2_+\ominus \l^2_{+,\bc}$. This is a one dimensional space, i.e.,
$\fK_\bc=\{\k_{\bc}\}$. Moreover, we can choose
\begin{equation}\label{defkap}
\k_{\bc}=(J-\bc)^{-1}(e_{-1}a(0)\sin\varphi+e_0\cos\varphi),
\end{equation}
where
\begin{equation}\label{defkap2}
\tan\varphi=\tan\varphi(\bc)=r_+(\bc),\ -\frac{\pi}{2}<\varphi\le \frac{\pi}{2},
\end{equation}
including $\varphi=\frac{\pi} 2$ if $r_+(\bc)=\infty$, that is,
 $\bc$ is a pole of this function. 
In this notations
 \begin{equation}\label{defkap3}
\|\k_\bc\|^2=\frac{r'_+(\bc)}{1+r_+(\bc)^2}=\varphi'(\bc).
 \end{equation}
 Moreover, the following two-sided estimation holds
  \begin{equation}\label{defkap4}
 \frac{\min\{a(0)^2,1\}}{(|\bc|+\|J\|)^2}\le\varphi'(\bc)\le  \frac{\max\{a(0)^2,1\}}{\dist^2(\bc,\sigma(J))}.
\end{equation}
 \end{lemma}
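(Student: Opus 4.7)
The plan is to reduce everything to Weyl--Titchmarsh theory for the two-sided matrix $J$. First I would characterize $\fK_\bc$ abstractly: a vector $h \in \l^2_+$ lies in $\fK_\bc$ iff $(J-\bc)h$ is orthogonal to $\{e_n:n\geq 1\}$, i.e., $(J-\bc)h \in \l^2_- \oplus \bbC e_0$. Writing out the three-term recurrence, this forces $h(n)=0$ for $n\leq -1$ and forces the tail $(h(0), h(1), \ldots)$ to satisfy the $J_+$ eigenvalue equation from $n\geq 1$ onwards. Such an $h$ is the $\l^2_+$ Weyl solution for $J_+$ at energy $\bc$, which is unique up to a scalar (since $\bc\notin\sigma(J)\supset\sigma(J_+)$), so $\dim\fK_\bc = 1$.

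To verify that the explicit vector \eqref{defkap} spans $\fK_\bc$, note that by construction $(J-\bc)\k_\bc = a(0)\sin\varphi\,e_{-1} + \cos\varphi\,e_0$ is already supported in $\l^2_- \oplus \bbC e_0$, so the substantive point is to show $\k_\bc \in \l^2_+$. I would use the standard Green's function representation
\begin{equation*}
\langle (J-\bc)^{-1} e_m, e_n\rangle = \frac{\psi_-(\min(m,n))\,\psi_+(\max(m,n))}{W(\psi_-,\psi_+)},
\end{equation*}
with $\psi_\pm$ the Weyl solutions of $J$ on $\bbZ$ decaying at $\pm\infty$. For $n\leq -1$ a short calculation factors $\k_\bc(n) = (\psi_-(n)/W)\bigl(a(0)\sin\varphi\,\psi_+(-1) + \cos\varphi\,\psi_+(0)\bigr)$, so $\k_\bc \perp \l^2_-$ iff $\tan\varphi = -\psi_+(0)/(a(0)\psi_+(-1))$. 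Plugging $\psi_+$ into the recurrence at $n=0$ and observing that $\psi_+|_{n\geq 0}$ is proportional to the $\l^2_+$ Weyl solution $u_+$ of $J_+$, one identifies the right-hand side with $r_+(\bc)$, recovering \eqref{defkap2}. The pole case $r_+(\bc)=\infty$ (i.e.\ $\bc$ an eigenvalue of $J_+$) is absorbed by taking $\varphi=\pi/2$, where $\k_\bc$ becomes a scalar multiple of the corresponding $J_+$-eigenvector.

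For \eqref{defkap3}, I would write $\k_\bc(n) = \sin\varphi\cdot u_+(n)/u_+(0)$ on $n\geq 0$ to get $\|\k_\bc\|^2 = \sin^2\varphi\cdot\|u_+\|^2/u_+(0)^2$, and combine it with the classical identity
\begin{equation*}
r'_+(\bc) = \|(J_+-\bc)^{-1}e_0\|^2 = r_+(\bc)^2\,\frac{\|u_+\|^2}{u_+(0)^2}.
\end{equation*}
Since $\sin^2\varphi = r_+(\bc)^2/(1+r_+(\bc)^2)$, this gives $\|\k_\bc\|^2 = r'_+(\bc)/(1+r_+(\bc)^2) = \varphi'(\bc)$.

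The estimate \eqref{defkap4} is then immediate from $\|\k_\bc\| = \|(J-\bc)^{-1}v_\varphi\|$ with $v_\varphi := a(0)\sin\varphi\,e_{-1} + \cos\varphi\,e_0$: the upper bound uses $\|(J-\bc)^{-1}\| = \dist(\bc,\sigma(J))^{-1}$ and $\|v_\varphi\|^2\leq\max\{a(0)^2,1\}$, while the lower bound uses $\|v_\varphi\|^2\geq\min\{a(0)^2,1\}$ together with $\|J-\bc\|\leq\|J\|+|\bc|$. I expect the main obstacle to be the Green's function bookkeeping in the second step --- carefully tracking the scalar proportionality constants between $\psi_+$ on $\bbZ$ and $u_+$ on the half-line, and in particular arriving at the exact identity $\psi_+(-1)/\psi_+(0) = -1/(a(0)r_+(\bc))$ that converts the orthogonality condition into $\tan\varphi = r_+(\bc)$.
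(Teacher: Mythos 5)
Your proof is correct and reaches all four assertions, but it takes a partly different route from the paper's. For \eqref{defkap}--\eqref{defkap2} the paper argues more directly: it observes that $\k_\bc=\cos\varphi\,(J_+-\bc)^{-1}e_0$ (respectively, a multiple of the $J_+$-eigenvector when $r_+(\bc)=\infty$), from which $(J-\bc)\k_\bc=a(0)r_+(\bc)\cos\varphi\,e_{-1}+\cos\varphi\,e_0=v_\varphi$ and membership in $\l^2_+$ are immediate; your detour through the two-sided Weyl solutions $\psi_\pm$ and the Green's function kernel proves the same identity $\tan\varphi=-\psi_+(0)/(a(0)\psi_+(-1))=r_+(\bc)$ with more bookkeeping but no loss. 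For \eqref{defkap3} both arguments reduce to $\|\k_\bc\|^2=\cos^2\varphi\,\langle(J_+-\bc)^{-2}e_0,e_0\rangle=r_+'(\bc)/(1+r_+(\bc)^2)$. The real divergence is \eqref{defkap4}: the paper sandwiches $R'(\bc)=\int(x-\bc)^{-2}\,d\Sigma$ between $(|\bc|+\|J\|)^{-2}$ and $\dist^{-2}(\bc,\sigma(J))$ and compares the lower-right corners of the resulting matrix inequalities to obtain $(1+a(0)^2r_+(\bc)^2)/(|\bc|+\|J\|)^2\le r_+'(\bc)\le(1+a(0)^2r_+(\bc)^2)/\dist^2(\bc,\sigma(J))$, whereas you estimate $\|\k_\bc\|=\|(J-\bc)^{-1}v_\varphi\|$ directly from $\|(J-\bc)^{-1}\|=\dist^{-1}(\bc,\sigma(J))$, $\|J-\bc\|\le|\bc|+\|J\|$ and $\min\{a(0)^2,1\}\le\|v_\varphi\|^2\le\max\{a(0)^2,1\}$ --- this is shorter and bypasses the $2\times 2$ matrix resolvent entirely. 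One small correction: the parenthetical ``$\sigma(J)\supset\sigma(J_+)$'' is false in this setting (eigenvalues of $J_+$ in the gaps need not belong to $\sigma(J)$; indeed the case $r_+(\bc)=\infty$ that the lemma explicitly covers is exactly $\bc\in\sigma(J_+)\setminus\sigma(J)$). Uniqueness up to scalar of the $\l^2$ solution at $+\infty$ should instead be justified by $\bc\notin\sigma_{ess}(J_+)$ together with the limit-point property of bounded Jacobi matrices.
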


 \begin{proof}
 If $r_+(\bc)\not=\infty$, we have $\k_\bc=(J_+-c)^{-1}e_0\cos\varphi$.
 Otherwise $\k_{\bc}$ is collinear to the corresponding eigenvector of $J_+$. These prove
 \eqref{defkap}, \eqref{defkap2}.
 
 Further, we have
 $$
 \|\k_{\bc}\|^2=\langle (J_+-\bc)^{-2}e_0,e_0 \rangle\cos^2\varphi=\frac{r'_+(\bc)}{1+r_+(\bc)^2},
 $$
 which proves \eqref{defkap3}. We use \eqref{sh1}.
Since $\int d\Sigma=I$, we have
$$
\frac{1}{(|\bc|+\|J\|)^2}
\le R'(\bc)=\int\frac{d\Sigma}{(x-\bc)^2}\le \frac{1}{\dist^2(\bc,\sigma(J))}.
$$
Using \eqref{sh1}, we obtain
$$
\frac{1}{(|\bc|+\|J\|)^2}
\le R(\bc)\begin{bmatrix}
\frac{r_-'(\bc)}{r_-(\bc)^2}&0
\\0&\frac{r_+'(\bc)}{r_+(\bc)^2}
\end{bmatrix}R(\bc)\le \frac{1}{\dist^2(\bc,\sigma(J))},
$$
or
$$
\frac{R(\bc)^{-2}}{(|\bc|+\|J\|)^2}
\le
\begin{bmatrix}
\frac{r_-'(\bc)}{r_-(\bc)^2}&0
\\0&\frac{r_+'(\bc)}{r_+(\bc)^2}
\end{bmatrix}\le \frac{R(\bc)^{-2}}{\dist^2(\bc,\sigma(J))}.
$$
Comparing the values in the lower corner of these matrices, we get
$$
\frac{1+a_0^2r_+(\bc)^2}{(|\bc|+\|J\|)^2}
\le
r_+'(\bc)
\le \frac{1+a_0^2r_+(\bc)^2}{\dist^2(\bc,\sigma(J))}.
$$
Thus, \eqref{defkap4} is also proved.
 \end{proof}
 
 \begin{proof}[Proof of Proposition \ref{prop73}]
  Defining $\k_{\bc}$ by \eqref{defkap}, we obtain $F^*e_0=\frac{1}{\|\k_{\bc_1}\|}\k_{\bc_1}$.
In particular,
 $$
 p^{(0)}_0(0)=\langle Ae_{-1},e_0 \rangle:=\langle Je_{-1},\frac{\k_{\bc_1}}{\|\k_{\bc_1}\|} \rangle=\frac{a(0)\sin\varphi(\bc_1)}{\varphi'(\bc_1)}.
 $$
 Generally, we consider the ordered system of vectors
\begin{equation}\label{mthos0}
\k_{\bc_1},\dots,\k_{\bc_g}, e_0.
\end{equation}
This system  is linearly independent. Otherwise, there exists a nontrivial vector $\xi=\{\xi_j\}_{j=0}^g$ such that
$$
0=\langle (J-z)^{-1}(e_{-1}a_0 r_+(z)+e_0),\k_{\bc_1}\xi_0+\dots+e_0\xi_g \rangle
=\langle (J_+-z)^{-1}e_0,\k_{\bc_1}\xi_0+\dots+e_0\xi_g \rangle
$$
$$
=\frac{r_+(z)\cos\phi(\bc_1)-\sin\phi(\bc_1)}{z-\bc_1}\xi_0+\dots+\frac{r_+(z)\cos\phi(\bc_g)-\sin\phi(\bc_g)}{z-\bc_g}\xi_{g-1}+r_+(z)\xi_g
$$
$$
=\left(\frac{\cos\phi(\bc_1)}{z-\bc_1}\xi_0+\dots+\frac{\cos\phi(\bc_g)}{z-\bc_g}\xi_{g-1}+\xi_g\right)r_+(z)-
\left(\frac{\sin\phi(\bc_1)}{z-\bc_1}\xi_0+\dots+\frac{\sin\phi(\bc_g)}{z-\bc_g}\xi_{g-1}\right)
$$
That is, $r_+(z)$ is rational and the corresponding $\sigma_+$ has only a finite number of mass-points, which contradicts to the original assumption  that $a(n)>0$ for all natural $n$.

In the spectral representation the system \eqref{mthos0} corresponds to the vector-functions
$$
\frac 1{x-\bc_1}\begin{bmatrix}
a_0\sin\phi(\bc_1)\\ \cos\phi(\bc_1)
\end{bmatrix},\dots,
\frac 1{x-\bc_g}\begin{bmatrix}
a_0\sin\phi(\bc_g)\\ \cos\phi(\bc_g)
\end{bmatrix},\begin{bmatrix}
0\\1
\end{bmatrix}
$$
from $L^2_{d\Sigma}$. Jointly with its orthogonal complement they form $(2g+2)$-dimensional cyclic subspace 
\begin{equation}\label{nis1}
\frac 1{x-\bc_1}\begin{bmatrix}
\xi_{-1,0}\\ \xi_{0,0}
\end{bmatrix}+\dots+
\frac 1{x-\bc_g}\begin{bmatrix}
\xi_{-1,g-1}\\ \xi_{0,g-1}
\end{bmatrix}+\begin{bmatrix}
\xi_{-1,g}\\ \xi_{0,g}
\end{bmatrix}
\end{equation}
of the operator multiplication by $\Delta(x)$ in this space. Being ordered and orthogonalized  in an appropriate way, it generates a GMP basis in $L^2_{d\Sigma}$. The operator of multiplication by the independent variable with respect to this basis forms 
$A\in\GMP(\bC)$. Moreover, its spectral matrix measure is $d\Sigma$, that is, the spectral measure of the initial $J$.
 \end{proof}
 
 \begin{proof}[Proof of Theorem \ref{thdensks}] 
 Clearly, the eigenvalue spectral condition on $A$ corresponds to the eigenvalue spectral condition for $\Delta(A)$ of the Killip-Simon class matrices with asymptotically constant matrix-block coefficients. 
 
  We see now that the cyclic subspace \eqref{nis1} of $\Delta(A)$ for a two-sided GMP matrix $A$ represents indeed a simple two dimensional counterpart of the system \eqref{nis4}. Subsequently the matrix measure $d\Xi_2(y)$ of $\Delta(A)$ is of the form \eqref{nis3}, i.e.,
  \begin{equation*}\label{nis31}
\int \frac{d\Xi_2(y)}{y-z}:=\int\frac 1 {\Delta(x)-z}W_2^*(x)d\Sigma(x)W_2(x),
\  \ 
 W_2(x)=
\begin{bmatrix}
\frac{I_2}{\bc_1-x} &\hdots&\frac{I_2}{\bc_g-x}&I_2
\end{bmatrix}.
\end{equation*}
We have
\begin{eqnarray*}
\Xi'_2(y)&=&\sum_{\Delta(x)=y}W_2^*(x)\frac{\Sigma'(x)}{\Delta'(x)}W_2(x)\nonumber\\
&=&
W_2^*\begin{bmatrix}
\frac{\Sigma'(x_1)}{\Delta'(x_1)}& & \\
&\ddots& \\
& & \frac{\Sigma'(x_g)}{\Delta'(x_g)}
\end{bmatrix} W_2,\ W_2:=W\otimes I_2.
\end{eqnarray*}
Using $\det W_2=\det^2 W$, by a two-dimensional counterpart of \eqref{densks},   we obtain an equivalence of the Killip-Simon a.c. spectral conditions on  $A$ and $\Delta(A)$.
\end{proof}

\section{Theorem \ref{th73}: the first step in a parametrization of coefficient sequences of the Killip-Simon class}

\subsection{``Derivative" in the Jacobi flow direction}

 Let us make the block decomposition of $\Delta(A)$ in $(g+1)\times (g+1)$ blocks
 \begin{equation}\label{eq11}
 \Delta(A)=\begin{bmatrix}\ddots&\ddots&\ddots & & &\\
 &\fv^*_{-1}&\fw_{-1}&\fv_0 & & \\
  & &\fv^*_{0}&\fw_{0}&\fv_1 & \\
& &  & \ddots&\ddots\ddots
 \end{bmatrix},
 \end{equation}
 where $\fw_k$ is a self-adjoint matrix and $\fv_k$ is  a lower triangular one, i.e.,
 $$
 \fw_k=\begin{bmatrix} w^{(k)}_{0,0}&\hdots &w^{(k)}_{0,g}\\ 
 \vdots&& \vdots \\
 w^{(k)}_{g,0}& \hdots&w^{(k)}_{g,g} \end{bmatrix},
\ \ 
 \fv_k=\begin{bmatrix} v^{(k)}_{0,0}&0 &0\\ 
 \vdots&\ddots& 0 \\
 v^{(k)}_{g,0}& \hdots&v^{(k)}_{g,g} \end{bmatrix}.
$$
Due to the previous subsection and general results on Jacobi block-matrices of Killip-Simon class \cite{KSDp},
 the spectral condition \eqref{eq5} is equivalent to the boundedness of
the following KS-functional
\begin{equation}\label{eq10}
H_+(A)=\sum_{j\ge 0}h(\fv_j,\fw_j,\fv_{j+1}),
\end{equation}
where
$$
h(\fv_j,\fw_j,\fv_{j+1})=\frac 1 2\tr (\fv_j^* \fv_j
+\fw_j^2
+\fv_{j+1} \fv_{j+1}^*)
-(g+1)-\log \det\fv_j\fv_{j+1}.
$$
\begin{lemma}\label{lemjder}
Let
$$
\delta_J H_+(A)=\frac 1 2 \langle \Delta(\cJ A) e_{-1}, \Delta(\cJ A) e_{-1}\rangle-1-\log(\cJ v)_{g,g}^{(-1)}(\cJ v)_{g,g}^{(0)}.
$$
Then
\begin{equation}\label{1apr24}
H_+(A)=H_+(\cJ A)+\delta_J H_+(A).
\end{equation}
\end{lemma}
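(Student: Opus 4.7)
The plan is to leverage the representation $\cJ A = S^{-1} U_A^* A U_A S$ from \eqref{jfex0}, which yields $\Delta(\cJ A) = S^{-1} \tilde M\, S$ with $\tilde M := U_A^* \Delta(A) U_A$; here $U_A$ is the block-diagonal unitary with blocks $U(\vp_j)$. I would first verify that $H_+$ is invariant under this kind of block-diagonal conjugation: the trace ingredients of each summand $h(\fv_j, \fw_j, \fv_{j+1})$ are manifestly unitarily invariant, while $\det U(\vp_{j-1}) \det U(\vp_j) = (-1)^g (-1)^g = 1$ (cf.\ Lemma \ref{lem:formulaU}) ensures $\log\det \tilde\fv_j = \log\det\fv_j$. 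Therefore $H_+(\tilde M) = H_+(\Delta(A)) = H_+(A)$, and the task reduces to computing $H_+(\tilde M) - H_+(\hat M)$, where $\hat M := \Delta(\cJ A) = S^{-1}\tilde M\, S$.

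Next, I would express the $(g+1)$-block decomposition of $\hat M$ in terms of that of $\tilde M$. Since $\hat M_{ij} = \tilde M_{i+1, j+1}$, the single scalar shift repartitions the block boundaries so that each new block straddles two consecutive old ones; bookkeeping of scalar entries gives
\begin{equation*}
\hat\fw_j = \begin{bmatrix}(\tilde\fw_j)_{[1:, 1:]} & (\tilde\fv_{j+1})_{[1:, 0]} \\ (\tilde\fv_{j+1}^*)_{[0, 1:]} & (\tilde\fw_{j+1})_{0, 0}\end{bmatrix}, \quad \hat\fv_j = \begin{bmatrix}(\tilde\fv_j)_{[1:, 1:]} & 0 \\ (\tilde\fw_j)_{[0, 1:]} & (\tilde\fv_{j+1})_{0, 0}\end{bmatrix},
\end{equation*}
the zero being forced by block tridiagonality of $\tilde M$. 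A crucial observation is that since $\tilde M = S \hat M S^{-1}$ inherits the scalar bandwidth $g+1$ from $\hat M = \Delta(\cJ A)$ (a GMP $\Delta$-image), the blocks $\tilde\fv_j$ are themselves lower triangular, even though a generic block-diagonal conjugation of a $(2g{+}3)$-diagonal matrix would widen its band. In particular $\|\tilde\fv_j[0, :]\|^2 = (\tilde\fv_j)_{0, 0}^2$, and this cancellation is what ultimately makes the remaining computation converge.

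With lower triangularity in hand, the difference $H_+(\tilde M) - H_+(\hat M)$ is then computed termwise. Decomposing $h(\fv, \fw, \fv') = \tfrac12 \tr\fw^2 + \tfrac12 \kappa(\fv) + \tfrac12 \kappa(\fv')$ with $\kappa(\fv) := \tr(\fv^*\fv) - (g+1) - 2\log\det\fv$ separates the trace and log-determinant contributions. The log-determinant piece collapses by telescoping: block lower-triangularity of $\hat\fv_j$ gives $\log\det\hat\fv_j = \log\det\tilde\fv_j - \log(\tilde\fv_j)_{0, 0} + \log(\tilde\fv_{j+1})_{0, 0}$, and combined with $(\tilde\fv_j)_{0, 0} \to 1$ this telescopes precisely to $-\log(\tilde\fv_0)_{0, 0}(\tilde\fv_1)_{0, 0}$. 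The Hilbert--Schmidt contribution, after expanding $\|\hat\fv_j\|^2$ and $\|\hat\fw_j\|^2$ in terms of $\tilde M$-blocks and summing over $j\ge 0$, telescopes analogously to the boundary quantity $\tfrac12 (\|\tilde\fv_0[:, 0]\|^2 + \|\tilde\fw_0[0, :]\|^2 + (\tilde\fv_1)_{0, 0}^2) - 1$; the $-1$ arises from the boundary limit $\lim_N \|\tilde\fv_N[:, 0]\|^2 = 1$, itself a consequence of $H_+(A) < \infty$ forcing $\tilde\fv_N \to I_{g+1}$ and $\tilde\fw_N \to 0$.

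The identification with $\delta_J H_+(A)$ is then immediate. One checks $\|\Delta(\cJ A) e_{-1}\|^2 = \|\tilde M e_0\|^2 = \|\tilde\fv_0[:, 0]\|^2 + \|\tilde\fw_0[0, :]\|^2 + \|\tilde\fv_1[0, :]\|^2$, which by lower-triangularity of $\tilde\fv_1$ coincides with the boundary quantity above; and the block formula for $\hat\fv_j$ gives $(\cJ v)_{g, g}^{(-1)} = (\hat\fv_{-1})_{g, g} = (\tilde\fv_0)_{0, 0}$ and $(\cJ v)_{g, g}^{(0)} = (\hat\fv_0)_{g, g} = (\tilde\fv_1)_{0, 0}$, so the two pieces combine into $\delta_J H_+(A)$ as stated. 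The hard part will be the telescoping bookkeeping in the trace part: the sum initially looks like it ought to accumulate divergent contributions from $\|\tilde\fv_j[0, :]\|^2$ terms, and it is only the bandwidth-forced lower-triangularity of $\tilde\fv_j$ that collapses those into $(\tilde\fv_j)_{0, 0}^2$ contributions, whose telescoping leaves just the finite boundary residue matching $\delta_J H_+(A)$.
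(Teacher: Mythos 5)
Your proposal is correct and follows essentially the same route as the paper's own (much terser) proof: first show that $H_+$ is unchanged under the block-diagonal conjugation $U_A^*\Delta(A)U_A$ via unitary invariance of the traces together with triangularity/determinant preservation of the $\fv$-blocks, and then identify the terms lost under the scalar shift $S^{-1}\cdot S$ with $\delta_J H_+(A)$. Your explicit block repartitioning of $S^{-1}\tilde M S$, the bandwidth argument forcing lower triangularity of $\tilde\fv_j$, and the telescoping with boundary limits $\tilde\fv_N\to I$, $\tilde\fw_N\to 0$ are exactly the bookkeeping the paper asserts in one sentence, and your computations check out.
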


\begin{proof} Comparing $\cJ A=S^{-1}U_A^*\Delta(A)U_AS$ and $U_A^*\Delta(A)U_A$, we note that
$\delta_J H_+(A)$ adds  to $H_+(\cJ A)$ exactly that terms,  which were omitted (beause of the shift)
in the trace-like expression \eqref{eq10} for $P_+U_A^*\Delta(A)U_A$.
Further, since $U_A$  is of a block diagonal form, we have the following identities between the blocks of $U_A^*\Delta(A)U_A$
and $\Delta(A)$ itself:
\begin{alignat*}{3}
\tr\, U^*(\vp_j)\fv_j^*\fv_j U(\vp_j)&=\tr\,\fv_j^*\fv_j, \ \ \tr\, U^*(\vp_j)\fw^2_jU(\vp_j)=\tr\, \fw_j^2,\\
\tr\, U^*(\vp_j)\fv_{j+1}\fv_{j+1}^* U(\vp_j)&=\tr\, \fv_{j+1}\fv_{j+1}^*.
\end{alignat*}
Also,  all $\fv_j$ and $U^*(\vp_{j-1})\fv_j U(\vp_{j})$ are triangular matrices, and we have
$$
\prod_{l=0}^g v^{(j)}_{l,l}=\det \fv_j=\det U^*(\vp_{j-1})\fv_j U(\vp_{j}).
$$
After that, we arrive to the conclusion that the right and left hand side in \eqref{1apr24} coincide. 
\end{proof}

\subsection{``Derivative" in the GMP direction}
Let $\bTh$ be unitary in $\l^2$ such that $\mathbf\Theta:\l^2_+\to\l^2_+$. We denote by $\cK_{\bTh}=\l^2_+\ominus\bTh \l^2_+$.
For $\bTh_n=U_{A(0)}SU_{A(1)}\dots U_{A(n)}S$ we have
\begin{equation}\label{7sem0}
\cK_{\bTh_n S^{g+1}}=\cK_{\bTh_n}\oplus \bTh_n\cK_{S^{g+1}}=\cK_{S^{g+1}}\oplus S^{g+1}\cK_{S^{-(g+1)}\bTh_n S^{g+1}}.
\end{equation}
The system $\tilde e_0,\dots,\tilde e_n$, $\tilde e_j=\bTh_j e_{-1}$, forms a basis in $\cK_{\bTh_n}$, see \eqref{nis51}, and $e_0,\dots, e_g$ is the standard basis in 
$\cK_{S^{g+1}}$.  By 
$\breve e_j=\bTh_j S^{g+1}e_{-1}=\bTh_j e_{g}$, 
$j=0,\dots,n$, we denote a similar orthonormal system in $S^{g+1}\cK_{S^{-(g+1)}\bTh_n S^{g+1}}$.

\begin{remark}\label{7semrem}
We will need both, the standard shift for GMP matrices $A_1:=S^{-(g+1)}AS^{g+1}$ as well as the  shift of $A$ in the Jacobi flow direction, i.e.,
$A(1)=\cJ A$. For this reason we have to use quite complicated notations. 
Recall that $\fv^{(j)}_{km}(n)$ denotes the $(k,m)$ entry of the the block $\fv_j(n)$ in the matrix $\Delta(A(n))$, where $A(n)=\cJ^{\circ n} A$. Then
$\fv^{(j+1)}_{km}(n)$ has the same meaning with respect to the shifted GMP  matrix $A_1(n)=\cJ^{\circ n}A_1=(\cJ^{\circ n} A)_1=A(n)_1$, i.e.: $\fv^{(j+1)}_{km}(n)=\fv^{(j)}_{km}(n)_1$.
\end{remark}

\begin{lemma}
Let us define the partial sum for the functional $\tilde H_+(A)$ by
$$
\tilde H_{+,n}(A):=\sum_{m=1}^n\{\frac 1 2\langle \Delta(A(m))  e_{-1},\Delta(A(m)) e_{-1} \rangle-1-\log\fv_{gg}^{(-1)}(m)
\fv_{gg}^{(0)}(m)\}
$$
and let, as before,
$$
h(\fv_0,\fw_0,\fv_1)=\frac 1 2\tr\{{\fv_0^*\fv_0+
\fw_0^2+\fv_1\fv_1^*} \}-(g+1)-\log\det\fv_0\fv_1.
$$
Then
\begin{equation}\label{7sem2}
\tilde H_{+,n}(A)+h(\fv_0(n),\fw_0(n),\fv_1(n))
=h(\fv_0(0),\fw_0(0),\fv_1(0))
+\tilde H_{+,n}(A_1).
\end{equation}
\end{lemma}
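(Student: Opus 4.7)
The plan is to read \eqref{7sem2} as the discrete integration-by-parts companion to Lemma \ref{lemjder}: whereas Lemma \ref{lemjder} tells us how the partial KS functional $H_+$ transforms under one step of the Jacobi flow $\cJ$, the identity \eqref{7sem2} tells us how the telescoped sum $\tilde H_{+,n}$ transforms under the one-block GMP shift $A\mapsto A_1:=S^{-(g+1)}AS^{g+1}$. The structural fact that makes the two pieces fit together is the commutation
$$\cJ(A_1)=\cO^{\circ g}\bigl(S^{-(g+1)}A_1 S^{g+1}\bigr)=S^{-(g+1)}\cO^{\circ g}\bigl(S^{-(g+1)}AS^{g+1}\bigr)S^{g+1}=(\cJ A)_1,$$
which follows immediately from the definition \eqref{jflow} of $\cJ$ together with \eqref{comvarpi}. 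By induction this gives $A(n)_1=A_1(n)$ for every $n\ge 0$, so ``Jacobi-flow steps'' and ``block-shifts'' can be freely interchanged inside all the expressions below.

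Given the commutation, the argument is essentially two lines. First, iterating Lemma \ref{lemjder} and noting that the $m$-th summand of $\tilde H_{+,n}(A)$ is exactly $\delta_J H_+(A(m-1))$, one telescopes
$$\tilde H_{+,n}(A)=H_+(A)-H_+(A(n)),\qquad \tilde H_{+,n}(A_1)=H_+(A_1)-H_+(A_1(n)).$$
Second, since $\Delta(A_1)=S^{-(g+1)}\Delta(A)S^{g+1}$ simply relabels the block $j+1$ of $\Delta(A)$ (see \eqref{eq11}) as the block $j$ of $\Delta(A_1)$, the very definition \eqref{eq10} of $H_+$ yields the block-shift identity
$$H_+(A)-H_+(A_1)=h(\fv_0(0),\fw_0(0),\fv_1(0)),$$
and the same identity at level $n$, combined with $A(n)_1=A_1(n)$, reads $H_+(A(n))-H_+(A_1(n))=h(\fv_0(n),\fw_0(n),\fv_1(n))$. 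Subtracting the two telescoped formulas and substituting the two block-shift identities produces
$$\tilde H_{+,n}(A)-\tilde H_{+,n}(A_1)=h(\fv_0(0),\fw_0(0),\fv_1(0))-h(\fv_0(n),\fw_0(n),\fv_1(n)),$$
which is precisely \eqref{7sem2} after rearrangement.

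The only delicate point, and the closest thing to an obstacle, is that $H_+(A)$ is \emph{a priori} a formal series which may diverge for a general $A\in\GMP(\bC)$. The clean way to sidestep this is to run the whole argument with the truncated sum $H_{+,N}(A):=\sum_{j=0}^{N-1}h(\fv_j,\fw_j,\fv_{j+1})$ in place of $H_+$ and then take $N\to\infty$: the truncated block-shift identity acquires an extra boundary contribution $-h(\fv_N,\fw_N,\fv_{N+1})$, but this contribution appears symmetrically in the pairs $H_{+,N}(A)-H_{+,N}(A_1)$ and $H_{+,N}(A(n))-H_{+,N}(A_1(n))$ and therefore cancels in the cross-subtraction that produces \eqref{7sem2}. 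No new ideas are required beyond the ones already used for Lemma \ref{lemjder}; the work lies entirely in keeping the bookkeeping of $(\fv_j,\fw_j)$-indices straight through the Jacobi-flow and block-shift operations.
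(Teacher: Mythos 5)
Your formal bookkeeping is right: the commutation $(\cJ A)_1=\cJ(A_1)$, the identification of the $m$-th summand of $\tilde H_{+,n}(A)$ with $\delta_J H_+(A(m-1))$, and the observation that $\Delta(A_1)=S^{-(g+1)}\Delta(A)S^{g+1}$ merely relabels the blocks so that $H_+(A)-H_+(A_1)=h(\fv_0,\fw_0,\fv_1)$ are all correct, and if $H_+(A)<\infty$ your two-line derivation of \eqref{7sem2} is valid. The problem is that the lemma must hold for a general $A\in\GMP(\bC)$ with $H_+(A)=+\infty$ --- this is precisely the case needed in Theorem \ref{thnoch}, where \eqref{7sem2} is used to \emph{deduce} $H_+(A)<\infty$ from $\tilde H_+(A)<\infty$ --- and there your telescoped formulas $\tilde H_{+,n}(A)=H_+(A)-H_+(A(n))$ are of the form $\infty-\infty$.

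Your truncation does not repair this. Writing $H_{+,N}(B)=\sum_{j=0}^{N-1}h(\fv_j(B),\fw_j(B),\fv_{j+1}(B))$, the truncated block-shift identity reads $H_{+,N}(A)-H_{+,N}(A_1)=h(\fv_0(0),\fw_0(0),\fv_1(0))-h(\fv_N(0),\fw_N(0),\fv_{N+1}(0))$, and at level $n$ it reads $H_{+,N}(A(n))-H_{+,N}(A_1(n))=h(\fv_0(n),\fw_0(n),\fv_1(n))-h(\fv_N(n),\fw_N(n),\fv_{N+1}(n))$. In the cross-subtraction the two boundary terms survive as $h(\fv_N(n),\ldots)-h(\fv_N(0),\ldots)$: they are the $N$-th blocks of $\Delta(A(n))$ and of $\Delta(A)$, i.e.\ of \emph{different} matrices (since $\Delta(A(n))=S^{-n}U^*\Delta(A)US^{n}$, its $N$-th block sits near index $(g+1)N+n$ of the conjugated $\Delta(A)$), so there is no symmetric cancellation; and since $h\ge 0$ need not tend to zero for a general GMP matrix, letting $N\to\infty$ does not kill these terms either. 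In addition, the telescoping step uses Lemma \ref{lemjder}, which for divergent $H_+$ must itself be truncated and then carries its own defect at position $N$ for each of the $2n$ applications; you do not account for these at all. Tracking all of these boundary contributions and proving that they cancel is essentially the computation the paper actually performs: it evaluates the trace of $\Delta(A)^2$ restricted to the finite-dimensional subspace $\cK_{\bTh_n S^{g+1}}$ with respect to the two orthogonal decompositions \eqref{7sem0} (an exact finite identity for the quadratic part) and proves the product identity \eqref{7sem1} by induction (for the logarithmic part). You should adopt that finite-dimensional route rather than manipulating the divergent series.
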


\begin{proof}
Let $P_{\cK_{\bTh_n S^{g+1}}}$ be the orthogonal projector on $\cK_{\bTh_n S^{g+1}}$. We compute the trace of the matrix
$$
P_{\cK_{\bTh_n S^{g+1}}}\Delta(A)^2|_{\cK_{\bTh_n S^{g+1}}}
$$
with respect to the two decompositions \eqref{7sem0}.
In the first case we get
\begin{equation}\label{7sem3}
\sum_{m=0}^n\langle \Delta(A) \tilde e_m,\Delta(A)\tilde e_m \rangle=\sum_{m=0}^n\langle \Delta(A(m))  e_{-1},\Delta(A(m)) e_{-1} \rangle
\end{equation}
and
\begin{equation}\label{7sem4}
\sum_{j=0}^g\langle \Delta(A) \bTh_n e_j,\Delta(A)\bTh_n e_j\rangle=\tr\{\fv_0(n)^*\fv_0(n)+
\fw_0(n)^2+\fv_1(n)\fv_1(n)^*\}.
\end{equation}
With respect to the second orthonormal system we get
\begin{equation}\label{7sem5}
\sum_{j=0}^g\langle \Delta(A)  e_j,\Delta(A) e_j\rangle=\tr\{\fv_0^*\fv_0+
\fw_0^2+\fv_1\fv_1^*\}
\end{equation}
and
\begin{equation}\label{7sem6}
\sum_{m=0}^n\langle \Delta(A) \breve e_m,\Delta(A)\breve e_m \rangle=\sum_{m=0}^n\langle \Delta(A_1(m))  e_{-1},
\Delta(A_1(m))  e_{-1} \rangle.
\end{equation}
Thus the sum of the expressions in \eqref{7sem3} and \eqref{7sem4} is the same as in \eqref{7sem5} and \eqref{7sem6}.

Next, we claim that for an arbitrary $n$ 
\begin{equation}\label{7sem1}
v^{(0)}_{00}(0)\dots v^{(0)}_{gg}(0)\cdot\prod_{m=1}^n v^{(0)}_{gg}(m)=
\prod_{m=1}^n v^{(-1)}_{gg}(m)\cdot v^{(0)}_{00}(n)\dots v^{(0)}_{gg}(n).
\end{equation}
Recall that $v_{gg}^{(0)}(m)$ can be regarded as $v_{gg}^{(-1)}(m)_1$, see Remark \ref{7semrem}.
We prove this by induction. Since
$$
v^{(0)}_{00}(0)\dots v^{(0)}_{g g}(0)=\det \fv_0(0)=\det U^*(\vp_{-1})\fv_0 U(\vp_0)=
v^{(-1)}_{gg}(1)\cdot v^{(0)}_{00}(1)\dots v^{(0)}_{g-1g-1}(1),
$$
 we have the following recurrence relation
$$
v^{(-1)}_{gg}(1)\cdot v^{(0)}_{00}(1)\dots  v^{(0)}_{gg}(1)= v^{(0)}_{00}(0)\dots  v^{(0)}_{gg}(0) v^{(0)}_{gg}(1).
$$
That is,
\begin{align*}
v^{(0)}_{00}(0)\dots v^{(0)}_{gg}(0)\cdot v^{(0)}_{gg}(1)v^{(0)}_{gg}(2)=&
v^{(-1)}_{gg}(1)\cdot v^{(0)}_{00}(1)\dots v^{(0)}_{gg}(1) v^{(0)}_{gg}(2)\\
=& v^{(-1)}_{gg}(1) v^{(-1)}_{gg}(2)\cdot v^{(0)}_{00}(2)\dots v^{(0)}_{gg}(2),
\end{align*}
and so on... Thus, \eqref{7sem1} is proved.

In a combination of \eqref{7sem3}, \eqref{7sem4} and \eqref{7sem5}, \eqref{7sem6} with 
 \eqref{7sem1} we obtain  \eqref{7sem2}.
\end{proof}

\begin{theorem} \label{thnoch}
If $H_+(A)<\infty$, then $\tilde H_+(A)<\infty$.
If 
\begin{equation}\label{8apr24}
\tilde H_+(A)<\infty\quad  \text{and}\quad \liminf_{n\to \infty} h(\fv_0(n),\fw_0(n),\fv_1(n))=0,
\end{equation}
then the Killip-Simon functional is finite, moreover 
$H_+(A)=\tilde H_+(A)$.
\end{theorem}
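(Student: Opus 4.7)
The plan combines two identities: iterating Lemma~\ref{lemjder} yields
\[
H_+(A)=H_+(A(n))+\tilde H_{+,n}(A),\qquad\text{(I)}
\]
and the preceding lemma gives
\[
\tilde H_{+,n}(A)+h_n=h_0+\tilde H_{+,n}(A_1),\qquad\text{(II)}
\]
where $h_k:=h(\fv_0(k),\fw_0(k),\fv_1(k))$. The first observation is that every summand in both $H_+$ and $\tilde H_+$ is non-negative. For $h(\fv_0,\fw_0,\fv_1)$ this follows from $\tfrac12 x^2-\log x\ge\tfrac12$ applied to the positive diagonal entries of the triangular $\fv_0,\fv_1$, together with $\tr\fw_0^2\ge 0$. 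For a generic term of $\tilde H_+$, the column $\Delta(A(m))e_{-1}$ contains both diagonal-adjacent entries $\fv_{gg}^{(-1)}(m)$ and $\fv_{gg}^{(0)}(m)$, so
\[
\tfrac12\|\Delta(A(m))e_{-1}\|^2\ \ge\ \tfrac12\bigl(\fv_{gg}^{(-1)}(m)^2+\fv_{gg}^{(0)}(m)^2\bigr)\ \ge\ 1+\log\bigl(\fv_{gg}^{(-1)}(m)\fv_{gg}^{(0)}(m)\bigr),
\]
making $\tilde H_{+,n}(A)$ monotone in $n$. The first assertion is then immediate: (I) and $H_+(A(n))\ge 0$ give $\tilde H_{+,n}(A)\le H_+(A)$, whence $\tilde H_+(A)\le H_+(A)<\infty$.

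For the second assertion the strategy is to prove $H_+(A)\le\tilde H_+(A)$; together with the first assertion this yields the equality. As a first step, apply (II): both sides are monotone non-decreasing in $n$ with limits in $[0,\infty]$, and $\tilde H_{+,n}(A)\to\tilde H_+(A)<\infty$, so the hypothesis $\liminf h_n=0$ upgrades to $\lim h_n=0$ and identifies $\tilde H_+(A_1)=\tilde H_+(A)-h_0<\infty$. To iterate this into $\tilde H_+(A_k)=\tilde H_+(A)-\sum_{l=0}^{k-1}h_{l,0}$ for all $k$, one needs $\liminf_n h(\fv_k(n),\fw_k(n),\fv_{k+1}(n))=0$ at each block $k$, not only $k=0$. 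This propagation is the crux. I would establish it by invoking Remling's Theorem: the assumption $\tilde H_+(A)<\infty$ encodes the logarithmic integrability of the spectral density of $\Delta(A)$ together with a Blaschke-type summability of its eigenvalues, and, pulled back through $y=\Delta(x)$, it gives $\sigma'>0$ a.e.\ on $E$ and essential spectrum on $E$ for $A$, hence for $J=\cF A$. Along a subsequence $n_j$ realizing $h_{n_j}\to 0$, compact-open extraction produces an accumulation point $A_\infty\in\GMP(\bC)$ whose image $\cF A_\infty$ is a limit of the shifts $\cS^{n_j}J$; Remling's Theorem identifies this limit as an element of $J(E)$, so $A_\infty\in A(E,\bC)$. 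The magic formula \eqref{magic} then forces every block of $\Delta(A_\infty)$ to match the model $(I,0,I)$, whence $h_{k,n_{j_m}}\to 0$ along a further subsequence for every $k$.

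Granted the propagation, induction on $k$ yields $\tilde H_+(A_k)=\tilde H_+(A)-\sum_{l=0}^{k-1}h_{l,0}$; non-negativity of $\tilde H_+(A_k)$ forces $\sum_{l<k}h_{l,0}\le\tilde H_+(A)$ uniformly in $k$, and sending $k\to\infty$ gives $H_+(A)=\sum_l h_{l,0}\le\tilde H_+(A)<\infty$. Combining with the first assertion (applicable now that $H_+(A)$ is finite) produces $H_+(A)=\tilde H_+(A)$. The main obstacle is exactly the propagation step, where Remling's Theorem is used to push the block-$0$ vanishing to every block via the rigidity of $A(E,\bC)$; the remainder is telescoping bookkeeping with the two identities.
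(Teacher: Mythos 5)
Your first assertion and the opening of your second assertion coincide with the paper's argument: nonnegativity of the individual summands, monotonicity of the partial sums $\tilde H_{+,n}$, iteration of Lemma~\ref{lemjder} to get $\tilde H_{+,n}(A)\le H_+(A)$, and passage to the limit in \eqref{7sem2} along a subsequence realizing the $\liminf$ to obtain $\tilde H_+(A_1)=\tilde H_+(A)-h(\fv_0,\fw_0,\fv_1)<\infty$. The paper then simply ``iterates this identity'' over the block shifts $A_k=S^{-k(g+1)}AS^{k(g+1)}$, and you are right to observe that each further iteration requires $\liminf_n h(\fv_k(n),\fw_k(n),\fv_{k+1}(n))=0$, which is not literally contained in hypothesis \eqref{8apr24}. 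So you have correctly located the delicate point, which the paper treats very tersely.

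However, your proposed repair of that point is circular. You assert that $\tilde H_+(A)<\infty$ ``encodes the logarithmic integrability of the spectral density of $\Delta(A)$ together with a Blaschke-type summability of its eigenvalues,'' and on that basis you feed $J=\cF A$ into Remling's theorem. This is exactly backwards: by the matrix Killip--Simon theorem those spectral properties are equivalent to $H_+(A)<\infty$ (equivalently, to $\Delta(A)-(S^{g+1}+S^{-(g+1)})$ being Hilbert--Schmidt), which is precisely the conclusion the theorem is meant to deliver. The inequality you prove in the first assertion goes the wrong way for your purpose ($\tilde H_+\le H_+$), so finiteness of $\tilde H_+(A)$ carries no a priori spectral information; in the intended application (the ``if'' direction of Theorem~\ref{th73}) one starts from purely coefficient-side conditions and has no knowledge that $\sigma'\neq 0$ a.e.\ on $E$ or that the eigenvalues accumulate only at band edges, so Remling's hypotheses are simply unavailable. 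The downstream steps (compact-open extraction, rigidity of $A(E,\bC)$ via the magic formula, the telescoping bookkeeping) would be sound if the spectral input were legitimate, but as written the crucial propagation step rests on assuming what is to be proved. Any non-circular treatment must stay on the coefficient side --- for instance by verifying the $\liminf$ condition for every block in the situation where the theorem is applied, or by propagating the block-$0$ information through the flow identity $\Delta(\cJ A)=S^{-1}U_A^*\Delta(A)U_AS$ --- rather than by invoking a spectral theorem whose hypotheses coincide with the conclusion.
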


\begin{proof}
 We get $\tilde H(A)<\infty$, iterating \eqref{1apr24}, morovere we obtain
 \begin{equation}\label{7apr24}
H_+(A)=\tilde H_+(A)+\lim_{m\to\infty} H_+(\cJ^{\circ m}A).
\end{equation}

If \eqref{8apr24} holds we can pass to the limit in \eqref{7sem2}
$$
\tilde H_+(A)=h(\fv_0,\fw_0,\fv_1)+\tilde H_+(S^{-(g+1)}AS^{g+1}).
$$
Iterating this identity we get that $H_+(A)$ is finite, and, in fact,
$$
\tilde H_+(A)=H_+(A)+\lim_{m\to\infty}\tilde H_+(S^{-m(g+1)}A S^{m(g+1)}).
$$
Therefore, by \eqref{7apr24},
$$
H_+(A)=
H_+(A)+\lim_{m\to\infty}\tilde H_+(S^{-m(g+1)}A S^{m(g+1)})+
\lim_{m\to\infty} H_+(\cJ^{\circ m}A).
$$
Since both limits are nonnegative, we obtain $H_+(A)=\tilde H_+(A)$.
\end{proof}

\subsection{Proof of Theorem \ref{th73}}

\begin{lemma}\label{lem51}
Let $A\in\KSA(E,\bC)$ and $A(n+1)=\cJ A(n)$, $A(0)=A$. Then \eqref{m29} and the first relation in \eqref{m30} are satisfied.
\end{lemma}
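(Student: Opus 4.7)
First, I would convert the spectral hypothesis into a functional one. By Theorem \ref{thdensks}, $A\in\KSA(E,\bC)$ is equivalent to $\Delta(A)-(S^{-(g+1)}+S^{g+1})$ being Hilbert--Schmidt, which in the block decomposition \eqref{eq11} amounts to the Killip--Simon sum rule $H_+(A)+H_-(A)<\infty$. Theorem \ref{thnoch} then gives $\tilde H_+(A)\le H_+(A)<\infty$, so $\tilde H_+(A)=\sum_{m\ge1}\xi(m)<\infty$ for the nonnegative summands $\xi(m)$ appearing in Lemma \ref{lemjder}. The task therefore reduces to extracting \eqref{m29} (on the $-1$ side) and the first of \eqref{m30} from the finiteness of this sum; the $+1$ side of \eqref{m29} is obtained by the analogous argument applied to $A_1=S^{-(g+1)}AS^{g+1}$, which also lies in $\KSA(E,\bC)$ since the Hilbert--Schmidt condition is shift invariant.

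Second, I would decompose each $\xi(m)$ into an explicit sum of squares. The vector $e_{-1}$ sits at position $g$ of block $-1$; since $\fv_{-1}(m)$ is lower triangular and $\fw_{-1}(m)$ is self-adjoint,
\[
\|\Delta(A(m))e_{-1}\|^2=|v_{gg}^{(-1)}(m)|^2+\sum_{k=0}^g|w_{kg}^{(-1)}(m)|^2+\sum_{k=0}^g|v_{gk}^{(0)}(m)|^2,
\]
so
\[
\xi(m)=F(v_{gg}^{(-1)}(m))+F(v_{gg}^{(0)}(m))+\tfrac12\sum_{k=0}^g|w_{kg}^{(-1)}(m)|^2+\tfrac12\sum_{k=0}^{g-1}|v_{gk}^{(0)}(m)|^2,
\]
where $F(t)=\tfrac12(t^2-1-2\log t)$ is nonnegative on $(0,\infty)$, vanishes only at $t=1$, and satisfies $F(t)\ge c(t-1)^2$ on any fixed compact subinterval. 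Since $\Delta(A(m))$ is uniformly bounded in $m$, the positive diagonal entries stay in such a subinterval, and the finiteness of $\sum_m\xi(m)$ yields $\{v_{gg}^{(0)}(m)-1\}\in\l^2_+$, $\{w_{kg}^{(-1)}(m)\}\in\l^2_+$ for every $k$, and $\{v_{gk}^{(0)}(m)\}\in\l^2_+$ for $k<g$.

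Third, I would translate these perturbation entries back to the coefficients $\vbp_j(m)$ through $\Delta(A)=\lambda_0 A+\bc_0 I+\sum_{j=1}^g\lambda_j(\bc_j-A)^{-1}$. At the band-edge position $(g,g)$ of $\fv_0$, i.e.\ the $(-1,g)$ entry of $\Delta(A(m))$, the inverse-resolvent contributions vanish identically: tracking the shifted positions shows that for $j=1,\dots,g$ the entry $(-1,g)$ of $(\bc_j-A)^{-1}$ sits in the super-diagonal block of $S^{-j}(\bc_j-A)^{-1}S^j\in\bbA$ precisely at a row other than the last, where the rank-one $\delta_g\vp'^*$ shape (Theorem \ref{defaltdef}) is zero. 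Hence $v_{gg}^{(0)}(m)=\lambda_0 p_g^{(0)}(m)$ and the first of \eqref{m30} is immediate. The remaining entries $(\fv_0)_{g,k}$ for $k<g$ and $(\fw_{-1})_{k,g}$ are computed from the Blaschke--Potapov factorization \eqref{facto3} together with Lemma \ref{lem:gsmpEntries}; modulo the $\l^2$ quantities already controlled, they form an invertible linear combination of the differences $\vp_{-1}(m)-\vp_0(m)$ and $\vq_{-1}(m)-\vq_0(m)$ at the isospectral surface. Inverting this linear map yields \eqref{m29} for $j<g$ on the $-1$ side, and the $+1$ side follows by the same argument applied to $A_1$.

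The principal technical obstacle is this last inversion: the sum-of-squares decomposition of $\xi(m)$ is essentially a direct computation once the block layout is in place, but pinning down the precise linear combinations of coefficient differences encoded by $(\fv_0)_{g,k}$ and $(\fw_{-1})_{k,g}$ requires careful tracking of the Blaschke--Potapov factors \eqref{facto3} near the isospectral surface, together with a verification that the linearization remains non-degenerate once the $\lambda_0 A$ principal part has been stripped off.
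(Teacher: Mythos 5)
Your reduction to the finiteness of $\tilde H_+(A)=\sum_m\delta_J H_+(A(m))$, the sum-of-squares decomposition of each summand, and the identification $v^{(0)}_{gg}=\lambda_0 p^{(0)}_g$ (whence the first relation in \eqref{m30}) all agree with what the paper extracts from Lemma \ref{lemjder} and Theorem \ref{thnoch}. The gap is in your third step, the passage from the smallness of $\{w^{(-1)}_{kg}(n)\}$ and $\{v^{(0)}_{gk}(n)\}$ to \eqref{m29}. These entries are \emph{not} functions of the differences $\vbp_{-1}(n)-\vbp_0(n)$ alone: by Lemma \ref{lem:gsmpEntries} they involve, already at first order, the deviations of the blocks from the isospectral surface, namely the quantities $\Lambda^\#_{-1,k}(n)-\lambda_k$ and $\lambda_0\langle\vp_0(n),\vq_0(n)\rangle+\bc_0$. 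For instance, for $g=1$ one has exactly $w^{(-1)}_{0g}=\lambda_0 p^{(-1)}_g q^{(-1)}_0-\lambda_1 q^{(0)}_0/\Lambda^\#_{-1,1}$, so its smallness only yields that $q^{(-1)}_0(n)-q^{(0)}_0(n)$ equals $q^{(0)}_0(n)\bigl(\lambda_1/\Lambda^\#_{-1,1}(n)-1\bigr)$ modulo an $\l^2_+$-sequence, which says nothing about \eqref{m29} until $\Lambda^\#_{-1,1}(n)-\lambda_1$ is controlled --- and that is precisely \eqref{m31}, which is \emph{not} available at this stage (it is established only later in the proof of Theorem \ref{th73}, and even there requires the identity \eqref{lambdal22} because the products \eqref{lambdal2} can degenerate when $p^{(-1)}_{g-1}(n)\to0$). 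Counting, you control $2g+2$ entries per step, while these depend to first order on the $2g$ target differences together with at least $g+1$ further isospectral deviations (plus a dependence on $\vbp_{-2}$ through $v^{(-1)}_{gg}$); so the target functionals need not lie in the span of the controlled ones, and the ``verification that the linearization remains non-degenerate'' that you defer is in fact the whole content of the lemma, and it fails in the form you state it.

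The paper closes this gap by a mechanism absent from your proposal: it applies the same $\delta_J H_+$ argument to the rotated matrix $\tilde A=\cO(A)$ --- again of Killip--Simon class, and with $\cO$ commuting with the Jacobi flow by Corollary \ref{corshift} --- and then uses the intertwining identity \eqref{eqar} between the $2\times2$ corners of $\Delta(A(n))$ and $\Delta(\tilde A(n))$, together with the elementary rotation Lemma \ref{lem52}, to conclude that $\{\sin\phi^{(-1)}_g(n)-\sin\phi^{(0)}_g(n)\}\in\l^2_+$, i.e.\ $\{p^{(-1)}_{g-1}(n)-p^{(0)}_{g-1}(n)\}\in\l^2_+$. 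The non-degeneracy there comes from the a priori bound $\cos\phi\ge\eta$ (that is, from $p_g>0$ being bounded below), not from inverting a Jacobian, and the remaining relations in \eqref{m29} follow from the explicit transformation formulas \eqref{def3}--\eqref{def1} for $\cO$ and its iterates. To salvage your route you would need to enlarge the system of controlled quantities by exactly these rotated copies, which is what the paper's argument amounts to.
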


First we prove the following sublemma.
\begin{lemma}\label{lem52}
Assume that for sequences $\psi_n$ and $\tilde \psi_n$ there are sequences $\tau_n$ and $\tilde \tau_n$ such that
\begin{equation}\label{eqsubl}
\begin{bmatrix}
\tau_{n}&0\\
0
&1
\end{bmatrix}
\bo(\psi_{n})-
\bo(\tilde\psi_{n})\begin{bmatrix}
1&0\\0
&\tilde\tau_{n}
\end{bmatrix}\in \l^2_+,
\end{equation}
that is, all entries of the above matrix form $\l^2_+$-sequences. Assume in addition that there is $\eta>0$ such that
for all $n$ we have  a priori estimations
\begin{equation}\label{eqsubl1}
\cos\psi_n\ge\eta, \ \cos\tilde \psi_n\ge\eta, \ \frac 1\eta\ge \tau_n\ge\eta, \ \frac 1 \eta\ge\tilde\tau_n\ge\eta.
\end{equation}
Then $\{e^{i\psi_n}-e^{i\tilde\psi_n}\}_{n\ge 0}\in\l^2_+$.
\end{lemma}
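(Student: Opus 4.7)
The plan is to unpack the matrix identity \eqref{eqsubl} entry by entry. Writing out $\bo$ as in \eqref{oofphi}, the four entries of the difference matrix on the left-hand side are
\begin{equation*}
\begin{bmatrix}
\tau_n\sin\psi_n-\sin\tilde\psi_n & \tau_n\cos\psi_n-\tilde\tau_n\cos\tilde\psi_n\\[2pt]
\cos\psi_n-\cos\tilde\psi_n & -\sin\psi_n+\tilde\tau_n\sin\tilde\psi_n
\end{bmatrix},
\end{equation*}
and by assumption all four of these scalar sequences lie in $\l_+^2$. Label them $(\mathrm{A}),(\mathrm{C}),(\mathrm{B}),(\mathrm{D})$ in reading order. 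The cosine difference $\cos\psi_n-\cos\tilde\psi_n\in\l_+^2$ is free from $(\mathrm{B})$, so half the job is done immediately.

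Next I would extract the $\l_+^2$-closeness of the weights $\tau_n$ and $\tilde\tau_n$. Form $(\mathrm{C})-\tilde\tau_n\cdot(\mathrm{B})$ to get $(\tau_n-\tilde\tau_n)\cos\psi_n\in\l_+^2$; since $\tilde\tau_n$ is uniformly bounded by $1/\eta$ this subtraction preserves $\l_+^2$, and since $\cos\psi_n\ge\eta$ we may divide to conclude
\begin{equation*}
\tau_n-\tilde\tau_n\in\l_+^2.
\end{equation*}

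With this in hand, I would take $(\mathrm{A})-(\mathrm{D})$ to obtain $(\tau_n+1)\sin\psi_n-(\tilde\tau_n+1)\sin\tilde\psi_n\in\l_+^2$, and rewrite
\begin{equation*}
(\tau_n+1)\sin\psi_n-(\tilde\tau_n+1)\sin\tilde\psi_n = (\tau_n+1)(\sin\psi_n-\sin\tilde\psi_n) + (\tau_n-\tilde\tau_n)\sin\tilde\psi_n.
\end{equation*}
Since $|\sin\tilde\psi_n|\le 1$ and $\tau_n-\tilde\tau_n\in\l_+^2$, the second term on the right is in $\l_+^2$, hence so is $(\tau_n+1)(\sin\psi_n-\sin\tilde\psi_n)$. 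The a priori bound $\tau_n\ge\eta>0$ yields $\tau_n+1\ge 1+\eta$, so dividing gives $\sin\psi_n-\sin\tilde\psi_n\in\l_+^2$. Combined with the cosine bound already established,
\begin{equation*}
e^{i\psi_n}-e^{i\tilde\psi_n} = (\cos\psi_n-\cos\tilde\psi_n)+i(\sin\psi_n-\sin\tilde\psi_n)\in\l_+^2,
\end{equation*}
which is the claim.

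The proof is essentially linear algebra with uniform estimates; the only subtle point is identifying \emph{which} linear combinations of the four entries isolate the $\sin\psi-\sin\tilde\psi$ difference, and this is where the a priori estimates \eqref{eqsubl1} enter in an essential way. I expect the mild obstacle to be in the bookkeeping of the $\tau$-factors (specifically, avoiding any step that would require $\tau_n\to 1$, which is not in the hypotheses); the chosen combination $(\mathrm{A})-(\mathrm{D})$ sidesteps this because it produces the coefficient $\tau_n+1$, which is bounded away from zero by the positivity of $\tau_n$ alone.
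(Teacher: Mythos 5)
Your proof is correct and takes essentially the same approach as the paper's: both read off the four entries of the matrix, deduce $\{\tau_n-\tilde\tau_n\}\in\ell^2_+$ from the two cosine entries, and then eliminate the weights from the two sine entries by an algebraic combination using the a priori bounds. Your combination $(\mathrm{A})-(\mathrm{D})$, which produces the coefficient $\tau_n+1\ge 1+\eta$, is a slightly more direct version of the paper's passage through $(1-\tau_n\tilde\tau_n)\sin\psi_n$ and $(\tau_n^2-1)\sin\psi_n$, but the substance is identical.
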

\begin{proof}
Directly from \eqref{eqsubl} we have 
$$
\{\cos\psi_n-\cos\tilde\psi_n\}_{n\ge 0}\in\l^2_+\ \text{and}\  
\{\tau_n\cos\psi_n-\tilde \tau_n\cos\tilde\psi_n\}_{n\ge 0}\in\l^2_+.
$$
Then \eqref{eqsubl1} implies $\{\tau_n-\tilde\tau_n\}_{n\ge 0}\in\l^2_+$.
Now, we have another two conditions
$$
\{\tau_n\sin\psi_n-\sin\tilde\psi_n\}_{n\ge 0}\in\l^2_+\ \text{and}\  
\{\sin\psi_n-\tilde \tau_n\sin\tilde\psi_n\}_{n\ge 0}\in\l^2_+.
$$
Therefore,
$$
\sin{\psi_n}-\tau_n\tilde \tau_n\sin\psi_n-\tilde\tau_n(\sin\tilde\psi_n-\tau_n\sin\psi_n)
$$
belongs to $\l^2_+$, that is, $\{\sin{\psi_n}(1-\tau_n\tilde \tau_n)\}_{n\ge0}\in\l^2_+$. Thus, $(\tau_n^2-1)\sin\psi_n$ forms a $\l^2_+$-sequence, as well as $(\tau_n-1)\sin\psi_n$. Finally, since 
$$
\sin\psi_n-\sin\tilde\psi_n=\tau_n\sin\psi_n-\sin\tilde\psi_n-(\tau_n-1)\sin\psi_n,
$$
both $\{\sin\psi_n-\sin\tilde\psi_n\}_{n\ge 0}$ and $\{\cos\psi_n-\cos\tilde\psi_n\}_{n\ge 0}$ are $\l^2_+$-sequences.
\end{proof}

\begin{proof}[Proof of Lemma \ref{lem51}]
The first relation \eqref{m29} follows immediately from Lemma \ref{lemjder}. 

Let $\tilde A=\cO(A)$, see \eqref{varpidef}. We use tilde for all entries related to $\tilde A$ and $\Delta(\tilde A)$ \eqref{eq11}, respectively. The entries of $A(n)$ we denote by $\{p_j^{(k)}(n),q_j^{(k)}(n) \}$ and we use a similar notation for the entries of $\Delta(A(n))$ and $\Delta(\tilde A(n))$. Due to Definition \ref{defo},
\begin{equation}\label{eqar}
\begin{bmatrix}
v^{(0)}_{g-1,g-1}(n)& 0\\
v^{(0)}_{g,g-1}(n)& \lambda_0 p^{(0)}_g(n)
\end{bmatrix}
\bo(\phi_{g}^{(0)}(n))
=
\bo(\phi_{g}^{(-1)}(n))\begin{bmatrix}
\lambda_0\tilde p^{(0)}_g(n)
& 0\\
\tilde w^{(0)}_{0,g}(n)& \tilde v^{(1)}_{0,0}(n)
\end{bmatrix}.
\end{equation}
Applying  Lemma \ref{lemjder} to the matrix $A$, we obtain
$$
\{\lambda_0 p^{(0)}_g(n)-1\}_{n\ge 0}\in \l^2_+,\quad \{v^{(0)}_{g,g-1}(n)\}_{n\ge 0}\in \l^2_+.
$$
Similarly for the entries related to $\tilde A$ we have
$
\{\lambda_0 \tilde p^{(0)}_g(n)-1\}_{n\ge 0}\in \l^2_+,\quad  \{\tilde w^{(0)}_{0,g}(n)\}_{n\ge 0}\in \l^2_+.
$
 Thus, we can apply Lemma \ref{lem52} with respect to \eqref{eqar}. We get $\{\sin\phi^{(-1)}(n)-\sin\phi^{(0)}(n)\}$ belongs to $\l^2_+$. That is, $\{p^{(-1)}_{g-1}(n)-p^{(0)}_{g-1}(n)\}_{n\ge 0}\in\l^2_+$.
 
 Using \eqref{def3}, \eqref{def2}, we get similar relations for all others $j$'s.  
 Using \eqref{def1}, we prove the second part of \eqref{m29}.
\end{proof}

\begin{proof}[Proof of Theorem \ref{th73}]
Lemma \ref{lem52} implies that 
$
(v^{(-1)}_{g-1,g-1}(n)-1)\sin\phi^{(-1)}_g(n)
$
form an $\l^2_+$-sequence, or, equivalently, see \eqref{deflad},
\begin{equation}\label{lambdal2}
\{(\Lambda^\#_{-1,g}(n)-\lambda_g) p^{(-1)}_{g-1}(n)\}_{n\ge 0}\in \l^2_+.
\end{equation}
Since $p^{(-1)}_{g-1}(n)$ may approach to zero, it does not imply yet that $\{\Lambda^\#_{-1,g}(n)-\lambda_g\}$ belongs to $\l ^2_+$. 
Let us show that
\begin{equation}\label{lambdal21}
\{(\Lambda^\#_{-1,g}(n)-\lambda_g) q^{(-1)}_{g-1}(n)\}_{n\ge 0}\in \l^2_+.
\end{equation}
Since $\inf_{n}\left((q^{(-1)}_{g-1}(n))^2+(p^{(-1)}_{g-1}(n))^2\right)>0$, both \eqref{lambdal2} and \eqref{lambdal21} give us
\eqref{m31} for $m=g$.

To this end, we note that 
\begin{equation}\label{lambdal22}
\Lambda^\#_{-1,g}(n+1)=\frac{\cos\phi^{(-1)}_g(n)}{\cos\phi^{(-2)}_g(n)}\Lambda^\#_{-1,g}(n).
\end{equation}
 Indeed, by definition
of the Jacobi flow
$$
U(\vp_{-2}(n))\begin{bmatrix} v^{(-2)}_{g,g}& & & \\
*&v^{(-1)}_{0,0}& &\\
 *&* & \ddots &\\
 *&* &* &v^{(-1)}_{g-1,g-1}
\end{bmatrix}(n+1)=\fv_{-1}(n) U(\vp_{-1}(n))
$$
the second from below entry in the last column in this matrix identity means exactly \eqref{lambdal22}.
Therefore, by Lemma \ref{lem52}, we get 
\begin{equation}\label{lambdal23}
\{\Lambda^\#_{-1,g}(n+1)-\Lambda^\#_{-1,g}(n)\}_{n\ge 0}\in \l^2_+. 
\end{equation}
Now,
by \eqref{jfex}
\begin{equation*}
(\Lambda^\#_{-1,g}(n)-\lambda_g) p^{(-1)}_{g-1}(n)=
-(\Lambda^\#_{-1,g}(n)-\lambda_g) q^{(-1)}_{g-1}(n+1)
\frac{p_g^{(-1)}(n) \|d_{g-1}\vp_{-1}(n)\|}{
\|\vp_{-1}(n)\|}.
\end{equation*}
In combination with \eqref{lambdal23} we have  \eqref{lambdal21}, and therefore \eqref{m31} for $k=g$.

The same arguments with respect to $\cO^k A$, $k=1,...,g-1$, in a combination with \eqref{shiftjflow}, give \eqref{m31} for all other $k$.

Now we start with the conditions \eqref{m29}-\eqref{m31}. Let us show that they imply the second relation in \eqref{8apr24}. Due to compactness we can choose convergent subsequences 
\begin{equation}\label{noch}
\vp_0(n_k)\to\oc\vp\quad\text{and}\quad \vq_0(n_k)\to\oc\vq.
\end{equation}
Let $\oc A$ be the periodic GMP matrix generated by $(\oc\vp, \oc\vq)$. Passing to the limit in  \eqref{m30} and \eqref{m31}
along the subsequence $\{n_k\}$ we get  the isospectral conditions
%\eqref{explrhk}, \eqref {explpqg}
\eqref{iso101}, that is,  $\oc A\in A(E,\bC)$. By the magic formula, $\oc\fv=I$, $\oc\fw=0$, where $\oc\fv,\oc\fw$ are blocks of $\Delta(\oc A)$. Recall that blocks of $\Delta(A)$ are formed by the coefficients of two consecutive blocks of $A$, see Lemma \ref{lem:gsmpEntries}. Therefore, due to \eqref{noch} and  \eqref{m29}, we obtain
$$
\lim_{k\to\infty}\fw_0(n_k)=\oc\fw,\quad \lim_{k\to\infty}\fv_0(n_k)=\lim_{k\to\infty}\fv_1(n_k)=\oc\fv.
$$
Thus, $\lim_{k\to\infty}h(\fv_0(n_k),\fw_0(n_k),\fv_1(n_k))=0$.

To show the first relation in \eqref{8apr24}, we evaluate the entries of $\Delta(A)e_{-1}$.
Let $h(\bc_k)=(\bc_k-A)^{-1}e_{-1}$.  In notations of Lemma \ref{lem:gsmpEntries} we have
\begin{equation*}
(h^{(-1)}(\bc_{k}))_{l}=\frac{(f^{(-1)})_l}{(f^{(-1)})_{k-1}}(h^{(-1)})_{k-1}
=\frac{\tilde\rho^{(0)}_{k-1}}{\Lambda^\#_{-1,k}}\frac{(\bp^{(-1)}_{k-1})^*\fj}{\bc_{l+1}-\bc_k}
\prod_{j=k}^{l-1}\fa(\bc_k,\bc_{j+1};\bp_j^{(-1)})
\bp_l^{(-1)},
\end{equation*}
for $k\le l$ and  $(h^{(-1)}(\bc_{l+1}))_{l}=-\frac{\tilde\rho^{(0)}_{l}}{\Lambda^\#_{-1,l+1}}$,
 where
\begin{equation}\label{altda1}
\begin{bmatrix}
\tilde \pi ^{(0)}_{k-1}\\
\tilde \rho^{(0)}_{k-1}
\end{bmatrix}=\prod_{j=0}^{k-2}\fa(\bc_k,\bc_{j+1};\bp^{(0)}_j)\bp_{k-1}^{(0)}, 
\
\begin{bmatrix}
\pi ^{(-1)}_{k-1}&
 \rho^{(-1)}_{k-1}
\end{bmatrix}=
-(\bp_{k-1}^{(-1)})^*\fj
\prod_{j=k}^{g-1}\fa(\bc_k,\bc_{j+1};\bp^{(-1)}_j)\fj
\end{equation}

We note that  due to the uniform estimations \eqref{altdef} and \eqref{oppalt1}, conditions \eqref{m29} 
and \eqref{m31} imply that
\begin{equation}\label{11m1}
\left\{\frac{\lambda_k}{\Lambda^\#_{-1,k}(n)}-1\right\}_{n\ge 0}\in\l_+^2.
\end{equation}
For $l<g$, by definition we have
$$
w_{l,g}^{(-1)}(n)=\lambda_0 p_g^{(-1)}(n)q_l^{(-1)}(n)+\lambda_{l+1}(h^{(-1)}(\bc_{l+1}))_{l}(n)+
\sum_{k=1}^{l}\lambda_k (h^{(-1)}(\bc_{k}))_{l}(n).
$$
We substitute in this expression $\tilde\rho^{(0)}_{k-1}$ from \eqref{altda1}. Then \eqref{11m1} and the first relation in \eqref{m30} imply that $\{w_{l,g}^{(-1)}(n)\}$ differs from the sequence $\{\tilde w_{l,g}^{(-1)}(n)\}$, see below, by a $\l_+^2$-sequence; here
\begin{align}\label{11m2}
 \begin{bmatrix}*\\ \tilde w_{l,g}^{(-1)}(n)\end{bmatrix}:=\bp_l^{(-1)}(n)-
\prod_{j=0}^{l-1}\fa(\bc_{l+1},\bc_{j+1};\bp^{(0)}_j(n))\bp_{l}^{(0)}(n)+
\\
\sum_{k=1}^{l}\prod_{j=0}^{k-2}\fa(\bc_k,\bc_{j+1};\bp^{(0)}_j(n))\bp_{k-1}^{(0)}(n)\frac{(\bp^{(-1)}_{k-1}(n))^*\fj}
{\bc_{l+1}-\bc_k}
\prod_{j=k}^{l-1}\fa(\bc_k,\bc_{j+1};\bp_j^{(-1)}(n))
\bp_l^{(-1)}(n)\nonumber
\end{align}
Using \eqref{m29} once again, we can substitute in the last expression all $\bp_j^{(0)}(n)$ by $\bp_j^{(-1)}(n)$. After that, we note that the product
$$
\prod_{j=0}^{k-2}\fa(\bc_k,\bc_{j+1};\bp^{(-1)}_j(n))\bp_{k-1}^{(-1)}(n){(\bp^{(-1)}_{k-1}(n))^*\fj}
\prod_{j=k}^{l-1}\fa(\bc_k,\bc_{j+1};\bp_j^{(-1)}(n))
$$
is the residue of the matrix function 
$\prod_{j=0}^{l-1}\fa(z,\bc_{j+1};\bp^{(-1)}_j(n))$ at $\bc_k$.
It remains to use  the identity 
\begin{equation}\label{11m7}
\sum_{k=1}^{l}\frac{\Res_{\bc_k} \prod_{j=0}^{l-1}\fa(z,\bc_{j+1};\bp^{(-1)}_j(n))}{z-\bc_k}
=\prod_{j=0}^{l-1}\fa(z,\bc_{j+1};\bp^{(-1)}_j(n))-I
\end{equation}
evaluated at $z=\bc_{l+1}$ and we get cancellation of the first and second line in \eqref{11m2}, i.e., 
$\{\tilde w^{(-1)}_{l,g}(n)\}\in\l^2_+$. As  result,  we obtain 
$\{w^{(-1)}_{l,g}(n)\}\in\l^2_+$ for all $0\le l<g $. 

The diagonal entry require a little bit special consideration. To prove that $\{w^{(-1)}_{g,g}(n)\}$ belongs to $\l^2_+$ we use 
\begin{equation*}\label{altd2bis}
\langle \Delta(A) e_{-1},e_{-1}\rangle=\lambda_0 p^{(-1)}_g q^{(-1)}_g+\bc_0-\sum_{k=1}^g\lambda_k\frac{\pi^{(-1)}_{k-1}\tilde \rho^{(0)}_{k-1}}{p_g^{(-1)}\Lambda^\#_{-1,k}},
\end{equation*}
 the second relation in \eqref{m30}, and, instead of \eqref{11m7}, a more involved identity \eqref{alternativqg} shown in Lemma \ref{lem:c0formula}. Similarly, one can prove that $\{v_{g,g}^{(-1)}(n)-1\}$ and $\{v_{g,l}^{(0)}(n)\}$, for $0\leq l< g$, form $\l^2_+$ sequences. Thus, by Theorem \ref{thnoch}, $H_+(A)=\tilde H_+(A)<\infty$ and, due to the matrix version of the Killip-Simon theorem  $A\in\KSA(E,\bC)$.

\end{proof}

\section{Proof of the main Theorem \ref{mainhy}}\label{Section7}
 
 \subsection{From GMP to Jacobi}

Assume that $A\in\GMP(\bC)$. Let $A(n)=\cJ^{\circ n} A$. Recall that the coefficients of the Jacobi matrix $J=\cF A$ are
given by \eqref{coefflow} and $\l^2$ properties of the coefficients $\{\vp_{\pm 1}(n),\vq_{\pm1}(n),\vp_0(n),\vq_0(n)\}$ are given in Theorem \ref{th73}. We consider the isospectral surface $\is$ given by \eqref{iso101}, see \eqref{explrhk},
with the identification $(p_j,q_j)\equiv(-p_j,-q_j)$, $j=0,\dots,g-1$.
Note that this is a $g$ dimensional torus, which we can parametrize by $\alpha\in\bbR^g/\bbZ^g	$ according to 
Theorem \ref{thm:multbyzsmp}. Moreover, by  statement (d) of  Moser-Uhlenbeck Theorem \cite[Theorem 4.7]{MUM}, for the given manifold
\begin{equation}\label{grad}
\sup_{\{\vbp\}\in\is} \|(T(\vbp)^*T(\vbp))^{-1}\|<\infty,\quad
T(\vbp)=\begin{bmatrix}
\frac{\partial\Lambda_1}{\partial p_0}&\hdots &\frac{\partial\Lambda_g}{\partial p_{0}}\\
\frac{\partial\Lambda_1}{\partial q_0}&\hdots &\frac{\partial\Lambda_g}{\partial q_{0}}\\
\vdots&\hdots&\vdots\\
\frac{\partial\Lambda_1}{\partial p_{q-1}}&\hdots &\frac{\partial\Lambda_g}{\partial p_{g-1}}\\
\frac{\partial\Lambda_1}{\partial q_{q-1}}&\hdots &\frac{\partial\Lambda_g}{\partial q_{g-1}}
\end{bmatrix}.
\end{equation}
Note that evidently
\begin{equation*}\label{grad0}
0<\inf_{\{\vbp\}\in\is} \|(T(\vbp)^*T(\vbp))^{-1}\|^{-1}\le \sup_{\{\vbp\}\in\is} \|T(\vbp)^*T(\vbp)\|<\infty.
\end{equation*}

We define a periodic GMP matrix $A(\alpha_n)$ generated by $\{\oc\vbp(\alpha_n)\}\in \is$ such that
\begin{equation}\label{estmain1}
\dist(\vbp_0(n),\is)=\dist(\vbp_0(n),\oc\vbp(\alpha_n)).
\end{equation}
Using the standard Lagrange multipliers method, we can estimate the distance from $\vbp_0(n)$ to the isospectral set in terms of 
$\sup\|(T(\vbp)^*T(\vbp))^{-1}\|$. Then, by \eqref{m30}, \eqref{m31} and \eqref{grad}, we have
\begin{equation}\label{estmain}
\sum_{n=0}^\infty\dist^2(\vbp_0(n),\oc\vbp(\alpha_n))<\infty
\end{equation}
and also, see \eqref{coefflow},
\begin{equation}\label{estmain2}
a(n)^2-\cA(\alpha_n)\in\l^2,\quad b(n)-\cB(\alpha_n)\in\l^2.
\end{equation}

On the other hand, by \eqref{m29}-\eqref{m31} and the uniform smoothness of the Jacobi flow transform
\eqref{jfex}, \eqref{jfex1}
\begin{equation*}
\dist(\vbp_0(n+1),\oc\vbp(\alpha_n-\mu))
\le C(E,J)\{
\dist(\vbp_0(n),\oc\vbp(\alpha_n))+
\dist(\vbp_0(n),\vbp_1(n))\}.
\end{equation*}
That is,
\begin{eqnarray*}
\dist(\oc\vbp(\alpha_{n+1}),\oc\vbp(\alpha_n-\mu))
&\le& C(E,J)\{
\dist(\vbp_0(n),\oc\vbp(\alpha_{n}))
+\dist(\vbp_0(n),\vbp_1(n))\}\\
&+&\dist(\vbp_0(n+1),\oc\vbp(\alpha_{n+1})).
\end{eqnarray*}
Since
$$
\|\alpha-\beta\|\le C_1(E)
\dist(\oc\vbp(\alpha),\oc\vbp(\beta)),
$$
\eqref{estmain} and \eqref{m29} imply
$$
\sum_{n=0}^\infty\|\epsilon_\alpha(n)\|^2<\infty, \quad \text{where}\quad \epsilon_\alpha(n):=\alpha_{n+1}-(\alpha_n-\mu).
$$
In combination with \eqref{estmain2}, we obtain \eqref{132}.

\begin{remark}\label{rem71}
Of course in this proof it is not necessary to choose $\alpha_n$ as the best approximation to $\vbp_0(n)$,
see \eqref{estmain1}. It is enough to have this distance under an appropriate control. This explains a certain ambiguity in the representation \eqref{132}. 
\end{remark}

\subsection{From Jacobi to GMP}\label{subs72}

In this section our goal is to estimate
$p_j(n)-\oc p_j(\alpha_n)$ and $q_j(n)-\oc q_j(\alpha_n)$
by means of the related distances $\dist((S^{-n}JS^n)_+, J(E))<\infty$ and then apply Theorem \ref{th73}.
Therefore first of all we prove the following lemma. 
Note that the relation \eqref{distalpn} below evidently implies a word-by-word counterpart of \eqref{opp} in DKST, see Remark \ref{rem7}.

\begin{lemma}\label{lem72}
Let $J$ be  of the form \eqref{132}. Then
\begin{equation}\label{distalpn}
\sum_{n=0}^\infty \dist^2_\eta((S^{-n}JS^n)_+,J(\alpha_n)_+)<\infty, \quad 
\alpha_n=\sum_{k=0}^n\epsilon_{\alpha}(k)-\mu n.
\end{equation}
\end{lemma}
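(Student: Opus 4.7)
The plan is a direct computation: unpack the definition of $\dist_\eta$, insert \eqref{132}, and control everything by the $\ell^2$ norms of $\epsilon_\alpha,\epsilon_a,\epsilon_b$. Writing out the distance, we see that we need to bound
\begin{equation*}
\Sigma:=\sum_{n\ge 0}\sum_{k\ge 0}\eta^{2k}\bigl\{|a(n+k)-\cA(\alpha_n-\mu k)^{1/2}|^2+|b(n+k)-\cB(\alpha_n-\mu k)|^2\bigr\}.
\end{equation*}
Since $\cA$ is bounded away from $0$ on the torus and the coefficients $a(n)$ are uniformly bounded above and (after possibly discarding finitely many terms) also below, it suffices to replace $|a(n+k)-\cA(\alpha_n-\mu k)^{1/2}|$ by the square difference $|a(n+k)^2-\cA(\alpha_n-\mu k)|$ up to a fixed constant. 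Inserting \eqref{132} and using the key telescoping identity
\begin{equation*}
\alpha_{n+k}-(\alpha_n-\mu k)=\sum_{j=n+1}^{n+k}\epsilon_\alpha(j),
\end{equation*}
together with Lipschitz continuity of the smooth torus functions $\cA,\cB$ (cf. \eqref{aal}), reduces the task to controlling
\begin{equation*}
\Sigma_1:=\sum_{n\ge 0}\sum_{k\ge 0}\eta^{2k}\Bigl\|\sum_{j=n+1}^{n+k}\epsilon_\alpha(j)\Bigr\|^2
\quad\text{and}\quad
\Sigma_2:=\sum_{n\ge 0}\sum_{k\ge 0}\eta^{2k}\bigl(|\epsilon_a(n+k)|^2+|\epsilon_b(n+k)|^2\bigr).
\end{equation*}

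The sum $\Sigma_2$ is the easy one: swapping the order of summation, each term $|\epsilon_a(m)|^2$ appears with total weight $\sum_{k=0}^m\eta^{2k}\le(1-\eta^2)^{-1}$, so $\Sigma_2\le(1-\eta^2)^{-1}(\|\epsilon_a\|_{\ell^2}^2+\|\epsilon_b\|_{\ell^2}^2)<\infty$. The main work is $\Sigma_1$, and my plan is a two-step weighted Cauchy--Schwarz. First, for fixed $n$, I expand the inner product and observe that the pair $(i,l)$ with $i,l>n$ contributes the coefficient $\eta^{2\max(i-n,l-n)}/(1-\eta^2)\le\eta^{(i-n)+(l-n)}/(1-\eta^2)$, which splits as a product. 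This yields
\begin{equation*}
\sum_{k\ge 0}\eta^{2k}\Bigl\|\sum_{j=n+1}^{n+k}\epsilon_\alpha(j)\Bigr\|^2
\le\frac{1}{1-\eta^2}\Bigl(\sum_{i>n}\eta^{i-n}\|\epsilon_\alpha(i)\|\Bigr)^2.
\end{equation*}
A second application of Cauchy--Schwarz against the summable weight $\eta^{i-n}$ gives the bound $\frac{\eta}{(1-\eta)(1-\eta^2)}\sum_{i>n}\eta^{i-n}\|\epsilon_\alpha(i)\|^2$. Summing over $n$ and swapping the order once more, each $\|\epsilon_\alpha(i)\|^2$ gets a total weight $\sum_{n<i}\eta^{i-n}\le\eta/(1-\eta)$, so $\Sigma_1\le C(\eta)\sum_{i\ge 0}\|\epsilon_\alpha(i)\|^2<\infty$.

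Combining the bounds for $\Sigma_1$ and $\Sigma_2$ with the preliminary Lipschitz reduction, we obtain $\Sigma<\infty$, which is exactly \eqref{distalpn}. The only nontrivial point is the first Cauchy--Schwarz step, where the naive estimate $|\sum\epsilon_\alpha(j)|^2\le k\sum\|\epsilon_\alpha(j)\|^2$ would lose a factor $k$ that cannot be absorbed by $\eta^{2k}$ after summation over $n$; the $\max$-into-sum trick on the exponent of $\eta$ is what decouples the two indices and avoids this loss. Everything else is routine, and the constants depend only on $\eta$, on $\|J\|$, and on $\sup_{\alpha}(\|\nabla\cA(\alpha)\|+\|\nabla\cB(\alpha)\|)$, which are finite.
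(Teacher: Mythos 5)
Your proof is correct and follows essentially the same route as the paper: reduce $|a-\cA^{1/2}|$ to $|a^2-\cA|$, use the telescoping identity $\alpha_{n+k}-(\alpha_n-\mu k)=\sum_{j=n+1}^{n+k}\epsilon_\alpha(j)$ together with the Lipschitz bound $C_1(E)=\sup_\alpha\|\mathrm{grad}\,\cB(\alpha)\|$ (and similarly for $\cA$), and then control the resulting double sum. The only place you diverge is the estimate of $\Sigma_1$, where you expand the square and use the $2\max(i-n,l-n)\ge (i-n)+(l-n)$ trick to decouple the indices; this is valid and gives a cleaner constant. However, your closing remark that the naive bound $\|\sum_{j=n+1}^{n+k}\epsilon_\alpha(j)\|^2\le k\sum_{j=n+1}^{n+k}\|\epsilon_\alpha(j)\|^2$ ``would lose a factor $k$ that cannot be absorbed'' is mistaken: that is exactly what the paper does. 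After swapping the order of summation each $\|\epsilon_\alpha(j)\|^2$ picks up at most another factor $k$, and the resulting weight $\sum_{k\ge 1}k^2\eta^{2k}$ is still finite, so the crude Cauchy--Schwarz suffices. Your refinement is therefore unnecessary (though harmless), and the claimed obstruction does not exist.
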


\begin{proof}
We have
$$
|b(k+n)-\cB(\alpha_n-\mu k)|\le |\e_b(k+n)|+C_1(E)
\|\sum_{j=n+1}^{n+k}\epsilon_\alpha(j)
\|,
$$
where 
$
C_1(E)=\sup_{\alpha\in\bbR^g/\bbZ^g}\|\text{\rm grad}\, \cB(\alpha)\|.
$
For $\eta<1$, we have
$$
\sum_{n\ge 0} \left(\sum_{k\ge 1}
\|\sum_{j=n+1}^{n+k}\epsilon_\alpha(j)
\|^2\eta^{2k}\right)\le
\sum_{n\ge 0} \sum_{k\ge 1}\sum_{j=n+1}^{n+k}\|\epsilon_\alpha(j)\|^2k\eta^{2k}
=\sum_{k\ge 1} \sum_{n\ge 0}\sum_{j=n+1}^{n+k}\|\epsilon_\alpha(j)\|^2k\eta^{2k}
$$
$$
\le
\sum_{k\ge 1} k\eta^{2k}\sum^\infty_{j\ge 1}k \|\epsilon_\alpha(j)\|^2
\le 
\sum^\infty_{j\ge 1}\|\epsilon_\alpha(j)\|^2\cdot
\sum_{k\ge 1} k^2\eta^{2k}.
$$
Making a similar estimation for $|a(k+n)^2-\cA(\alpha_n-\mu k)|$ we obtain \eqref{distalpn}. 
\end{proof}

 Before to proceed we  make the following important for us remark.
 
 \begin{remark}\label{inirem74}
 Note that  $\fK_z$, see Lemma \ref{vbr}, is well define for all $z\in \bbC\setminus \sigma(J)$. That is, in fact, we have a Hermitian analytic vector bundle in this domain.  Its fundamental characteristic, the so-called curvature, is of the form 
 $\Delta\log\langle \k_z,\k_z \rangle$, see  e.g. \cite{CD}. Being restricted on the real axis, it represents the Schwarzian derivative of $r_+(z)$. Our further  considerations are based on estimations of related expressions and involve derivatives of exactly this level, see Lemma \ref{le75} below.
 We can conjecture that a certain Hermitian analytic vector bundle model, which generalized the model described in Section 2, is possible for operators of Killip-Simon class. Under more restrictive assumptions, when the absolutely continuous part of the spectral measure satisfies the Szeg\"o condition and positions of the eigenvalues outside $E$ obey the Blaschke condition, such model does exist. This is the  so-called scattering model for the given operator \cite{PY, VY, PVY}.
 \end{remark}
 
 \begin{lemma}\label{le75}
 Let $\oc J\in J(E)$. In the notations of Lemma \ref{vbr}, 
 \begin{equation}\label{mlkap1}
\langle (J-\oc J)\k_{\bc},\oc\k_{\bc} \rangle=\sin(\oc\varphi(\bc)-\varphi(\bc)).
\end{equation}
Consequently, there exists $C=C(\sigma(J),\bc)<\infty$ such that
\begin{equation}\label{mlkap2}
|\sin(\oc\varphi(\bc)-\varphi(\bc))|\le C\dist_\eta(J_+,\oc J_+)
\end{equation}
and simultaneously for the  derivatives
\begin{equation}\label{mlkap3}
|(\oc\varphi)^{(m)}(\bc)-\varphi^{(m)}(\bc)|\le C\dist_\eta(J_+,\oc J_+)
\end{equation}
for $m=1,2,3$ and $\eta>|b(\bc)|$.
 \end{lemma}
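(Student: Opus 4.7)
The plan is to verify \eqref{mlkap1} as a direct algebraic identity, derive \eqref{mlkap2} via Cauchy--Schwarz combined with exponential decay of $\k_\bc$ and $\oc\k_\bc$, and obtain \eqref{mlkap3} by Cauchy's integral formula applied to a suitable analytic extension.

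For the identity, both $\k_\bc$ and $\oc\k_\bc$ lie in $\l^2_+$ by Lemma~\ref{vbr}. Using self-adjointness of $J$ and $\oc J$ together with the defining equation $(J-\bc)\k_\bc=e_{-1}a(0)\sin\varphi+e_0\cos\varphi$ and its analogue obtained by replacing $J$ with $\oc J$, I would write
\begin{equation*}
\langle(J-\oc J)\k_\bc,\oc\k_\bc\rangle=\langle(J-\bc)\k_\bc,\oc\k_\bc\rangle-\langle\k_\bc,(\oc J-\bc)\oc\k_\bc\rangle.
\end{equation*}
The $e_{-1}$-components on the right pair trivially with $\oc\k_\bc,\k_\bc\in\l^2_+$, leaving $\cos\varphi\,\langle e_0,\oc\k_\bc\rangle-\cos\oc\varphi\,\langle\k_\bc,e_0\rangle$. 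Using the alternative expression $\k_\bc=(J_+-\bc)^{-1}e_0\cos\varphi$ from Lemma~\ref{vbr} gives $\langle\k_\bc,e_0\rangle=\cos\varphi\cdot r_+(\bc)=\sin\varphi$ and the analogous identity for $\oc\k_\bc$, so the expression collapses to $\sin(\oc\varphi-\varphi)$.

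For \eqref{mlkap2}, Cauchy--Schwarz gives $|\sin(\oc\varphi-\varphi)|\le\|(J-\oc J)\k_\bc\|\cdot\|\oc\k_\bc\|$. The second factor is uniformly bounded: by \eqref{defkap3}--\eqref{defkap4}, $\|\oc\k_\bc\|^2=\oc\varphi'(\bc)\le\max\{\oc a(0)^2,1\}/\dist^2(\bc,E)$, a quantity depending only on $E$ and $\bc$ since $\oc J\in J(E)$. For the first factor, tridiagonality of $J-\oc J$ expands the sum into terms of type $(a(n)-\oc a(n))\k_\bc(n\pm 1)$ and $(b(n)-\oc b(n))\k_\bc(n)$. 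The vector $\k_\bc$ satisfies an exponential decay estimate $|\langle\k_\bc,e_n\rangle|\le C\rho^n$ for any $\rho>|b(\bc)|$ (the sharp rate of Green's-function decay outside $\sigma(J)$, identified with the Blaschke factor of the uniformization). Choosing $\rho\le\eta$, which is permitted by $\eta>|b(\bc)|$, yields $\|(J-\oc J)\k_\bc\|^2\le C\dist_\eta^2(J_+,\oc J_+)$.

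For \eqref{mlkap3}, I would extend \eqref{mlkap1} in the variable $\bc$ to a small complex disk $D$ around the original $\bc$, staying inside the resolvent sets of both $J$ and $\oc J$. Both $z\mapsto\k_z$ and $z\mapsto\oc\k_z$ are holomorphic $\l^2$-valued functions on $D$, and the constants in \eqref{mlkap2} are uniform on $D$ since the decay rate $|b(z)|$ varies continuously and stays strictly below $\eta$ after shrinking $D$. Cauchy's integral formula applied to the analytic function $z\mapsto\sin(\oc\varphi(z)-\varphi(z))$ bounds its $m$-th derivative at $\bc$ by the $\sup$ on $\partial D$, which is $\le C\dist_\eta(J_+,\oc J_+)$. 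Since $|\sin(\oc\varphi-\varphi)|$ is uniformly small on $D$, $\cos(\oc\varphi-\varphi)$ is bounded away from zero, and a short chain-rule induction recovers the corresponding bound on $(\oc\varphi-\varphi)^{(m)}$ for $m=1,2,3$. The main technical obstacle is the sharp exponential-decay estimate for $\k_\bc$ at rate $|b(\bc)|$ (rather than a crude rate depending only on $\dist(\bc,\sigma(J))$): for $\oc J\in J(E)$ this is immediate from the Baker--Akhiezer functional model of Section~\ref{Section2}, while for the given matrix $J$ one must invoke a Combes--Thomas style argument, using that $\sigma_{\mathrm{ess}}(J)=E$ to ensure the rate is no worse.
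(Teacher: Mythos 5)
Your derivation of \eqref{mlkap1} is essentially the paper's: decompose $(J-\oc J)=(J-\bc)-(\oc J-\bc)$, use self-adjointness, kill the $e_{-1}$-components against $\k_\bc,\oc\k_\bc\in\l^2_+$, and read off $\sin\oc\varphi\cos\varphi-\sin\varphi\cos\oc\varphi$ from $\langle\k_\bc,e_0\rangle=\sin\varphi$ and its barred analogue. That part is fine (modulo the degenerate case $\varphi=\pi/2$, which both you and the paper treat only implicitly).

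The genuine gap is in \eqref{mlkap2}. You apply Cauchy--Schwarz as $|\langle(J-\oc J)\k_\bc,\oc\k_\bc\rangle|\le\|(J-\oc J)\k_\bc\|\,\|\oc\k_\bc\|$, which forces you to prove that the kernel $\k_\bc$ of the \emph{perturbed} matrix $J$ decays at the sharp rate $|\fb(\zeta_\bc)|^n=e^{-nG_E(\bc)}$ (so that the rate can be taken below any $\eta>|\fb(\zeta_\bc)|$). You correctly identify this as the main obstacle, but the fix you propose does not work: a Combes--Thomas argument yields a decay rate controlled by $\dist(\bc,\sigma(J))$ and $\|J\|$, which for a point $\bc$ in a gap is in general strictly worse than $G_E(\bc)$, and the assertion that $\sigma_{\mathrm{ess}}(J)=E$ upgrades it to the sharp rate is unsubstantiated --- for a non-regular $J$ whose coefficients do not approach the isospectral torus, the resolvent at a gap point need not decay at the potential-theoretic rate of $E$, and eigenvalues in $X$ may sit close to $\bc$. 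The paper sidesteps this entirely: since $J-\oc J$ is self-adjoint, $\langle(J-\oc J)\k_\bc,\oc\k_\bc\rangle=\langle\k_\bc,(J-\oc J)\oc\k_\bc\rangle$, and Cauchy--Schwarz is applied so that the perturbation hits $\oc\k_\bc$, the kernel of the \emph{finite-gap} matrix $\oc J=J(\alpha)$. In the functional model $\oc\k_\bc/\|\oc\k_\bc\|$ is the normalized reproducing kernel $k^\alpha_{\zeta_\bc}/\|k^\alpha_{\zeta_\bc}\|$, whose coefficients obey the explicit, $\alpha$-uniform bound \eqref{estF} with exactly the rate $\eta>|\fb(\zeta_\bc)|$; together with $\|\k_\bc\|^2=\varphi'(\bc)$ and \eqref{defkap4} this gives \eqref{mlkap2} at once. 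So the gap is real but easily repaired by flipping the Cauchy--Schwarz.

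For \eqref{mlkap3} you take a different route (Cauchy's integral formula on a complex disk) from the paper, which simply differentiates \eqref{mlkap1} in $\bc$ up to three times, bounds $\|\k_\bc'\|$ and uses that $(\oc\k_\bc)'$ again satisfies \eqref{estF}, and divides by $\cos(\oc\varphi-\varphi)$, bounded below via \eqref{mlkap2}. Your route is workable in principle but needs care that you do not supply: the pairing $\langle\k_z,\oc\k_z\rangle$ is sesquilinear, so the left side of \eqref{mlkap1} is not holomorphic in $z$ as written, and the \eqref{mlkap2}-type bound must be shown to hold uniformly on the complex disk. The paper's direct differentiation avoids both issues and inherits the same decay input from \eqref{estF}.
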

 \begin{proof}
 We have
 $$
 \langle (J-\oc J)\k_{\bc},\oc\k_{\bc} \rangle=
 \langle (J-\bc)\k_{\bc},\oc\k_{\bc} \rangle-
 \langle (\oc J-\bc)\k_{\bc},\oc\k_{\bc} \rangle.
 $$
 We simplify the first term
 $$
  \langle e_{-1}a(0)\sin\varphi+e_0 \cos\varphi,\oc\k_{\bc} \rangle=
  \frac{1}{\oc a(0)}\langle (\oc J-c)e_{-1},\oc\k_{\bc} \rangle  \cos\varphi=\sin\oc\varphi\cos\varphi.
 $$
 Thus,
 $
 \langle (J-\oc J)\k_{\bc},\oc\k_{\bc} \rangle=\sin\oc\varphi\cos\varphi
 -\sin\varphi\cos\oc \varphi
 $
 and \eqref{mlkap1} is proved. 
 
  The upper estimation in \eqref{defkap4}
in combination with \eqref{mlkap1},
 \eqref{defkap3} implies
 \begin{equation}\label{mlkap7}
|\sin(\oc\varphi(\bc)-\varphi(\bc))|\le \sqrt{\varphi'(\bc)(\oc \varphi)'(\bc)}\frac{\| (J-\oc J)\oc\k_{\bc} \|}{\|\oc \k_{\bc}\|}
\le C\frac{\| (J-\oc J)\oc\k_{\bc} \|}{\|\oc \k_{\bc}\|}.
\end{equation}
Now, the vector $\frac 1{\|\oc\k_\bc\|}\oc\k_\bc$ in the functional model for $\oc J=J(\alpha)$ corresponds to the normalized reproducing kernel $\frac{1}{\| k^\alpha_{\zeta_\bc}\|} k^\alpha_{\zeta_\bc}$,
where $\zeta_\bc\in\bbD$ is such that $\fz(\zeta_\bc)=\bc$. 
 The components of this vector were estimated in \eqref{estF}. Thus,
$$
\frac 1{\|\oc \k_{\bc}\|}{\| (J-\oc J)\oc\k_{\bc} \|}\le C(E)\dist_\eta(J_+,\oc J_+), \quad |b(\bc)|<\eta<1,
$$
and \eqref{mlkap7} implies \eqref{mlkap2}.

To get \eqref{mlkap3} we differentiate \eqref{mlkap1} with respect to $\bc$
\begin{equation}\label{firstde}
\cos(\oc\varphi(\bc)-\varphi(\bc))((\oc\varphi)'(\bc)-\varphi'(\bc))=\langle (J-\oc J)\k'_\bc,\oc\k_\bc \rangle+
\langle (J-\oc J)\k_\bc,(\oc\k_\bc)' \rangle.
\end{equation}
Since $\sin(\oc\varphi(\bc)-\varphi(\bc))$ was estimated from above, we have a uniform estimation for 
$|\cos(\oc\varphi(\bc)-\varphi(\bc))|$ from below. Using \eqref{defkap}, we evaluate $\k'_\bc$. Based on its explicit form and the estimation for $\varphi'_\bc$, we obtain that $\|\k'_{\bc}\|$ is also bounded by the distance from $\bc$ to $\sigma(J)$. Evidently, the coefficients of $(\oc \k_{\bc})'$ also satisfies \eqref{estF}. Thus,
$$
|(\oc\varphi)'(\bc)-\varphi'(\bc)|=\frac{\|(J-\oc J)\oc\k_c \|\|\k'_\bc\|+
\|(J-\oc J)(\oc\k_c)' \|\|\k_\bc\|}{|\cos(\oc\varphi(\bc)-\varphi(c))|}
$$
implies \eqref{mlkap3}. Taking the second and third derivatives in \eqref{firstde}, we obtain \eqref{mlkap3} for $m=2,3$.
 \end{proof}

\begin{corollary} If $J$ is of the form \eqref{132} and $A(n)=\cF^{-1}(S^{-n} J S^n)$,
 then
 \begin{equation}\label{zzz}
\sum_{n=0}^\infty|p^{(0)}_0(n)-p_0(\alpha_n)|^2<\infty, \quad \alpha_n= \sum_{k=0}^n\epsilon_\alpha(k)-\mu n.
\end{equation}

\end{corollary}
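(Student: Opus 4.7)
Write $J_n := S^{-n}JS^n$ and $\oc J_n := J(\alpha_n) \in J(E)$ with $\alpha_n = \sum_{k=0}^n \epsilon_\alpha(k) - \mu n$. The strategy is to exhibit both $p_0^{(0)}(n)$ and $p_0(\alpha_n)$ as the values of one and the same smooth function applied to the Weyl-angle data at $\bc_1$ of $J_n$ and $\oc J_n$, then to control the discrepancy via Lemma~\ref{le75}, Lemma~\ref{lem72}, and~\eqref{132}. From the proof of Proposition~\ref{prop73}, the isometry $F$ intertwining $A = \cF^{-1}(J)$ with $J$ satisfies $F^* e_{-1} = e_{-1}$ and $F^*e_0 = \k_{\bc_1}/\|\k_{\bc_1}\|$ (Gram--Schmidt against the later kernels $\k_{\bc_j}$, $j\ge 2$, does not alter this first vector). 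Using the alternative form $\k_{\bc_1} = \cos\varphi(\bc_1)(J_+ - \bc_1)^{-1}e_0 \in \l^2_+$ together with $\|\k_{\bc_1}\|^2 = \varphi'(\bc_1)$ from Lemma~\ref{vbr}, a direct computation gives
\begin{equation*}
p_0^{(0)} = \langle Je_{-1},\, \k_{\bc_1}/\|\k_{\bc_1}\|\rangle = G\bigl(a(0), \varphi(\bc_1), \varphi'(\bc_1)\bigr), \qquad G(a,\theta,d) := \frac{a\sin\theta}{\sqrt{d}}.
\end{equation*}
The same identity applied to $J_n$ and to $\oc J_n$ (both processed by $\cF^{-1}$ with the fixed tuple $\bC$) yields $p_0^{(0)}(n) = G(a(n),\varphi_n(\bc_1),\varphi_n'(\bc_1))$ and $p_0(\alpha_n) = G(\sqrt{\cA(\alpha_n)},\oc\varphi_n(\bc_1),\oc\varphi_n'(\bc_1))$, where $\varphi_n$ (resp.\ $\oc\varphi_n$) is the Weyl angle at $\bc_1$ of $J_n$ (resp.\ $\oc J_n$).

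I would next verify that the arguments of $G$ stay in a compact subset of $\{a>0,\ d>0\}$ uniformly in $n$, so that $G$ is uniformly Lipschitz on that domain. Unitary equivalence gives $\sigma(J_n) = \sigma(J)$, so $\dist(\bc_1, \sigma(J_n))$ is a positive constant independent of $n$; likewise $\dist(\bc_1, E) > 0$. Combined with boundedness of $J$, the relation $a(n)^2 = \cA(\alpha_n) + \epsilon_a(n)$, and $\inf_\alpha \cA(\alpha) > 0$, the two-sided estimate~\eqref{defkap4} supplies uniform positive upper and lower bounds on $\varphi_n'(\bc_1)$ and $\oc\varphi_n'(\bc_1)$. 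Fixing $\eta \in (\max_j|\fb(\z_j)|, 1)$ and applying Lemma~\ref{le75} to the pair $(J_n, \oc J_n)$ at $\bc = \bc_1$ produces a constant $C$ independent of $n$ such that
\begin{equation*}
|\sin(\oc\varphi_n(\bc_1) - \varphi_n(\bc_1))| + |\oc\varphi_n'(\bc_1) - \varphi_n'(\bc_1)| \leq C\,\dist_\eta\bigl((J_n)_+, (\oc J_n)_+\bigr).
\end{equation*}
Since both angles lie in a fixed compact arc of $(-\pi/2, \pi/2]$, the first term bounds $|\sin\varphi_n(\bc_1) - \sin\oc\varphi_n(\bc_1)|$ up to a uniform factor. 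Combining with $|a(n) - \sqrt{\cA(\alpha_n)}| = O(|\epsilon_a(n)|)$ and the Lipschitz property of $G$ gives
\begin{equation*}
|p_0^{(0)}(n) - p_0(\alpha_n)| \leq C\bigl(|\epsilon_a(n)| + \dist_\eta((J_n)_+, J(\alpha_n)_+)\bigr),
\end{equation*}
whose square-summability follows from $\epsilon_a \in \l^2_+$ and Lemma~\ref{lem72}, proving~\eqref{zzz}.

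The delicate step is the closed form of $p_0^{(0)}$: although $p_j^{(0)}$ for $j \geq 1$ couples \emph{all} the Weyl kernels $\{\k_{\bc_j}\}_{j=1}^g$ through the Gram--Schmidt process in the construction of Proposition~\ref{prop73}, the zeroth coordinate $p_0^{(0)}$ depends only on $\k_{\bc_1}$, and hence on just two pieces of analytic-bundle data, $\varphi(\bc_1)$ and $\varphi'(\bc_1)$. This reduction is exactly the reason why the corollary singles out $p_0^{(0)}$; the analogous $\l^2$ statement for $p_j^{(0)}, q_j^{(0)}$ with $j \geq 1$ --- needed to complete the full parametrization envisaged in Subsection~\ref{subs72} --- would require propagating the Gram--Schmidt procedure stably in $n$, a technically heavier but conceptually parallel task.
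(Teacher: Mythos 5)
Your argument is correct and follows essentially the same route as the paper: express $p_0^{(0)}(n)$ and $p_0(\alpha_n)$ through the identity $p_0^{(0)}=\langle Je_{-1},\k_{\bc_1}/\|\k_{\bc_1}\|\rangle$ with $\|\k_{\bc_1}\|^2=\varphi'(\bc_1)$, control the angle data via Lemma \ref{le75} and the derivative bounds \eqref{defkap4}, and conclude square-summability from Lemma \ref{lem72}. (Your normalization $a\sin\theta/\sqrt{d}$ is in fact the correct one; the paper's displayed formula with $\varphi'(\bc_1)$ rather than $\sqrt{\varphi'(\bc_1)}$ in the denominator is a harmless typo, immaterial since $\varphi'$ is uniformly bounded above and below.)
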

\begin{proof}
By \eqref{mlkap2}, \eqref{mlkap3} we can estimate the difference
$$
p^{(0)}_0(n)-p_0(\alpha_n)=\frac{a(n)\sin\varphi(\bc_1)}{\varphi'(\bc_1)}-
\frac{\oc a(0)\sin\oc\varphi(\bc_1)}{(\oc\varphi)'(\bc_1)},\quad \oc J=J(\alpha_n),
$$
by means of $\dist((S^{-n}J S^n)_+,J(\alpha_n)_+)$. Due to \eqref{distalpn}, we have \eqref{zzz}.
\end{proof}

\begin{proof}[Finishing the proof of Theorem \ref{mainhy}]
It remains to show that \eqref{132} imply \eqref{m29}-\eqref{m31}.
Similarly to \eqref{mthos0}, consider the ordered system of vectors
\begin{equation}\label{mthos}
e_{-1},\k_{\bc_1},\dots,\k_{\bc_g},e_0,\k'_{\bc_1},\dots,\k'_{\bc_g},e_1.
\end{equation}
Let us point out that the orthogonalization of the system
\begin{equation}\label{mthos1}
e_{-1},\oc\k_{\bc_1},\dots,\oc\k_{\bc_g},e_0,(\oc\k_{\bc_1})',\dots,(\oc \k_{\bc_g})',e_1
\end{equation}
leads to the family $\{\ff^\alpha_j\}_{j=-1}^{2g+2}$, see \eqref{smpbase}, where $\oc J=J(\alpha)$.

To evaluate the Gram-Schmidt matrix of the system \eqref{mthos} we use
$$
\langle \k_{\bc_j},\k_{\bc_m} \rangle=\frac{r_+(\bc_j)-r_+(\bc_m)}{\bc_j-\bc_m}\cos\varphi(\bc_j)\cos\varphi(\bc_m)=
\frac{\sin(\varphi(\bc_j)-\varphi(\bc_m))}{\bc_j-\bc_m}.
$$
Therefore,
$$
\langle \k'_{\bc_j},\k_{\bc_m} \rangle=\frac{\cos(\varphi(\bc_j)-\varphi(\bc_m))}{\bc_j-\bc_m}\varphi'(\bc_j)-
\frac{\sin(\varphi(\bc_j)-\varphi(\bc_m))}{(\bc_j-\bc_m)^2}
$$
and
\begin{eqnarray*}
\langle \k'_{\bc_j},\k'_{\bc_m} \rangle&=&
\frac{\cos(\varphi(\bc_j)-\varphi(\bc_m))}{(\bc_j-\bc_m)^2}(\varphi'(\bc_j)+\varphi'(\bc_m))
\\
&+&
\frac{\sin(\varphi(\bc_j)-\varphi(\bc_m))}{\bc_j-\bc_m}\varphi'(\bc_j)\varphi'(\bc_m)
-2\frac{\sin(\varphi(\bc_j)-\varphi(\bc_m))}{(\bc_j-\bc_m)^3},\ j\not=m.
\end{eqnarray*}

Having uniform estimations from below for all Gram-Schmidt determinants of the system \eqref{mthos1},
from  \eqref{mlkap2}, \eqref{mlkap3}, similarly to \eqref{zzz}, we obtain
$$
\sum_{n=0}^\infty|p^{(m)}_j(n)-p_j(\alpha_n)|^2<\infty,
\ \sum_{n=0}^\infty|q^{(m)}_j(n)-q_j(\alpha_n)|^2<\infty,\quad m=-1,0,1,\ j=0,\dots,g.
$$
 This implies \eqref{m29}-\eqref{m31}, in particular,
$
\sum_{n=0}^\infty|p^{(\pm 1)}_j(n)-p^{(0)}_j(n)|^2<\infty,$  $j=0,...,g-1$.

\end{proof}
%%%%%%%%%%%%%

\section{Appendix: one sided GMP matrices}
We describe interrelations between one sided GMP  and Jacobi matrices  given by, see \eqref{ijf4},
\begin{equation}\label{app1}
r_-(z)=\langle (J_--z)^{-1} e_{-1}, e_{-1} \rangle=\langle (A_--z)^{-1} e_{-1}, e_{-1} \rangle=\int\frac{d\sigma_-(x)}{x-z}
\end{equation}
Let us point out that both matrices have the same cyclic vector, and they are related by a common spectral measure $d\sigma_-$.
It is assumed that $\bc_1,\dots,\bc_g$ do not belong to the (closed) support of this measure.

Thus, as soon as $A_-$ is given we can construct $J_-$ in the usual way, making basis of orthonormal polynomials.
In the opposite direction we will construct an orthonormal basis of rational functions in $L^2_{d\sigma_-}$. The matrix of the multiplication operator by the independent variable in this basis  $\{\tau_{k}(x)\}_{k\le -1}$  is $A_-$.

\begin{definition}
To the given $d\sigma_-$ we associate the orthonormal system
\begin{equation}\label{app2}
\begin{bmatrix}\tau_{-1}(x)& \tau_{-2}(x)&\hdots& \tau_{-g-1}(x)\end{bmatrix}
=\begin{bmatrix}1&\frac{1}{\bc_g-x}&\hdots&\frac{1}{\bc_1-x}
\end{bmatrix} L,
\end{equation}
where $L$ is the upper triangular matrix
$$
 L=\begin{bmatrix}
\ell^{(-1)}_{0}&\ell^{(-2)}_0&\dots&\ell^{(-g-1)}_0\\
&\ell^{(-2)}_1&\dots&\ell^{(-g-1)}_1\\
&&\ddots& \vdots\\
&& & \ell^{(-g-1)}_g
\end{bmatrix}, \quad \ell^{(-k-1)}_k>0,
$$
such that $\int \tau_k(x)\tau_j(x)d\sigma_-(x)=\delta_{k,j}$.
\end{definition}

%%%%%
In other words, if $D$ is the Gram-Schmidt matrix of the given system
$$
D=
\begin{bmatrix}
1&r_-(\bc_g)&\dots& r_-(\bc_1)\\
r_-(\bc_g)&r_-'(\bc_g)&\dots&\frac{ r_-(\bc_1)-r_-(\bc_g)}{\bc_1-\bc_g}\\
\vdots&\vdots &\ddots& \vdots\\
r_-(\bc_1)& \frac{ r_-(\bc_1)-r_-(\bc_g)}{\bc_1-\bc_g}&\dots& r_-'(\bc_1)
\end{bmatrix},\ \frac{r_-(\bc_j)-r_{-}(\bc_k)}{\bc_j-\bc_k}=\int\frac{d\sigma_-(x)}{(x-\bc_j)(x-\bc_k)},
$$
then $L$ is defined via the upper-lower triangular factorization of $D^{-1}$
\begin{equation}\label{appn1}
L^* D L=I \quad\text{or}\quad D^{-1}=L L^*.
\end{equation}

\begin{lemma}\label{lemapp1}
The multiplication operator with respect to the orthonormal system \eqref{app2} represents the $B$-block of a GMP matrix (see \eqref{n2}, \eqref{n3}), that is,
for the matrix $B_{-1}$ given by
\begin{equation}\label{app3}
B^{(-1)}_{jk}:=
\int x\tau_j(x)\tau_k(x) d\sigma_-(x) 
\end{equation}
its lower triangular part {\em (including the main diagonal)} is of the form
\begin{equation}\label{app4}
B^+=(\vec{\zm}(\vec{\ell})^*)^++\hat \bC,
\end{equation}
where
$$
\vec\ell:=\begin{bmatrix}
\ell_0^{(-1)}\\\ell_0^{(-2)}\\ \dots\\\ell_0^{(-g-1)}
\end{bmatrix}, \vec{\zm}=\begin{bmatrix}
\int \tau_{-1}(x) xd\sigma_-(x)\\
\int \tau_{-2}(x) xd\sigma_-(x)\\
\vdots\\
\int \tau_{-g-1}(x) xd\sigma_-(x)\\
\end{bmatrix}, \quad
\hat\bC=\begin{bmatrix}
0& & &\\
&\bc_g & &\\
& & \ddots& \\
& & & \bc_1
\end{bmatrix}.
$$
\end{lemma}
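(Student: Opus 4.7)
The plan is to exploit the near-diagonal action of multiplication by $x$ on the rational basis $\psi_0 = 1$, $\psi_m = \frac{1}{\bc_{g+1-m}-x}$ for $m=1,\dots,g$, namely
\begin{equation*}
x\psi_0 = x, \qquad x\psi_m = \bc_{g+1-m}\psi_m - 1 \quad (m \geq 1).
\end{equation*}
Because $L$ is upper triangular with $\ell_k^{(-k-1)}>0$ and $L^*DL=I$, the identity \eqref{appn1} is exactly the Gram--Schmidt orthonormalization of $\psi_0,\psi_1,\dots,\psi_g$ taken in this order. Consequently
\begin{equation*}
\tau_{-k-1} = \sum_{m=0}^{k}\ell_m^{(-k-1)}\psi_m,\qquad \int \tau_{-k-1}\,\psi_l\,d\sigma_- = 0 \ \text{ for } \ 0\le l<k,
\end{equation*}
and $\int \psi_k\,\tau_{-k-1}\,d\sigma_- = 1/\ell_k^{(-k-1)}$.

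First I would substitute the multiplication rules into the expansion of $\tau_{-k-1}$ to get
\begin{equation*}
x\,\tau_{-k-1}(x) \;=\; \ell_0^{(-k-1)}\,x \;-\;\sum_{m=1}^{k}\ell_m^{(-k-1)} \;+\; \sum_{m=1}^{k}\bc_{g+1-m}\,\ell_m^{(-k-1)}\,\psi_m(x).
\end{equation*}
Then, for $j\ge k$, I would compute $B^{(-1)}_{jk} = \int x\,\tau_{-j-1}\tau_{-k-1}\,d\sigma_-$ term by term. The first summand contributes $\ell_0^{(-k-1)}\,\zm_j$ by the definition of $\vec{\zm}$. The constant piece $-\sum_{m}\ell_m^{(-k-1)}$ is a multiple of $\psi_0$, so its integral against $\tau_{-j-1}$ vanishes whenever $j\ge 1$; the remaining case $j=k=0$ has an empty sum, so this contribution is absent throughout the lower triangular part. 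In the last sum the orthogonality relation kills every term with $m<j$; combined with $m\le k\le j$ this leaves only $m=j=k$, yielding the contribution $\bc_{g+1-k}\,\ell_k^{(-k-1)}\cdot (1/\ell_k^{(-k-1)}) = \bc_{g+1-k}$ on the diagonal.

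Collecting: for $j>k$ we obtain $B^{(-1)}_{jk} = \ell_0^{(-k-1)}\zm_j = \bigl(\vec{\zm}(\vec{\ell})^*\bigr)_{jk}$; for the diagonal $j=k\ge 1$ the extra summand $\bc_{g+1-k}$ matches $\hat{\bC}_{kk}$; and the first column $k=0$ gives $\ell_0^{(-1)}\zm_j$ with no diagonal shift, consistent with $\hat{\bC}_{00}=0$. This is precisely $B^+ = (\vec{\zm}(\vec{\ell})^*)^+ + \hat{\bC}$. I do not anticipate any real obstacle here: the entire argument is a bookkeeping exercise on the Gram--Schmidt structure of $L$. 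The only point worth stressing is that $\psi_0 = 1$ carries no pole, which is exactly why the associated diagonal entry of $\hat{\bC}$ is $0$ rather than some $\bc_j$, whereas every $\psi_m$ with $m\ge 1$ produces its own pole $\bc_{g+1-m}$ via the multiplication rule.
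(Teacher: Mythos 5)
Your proof is correct and follows essentially the same route as the paper's: both rest on the identity $x\cdot\frac{1}{\bc_j-x}=-1+\frac{\bc_j}{\bc_j-x}$ together with the orthogonality of constants to $\tau_{-j-1}$ for $j\ge 1$. The paper merely packages the computation in matrix form, using $L^*D=L^{-1}$ to see that the diagonal contribution $L^*D\hat\bC L=L^{-1}\hat\bC L$ is upper triangular with diagonal $\hat\bC$, whereas you verify the entries one at a time via the Gram--Schmidt relations $\int\tau_{-k-1}\psi_l\,d\sigma_-=0$ ($l<k$) and $\int\psi_k\tau_{-k-1}\,d\sigma_-=1/\ell_k^{(-k-1)}$; the content is the same.
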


\begin{proof}
Note that
\begin{equation}\label{app0}
x\begin{bmatrix}1&\frac{1}{\bc_g-x}&\hdots&\frac{1}{\bc_1-x}
\end{bmatrix}=
\begin{bmatrix}x&-1&\hdots&-1
\end{bmatrix}
+\begin{bmatrix}1&\frac{1}{\bc_g-x}&\hdots&\frac{1}{\bc_1-x}
\end{bmatrix}\tilde\bC.
\end{equation}
We substitute this and \eqref{app2} in \eqref{app3}. Since constants  are orthogonal to $\tau_{k}(x)$ for all $k$, except for $k=-1$, we obtain
$$
B=
\vec{\zm}
\begin{bmatrix}
\ell_0^{(-1)}&\ell_0^{(-2)}&\dots&\ell_0^{(-g-1)}
\end{bmatrix}+
\delta_0
\begin{bmatrix}
0&*&\dots&*
\end{bmatrix}+
L^*D\hat \bC L.
$$
By \eqref{appn1} $L^*D\hat\bC L=L^{-1}\hat\bC L$. Since  $(L^{-1}\hat \bC L)^+=\hat \bC$, we have \eqref{app4}.
\end{proof}

Now, we consider the system \eqref{app2} as a \textit{cyclic} subspace for the multiplication by 
\begin{equation}\label{addm1}
\Delta(x)=\lambda_0 x+\bc_0+\sum_{k=1}^g\frac{\lambda_k}{\bc_k-x}, \quad \lambda_j>0,\ j=0,\dots,g,
\end{equation}
in $L^2_{d\sigma_-}$. We inductively define the orthonormal system 
\begin{equation}\label{app5}
\tau_{-m(g+1)-k-1}(x)=\ell^{(-m(g+1)-k-1)}_{m(g+1)+k}\Delta(x)^m\tau_{-k-1}(x)+\dots, \quad m\ge 1,\ 0\le k\le g.
\end{equation}

\begin{lemma}
The multiplication by $\Delta(x)$ with respect to the system \eqref{app5} is a $(2g+3)$-diagonal matrix, or a $(g+1)$-block diagonal Jacobi matrix
\begin{equation}\label{app6}
\cG_-=\begin{bmatrix}
\fw_{-1}&\fv^*_{-1}& & \\
 \fv_{-1}&\fw_{-2}&\fv^*_{-2}& \\
  &\ddots&\ddots&\ddots
\end{bmatrix},
\end{equation}
with upper-triangular $\fv_{k}$'s, having positive diagonal entries.
\end{lemma}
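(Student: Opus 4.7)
The plan is to introduce the pre-orthogonalized functions $\phi_{-n}$: set $\phi_{-1}(x):=1$, $\phi_{-k-1}(x):=\frac{1}{\bc_{g-k+1}-x}$ for $1\leq k\leq g$, and
$$\phi_{-m(g+1)-k-1}(x):=\Delta(x)^{m}\,\phi_{-k-1}(x),\qquad m\geq 1,\ 0\leq k\leq g.$$
Two facts will drive the whole argument: the index-shift identity
$$\Delta(x)\cdot\phi_{-n}(x)=\phi_{-n-(g+1)}(x),\qquad n\geq 1,$$
and the observation that, by \eqref{app2} and \eqref{app5}, the orthonormal system $\{\tau_{-n}\}_{n\geq 1}$ is precisely what Gram--Schmidt produces from $\{\phi_{-n}\}_{n\geq 1}$ in this natural order. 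In particular each $\tau_{-n}$ lies in $\mathrm{span}\{\phi_{-n'}:n'\leq n\}$, and the coefficient $\nu_{n}>0$ of $\phi_{-n}$ in $\tau_{-n}$ is strictly positive (from positivity of the diagonal of $L$ in \eqref{app2} and of $\ell^{(-n)}_{n-1}$ in \eqref{app5}).

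Combining these, apply $\Delta$ to $\tau_{-n}$ and use the shift:
$$\Delta\tau_{-n}\in\mathrm{span}\{\Delta\phi_{-n'}:n'\leq n\}\subset\mathrm{span}\{\phi_{-n''}:n''\leq n+(g+1)\}=\mathrm{span}\{\tau_{-N}:N\leq n+(g+1)\},$$
so $\langle\Delta\tau_{-n},\tau_{-N}\rangle=0$ whenever $N>n+(g+1)$. Since multiplication by the real function $\Delta$ is self-adjoint in $L^{2}_{d\sigma_{-}}$, the companion identity $\langle\Delta\tau_{-n},\tau_{-N}\rangle=\langle\tau_{-n},\Delta\tau_{-N}\rangle$ yields the same vanishing when $n>N+(g+1)$. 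Grouping indices into consecutive blocks of size $g+1$ then gives the block-tridiagonal form \eqref{app6}.

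Next, refine the vanishing inside the off-diagonal band. With $n=m(g+1)+k+1$ and $N=(m+1)(g+1)+K+1$, the inequality $N\leq n+(g+1)$ reduces to $K\leq k$. The block $\fv_{-m-1}$ consists exactly of entries $(\fv_{-m-1})_{K,k}=\langle\Delta\tau_{-n},\tau_{-N}\rangle$, so $(\fv_{-m-1})_{K,k}=0$ for $K>k$: upper triangularity. For the diagonal, take $K=k$, so $N=n+(g+1)$. The shift identity gives
$$\Delta\tau_{-n}=\nu_{n}\,\phi_{-N}+(\text{combinations of }\phi_{-j},\ j<N),$$
and re-expressing $\phi_{-N}=\nu_{N}^{-1}\tau_{-N}+(\text{combinations of }\tau_{-j},\ j<N)$ yields
$$(\fv_{-m-1})_{k,k}=\langle\Delta\tau_{-n},\tau_{-N}\rangle=\frac{\nu_{n}}{\nu_{N}}>0.$$
The only real obstacle is notational book-keeping across the two parallel indexings $n$ and $(m,k)$; the underlying reason---that multiplication by $\Delta$ shifts the pre-basis index by exactly $g+1$---is elementary.
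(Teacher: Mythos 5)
Your proof is correct and follows essentially the same route as the paper's (very terse) argument: the definition \eqref{app5} forces $\Delta\tau_{-n}$ to have leading component along $\tau_{-n-(g+1)}$ with a positive ratio of leading coefficients, which gives the upper-triangular $\fv$-blocks with positive diagonal, and self-adjointness of multiplication by the real function $\Delta$ supplies the conjugate blocks. Your explicit pre-basis $\{\phi_{-n}\}$ and the shift identity $\Delta\phi_{-n}=\phi_{-n-(g+1)}$ merely make transparent the bookkeeping that the paper leaves implicit in the "$+\dots$" of \eqref{app5}.
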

\begin{proof}
The relation $\cG_- e_m=\frac{\ell^{(m)}_{-m+1}}{\ell_{-m+g}^{(m-g-1)}}e_{m-g-1}+\dots$ follows from the definition \eqref{app5}. That is, $\fv_m$ is upper-triangular. The fact that the operator is self-adjoint implies that all $\fv^*_m$ are lower-triangular matrices.

Note that corresponding to this $\cG_-$ matrix measure $d\Xi_-(y)$ is of the form \eqref{nis3}.
\end{proof}
As it was claimed, we define $A_-=A_-(J_-)$ as the matrix of the operator multiplication by $x$  in the basis \eqref{app5}. Evidently, $\cG_-$ and $A_-$ commute.

\begin{theorem}\label{th8.4}
In the given construction $A_-=A_-(J_-)$ is a one sided GMP matrix. 
\end{theorem}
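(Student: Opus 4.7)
The plan is to verify the two structural requirements for $A_-$ to be a one-sided GMP matrix: first, that $A_-$ has the $(g+1)$-block Jacobi form of class $\bbA$ specified in \eqref{n1}--\eqref{n3}, and second, that the matrices $\bc_k-A_-$ are invertible with the corresponding shifted inverses also having the $\bbA$ form.

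For the first step, Lemma \ref{lemapp1} already establishes the form of the leading diagonal block $B(\vbp_{-1})$. To extend this to all diagonal blocks, I would observe that the basis $\{\tau_{-m(g+1)-k-1}\}_{k=0}^{g}$ of block $-(m+1)$ is obtained from $\{\tau_{-k-1}\}_{k=0}^{g}$ by applying $\Delta(x)^m$ and Gram--Schmidt, so multiplication by $x$ within this block mirrors its action on block $-1$ and yields $B(\vbp_{-m-1})$ of the form \eqref{n2}. For the off-diagonal coupling between consecutive blocks, I would decompose
\begin{equation*}
\lambda_0\,x=\Delta(x)-\bc_0-\sum_{k=1}^{g}\frac{\lambda_k}{\bc_k-x}.
\end{equation*}
The resolvent summands $\tau_{-j}(x)/(\bc_k-x)$ do not leave the current block because they only redistribute the pole structure of $\tau_{-j}$ without producing a genuinely new orthogonal direction with respect to the span accumulated so far, while $\Delta(x)\tau_{-j}$ produces exactly one new direction in block $-(m+1)$, namely the single leading vector $\tau_{-m(g+1)-1}$ singled out by the Gram--Schmidt ordering of \eqref{app5}. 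This gives the rank-one coupling $A(\vec p)=\delta_g\vec p^{\,*}$ of \eqref{n2} and precludes coupling to non-adjacent blocks, so $A_-\in\bbA$.

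For the second step, the spectrum of $A_-$ coincides with the support of $d\sigma_-$ (since the construction makes $\tau_{-1}$ a cyclic vector for $A_-$ with spectral measure $\sigma_-$, by \eqref{app1}), which by hypothesis contains none of the points $\bc_k$. Hence each $\bc_k-A_-$ is boundedly invertible. The argument in the proof of Theorem \ref{defaltdef} then yields
\begin{equation*}
\Lambda^\#_{-j,k}\ge\frac{1}{\|(\bc_k-A_-)^{-1}\|}>0
\end{equation*}
uniformly in $j$, so the criterion \eqref{altdef} is satisfied; combined with $A_-\in\bbA$ this proves that $A_-$ is a one-sided GMP matrix.

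The main technical obstacle is the precise verification of the rank-one form $\delta_g\vec p^{\,*}$ for the off-diagonal coupling: the argument has to establish that when $x$ is applied to any basis vector of block $-m$, the resulting new component in block $-(m+1)$ lies in the single direction $\tau_{-m(g+1)-1}$. This reduces to an analysis of how the extra polynomial degree introduced by $x$ fits into the graded structure of the basis \eqref{app5}, and it is the analogue for the iterative construction of the observation made in Lemma \ref{lemapp1} via identity \eqref{app0}.
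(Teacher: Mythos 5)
Your overall architecture is right — reduce GMP membership to (i) the class-$\bbA$ block structure and (ii) invertibility of $\bc_k-A_-$ plus the criterion of Theorem \ref{defaltdef} — and part (ii) of your argument is sound and matches the logic the paper relies on. The problem is that part (i), which is the actual content of the theorem, rests on two claims that are false as stated. First, the resolvent summands $\tau_{-j}(x)/(\bc_k-x)$ \emph{do} leave the span accumulated so far: if $\tau_{-j}$ lies in the span $\{P/\prod_l(\bc_l-x)^{m}:\deg P\le m(g+1)-1\}$ of the first $m$ blocks, then dividing by $\bc_k-x$ raises the pole order at $\bc_k$ to $m+1$, which is not available in that span (already $\tau_{-2}/(\bc_g-x)$ has a double pole at $\bc_g$ and so sticks out of block $-1$). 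Second, $\Delta(x)\tau_{-j}$ produces a single new direction only when $\tau_{-j}$ is the \emph{leading} vector of its block; for the $(k+1)$-st vector it produces components along the first $k+1$ vectors of the next block — this is exactly why multiplication by $\Delta$ yields the block-Jacobi matrix $\cG_-$ with \emph{triangular} (not rank-one) off-diagonal blocks $\fv_k$. The rank-one form of the coupling comes from a cancellation between the two kinds of overshooting terms in your decomposition $\lambda_0 x=\Delta(x)-\bc_0-\sum_k\lambda_k/(\bc_k-x)$, i.e.\ from a degree count on $xP/\prod_l(\bc_l-x)^{m}$, not from either summand behaving well separately. Your claimed mechanism therefore does not prove the statement it is invoked for.

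The second genuine gap is the diagonal blocks: asserting that multiplication by $x$ within block $-(m+1)$ ``mirrors'' its action on block $-1$ is not an argument. Lemma \ref{lemapp1} uses the explicit form of the block-$(-1)$ basis and the Gram matrix; for later blocks the basis vectors \eqref{app5} carry Gram--Schmidt corrections from all previous blocks, and what must be shown is the precise compatibility that $B_{k}=\vq_k(\vp_k)^*+M_k$ with $M_k$ upper triangular whose main diagonal is exactly $\hat\bC$, and with the \emph{same} vector $\vp_k$ that appears in the adjacent coupling block $A_{k}=\delta_0(\vp_k)^*$. The paper obtains all of this in one stroke from the commutant relation $\cG_-A_-=A_-\cG_-$: the identities $\fv_kA_{k+1}=A_k\fv_{k+1}$ and $\fw_kA_{k+1}+\fv_{k+1}B_{k+1}=B_k\fv_{k+1}+A_{k+1}\fw_{k+1}$, together with the triangularity of $\fv_k$, propagate the structure of the first block (Lemma \ref{lemapp1} plus the direct computation of $A_{-1}$) to every block inductively; the preservation of the diagonal $\hat\bC$ hinges on the observation that $A_{k+1}\fw_{k+1}\fv_{k+1}^{-1}=\delta_0(\vp_{k+1})^*\fw_{k+1}\fv_{k+1}^{-1}$ touches the main diagonal only in its first entry. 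Your proposal contains no substitute for this inductive propagation, so as written the proof does not go through; either adopt the commutant argument or replace your two structural claims by an honest degree-counting argument in the graded family of spaces $\{P/\prod_l(\bc_l-x)^{m}\}$, carried out for the diagonal blocks as well as the off-diagonal ones.
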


\begin{proof}
In Lemma \ref{lemapp1} it was checked that the block $B_{-1}$ has the required structure. We claim that the same is related to the block $A_{-1}$. Indeed, the function
$
\Delta(x)\tau_{-1}(x)=\Delta(x)\ell^{(-1)}_0
$
is a linear combination of $x$ and functions from the chosen  cyclic subspace, see \eqref{addm1}. That is, $\tau_{-g-2}(x)$ is of the same form.
Thus, for all $j=-1,\dots,-g-1$,  by \eqref{app0}, $x\tau_j(x)$ is a linear combination of $\{\tau_j(x)\}_{j=-1}^{-g-2}$. In other words $A_{-1}$ is of the form
\begin{equation}\label{app7}
A_{-1}=\delta_0\begin{bmatrix}
p_0^{(-1)}& \dots &
 p^{(-1)}_g
\end{bmatrix}=\delta_0 (\vp_{-1})^*,
\end{equation}
where
$$
(\vp_{-1})^*:=\int\tau_{-g-2}(x)x \begin{bmatrix}\tau_{-1}(z)& \tau_{-2}(x)&\hdots& \tau_{-g-1}(x)\end{bmatrix}d\sigma(x)
$$
$$
=\int\tau_{-g-2}(x) 
(\begin{bmatrix} x&-1&\hdots&-1
\end{bmatrix} 
+\begin{bmatrix}1&\frac{1}{\bc_g-x}&\hdots&\frac{1}{\bc_1-x}
\end{bmatrix}\hat\bC) Ld\sigma(x)
$$
$$
=\begin{bmatrix}
\ell_0^{(-1)}&\ell_0^{(-2)}&\dots&\ell_0^{(-g-1)}
\end{bmatrix}
 \int \tau_{-g-2}(x) x  d\sigma(x)=
 \frac{\begin{bmatrix}
\ell_0^{(-1)}&\ell_0^{(-2)}&\dots&\ell_0^{(-g-1)}\end{bmatrix}} {\lambda_0\ell_0^{(-1)}\ell_{g+1}^{(-g-2)}}
$$

It remains to use the commutant relation $\cG_- A_-=A_-\cG_-$.
We have
$$
\fv_{k}A_{k+1}=A_{k} \fv_{k+1},
\ \ 
\fw_k A_{k+1}+\fv_{k+1} B_{k+1}=B_{k}\fv_{k+1}+A_{k+1}\fw_{k+1}.
$$
Due to \eqref{app7},
we get
$$
A_{-2}={(\fv_{-2})_{00}}\delta_0(\vp_{-1})^* \fv_{-1}^{-1}, \ \text{that is},\ \ 
A_{-2}=\delta_0(\vp_{-2})^*, \ 
 \ (\vp_{-2})^*={(\fv_{-2})_{00}}(\vp_{-1})^* \fv_{-1}^{-1}.
$$
Generally,
\begin{equation}\label{app8}
A_{k}=\delta_0(\vp_{k})^*, \ \text{where}\ 
(\vp_{k})^*={(\fv_k)_{00}}(\vp_{k+1})^* \fv_{k+1}^{-1}.
\end{equation}

For $B$-blocks we have
$$
B_{k}=(\fv_{k+1}B_{k+1}+\fw_{k}A_{k+1}- A_{k+1}\fw_{k+1})\fv_{k+1}^{-1}.
$$
If we assume
$$
B_{k+1}=\vq_{k+1}(\vp_{k+1})^*+M_{k+1},
$$
where $M_{k+1}$ is upper-triangular, which main diagonal is $\hat\bC$, then
$$
B_k=\fv_{k+1}\vq_{k+1}(\vp_{k+1})^*\fv_{k+1}^{-1}+ \fw_k\delta_0(\vp_{k+1})^*\fv_{k+1}^{-1}+\tilde M_{k}
=(\fv_{k+1}\vq_{k+1}+ \fw_k\delta_0)(\vp_{k+1})^*\fv_{k+1}^{-1}+\tilde M_{k}
$$
where $\tilde M_{k}$ is also upper-triangular. From this relation and \eqref{app8} we get
$$
B_k=\vq_{k}(\vp_{k})^*+M_{k},
$$
where up to the first component the vector $\vq_k$ has the form
$
\frac 1{(\fv_{k})_{00}}(\fv_{k+1}\vq_{k+1}+\fw_{k}\delta_0)
$
and  by definition $M_k$ preserves the structure of its main diagonal for all $k$. Note that this is possible, since   the term $A_{k+1} \fw_{k+1}\fv_{k+1}^{-1}=\delta_0(\vp_{k+1})^* \fw_{k+1}\fv_{k+1}^{-1}$ has a non vanishing entry on the main diagonal only in the first component.

\end{proof}

\section{Acknowledgment}
I am profusely thankful to my PhD student Benjamin Eichinger for his active participation in this research (some technical results presented here were proved by him in his Master thesis and will be included in his forthcoming PhD thesis\footnote{ To be more precise, this is related to Section \ref{Section2}, and the proofs of Lemma \ref{lem:gsmpEntries}  and the second part of Theorem \ref{th73}.}, but most of all for his young energy and enthusiasm, which I borrowed a lot.  I would like to thank the Isaac Newton Institute for Mathematical Sciences, Cambridge, for their support and hospitality during the program \textit{Periodic and Ergodic Spectral Problems} (PEP), where  this paper was first presented. 
After that Barry Simon kindly suggested to organize a small informal research seminar at the Hebrew University of Jerusalem, where
in a series of lectures  a comparably complete presentation of the results was given. Thus, many thanks are due to the organizers of  PEP and especially to Barry Simon and Jonathan Breuer for their patience and  fruitful discussions during these presentations. Also, I am thankful to Alexander Kheifets and Alexander Volberg for stimulating conversations.

%%%%%%%%%%%
%%%%%%%%%%%%

 \bibliographystyle{amsplain}
 \bibliography{lit2}

\smallskip

\noindent
{Institute for Analysis, Johannes Kepler University Linz,
A-4040 Linz, Austria} 

\noindent\textit{E-mail address}:
{petro.yudytskiy@jku.at,}
\end{document}